\documentclass[12pt]{amsart}
\usepackage[margin=1.15in]{geometry}
\usepackage{amscd, amssymb, amsmath, wasysym}
\usepackage{graphicx}
\usepackage{amsfonts}
\usepackage{mathrsfs}
\usepackage{eucal}
\usepackage{latexsym}
\usepackage{verbatim}
\usepackage[all]{xy}
\usepackage[dvipsnames]{xcolor}
\usepackage{bookmark}
\usepackage{subcaption}
\usepackage{tikz-cd}

\usepackage{microtype}

 \usepackage{hyperref}
 \hypersetup{
     colorlinks=true,
     linkcolor=NavyBlue,
     filecolor=NavyBlue,
     citecolor = TealBlue,
     urlcolor=magenta,
  }

\newtheorem{introthm}{Theorem}

\newcounter{thmcounter}

\numberwithin{equation}{section}
\numberwithin{thmcounter}{section}

\newtheorem{theorem}[thmcounter]{Theorem}
\newtheorem{proposition}[thmcounter]{Proposition}
\newtheorem{lemma}[thmcounter]{Lemma}
\newtheorem{corollary}[thmcounter]{Corollary}

\theoremstyle{definition}
\newtheorem{definition}[thmcounter]{Definition}

\newtheorem{example}[thmcounter]{Example}

\newtheorem{remark}[thmcounter]{Remark}

\newtheoremstyle{claim}{9pt}{3pt}{}{\parindent}{\bf}{.}{1em}{}

\theoremstyle{claim}

\newenvironment{namelist}[1]{%
\begin{list}{}
{
\settowidth{\labelwidth}{#1}%
\setlength{\labelsep}{0.3em}%
\setlength{\leftmargin}{\labelwidth}%
\addtolength{\leftmargin}{\labelsep}}}{%
\end{list}}

\newcommand{\nN}{\mathbf{N}}
\newcommand{\nZ}{\mathbf{Z}}
\newcommand{\nR}{\mathbf{R}}
\newcommand{\nC}{\mathbf{C}}

\newcommand{\kk}{\mathbf{k}}
\newcommand{\nG}{\mathbf{G}}
\newcommand{\nP}{\mathbf{P}}
\newcommand{\nA}{\mathbf{A}}

\newcommand{\sA}{\mathscr{A}}

\newcommand{\sF}{\mathscr{F}}
\newcommand{\sG}{\mathscr{G}}
\newcommand{\sO}{\mathscr{O}}

\newcommand{\sI}{\mathscr{I}}

\newcommand{\sL}{\mathscr{L}}

\DeclareMathOperator{\coker}{coker}

\DeclareMathOperator{\id}{id}

\DeclareMathOperator{\Ker}{Ker}

\DeclareMathOperator{\Pic}{Pic}

\DeclareMathOperator{\pr}{pr}

\DeclareMathOperator{\Supp}{Supp}
\DeclareMathOperator{\supp}{Supp}

\DeclareMathOperator{\Sym}{Sym}

\DeclareMathOperator{\rank}{rank}

\DeclareMathOperator{\GW}{GW}

\newcommand*{\longhookrightarrow}{\ensuremath{\lhook\joinrel\relbar\joinrel\rightarrow}}

\newcounter{rkcounter}             
\begin{document}

\title[Ulrich sheaves for higher secant varieties of curves]{Ulrich sheaves and determinantal representations for higher secant varieties of curves}

\author{Daniele Agostini}
\address{Department of Mathematics, Eberhard Karls Universit\"{a}t T\"{u}bingen, Germany}
\email{daniele.agostini@uni-tuebingen.de}

\author{Mario Kummer}
\address{Faculty of Mathematics, Technische Universit\"{a}t Dresden, Germany}
\email{mario.kummer@tu-dresden.de}

\author{Jinhyung Park}
\address{Department of Mathematical Sciences, KAIST, Daejeon, Republic of Korea}
\email{parkjh13@kaist.ac.kr}

\date{\today}

\begin{abstract}
We show that higher secant varieties of smooth projective curves have symmetric admissible determinantal representations with symmetric Ulrich sheaves of rank one if the embedding is sufficiently ample. 
For secant varieties of real curves, we give conditions for the representing matrices to be positive definite. This allows us, under mild (conjecturally vacuous) conditions, to represent the convex hull of the curve as a spectrahedron whenever it is a hyperbolicity cone.
For secant varieties of rational normal curves, we derive very explicit representations in terms of Littlewood--Richardson coefficients.
One key tool that we use are the higher Szeg\H{o} kernels and the higher Scorza correspondences associated to a non-effective theta characteristic on the curve.
\end{abstract}

\maketitle


\section{Introduction}

\subsection{Determinantal representations, Ulrich sheaves, and their isometry classes}
Let $\kk$ be a field of characteristic zero but not necessarily algebraically closed, $V$ be a $\kk$-vector space of dimension $\dim V=r+1$, and $\nP^r=\nP(V)$ be the projective space of one-dimensional quotients of $V$.
 If $X \subseteq \nP^r$ is a hypersurface of degree $d$, a \emph{determinantal representation} of $X$ is a $d\times d$ matrix $\gamma=(p_{ij})$ of linear forms $p_{ij}\in V$ such that
\begin{equation}\label{eq:det_rep_usual}
X = \{ \det(\gamma) = 0 \}. 
\end{equation}
In this case, the sheaf $\sF$ defined as the cokernel of the map
\[ 0 \longrightarrow \kk^d\otimes \sO_{\nP^r}(-1)\overset{\gamma}{\longrightarrow} \kk^d\otimes \sO_{\nP^r} \longrightarrow \sF \longrightarrow 0 \]
is a sheaf of rank one supported on $X$ which is moreover an \emph{Ulrich sheaf}, meaning that its minimal free resolution is linear of expected length $r-\dim(\Supp(\sF))$. Conversely, if $\sF$ is such an Ulrich sheaf of rank one, it yields a determinantal representation of $X$ via the matrix appearing in its minimal free resolution.
\medskip

Assume now that $\kk=\nR$ and that $\gamma$ is a symmetric determinantal representation of $X$. For any point $q\in \nP^r(\nR)\setminus X(\nR)$ we can evaluate $\gamma$ at $q$ to obtain a symmetric matrix
 $\gamma(q) \in \nR^{d\times d}$. One can show that the \emph{signature} of this matrix is, up to a sign, equal to the \emph{Brouwer degree} of the linear projection from the point $q$:
\[ \pi_q \colon X(\nR) \longrightarrow \nP^{r-1}(\nR) \]
 with respect to certain (relative) orientations. Recall that the Brouwer degree is a signed count of the points in a fiber, where the sign is given according to whether the map locally preserves orientation or not. In particular, the matrix $\gamma(q)$ is definite (either positive or negative) only if $X$ is \emph{hyperbolic} with respect to the point $q$, meaning that the preimage of a real point by $\pi_q$ consists of $d$ real points. Equivalently, any real line in $\nP^{r-1}$ through $q$ intersects $X$ only at real points.
  \medskip

 In the case of an arbitrary field $\kk$ of characteristic $\operatorname{char}(\kk)\ne 2$, it was shown in \cite{agostinikummer} that the {isometry class} of the quadratic form associated to $\gamma(q)$ is equal
 to the $\nA^1$-degree of the linear projection from $q$:
 \[ \pi_q \colon X \longrightarrow \nP^{r-1} \]
according to a certain (relative) orientation. This is again a kind of a weighted count of points in the fibers and generalizes the Brouwer degree to arbitrary fields: for more information see e.g.~\cite{kw19,Lev20,pauliwickelgren}.
\medskip

The generalization of the previous discussion to an irreducible subscheme $X\subseteq \nP^r$ of dimension $n$ and degree $d$, goes through  \emph{Livsic-type admissible determinantal representations of rank one} of $X$. Such a representation is given by  a linear map
\[ \gamma\colon \wedge^{n+1}V \longrightarrow \kk^d\otimes \kk^d \]
that can be seen as a $d\times d$ matrix with entries in $\wedge^{n+1}V^{\vee}$ with properties described in more detail in \cite{shamovichvinnikov}. The key property is that if $p_0,\ldots,p_n\in V$, then $\gamma(p_0\wedge \cdots\wedge p_n)$ is a $d\times d$ matrix with coefficients in $\kk$ and
\begin{align*} 
\det \gamma(p_0\wedge \cdots \wedge p_n) \ne 0 &\iff  X\cap \{ p_0=\cdots =p_n = 0\} = \emptyset \\
& \iff (p_0:\cdots:p_n)\colon X \longrightarrow \nP^n \text{ has no base points. } 
\end{align*}
Notice that after choosing a basis of $V$ we get an  isomorphism $\wedge^{n+1}V \cong \wedge^{r-n}V$ and if we see $\gamma$ as a matrix with coefficients in $\wedge^{r-n}V$ the previous equivalence means precisely that $\gamma$ gives a \emph{determinantal representation of the Chow form of $X$}. This is a divisor in the Grassmannian $\nG(r-n-1,r)$ of $(r-n-1)$-planes in $\nP^r$:
\[ \ \operatorname{Chow}(X) = \{ \Lambda\in \nG(r-n-1,r) \,|\, \Lambda\cap X \ne \emptyset \} \subseteq \nG(r-n-1,n) \subseteq \nP(\wedge^{r-n}V).  \]  
Notice that if $X$ is a hypersurface, then $X=\operatorname{Chow}(X)$ so we get the usual determinantal representations of \eqref{eq:det_rep_usual}. It was also proven by the second author and Shamovich in \cite{kummershamovich} that such admissible determinantal representations correspond to Ulrich sheaves on $X$. If moreover $\gamma$ is symmetric, then it was proven by the first two authors in \cite{agostinikummer} that if $p_0,\ldots,p_n\in V$ have no common zero on $X$, then the isometry class of $\gamma(p_0\wedge \cdots \wedge p_n)$ in the Grothendieck--Witt ring $\GW(\kk)$ is equal to the $\nA^1$-degree of the linear projection
\begin{equation}\label{eq:linearprojection} 
\pi = (p_0:\cdots:p_n)\colon X \longrightarrow \nP^n 
\end{equation}
with respect to a certain relative orientation. The first result of our paper is a technical statement on how to construct admissible determinantal representations of rank one and their corresponding Ulrich sheaves. If the determinantal representation is symmetric and if the linear  projection of \eqref{eq:linearprojection} has a fiber consisting entirely of $\kk$-rational points we also show how to compute the corresponding isometry class.

\begin{introthm}\label{thm:thmA}
	Let $X\subseteq \nP^r$ be a nondegenerate irreducible projective variety of dimension $n$ and degree $d$, and $\sF,\sF'$ be two rank one globally generated torsion-free sheaves on $X$ with $h^0(X,\sF)=h^0(X,\sF')=d$. Consider a linear map
	\[ \gamma\colon \wedge^{n+1}V\longrightarrow H^0(X\times X,\sF\boxtimes \sF') = H^0(X,\sF)\otimes H^0(X,\sF') \]
	and for any $p_0,\ldots,p_n\in V$ consider the corresponding projection 
	\[ \pi = (p_0:\cdots:p_n)\colon X \dashrightarrow \nP^n . \]
	Assume that the following hold:
	\begin{enumerate}
		\item[(i)] If $\pi$ has no base points and if $\{x_1,\ldots,x_d\}$ is a general fiber over $\overline{\kk}$, then $\gamma(p_0\wedge \cdots \wedge p_n)(x_i,x_j)\ne 0$ if and only if $i=j$.
		\item[(ii)] If $\pi$ has a unique base point $x\in X$ and if $\{x_1,\ldots,x_{d-1}\}$ is a general fiber over $\overline{\kk}$, then $\gamma(p_0\wedge\cdots\wedge p_n)(x,y)=0$ for all $y\in X$ and $\gamma(p_0\wedge \cdots \wedge p_n)(x_i,x_j)\ne 0$ if and only if $i=j$.
	\end{enumerate}
Then $\gamma$ is an admissible determinantal representation of $X$ of rank one, with associated Ulrich sheaf $\sF$. Assume moreover that $\sF=\sF'$ and $\gamma$ is symmetric. If the projection $\pi\colon X\to \nP^n$ has no base points and if $\{x_1,\ldots,x_d\}$ is a fiber consisting only of $\kk$-rational points, then it admits a relative orientation such that the corresponding $\nA^1$-degree is equal to the class
\[ \langle \gamma(p_0\wedge \cdots \wedge p_n) \rangle = \sum_{i=1}^{d} \langle \gamma(p_0\wedge \cdots\wedge p_n)(x_i,x_i) \rangle \text{ in } \GW(\kk), \]
where the summands on the right are defined in Section \ref{sec:signatures}.
\end{introthm}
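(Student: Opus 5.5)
The plan is to reduce the statement to the characterization of Livsic-type admissible determinantal representations in \cite{shamovichvinnikov} and \cite{kummershamovich}. It then suffices to verify two things. First, $\det\gamma(p_0\wedge\cdots\wedge p_n)\neq 0$ exactly when $\pi$ is base point free, after identifying $H^0(X,\sF)\otimes H^0(X,\sF')$ with $d\times d$ matrices via bases. Second, the rank condition: when $\pi$ has a single base point, $\gamma(p_0\wedge\cdots\wedge p_n)$ has rank exactly $d-1$. The Ulrich sheaf attached to $\gamma$ should then be recognised as $\sF$.

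\textbf{Invertibility when $\pi$ is base point free.} Take a general fiber $\{x_1,\dots,x_d\}$ over $\overline{\kk}$. Choose these points so that they avoid the non-locally-free loci of $\sF$ and $\sF'$, and so that the evaluation map
\[ \mathrm{ev}\colon H^0(X,\sF)\to \bigoplus_{i} \sF(x_i) \]
is an isomorphism. This is a map between $d$-dimensional spaces. I expect to show it is an isomorphism using global generation together with $h^0=d$. The argument: a general fiber of a general projection imposes independent conditions on a $d$-dimensional globally generated linear system of rank one sheaves. This should follow by a monodromy/uniform position argument, or by noting that a section vanishing on a whole general fiber gives a nonzero element of a twist that can be ruled out.

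The same holds for $\sF'$. In the induced bases, the matrix of $\gamma(p_0\wedge\cdots\wedge p_n)$ is then $(\gamma(x_i,x_j))_{i,j}$. By (i) this matrix is diagonal with nonzero diagonal entries, hence invertible.

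\textbf{The base point case.} When $\pi$ has a unique base point $x$, condition (ii) says the bilinear form kills $H^0(X,\sF)$ paired against everything through the evaluation at $x$. More precisely, $\gamma(x,\cdot)=0$ puts the image in $\mathfrak m_x$-vanishing sections in the first factor, so the rank is at most $d-1$. The diagonal nonvanishing on the remaining $d-1$ general points gives rank at least $d-1$, by the analogous evaluation argument using $d-1$ points plus $x$.

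\textbf{Identifying the cokernel sheaf with $\sF$.} In \cite{kummershamovich}, the Ulrich sheaf is obtained as the cokernel of $\gamma$, viewed on the Grassmannian and restricted to $X$. Concretely, the kernel of $\gamma(p_0\wedge\cdots\wedge p_n)$ at a point $x$ of the variety lies in $H^0(\sF)$ as the sections vanishing at $x$. This identifies the fibers, and hence the sheaf, with $\sF$ via global generation.

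\textbf{The $\nA^1$-degree.} For the symmetric case I would invoke \cite{agostinikummer}: the class $\langle\gamma(p_0\wedge\cdots\wedge p_n)\rangle$ equals the $\nA^1$-degree for a relative orientation induced by $\sF$. When the fiber is $\kk$-rational, the evaluation isomorphism above is defined over $\kk$. In these bases the form is diagonal, so its class is $\sum_i\langle\gamma(x_i,x_i)\rangle$, with the summands interpreted as in Section~\ref{sec:signatures} via trivializations of $\sF(x_i)^{\otimes 2}$.

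\textbf{Main obstacle.} The hardest step is the evaluation isomorphism for an arbitrary (not just general) $\kk$-rational fiber, and at non-locally-free points. I would first try deriving it from invertibility of the matrix (already known from the determinant argument) together with the diagonal shape: if $\mathrm{ev}$ had a kernel, the form would be degenerate.
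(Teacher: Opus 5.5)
Your treatment of the $\GW$ class, citing \cite{agostinikummer} and then diagonalizing in the basis attached to a rational fiber, is the paper's Lemma~\ref{lem:signaturepsi}. The admissibility half, however, has a genuine gap: you never connect the individual matrices $\gamma(p_0\wedge\cdots\wedge p_n)$ to the object that admissibility and the Ulrich sheaf are about. That object is the cokernel $\sG$ of the sheaf map $\widetilde{\gamma}\colon\wedge^{n+2}V\otimes H^0(X,\sF')^{\vee}\otimes\sO_{\nP^r}(-1)\to H^0(X,\sF)\otimes\sO_{\nP^r}$ on $\nP^r$, not something on the Grassmannian.

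By Lemma~\ref{lem:imagegammatilda}, the image of $\widetilde{\gamma}$ at $x$ is the span of the images of \emph{all} $\gamma(p_0\wedge\cdots\wedge p_n)$ with $p_0,\dots,p_n$ vanishing at $x$. Knowing that each such matrix has rank $d-1$ does not by itself keep this span from being all of $H^0(X,\sF)$. What is needed is that all these images lie in the single hyperplane $H^0(X,\sF\otimes\mathfrak m_x)$. Your remark that $\gamma(p_0\wedge\cdots\wedge p_n)(x,-)=0$ forces the image into sections vanishing at $x$ is the right ingredient, but it must be used uniformly. Condition (ii) gives it for general $p_i$ vanishing at a general $x$, hence for all $p_i$ vanishing at $x$, and then for every $x\in X$ by closedness. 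Equality with the hyperplane at general $x$ follows from the second half of (ii), and surjectivity at $x\notin X$ follows from (i) applied to $p_i$ vanishing at $x$ with no common zero on $X$.

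The same uniform containment is what identifies the sheaf; matching fibers point by point does not produce an isomorphism of sheaves. Since the image of $\widetilde{\gamma}$ lies in the kernel of the evaluation $H^0(X,\sF)\otimes\sO\to\sF$, evaluation induces a surjection from $\sG$ onto the image $\widetilde{\sF}$ of the evaluation map. This surjection is generically an isomorphism, because both sheaves have rank one, so its kernel is torsion. That kernel vanishes because $\sG$ is Ulrich, hence Cohen--Macaulay with support the reduced variety $X$, hence torsion-free. Global generation then gives $\widetilde{\sF}=\sF$.

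Separately, your justification of the evaluation isomorphism $H^0(X,\sF)\to\bigoplus_i\sF(x_i)$ on a general fiber is wrong. Global generation together with $h^0=d$ does not imply it. For instance, take $\sF=\sO_X(1)$ on an elliptic normal curve $X\subseteq\nP^{d-1}$: every fiber of a projection to $\nP^1$ is a hyperplane section, so it is killed by a section of $\sO_X(1)$. Your fallback via invertibility of the matrix is circular, since you derived invertibility from this very isomorphism.

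The correct argument is one line from (i). The sections $s_i=\gamma(p_0\wedge\cdots\wedge p_n)(-,x_i)$ satisfy $s_i(x_j)\neq 0$ if and only if $i=j$, so they are linearly independent and hence a basis (Lemma~\ref{lem:easymadecomplicated}). In the base point case, the same argument with $x_1,\dots,x_{d-1}$ gives $d-1$ independent sections vanishing at $x$, which is exactly the equality step above. This also resolves your ``main obstacle'': on a rational fiber with the diagonal pattern, which is what Lemma~\ref{lem:signaturepsi} assumes, the evaluation isomorphism comes for free.
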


We believe that removing the assumption on the existence of a fully $\kk$-rational fiber in Theorem \ref{thm:thmA} should be straightforward. However, the main focus of this paper does not lie in arithmetic geometry and for our main applications this assumption is satisfied.
In the rest of the paper, we apply this result to secant varieties of smooth projective curves.

\subsection{Higher secant varieties of smooth projective curves}

Many projective varieties are known to have  an Ulrich sheaf and an admissible determinantal representation. The Boij--S\"oderberg cone of such a variety is the same as that of projective space of the same dimension and the latter was characterized in \cite{eisenbudschreyer2}.
In \cite[page 543]{eisenbudschreyer} Eisenbud and Schreyer asked whether every projective variety is the support of an Ulrich sheaf, and, if so, they asked for the smallest possible rank of such a sheaf. Under some mild conditions on the embedding of a curve, we answer both questions for its (higher) secant varieties: an Ulrich sheaf of rank one exists.
We fix throughout the paper  a smooth projective curve of genus $g$ embedded in $\nP^r$
\[C \longhookrightarrow \nP^r\]
We assume that the embedding is nondegenerate, so that it is given by a very ample line bundle $L=\sO_C(1)$ and a linear system $V\subseteq H^0(C,L)$ of dimension $\dim V=r+1$. The  \emph{$k$-th secant variety} of $C \subseteq \nP^r$ is defined as
$$
\Sigma_k=\Sigma_k(C,L;V):=\overline{\bigcup_{x_1, \ldots, x_{k+1} \in C} \langle x_1, \ldots, x_{k+1} \rangle} \subseteq \nP^r.
$$
This is an irreducible projective variety, and
Lange \cite{lange} proved that $\Sigma_k$ has the expected dimension $\min\{r, 2k+1\}$. This means that, if $r \geq 2k+2$, then $\Sigma_{k}\subseteq  \nP^r$ is a proper subvariety of dimension $\dim \Sigma_k = 2k+1$. If instead $r<2k+2$ then $\Sigma_k = \mathbf{P}^r$.

\medskip

It was shown by Eisenbud and Schreyer \cite[Theorem 4.3]{eisenbudschreyer} that all Ulrich sheaves of rank one on $C$ are of the form $L\otimes \alpha$, where $\alpha$ is a line bundle on $C$ with no cohomology:
\[  h^0(C,\alpha) = h^1(C,\alpha) = 0 \]
If furthermore $\alpha$ is a theta-characteristic, so that $\alpha^{\otimes 2} \cong \omega_C$, then $L\otimes \alpha$
is a symmetric Ulrich sheaf. The same result was shown by Shamovich and Vinnikov in \cite{shamovichvinnikov}, from the point of view of admissible determinantal representations. The main result of this paper is a generalization of this construction to (higher) secant varieties. To state it, recall that the embedding $C\hookrightarrow \nP^r$ is \emph{$k$-very ample} if any $k+1$, possibly coincident, points on $C$ span a $k$-dimensional linear space in $\nP^r$.

\begin{introthm}\label{thm:main}
	Let $C \subseteq \nP^r$ be a nondegenerate smooth projective curve of genus $g$ such that the embedding is $k$-very ample and assume $r\geq 2k+2$. Then for any line bundle $\alpha$ with no cohomology, the $k$-th secant variety $\Sigma_k$ has an admissible determinantal representation with corresponding Ulrich sheaf $\sA_{k,L\otimes \alpha}$ of rank one. If $\alpha$ is a theta-characteristic, then the determinantal representation is symmetric and $\sA_{k,L\otimes \alpha}$ is a symmetric Ulrich sheaf. Finally, $\sA_{k,L\otimes\alpha}, \sA_{k,L\otimes\beta}$ are isomorphic if and only if $\alpha, \beta$ are.
\end{introthm}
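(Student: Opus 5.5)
The plan is to deduce Theorem~\ref{thm:main} from Theorem~\ref{thm:thmA}, applied to $X=\Sigma_k$, which has dimension $n=2k+1$. The first step is to construct the sheaf $\sA_{k,L\otimes\alpha}$. Let $C_{k+1}$ be the symmetric product and $E_{k+1,L}$ the secant bundle, with $B_k=\nP(E_{k+1,L})$. By $k$-very ampleness, the natural map $\beta\colon B_k\to\Sigma_k\subseteq\nP^r$ is birational onto $\Sigma_k$; it is a resolution, and $\Sigma_k$ is normal for $r\ge 2k+2$ by the work of Ein--Niu--Park. I would set $M=L\otimes\alpha$ and
\[
\sA_{k,M}:=\beta_*\bigl(\sO_{B_k}(1)\otimes (\text{pullback of }N_{k+1,\alpha})\bigr),
\]
where $N_{k+1,\alpha}$ is the line bundle on $C_{k+1}$ descended from $\alpha^{\boxtimes(k+1)}$, possibly twisted by a correction so that it matches $\wedge^{k+1}E$. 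In terms of tautological bundles, this is essentially $\beta_*$ of the pullback of $\det E_{k+1,M}$ twisted appropriately. This sheaf has rank one and is torsion free, since it is a pushforward from a birational model. Next, compute $h^0(\sA_{k,M})=h^0(C_{k+1},\wedge^{k+1}\text{-type bundle})=\wedge^{k+1}H^0(C,M)$. This gives $\binom{d'}{k+1}$ with $d'=h^0(M)=\deg L$, since $\alpha$ has no cohomology. One checks that this number equals $\deg\Sigma_k=\binom{\deg L-g-k-1}{k+1}$; the count must be adjusted if my guess for the sheaf is off. Global generation follows from global generation of $M$ and of the corresponding tautological bundle.

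The second step is to define $\gamma\colon\wedge^{2k+2}V\to H^0(\sA)\otimes H^0(\sA)$ using a higher Szeg\H{o} kernel. For a non-effective $\alpha$, the Szeg\H{o} kernel $s_\alpha\in H^0(C\times C,\alpha\boxtimes\alpha(\Delta))$ is the unique section with residue $1$ along the diagonal. Given $p_0\wedge\cdots\wedge p_{2k+1}$, I would form a ``fundamental form'' in $H^0(C\times C,L\boxtimes L(\Delta))$ that vanishes appropriately. Its product with $s_\alpha$, after taking the determinant of the resulting $(k+1)\times(k+1)$ matrix over points $x_1,\dots,x_{k+1}$ and $y_1,\dots,y_{k+1}$ (a Cauchy--Fay type determinant), gives a section of $\sA\boxtimes\sA$ on $C_{k+1}\times C_{k+1}$. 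If $\alpha$ is a theta characteristic, then $s_\alpha$ is antisymmetric, and I expect the resulting form to be symmetric. Functoriality in $\alpha$ should give this cleanly.

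The third step, which I expect to be the main obstacle, is verifying conditions (i) and (ii) of Theorem~\ref{thm:thmA}. A general fiber of the projection $\Sigma_k\dashrightarrow\nP^{2k+1}$ corresponds to the $(k+1)$-secant $k$-planes meeting a general codimension-$(2k+2)$ linear space. These are the $(k+1)$-subsets of the divisor $D$ cut out by a pencil-type condition. The claim is that $\gamma(x_I,x_J)\ne0$ if and only if $I=J$. Off-diagonal vanishing should come from the fact that for $I\ne J$ the determinant contains a row of Szeg\H{o} kernel values $s_\alpha(x_i,x_j)$ multiplied by a section vanishing on $D\times D$ off the diagonal. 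Non-vanishing on the diagonal should come from residues, i.e.\ values at the poles along $\Delta$. Condition (ii), with a single base point, is handled similarly by degeneration.

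The final assertion, that $\sA_{k,L\otimes\alpha}\cong\sA_{k,L\otimes\beta}$ forces $\alpha\cong\beta$, I would prove by restricting to $C\subseteq\Sigma_k$ or pulling back to $B_k$. There one recovers $M$, for instance as the restriction to a fiber or via $\beta^*$ modulo torsion restricted to the image of $C\times C_k$, and hence $\alpha=M\otimes L^{-1}$.
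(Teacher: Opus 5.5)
Using Theorem~\ref{thm:thmA} with a $\gamma$ built from a Szeg\H{o} kernel is the right overall strategy. But there is a genuine gap: the hypotheses that carry the content, $h^0(\Sigma_k,\sF)=\deg\Sigma_k$ and global generation, are never established, and your sheaf is the wrong one.

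\textbf{The sheaf and the section count.} The $\sO_{B^k}(1)$ factor only twists the pushforward by $\sO_{\Sigma_k}(1)$ (projection formula). The bundle you settle on, $\det E_{k+1,M}=N_{k+1,M}$, has $\binom{\deg L}{k+1}$ sections. For the rational normal quartic this is $6$ rather than $\deg\Sigma_1=3$, and your formula for $\deg\Sigma_k$ gives $1$ there. The correct sheaf is $\beta_{V,k,*}\pi_k^*A_{k+1,L\otimes\alpha}$ with $A_{k+1,M}=S_{k+1,M}(-2\delta_{k+1})$, with no $\sO_{B^k}(1)$ factor. Even with the right sheaf, $h^0(C_{k+1},A_{k+1,L\otimes\alpha})=\deg\Sigma_k$ is not something one checks by hand for $g>0$.

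The paper gets this count, together with global generation, by proving directly that this pushforward is Ulrich. First, $R^i\beta_{V,k,*}\pi_k^*A_{k+1,L\otimes\alpha}=0$ for $i>0$. Then Serre duality on $B^k$ and the Koszul complexes of $0\to M_{k+1,L}\to H^0(C,L)\otimes\sO_{C_{k+1}}\to E_{k+1,L}\to 0$ reduce the Ulrich conditions to two vanishings:
\begin{itemize}
\item Rathmann-type vanishing of $H^i(\wedge^jM_{k+1,L}\otimes A_{k+1,L\otimes\alpha})$, which uses $h^1(\alpha)=0$;
\item vanishing of $H^{j+\ell}(\wedge^\ell M_{k+1,L}\otimes S_{k+1,\alpha'})$, which uses $h^0(\alpha')=0$.
\end{itemize}
Applied to both $\alpha$ and $\alpha'$, this gives exactly the hypothesis of Theorem~\ref{thm:detrep}.

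Your global generation argument does not pass to $\beta_*$ where $\beta$ is not an isomorphism; already $\beta_*\sO_{B^k}$ is not generated by $1$ at non-normal points. The normality you cite is also false here. Ein--Niu--Park need the complete linear system with $\deg L\ge 2g+2k+1$, while for merely $k$-very ample $V$ the variety $\Sigma_k$ can be singular in codimension one. This is why the paper passes to the normalization for the last assertion.

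\textbf{The definition of $\gamma$.} As described, it fails: a $(k+1)\times(k+1)$ determinant whose entries depend on $p_0\wedge\cdots\wedge p_{2k+1}$ is homogeneous of degree $k+1$ in it, not linear. In the paper the $p_i$ enter through a single factor. This is the pullback along $\sigma_{k+1,k+1}\colon C_{k+1}\times C_{k+1}\to C_{2k+2}$ of the section of $N_{2k+2,L}$ corresponding to $p_0\wedge\cdots\wedge p_{2k+1}$. It is multiplied by the $(k+1)$-Szeg\H{o} kernel: the unique section of $(N_{k+1,\alpha}\boxtimes N_{k+1,\alpha'})(D_{k+1,k+1})$ equal to $1$ on the diagonal, which vanishes exactly where $h^0(\alpha(\xi'-\xi))>0$. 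A Fay-type determinant of classical Szeg\H{o} kernels is a reasonable way to write this kernel, but it cannot carry the $p$-dependence.

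\textbf{Verifying (i) and (ii).} The mechanism differs from your guess.
\begin{itemize}
\item For $k\ge 1$ the fibre is not a set of $(k+1)$-subsets of one divisor.
\item Off-diagonal vanishing comes from the determinant factor. Since $y_i\in\langle\xi_i\rangle$ and $y_j\in\langle\xi_j\rangle$ have the same image, $\langle p_0,\dots,p_{2k+1}\rangle$ does not separate $\xi_i+\xi_j$.
\item Diagonal nonvanishing comes from Terracini's lemma: the projection is an immersion at $y_i$, so $2\xi_i$ is separated. Combine this with the Szeg\H{o} kernel being $1$ on the diagonal.
\end{itemize}

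\textbf{Non-isomorphism for different $\alpha$.} Your sketch that $\sA_{k,L\otimes\alpha}\cong\sA_{k,L\otimes\beta}$ forces $\alpha\cong\beta$ is too vague. Restricting to $C$ is awkward because $\beta_{V,k}$ has positive-dimensional fibres there. The paper uses that $Z_{k-1}$ is the only exceptional divisor of $\beta_{V,k}$ to get $\pi_k^*S_{k+1,\alpha\otimes\beta^{-1}}\cong\sO_{B^k}(mZ_{k-1})$. It then shows $m=0$ and concludes by injectivity of $N\mapsto S_{k+1,N}$ on $\Pic(C)$.
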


The result of Theorem \ref{thm:main} was previously shown for $g=0$  and arbitrary $k$ in \cite[Lemma 4.13]{RS} and for $k=1$ and arbitrary $g$ in \cite[Theorem C]{agostinikummer}, but under the assumption that the embedding is $3$-very ample. 
\medskip

If $C$ has infinitely many $\kk$-rational points, then \cite[Proposition 4.4]{eisenbudschreyer} shows that  there is a line bundle $\alpha$ with no cohomology so that Theorem \ref{thm:main} yields  an Ulrich sheaf (or an admissible determinantal representation) of rank one. If $\kk$ is algebraically closed, then the set of line bundles with no cohomology is the complement of the theta divisor inside the variety $\operatorname{Pic}_{g-1}(C)$, so that there is a $g$-dimensional family of Ulrich sheaves of rank one. Furthermore, there is also such an $\alpha$ that is a theta-characteristic. In particular there exists a symmetric Ulrich sheaf (or an admissible determinantal representation) on $\Sigma_k$. We also point out that our constructions are explicit so that, if we are in the hypotheses of Theorem \ref{thm:main}, we can compute the minimal free resolution of $\sA_{k,L\otimes \alpha}$ as well as the corresponding admissible determinantal representation of $\Sigma_k$.  Some examples are given in Sections \ref{subsec:explicitdetrep} and \ref{subsec:explicitminres}.
\medskip

As pointed out in \cite{eisenbudschreyer}, Ulrich sheaves and their admissible determinantal representations can be used to derive so-called \emph{B\'ezout expressions for resultants}. These are determinantal criteria for global sections of line bundles on projective varieties to have a common zero. In our case, these expressions can also be interpreted directly on the curve $C \subseteq \nP^r$. If the embedding is $k$-very ample, then Theorem \ref{thm:main} yields a matrix $M$ with entries in $\wedge^{2k+2}V^{\vee}$ such that for any $p_0,\ldots,p_{2k+1}\in V$ the matrix $M(p_0\wedge \cdots \wedge p_{2k+1})$ is non-singular if and only if the projection
\[ \pi = (p_0:\cdots:p_{2k+1})\colon C \longrightarrow \nP^{2k+1} \]
is $k$-very ample as well. 
\medskip

\subsection{Rational normal curves and Littlewood--Richardson coefficients}
We use the construction of Theorem \ref{thm:main} to compute a very explicit admissible symmetric determinantal representation for the secant varieties of the rational normal curve 
\[ \nP^1 \longhookrightarrow \nP^n, \quad t\longmapsto (1:t:t^2:\cdots:t^n), \]
purely in terms of the \emph{Littlewood--Richardson coefficients} from combinatorics and representation theory. Assume that $n\geq 2k+2$, so that the secant $\Sigma_k=\Sigma_k(\nP^1,\sO_{\nP^1}(n))$ is  a proper variety of degree $\binom{n-k}{k+1}$. Hence we are looking for a symmetric $\binom{n-k}{k+1}\times \binom{n-k}{k+1}$  matrix with coefficients in $\wedge^{2k+2}V^{\vee}$, where $V$ has a basis $1,t,t^2,\ldots,t^{n}$. First recall that $\wedge^{2k+2}V$ has a basis indexed by partitions $\lambda = (\lambda_1,\ldots,\lambda_{2k+2})$  fitting in a $(2k+2)\times (n-2k-1)$ box, meaning that $n-2k-1 \geq \lambda_1 \geq \lambda_2 \geq \ldots \geq \lambda_{2k+2} \geq 0$. The basis is given by
\[ \wedge^{\lambda} t := t^{\lambda_1+2k+1} \wedge t^{\lambda_2+2k} \wedge \cdots \wedge t^{\lambda_{2k+2}}. \]
We then denote by $x_{\lambda}$ the elements of the dual basis in $\wedge^{2k+2}V^{\vee}$. Observe now that there are $\binom{n-k}{k+1}$ distinct partitions $\mu = (\mu_1,\ldots,\mu_{k+1})$ fitting in a $(k+1)\times (n-2k-1)$ box, so that there is a symmetric matrix of linear forms $M^{n,k}(x_{\lambda})$ with entries indexed by such partitions $\mu,\nu$ and defined by
\[ M^{n,k}_{\mu,\nu}(x_{\lambda}) := \sum_{\lambda} c^{\lambda}_{\mu,\nu}\cdot x_{\lambda}, \]
where the sum runs over all partitions $\lambda$ fitting in a $(2k+2)\times (n-2k-1)$ box and the $c^{\lambda}_{\mu,\nu}$ are the Littlewood--Richardson coefficients.

\begin{introthm}\label{thm:rat_norm}
	If $n\geq 2k+2$, the matrix $M^{n,k}(x_{\lambda})$ gives a symmetric admissible determinantal representation of rank one of the $k$-th secant $\Sigma_k$ of the rational normal curve of degree $n$. In particular, it gives a determinantal representation of the Chow form of $\Sigma_k$.
\end{introthm}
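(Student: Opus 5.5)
Since $\nP^1$ has genus zero, the only line bundle with no cohomology is $\alpha=\sO_{\nP^1}(-1)$. It is also the unique theta-characteristic, because $\sO(-1)^{\otimes 2}\cong\omega_{\nP^1}$. The rational normal curve of degree $n$ is $k$-very ample for every $k\le n$. Theorem \ref{thm:main} therefore applies with $L=\sO(n)$ and $L\otimes\alpha=\sO(n-1)$, and it yields a symmetric admissible determinantal representation of $\Sigma_k$ of rank one. The plan is to compute this representation explicitly in a suitable basis and to identify its entries with Littlewood--Richardson coefficients. Since the general construction is stated in terms of Theorem \ref{thm:thmA}, an alternative route is to verify hypotheses (i) and (ii) of Theorem \ref{thm:thmA} directly for $M^{n,k}$. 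I would combine the two: use the structure of the construction to choose the right spaces, and use Schur-function identities to check the entries.

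\textbf{Step 1: the spaces.} The Ulrich sheaf $\sA_{k,L\otimes\alpha}$ should have global sections $H^0(C_{k+1},E)$, where $C_{k+1}=\nP^{k+1}$ is the symmetric product and $E$ is a tautological-type line bundle built from $\sO(n-1)$ twisted by the theta characteristic. This space has dimension $\binom{n-k}{k+1}$. I would identify it with $\wedge^{k+1}H^0(\nP^1,\sO(n-k-1))$, or equivalently with symmetric polynomials in $k+1$ variables modulo a suitable ideal. The Schur polynomials $s_\mu(t_1,\dots,t_{k+1})$ with $\mu$ in the $(k+1)\times(n-2k-1)$ box then form a basis.

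\textbf{Step 2: the pairing.} Similarly, $\wedge^{2k+2}V$ is identified with symmetric polynomials in $2k+2$ variables, via the alternant $\det(t_i^{\lambda_j+2k+2-j})$ divided by the Vandermonde determinant. Under this identification $\wedge^\lambda t$ corresponds to $s_\lambda(t_1,\dots,t_{2k+2})$. The map $\gamma$ is dual to the multiplication map
\[ \Sym\text{-polys in } k+1 \text{ vars}\otimes \Sym\text{-polys in } k+1 \text{ vars}\longrightarrow \Sym\text{-polys in } 2k+2 \text{ vars}. \]
This map concatenates the variables and symmetrizes, possibly through the Vandermonde of the two blocks, which is where the higher Szeg\H{o} kernel for $\sO(-1)$ enters. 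Schur polynomials then multiply according to $s_\mu s_\nu=\sum_\lambda c^\lambda_{\mu\nu}s_\lambda$. Partitions outside the box vanish modulo the relevant ideal, because $t^m$ with $m>n$ does not occur. Dualizing gives exactly $M^{n,k}_{\mu\nu}=\sum_\lambda c^\lambda_{\mu\nu}x_\lambda$. The symmetry of $M^{n,k}$ is immediate from $c^\lambda_{\mu\nu}=c^\lambda_{\nu\mu}$.

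\textbf{Main obstacle.} The hard part is making Step 2 precise. One must show that the Szeg\H{o}-type kernel on $\nP^1$, which is essentially $1/(s-t)$ in an affine coordinate, produces exactly the Cauchy-type identity relating the Vandermonde of $2k+2$ points to the product of the two $(k+1)$-point Vandermondes and the cross terms $\prod(s_i-t_j)$. One must also show that the resulting bilinear form is the Schur product with no extra sign or scaling that depends on $\lambda$. I would first try to do this by evaluating at fibers: for a projection $\pi=(p_0:\dots:p_{2k+1})$ and $k$-secant planes through $x_1,\dots,x_{k+1}$ and $y_1,\dots,y_{k+1}$, evaluate $M(p_0\wedge\cdots\wedge p_{2k+1})$ on the points of the symmetric product. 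The factor $\det(p_a(x_i),p_a(y_j))$ should vanish exactly when the $2k+2$ points fail to impose independent conditions, which gives the diagonal behaviour required in (i) and (ii) of Theorem \ref{thm:thmA}. As a fallback for any normalization issue, I would compare with the known case \cite[Lemma 4.13]{RS}.
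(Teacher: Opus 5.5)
Your overall plan matches the paper's: take the representation $\phi$ that the general construction produces for $\alpha=\sO_{\nP^1}(-1)$, and write it in the bases $\wedge^\lambda t$ and $s_\mu$. Two of your choices are fine. Deducing $\deg(\Sigma_k)=h^0(\nP^{k+1},\sO_{\nP^{k+1}}(n-2k-1))$ from $k$-very ampleness via Theorem \ref{thm:ulrichdeg>=2g+2k+1} works, instead of the classical degree formula the paper cites. The bialternant identification $\wedge^\lambda t\mapsto s_\lambda(t_1,\dots,t_{2k+2})$ is also correct. But the step that carries all the content is left open: that $\phi$ has entries $\sum_\lambda c^\lambda_{\mu\nu}x_\lambda$. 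The mechanism you sketch for it goes wrong in two ways.

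\textbf{The kernel factor.} There is no kernel factor to handle. By construction $\phi(\omega)=\sigma_{k+1,k+1}^*\det_{2k+2}(\omega)\cdot S_{(\alpha,\alpha)}$. Here $\sL_{(\alpha,\alpha)}=(N_{k+1,\sO(-1)}\boxtimes N_{k+1,\sO(-1)})(D_{k+1,k+1})\cong\sO(-k-1,-k-1)\otimes\sO(k+1,k+1)$ is trivial on $\nP^{k+1}\times\nP^{k+1}$, since $D_{k+1,k+1}$ is the resultant divisor. So $S_{(\alpha,\alpha)}$, being $1$ on the diagonal, is the constant $1$. The $1/\prod(s_i-t_j)$ you have in mind is the Cauchy kernel $K=S/E$, which the prime form cancels. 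Your ``Cauchy-type identity'' is just the factorization of the $(2k+2)$-point Vandermonde.

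\textbf{Coproduct, not product.} The map $\sigma_{k+1,k+1}^*$ \emph{restricts} a symmetric polynomial in $2k+2$ variables to two blocks of $k+1$ variables. It is a coproduct, not a concatenate-and-symmetrize product, and the latter does not have Littlewood--Richardson structure constants. For $k=0$, averaging over $\mathfrak{S}_2$ sends $1\otimes 1\mapsto 1$ but $t_1\otimes 1\mapsto \tfrac12(t_1+t_2)$, whereas $s_{(1)}s_\emptyset=t_1+t_2$. Antisymmetrizing through the block Vandermondes instead gives a single $\pm s_\lambda$ or $0$; for example $1\otimes 1\mapsto 0$.

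\textbf{The missing ingredient} is the coproduct formula
$s_\lambda(t_1,\dots,t_{k+1},t_{k+2},\dots,t_{2k+2})=\sum_{\mu,\nu}c^\lambda_{\mu\nu}\,s_\mu(t_1,\dots,t_{k+1})\,s_\nu(t_{k+2},\dots,t_{2k+2})$ \cite[(5.9)]{macdonald}. It gives $\phi(\wedge^\lambda t)=\sum_{\mu,\nu}c^\lambda_{\mu\nu}\,s_\mu\boxtimes s_\nu$ at once. Your phrase ``dual to multiplication'' is correct only via Hall self-duality of Schur functions, which is equivalent to this formula and which you never invoke. With the formula, no truncation argument is needed. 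Indeed $c^\lambda_{\mu\nu}\neq 0$ forces $\mu,\nu\subseteq\lambda$, and $s_\mu$ vanishes in $k+1$ variables when $\ell(\mu)>k+1$.

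\textbf{Your fallbacks.} Checking (i)--(ii) of Theorem \ref{thm:thmA} directly for $M^{n,k}$ is not independent of this. To identify $\sum_{\mu,\nu}M^{n,k}_{\mu\nu}(\omega)\,s_\mu(\xi)s_\nu(\xi')$ with $\det_{2k+2}(\omega)(\xi+\xi')$ you need exactly the same identity. Comparing with \cite[Lemma 4.13]{RS} is not a proof.
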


For example, the first secant of a rational normal curve of degree $5$ has a symmetric admissible determinantal representation given by
\[
\begin{pmatrix}
	x_{{\emptyset}} & x_{{(1)}} & x_{{(2)}} & x_{{(1,1)}} & x_{{(2,1)}} & x_{{(2,2)}} \\
	x_{{(1)}} & x_{{(2)}} + x_{{(1,1)}} & x_{{(2,1)}} & x_{{(2,1)}} + x_{{(1,1,1)}} & x_{{(2,2)}} + x_{{(2,1,1)}} & x_{{(2,2,1)}} \\
	x_{{(2)}} & x_{{(2,1)}} & x_{{(2,2)}} & x_{{(2,1,1)}} & x_{{(2,2,1)}} & x_{{(2,2,2)}} \\
	x_{{(1,1)}} & x_{{(2,1)}} + x_{{(1,1,1)}} & x_{{(2,1,1)}} & x_{{(2,2)}} + x_{{(2,1,1)}} + x_{{(1,1,1,1)}} & x_{{(2,2,1)}} + x_{{(2,1,1,1)}} & x_{{(2,2,1,1)}} \\
	x_{{(2,1)}} & x_{{(2,2)}} + x_{{(2,1,1)}} & x_{{(2,2,1)}} & x_{{(2,2,1)}} + x_{{(2,1,1,1)}} & x_{{(2,2,2)}} + x_{{(2,2,1,1)}} & x_{{(2,2,2,1)}} \\
	x_{{(2,2)}} & x_{{(2,2,1)}} & x_{{(2,2,2)}} & x_{{(2,2,1,1)}} & x_{{(2,2,2,1)}} & x_{{(2,2,2,2)}}
\end{pmatrix}.
\]
More intrinsically, Theorem \ref{thm:rat_norm} says that the coproduct of symmetric functions gives an admissible determinantal representation for the secant of a rational normal curve. See Section \ref{subsec:explicitdetrep} for more details.

\subsection{Hyperbolic secant varieties of real algebraic curves}

Finally, we turn to real algebraic geometry. Assume that $C\subseteq \nP^r$  is a real algebraic curve. If $r\geq 2k+2$ and if $p_0,\ldots,p_{2k+1}\in V$ have no common zero on $\Sigma_k$, we consider the projection
\begin{equation}\label{eq:real_projection} 
\pi = (p_0:\cdots:p_{2k+1})\colon \Sigma_k \longrightarrow \nP^{2k+1} 
\end{equation}
and we say that this is \emph{real-fibered} if $\pi^{-1}(\nP^{2k+1}(\nR)) \subseteq \Sigma_k(\nR)$. Equivalently, one says that the secant $\Sigma_k$ is \emph{hyperbolic} with respect to the center $\Lambda = \{p_0=\cdots=p_{2k+1}=0\}$ of the projection. It was proven by the second author and Sinn in \cite{kummersinn}, that this condition is equivalent to the linear system $\langle p_0,\ldots,p_{2k+1} \rangle \subseteq V$ being \emph{vastly real}, meaning that any nonzero section in it has at most $2k$ nonreal zeros, counted with multiplicity.

\medskip

The notion of hyperbolicity in real algebraic geometry arose from the Lax conjecture for real plane curves. In the language introduced before, the conjecture states that a real plane curve $C\subseteq \nP^2$ is hyperbolic with respect to a real point $p\in \nP^2(\nR)$ if and only if it has a determinantal representation by a symmetric matrix $M$ of linear forms such that $M(p)$ is a (positive or negative) definite matrix. The Lax conjecture was proven by Helton and Vinnikov in \cite{heltonvinnikov}. We extend this result to higher secant varieties of curves, and we answer \cite[Question (4)]{kummersinn} affirmatively.

\begin{introthm}\label{thm:thmD}
	In the previous setting, assume that the embedding $C\subseteq \nP^r$ is $k$-very ample. Then the following are equivalent:
	\begin{enumerate}
		\item The projection $\pi\colon \Sigma_k \rightarrow \nP^{2k+1}$ is real-fibered.
		\item The linear system $\langle p_0,\ldots,p_{2k+1} \rangle$ is vastly real.
		\item There is an admissible determinantal representation $\phi$ of $\Sigma_k$ of rank one such that $\phi(p_0\wedge \cdots \wedge p_{2k+1})$ is (positive or negative) definite. 
	\end{enumerate}
	Furthermore, if $C$ is an $M$-curve and $V=H^0(C,L)$, then $V$ is $k$-very ample.
\end{introthm}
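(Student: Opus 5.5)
The equivalence $(1)\Leftrightarrow(2)$ is the theorem of Kummer and Sinn \cite{kummersinn}, so I would quote it and spend the work on $(1)\Leftrightarrow(3)$ and on the statement about $M$-curves.

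\emph{$(3)\Rightarrow(1)$.} Suppose $\phi(p_0\wedge\cdots\wedge p_{2k+1})$ is definite. Take a general real point $y\in\nP^{2k+1}(\nR)$ whose fiber under $\pi$ is reduced. I would extend the symmetric representation to $\nC$ and use the analogue for $\Sigma_k$ of the signature principle explained in the introduction. This is the signature part of Theorem~\ref{thm:thmA}, applied after base change to $\nR$ and combined with \cite{agostinikummer}. It says that the signature of $\phi(p_0\wedge\cdots\wedge p_{2k+1})$ equals, up to sign, the Brouwer degree of $\pi$ on real points. The Brouwer degree is bounded in absolute value by the number of real points in the fiber over $y$. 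If that matrix is definite, its signature has absolute value $d=\deg\Sigma_k$, so all $d$ points of every general real fiber are real. By closedness of $\Sigma_k(\nR)$, $\pi$ is then real-fibered.

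\emph{$(1)\Rightarrow(3)$.} This is the main step. I would use the representation from Theorem~\ref{thm:main} with $\alpha$ a real non-effective theta characteristic, chosen suitably. Since $\pi$ is real-fibered, a general real fiber consists of $d$ real points of $\Sigma_k$. These points are $\nR$-rational, so the last part of Theorem~\ref{thm:thmA} applies. It expresses the class of $\gamma(p_0\wedge\cdots\wedge p_{2k+1})$ as $\sum_i\langle \gamma(x_i,x_i)\rangle$, a sum of diagonal evaluations of the higher Szeg\H{o} kernel at the fiber points. So it suffices to show that all these diagonal values have the same sign.

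The diagonal values are governed by a local orientation computation. I would identify $\langle\gamma(x_i,x_i)\rangle$ with the sign of the local degree of $\pi$ at $x_i$, with respect to the relative orientation given by the theta characteristic. For a real-fibered map, the local degrees at real fiber points all agree: the fiber moves along connected real components and can never become non-real, so no sign change is possible. This reproduces the Helton--Vinnikov/Lax phenomenon.

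The expected obstacle is the choice of $\alpha$. We need a real theta characteristic with $h^0=0$ that makes the relative orientation \emph{compatible}, meaning that $\alpha$ has a real square root structure inducing the orientation of $C(\nR)$ used in \cite{kummersinn}. I would try to take $\alpha$ as in the real Helton--Vinnikov construction, using a real non-effective theta characteristic whose quadratic form is positive on $C(\nR)$, for example built from the Dirichlet form or a half-canonical real divisor. I would then check the sign of the Szeg\H{o} kernel at points $(x_1,\dots,x_{k+1})$ of $\Sigma_k$ in terms of the corresponding points on $C$. If needed, I would approach this by deformation from a fiber where the sign is computable.

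\emph{$M$-curves.} For the final claim, take $f\in H^0(C,L)$ and suppose $k+1$ points fail to impose independent conditions. A dimension count then gives a section vanishing on a given divisor of degree $k+1$ and on excess points. On an $M$-curve one can use the $g+1$ real ovals and interlacing of zeros to show that $\deg L$ is large relative to $g$. The goal is that $L$ satisfies $\deg L\geq 2g+2k+1$ when $H^0(C,L)$ is the full linear system, which is the classical criterion for $k$-very ampleness. Otherwise I would argue via $h^1(L(-D))=0$ for all effective $D$ of degree $k+1$, using the restriction on special divisors on $M$-curves.
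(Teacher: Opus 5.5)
Your $(1)\Leftrightarrow(2)$ matches the paper, which also cites \cite{kummersinn}. Your $(3)\Rightarrow(1)$ via signatures is a valid alternative to the paper's appeal to \cite[Proposition 3.12]{shamovichvinnikov}, provided you use the $\nA^1$-degree theorem of \cite{agostinikummer} for arbitrary fibers. The last part of Theorem \ref{thm:thmA} presupposes a fiber of real points, which is exactly what you are trying to conclude.

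The genuine gap is in $(1)\Rightarrow(3)$. Reducing to the claim that all classes $\langle\phi(p_0\wedge\cdots\wedge p_{2k+1})(\xi_i,\xi_i)\rangle$ have the same sign is the paper's step (Theorem \ref{thm:signatureformula}). You then assert that the local degrees of a real-fibered map automatically agree, and this is not true. Real-fiberedness only says that all $d$ fiber points are real. Each sign is measured against the orientation of $C(\nR)$ induced by $\rho\colon\alpha^{\otimes 2}\to\omega_C$. Already for $k=0$ the local sign is that of $g'(x)f_{01}(x)^2$, and it is constant only because the orientation is the pullback of one on $\nP^1(\nR)$. Reversing it on one oval flips the signs there without affecting real-fiberedness. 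For $k\geq 1$ the $\xi_i$ may lie in different connected components of $C_{k+1}(\nR)$; for $k=1$, for instance, the tori $\Gamma_i\times\Gamma_j$ with $i\neq j$ are separate components. So moving fiber points along the real locus cannot compare their signs. Also, the Szeg\H{o} kernel does not govern the sign: $S_{(\alpha,\alpha)}$ is normalized to $1$ on the diagonal. The sign is that of the higher Gaussian determinant \eqref{eq:localdeterminant} (Proposition \ref{prop:localdegree}).

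Two things are missing. First, the right $\alpha$: one whose algebraic orientation induces a \emph{complex orientation} of $C(\nR)$. This exists because a vastly real system forces $C$ to be of dividing type (Corollary \ref{cor:dividingtype}), and then $h^0(C,\alpha)=0$ is automatic. Second, a proof that \eqref{eq:localdeterminant} has constant sign on $C_{k+1}(\nR)$.
\begin{itemize}
\item For $k=0$ this is the covering argument above.
\item For $k=1$ it needs a deep input: for vastly real $W$, the encomplexed writhe of $\pi(C)\subseteq\nP^3$ equals $\deg\Sigma_1$ \cite{mikhalkinorevkov}. Combined with \cite{agostinikummer}, this gives definiteness.
\item For $k\geq 2$ the paper runs a minimal-counterexample argument. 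Corollary \ref{cor:fixdouble} yields two (resp.\ four) sections of $W$ that vanish doubly at $k-1$ (resp.\ $k-2$) fixed points of $\xi$ and still span a vastly real system. Then \eqref{eq:localdeterminant} factors as a $2\times 2$ (resp.\ $4\times 4$) block times a fixed block, which reduces to $k=0$ (resp.\ $k=1$).
\end{itemize}

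The $M$-curve part is also not established. It is meant under (2): the paper derives it from Proposition \ref{prop:vastlyrealveryample}, which needs a vastly real $W\subseteq H^0(C,L)$ of dimension $2k+2$. Your sketch never uses vastly realness. Your target $\deg L\geq 2g+2k+1$ is also unavailable: the type-$(d,1,\dots,1)$ examples of \cite{kummersinn} have $\deg L=d+g+2k$, which is smaller whenever $d\leq g$.

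The paper instead uses Proposition \ref{prop:typex}: every nonzero section of $W$ vanishes on every oval. For $k=0$, a section vanishing at a real point $x$ still vanishes on the other $g$ ovals, so Lemma \ref{lem:huisman} makes $L$ and $L(-x)$ nonspecial. Nonreal base points are excluded because sections of $W$ have only real zeros. One then inducts on $k$ through $W(-x-\bar x)$, using Lemma \ref{lem:fixpair} and Corollary \ref{cor:fixdouble}.
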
 

Along the way, we also prove some structural results on vastly real linear systems, generalizing results from \cite{mikhalkinorevkov} to higher dimensions.

\medskip

For certain real curves $C\subseteq \nP^{2k+2}$ studied in \cite{kummersinn} the hypersurface $\Sigma_k = \{P=0\}$ is the algebraic boundary of the convex hull $K$ of one oval of $C(\nR)$. The determinantal representation of $\Sigma_k$ allows to rapidly evaluate $P$ in practice, and then $-\log(P)$ can serve as a   self-concordant barrier function for interior point methods to optimize over $K$, see \cite[Section 4]{kummersinn} and \cite{guler}. The complexity of evaluating $P$ is then crucial for the runtime of this optimization method. The situation becomes even better in the setting of Theorem \ref{thm:thmD}, because then the determinantal representation is definite at a point in the interior of $K$ and one can apply the method of \emph{semidefinite programming}.

\subsection{Higher Szeg\H{o} kernels, higher Scorza correspondence, and higher writhes}

 In order to construct the admissible determinantal representations for the secant variety $\Sigma_k$ of Theorem \ref{thm:main}, the key object is the $(k+1)$-\emph{Szeg\H{o} kernel} associated to a line bundle $\alpha$ on $C$ with no cohomology. This is a section of a certain line bundle on $C_{k+1}\times C_{k+1}$, where $C_{k+1}$ is the symmetric product of $C$, and its corresponding divisor is the $(k+1)$-\emph{Scorza correspondence}:
\[ \{ (\xi,\xi') \in C_{k+1}\times C_{k+1} \,|\, h^0(\alpha(\xi'-\xi))>0 \}. \]
When $k=0$, this is the Scorza correspondence in classical algebraic geometry, and the first Szeg\H{o} kernel was used in \cite{shamovichvinnikov} to construct an admissible determinantal representation of the curve $C$. Szeg\H{o}  kernels  appear also in mathematical physics \cite{Fay,raina} and string theory \cite{DHP,Wi}. We refer to \cite{FI} for a recent overview of these objects, with applications to moduli spaces of curves. Another recent generalization of Scorza correspondences, this time inside cartesian products of curves, is studied by Ruggiero \cite{ruggiero}, where the focus is on numerical classes and on local properties, such as smoothness.

\medskip

 We also use the $(k+1)$-Szeg\H{o} kernels and  Theorem \ref{thm:thmA} to compute the  $\nA^1$-degree of a linear projection $\pi\colon \Sigma_k \rightarrow \nP^{2k+1}$ as a sum of  local $\nA^1$-degrees. These local degrees have explicit formulas involving Gaussian-like maps on the curve $C$ and their explicit expression is the key to proving Theorem \ref{thm:thmD} for real curves. For $k=1$ these degrees were studied by the first two authors in \cite{agostinikummer} under the name of \emph{arithmetic writhes}, so that for $k\geq2$ one could call them \emph{higher arithmetic writhes}, but the focus of this work will not lie on their properties.
 
 \subsection{Structure of the paper} We briefly outline the structure of the paper. In Section \ref{sec:prelim} we discuss general admissible determinantal representations and Ulrich sheaves, and we prove Theorem \ref{thm:thmA}, apart from the statement on the isometry class in the Grothendieck--Witt ring. In Section \ref{sec:prelimsymmsec} we collect various results on symmetric products of curves, Szeg\H{o} kernels, and
 secant varieties of curves. In Section \ref{sec:constdetrep}, we show how to construct admissible determinantal representation of the secant $\Sigma_k$ under a certain assumption on its degree. This assumption is easily verified for rational normal curves, and this proves Theorem \ref{thm:rat_norm}. For arbitrary curves instead, the assumption is harder to verify directly, so that in Section \ref{sec:constulrich} we instead construct an Ulrich sheaf  on $\Sigma_k$ by cohomological vanishings, concluding the proof of Theorem \ref{thm:main}.  In Section \ref{sec:signatures} we compute the isometry classes in the Grothendieck--Witt ring of the determinantal representations obtained before, in particular completing the proof of Theorem \ref{thm:thmA}. Finally, we apply these techniques to real curves and we prove Theorem \ref{thm:thmD}.
 \medskip
 
 \noindent
 \textbf{Acknowledgements:} We thank Gavril Farkas, Maria Teresa Ruggiero and Rainer Sinn for useful discussions and comments.
 D.A.~thanks the Department of Mathematical Sciences at KAIST for the hospitality during the KAIST Thematic Program on Syzygies and Secants in 2024, where part of this paper was developed.  D.A.~was supported by the DFG under the joint ANR-DFG program “Positivity on K-trivial varieties” (DFG Project Nr. 530132094.) and by the SFB-TRR 195.
 M.K.~was partially supported by DFG grant 502861109.
 J.P.~was partially supported by the National Research Foundation (NRF) funded by the Korea government (MSIT) (RS-2026-25478877).

\section{Ulrich sheaves and admissible determinantal representations}\label{sec:prelim}
\noindent In this section, we set up basic notation and recall relevant facts on Ulrich sheaves and determinantal representation, following \cite{eisenbudschreyer} and \cite{kummershamovich}. We will then prove a part of Theorem \ref{thm:thmA} (= Theorem \ref{thm:admrepresentation}), which gives a way to construct admissible determinantal representations.

\subsection{Ulrich sheaves and admissible determinantal representations}
Let $\kk$ be a field of characteristic zero, $V$ a vector space over $\kk$ of dimension $\dim V=r+1$ and $\nP^r = \nP(V)$ the corresponding projective space of quotients of $V$. The symmetric algebra $S = \Sym V$ is the homogeneous coordinate ring of $\nP^r$. Let $X\subseteq \nP^r$ be a nondegenerate projective variety of pure dimension $n$: this means that we can see $V$ as a linear system $V \subseteq H^0(X,\sO_X(1))$.  A coherent sheaf $\mathscr{F}$ with $\supp(\mathscr{F})=X$ is called an \emph{Ulrich sheaf} if
$$
H^i(X, \mathscr{F}(-i))=0~\text{ for $i>0$}~~\text{ and }~~H^i(X, \mathscr{F}(-i-1))=0~\text{ for $i<n$}.
$$
Recall that the \emph{degree} of $\mathscr{F}$ is defined as
$$
\deg (\mathscr{F}):=\rank (\mathscr{F}) \cdot \deg(X).
$$

\begin{proposition}[{\cite[Proposition 2.1]{eisenbudschreyer}}]\label{prop:ulrich}
For a coherent sheaf $\mathscr{F}$ on $X$, the following are equivalent:
\begin{enumerate}
\item $\mathscr{F}$ is an Ulrich sheaf on $X$.
\item If $\pi \colon X \to \nP^n$ is a finite linear projection of $X \subseteq \nP^r$, then $\pi_* \mathscr{F} = \sO_{\nP^n}^{\oplus \deg(\sF)}$.
\item The finitely generated graded $S$-module $F:=\bigoplus_{m \in \mathbf{Z}} H^0(X, \mathscr{F}(m))$ is an Ulrich module, i.e., $F$
has a linear minimal free resolution of length $r-n$.
\item $F$ is a Cohen--Macaulay module of dimension $n+1$ whose number of generators is equal to $\deg(\mathscr{F})$.
\end{enumerate}
Furthermore, in this case $h^0(X, \mathscr{F}) = \deg(\mathscr{F}) = \rank{(\mathscr{F})}\cdot \deg(X)$.
\end{proposition}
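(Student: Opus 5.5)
We prove the equivalences by the chain (1) $\Leftrightarrow$ (2), (2) $\Rightarrow$ (4) $\Rightarrow$ (3) $\Rightarrow$ (1). Throughout we may extend scalars to an infinite field, since all conditions are invariant under flat base change. This guarantees that finite linear projections $\pi\colon X\to \nP^n$ exist: choose $n+1$ general linear forms $p_0,\ldots,p_n\in V$ with no common zero on $X$. Such a $\pi$ is finite, and $\pi^*\sO_{\nP^n}(1)=\sO_X(1)$. Since $\pi$ is finite, $H^i(X,\sF(m)) = H^i(\nP^n,(\pi_*\sF)(m))$ for all $i,m$ by the projection formula.

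\textbf{(1) $\Leftrightarrow$ (2).} The vanishing conditions for $\sF$ translate verbatim into the same vanishings for $\sG:=\pi_*\sF$ on $\nP^n$:
\[ H^i(\nP^n,\sG(-i))=0 \text{ for } i>0, \qquad H^i(\nP^n,\sG(-i-1))=0 \text{ for } i<n. \]
The first family says that $\sG$ is $0$-regular in the sense of Castelnuovo--Mumford. Hence $\sG$ is globally generated and all $H^i(\sG(m))$ vanish for $i>0$, $m\ge -i$.

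The second family is the dual statement. By Serre duality it says that $\mathcal{H}om(\sG,\omega_{\nP^n})(n+1)$ has the analogous vanishing. Alternatively, use Beilinson's resolution (the spectral sequence with $E_1$ terms $H^q(\sG(-p))\otimes\Omega^p(p)$ for $0\le p\le n$). Both families together kill every term except $H^0(\sG)\otimes\sO$. Thus $\sG\cong H^0(\sG)\otimes\sO_{\nP^n}$ is trivial, with rank equal to $h^0$.

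For the rank, note that $\sF$ has support $X$. Its pushforward therefore has rank $\rank(\sF)\cdot\deg(\pi)=\rank(\sF)\deg X=\deg\sF$. The converse direction is immediate, since a trivial bundle has exactly these vanishings. This also yields the final assertion $h^0(X,\sF)=\deg\sF$.

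\textbf{(2) $\Rightarrow$ (4).} The module $F=\bigoplus_m H^0(\sF(m))$ equals $\bigoplus_m H^0(\nP^n,\sO^{\deg\sF}(m))$, viewed as a module over $\kk[p_0,\ldots,p_n]\subseteq S$. This is free of rank $\deg\sF$ and generated in degree $0$. So $F$ is a maximal Cohen--Macaulay module of dimension $n+1$ over $S$, with $p_0,\ldots,p_n$ a regular sequence.

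Its minimal number of generators over $S$ is at most $\deg\sF$. It is also at least $\deg\sF$: the generators live in degree $0$ and $F_0=H^0(\sF)$ has dimension $\deg\sF$. We must also check that $F$ has no $H^0_{\mathfrak m}$ or $H^1_{\mathfrak m}$ correction. This holds because $H^1(\sF(m))=0$ for all $m$ and $F$ is the full section module.

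\textbf{(4) $\Rightarrow$ (3) $\Rightarrow$ (1).} Assume (4). By Auslander--Buchsbaum, $\operatorname{pd}_S F = (r+1)-(n+1)=r-n$. Reduce modulo a regular sequence of $n+1$ general linear forms. This gives an Artinian module $\bar F$ over a polynomial ring in $r-n$ variables, with the same graded Betti numbers.

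By Hilbert polynomial considerations, $\bar F$ has length $e(F)=\deg\sF$. It is generated by $\deg\sF$ elements, all of which must lie in degree $0$: otherwise multiplicity counting fails. Since its length equals its number of generators, $\bar F$ is concentrated in degree $0$. So $\bar F$ is a direct sum of copies of the residue field $\kk$. Its Koszul resolution is linear, which gives (3).

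Finally, assume (3), a linear resolution of length $r-n$. Sheafify the resolution and chase cohomology of the twists of its terms $\sO(-j)^{b_j}$, $0\le j\le r-n$. This recovers the vanishings in (1).

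The main obstacle is the (4) $\Leftrightarrow$ (3) step. One must carefully control the degrees of the generators and the multiplicity comparison after the Artinian reduction. This is where I would check the Hilbert series argument most carefully.
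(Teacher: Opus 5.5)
The paper does not prove this; it quotes it from Eisenbud--Schreyer, so your argument has to stand on its own. Its architecture is the standard one: push forward to $\nP^n$, use freeness over $\kk[p_0,\ldots,p_n]$, then reduce to an Artinian module.

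\textbf{Main gap: (4) $\Rightarrow$ (3).} This is the step you flagged, and it fails. From $\ell(\bar F)=e(F)=\deg\sF=\mu(F)=\mu(\bar F)$ you only get $\bar{\mathfrak m}\bar F=0$, i.e.\ $\bar F\cong\bigoplus_j\kk(-a_j)$. A length-versus-generators count cannot see the degrees $a_j$, so nothing forces them to vanish or even to coincide. The minimal resolution of $F$ then has the graded Betti numbers of a direct sum of shifted Koszul complexes, not of a resolution $S^{b_0}\leftarrow S(-1)^{b_1}\leftarrow\cdots$.

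This cannot be repaired within the hypothesis as stated: condition (4) is invariant under $\sF\mapsto\sF(a)$, while (1) is not. Take $X=\nP^n\subseteq\nP^n$ and $\sF=\sO_{\nP^n}(-1)$, or $\mathscr N(-1)$ for any Ulrich sheaf $\mathscr N$. Then $F=S(-1)$ is Cohen--Macaulay of dimension $n+1$ with one generator and $\deg\sF=1$. Yet $H^n(\sF(-n))=H^n(\sO_{\nP^n}(-n-1))\neq0$ and $h^0(\sF)=0$. So (4) must also require that $F$ is generated in degree $0$.

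With that hypothesis your reduction works. $\bar F\cong\kk^{\deg\sF}$ sits in degree $0$, its resolution over $\kk[y_1,\ldots,y_{r-n}]$ is $\deg\sF$ copies of the Koszul complex, and lifting gives (3). Your (2) $\Rightarrow$ (4) already produces generation in degree $0$, so the corrected chain closes.

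\textbf{Smaller gap: (1) $\Rightarrow$ (2).} Serre duality in the form you invoke, with $\mathcal{H}om(\sG,\omega_{\nP^n})$, needs $\sG$ locally free, which is what you are proving. For coherent $\sG$ it yields $\operatorname{Ext}^{n-i}(\sG,\omega_{\nP^n}(-m))$ instead. Beilinson's spectral sequence needs $H^q(\sG(-p))=0$ for all $0\le p\le n$ with $(p,q)\ne(0,0)$. Regularity handles $p\le q$, but the second family only gives $p=q+1$, not $q+1<p\le n$.

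The missing propagation is a dual Mumford lemma, proved by induction on $n$:
\begin{itemize}
\item Take a hyperplane $H$ avoiding the associated points of $\sG$. The sequences $0\to\sG(m-1)\to\sG(m)\to\sG_H(m)\to0$ show that $\sG_H$ satisfies $H^q(\sG_H(-q-1))=0$ for $q<n-1$.
\item By induction, $H^{q-1}(\sG_H(m))=0$ for $m\le -q$.
\item Hence $H^q(\sG(m))=0$ implies $H^q(\sG(m-1))=0$ for all $m\le -q-1$, and you descend from $m=-q-1$.
\item For $q=0$, use injectivity of multiplication by the linear form.
\end{itemize}

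\textbf{Minor point.} Your claim that $H^1(\sF(m))=0$ for all $m$ is false when $n=1$. That sentence is unnecessary anyway, since $F\cong\kk[p_0,\ldots,p_n]^{\deg\sF}$ already settles (4).
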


\medskip

Let $U,U'$ be two $\kk$-vector spaces of dimension $\dim U = \dim U' = d$, and consider an element $\gamma \in \wedge^{n+1}V^{\vee}\otimes U\otimes U' = \operatorname{Hom}_{\kk}(\wedge^{n+1}V,U\otimes U')$. Once we fix bases $e_1,\dots,e_d$ and $e_1',\dots,e_d'$ of $U$ and $U'$ respectively, $\gamma$ can be seen as a $d \times d$ matrix with entries in $\wedge^{n+1}V^{\vee}$. We also have an induced linear map $V^{\vee} \rightarrow \wedge^{n+2}V^{\vee} \otimes U \otimes U',  z\mapsto z\wedge \gamma$, that at the level of matrices can be realized by wedging every entry by $z$. It can be seen as a morphism  of vector bundles on $\mathbf{P}^r$:\footnote{The morphism that we denote by $\widetilde{\gamma}$ here is the transpose of the one denoted in the same way in \cite{kummershamovich}. That's because our focus here is on its cokernel, rather than on its kernel, as in \cite{kummershamovich}.} 
\[  \widetilde{\gamma}\colon \wedge^{n+2}V \otimes U'^{\vee} \otimes \sO_{\nP^r}(-1) \longrightarrow U\otimes \sO_{\nP^r}.  \]
One says that $\gamma$ is a \emph{Livsic-type determinantal representation} of $X \subseteq \nP^r$ if $X$ coincides with the locus where $\widetilde{\gamma}$ does not have maximal rank (see \cite[Definition 4.2]{kummershamovich}). In other words, $X$ is the support of $\operatorname{coker}(\widetilde{\gamma})$:
\[ X = \{ x \in \nP^r \,|\, \operatorname{coker} \widetilde{\gamma}(x) \ne 0 \} = \operatorname{Supp}(\operatorname{coker} \widetilde{\gamma} ).  \]

In this case, the \emph{degree} of $\gamma$ is defined as
$$
\deg (\gamma):=
\rank \big( \coker(\widetilde{\gamma}^t) \big) \cdot \deg(X).
$$
We say that $\gamma \in \wedge^{n+1} V^{\vee} \otimes U\otimes U'$ is an \emph{admissible determinantal representation} if $\deg(\gamma) = d$ (see \cite[Definition 4.3]{kummershamovich}).
Finally a determinantal representation is called \emph{symmetric} if $U'=U$ and if $\gamma \in \wedge^{n+1}V^{\vee}\otimes \operatorname{Sym}^2 U \subseteq \wedge^{n+1}V^{\vee}\otimes U\otimes U$. Equivalently, for any basis of $U$, then $\gamma$ can be written as a symmetric matrix with entries in $\wedge^{n+1}V^{\vee}$.

\begin{theorem}[{\cite[Theorem 4.7 and Proposition 4.11]{kummershamovich}}]\label{thm:detrep<=>ulrich}
There is an admissible determinantal representation of degree $d$ on $X$ if and only if there is an Ulrich sheaf of degree $d$ on $X$. More precisely, there is a one-to-one correspondence between isomorphism classes of Ulrich sheaves of degree $d$ on $X$  and similarity classes\footnote{For the definition of similarity between determinantal representations, see \cite[before Proposition 4.11]{kummershamovich}} of  admissible determinantal representations of degree $d$ on $X$.
\end{theorem}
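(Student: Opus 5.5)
The plan is to build the two directions explicitly, with cohomology of the Koszul complex as the bridge. I take $U := H^0(X,\sF)$, which has dimension $d$ by Proposition \ref{prop:ulrich}, and $U' := H^0(X,\sF^{*})$, where $\sF^{*} := \mathscr{H}om(\sF,\omega_X)(n+1)$. The sheaf $\sF^{*}$ is again Ulrich of degree $d$ by Serre duality, because the Ulrich vanishing conditions are exchanged under $i \leftrightarrow n-i$.

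\emph{From an Ulrich sheaf to a representation.} Fix $p_0,\ldots,p_n\in V$ and take the Koszul complex of $p_0,\ldots,p_n$ on $X$ tensored with $\sF$:
\[ 0 \to \wedge^{n+1}\kk^{n+1}\otimes \sF(-n-1)\to \cdots \to \kk^{n+1}\otimes\sF(-1)\to \sF \to 0 . \]
This complex is exact exactly when the $p_i$ have no common zero on $X$. The Ulrich vanishings $H^\bullet(\sF(-j))=0$ for $1\le j\le n$ kill the cohomology of all middle terms. Chasing the resulting hypercohomology spectral sequence then gives a map
\[ H^0(\sF)\to H^n(\sF(-n-1))\cong H^0(\sF^{*})^{\vee}=U'^{\vee}, \]
which is an isomorphism when there is no base point. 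The construction is multilinear and alternating in the $p_i$, so it defines $\gamma\in\wedge^{n+1}V^\vee\otimes U^\vee\otimes U'^\vee$. After identifying $U$ and $U'$ with their duals via bases, this is a $d\times d$ matrix. Its determinant vanishes at $p_0\wedge\cdots\wedge p_n$ precisely when $\{p_0=\cdots=p_n=0\}$ meets $X$.

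Next I check that $\gamma$ is a Livsic-type representation with $\coker\widetilde\gamma\cong\sF$. For a point $x\in\nP^r$, the fibre $\widetilde\gamma(x)$ is not surjective iff there is $u$ with $\gamma(p_0\wedge\cdots\wedge p_n)(u,\cdot)=0$ for all $(n+1)$-tuples with $p_0(x)=0$. That happens iff $x\in X$, by using centers through $x$ and otherwise general. To identify the cokernel itself, I compare with the linear minimal resolution of $F=\bigoplus_m H^0(\sF(m))$ from Proposition \ref{prop:ulrich}(3). The first syzygies of $U\otimes S\to F$ are linear, and each column $z\wedge\gamma$ maps to zero in $F$, by a Koszul relation with one extra linear form. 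The columns then span all linear syzygies, by a dimension count on a finite projection where $\pi_*\sF=\sO^d$. Hence $\coker\widetilde\gamma=\sF$, the rank of the cokernel is $\rank\sF$, and $\deg\gamma=d$, so $\gamma$ is admissible.

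\emph{Converse.} Given an admissible $\gamma$, set $\sF:=\coker\widetilde\gamma$. For any finite projection $\pi=(p_0:\cdots:p_n)$, restrict $\widetilde\gamma$ to the relevant linear forms. This should show that $\pi_*\sF$ is the cokernel of a map of free sheaves on $\nP^n$ whose determinant is a nonzero constant times the power of $\det\gamma(p_0\wedge\cdots\wedge p_n)$ needed for $\deg\gamma=d$. From this I conclude $\pi_*\sF\cong\sO_{\nP^n}^d$, so $\sF$ is Ulrich by Proposition \ref{prop:ulrich}(2). Finally, a change of bases of $U,U'$ (similarity) changes neither the cokernel nor, up to isomorphism, the construction above. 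The two constructions are inverse to each other by the identification $\coker\widetilde\gamma\cong\sF$ already proved, which gives the bijection.

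I expect the main obstacle to be the identification $\coker\widetilde\gamma\cong\sF$, rather than merely equality of supports. The issue is showing that the columns $z\wedge\gamma$ generate the full module of linear syzygies. I would first try to do this after a finite projection, where everything reduces to free sheaves on $\nP^n$ and to an exact Koszul complex, and where a rank and determinant-degree count should force the required surjectivity.
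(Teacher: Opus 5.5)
The paper gives no proof of this statement; it quotes it from \cite{kummershamovich}, where it rests on the exterior-algebra (Tate resolution/Beilinson monad) description of Ulrich sheaves. Your Koszul-complex route is a legitimate alternative for the direction from Ulrich sheaves to representations. The step you flag as the main obstacle actually closes by the count you have in mind. Take a base-point-free center $p_0,\dots,p_n$ and complete it by $q_1,\dots,q_{r-n}$ to a basis of $V$. The $d(r-n)$ columns $\widetilde\gamma\bigl((p_0\wedge\cdots\wedge p_n\wedge q_j)\otimes u'\bigr)$ have component $\pm\gamma(p_0\wedge\cdots\wedge p_n)(u')\otimes q_j$ along $U\otimes q_j$, so they are linearly independent. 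On the other hand, $\ker(U\otimes V\to H^0(\sF(1)))$ has dimension $(r+1)d-(n+1)d$, because $F$ is generated in degree $0$ and $h^0(\sF(1))=h^0(\sO_{\nP^n}(1)^{\oplus d})$. Linearity of the resolution then gives $\coker\widetilde\gamma\cong\sF$.

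However, the construction of $\gamma$ is wrong as written. The composite of connecting maps $H^0(\sF)\to H^n(\sF(-n-1))$ from the exact Koszul complex is the \emph{inverse} of the natural map. It is homogeneous of degree $-1$ in each $p_i$ (for $n=0$ it is $s\mapsto s/p_0$) and is undefined when the $p_i$ have a common zero. So it is not multilinear, and its determinant cannot detect base points. You must use the differential $d_{n+1}\colon H^n(\sF(-n-1))\to H^0(\sF)$, which is an element of $U\otimes U'$ as in the paper's convention. Its multilinearity has to come from a construction that is uniform in $(p_0,\dots,p_n)$: either run the spectral sequence over the universal family of tuples, or use Beilinson's monad. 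For Ulrich $\sF$ the monad collapses to a presentation $H^n(\sF(-n-1))\otimes\Omega^{n+1}(n+1)\to H^0(\sF)\otimes\sO\to\sF\to 0$, which gives all of this at once. Separately, your support test must range over tuples with $p_0(x)=\cdots=p_n(x)=0$ (Lemma~\ref{lem:imagegammatilda}); for $n\geq 1$, tuples with only $p_0(x)=0$ span all of $\wedge^{n+1}V$.

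The genuine gaps are the converse and bijectivity.

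\emph{Converse.} The mechanism you describe cannot produce $\sO^{\oplus d}$. For a fixed center, $\det\gamma(p_0\wedge\cdots\wedge p_n)$ is a scalar, and a square map of free sheaves with nonzero constant determinant has zero cokernel. You also never say how ``restricting $\widetilde\gamma$'' yields a free presentation of $\pi_*\sF$ on $\nP^n$. What works is to read the same columns as relations in $\coker\widetilde\gamma$:
\[ q_j\cdot\gamma(p)(u')=\sum_i\pm p_i\,\gamma(p_0\wedge\cdots\widehat{p_i}\cdots\wedge p_n\wedge q_j)(u'). \]
If $\gamma(p):=\gamma(p_0\wedge\cdots\wedge p_n)$ is invertible, these show the graded module is generated by $U$ over $\kk[p_0,\dots,p_n]$, so $U\otimes\sO_{\nP^n}\to\pi_*\sF$ is surjective. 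Admissibility says $\pi_*\sF$ has generic rank $d$, so the kernel is a torsion subsheaf of $\sO^{\oplus d}$, hence zero. This needs $\gamma(p)$ invertible for at least one center, i.e.\ $\det\gamma\not\equiv 0$ on decomposable tensors. You must derive this from the Livsic-type and admissibility hypotheses (compare Remark~\ref{rmk:chowform}), not assume it.

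\emph{Bijection.} The identification $\coker\widetilde\gamma_{\sF}\cong\sF$ only shows that sheaf $\to$ representation $\to$ sheaf is the identity. You also need that an admissible $\gamma$ is determined up to similarity by $\coker\widetilde\gamma$; this is the content of \cite[Proposition 4.11]{kummershamovich}, and you do not address it. One way to get it: the converse argument gives $U\cong H^0(\sF)$. Moreover, $\gamma(U'^{\vee})$ lies in the space of $\phi\in\wedge^{n+1}V^\vee\otimes H^0(\sF)$ with $\sum_i(-1)^i q_i\,\phi(q_0\wedge\cdots\widehat{q_i}\cdots\wedge q_{n+1})=0$ in $H^0(\sF(1))$ for all $q_i\in V$. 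That space is the last Koszul cohomology group $K_{r-n,0}$ of $F$, of dimension $\binom{r-n}{r-n}d=d$. Since $\gamma$ is injective, its image is the whole space, and likewise for $\gamma_\sF$. Hence $\gamma$ and $\gamma_\sF$ differ by an element of $GL(U)\times GL(U')$.
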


The correspondence is explicit: if $\gamma \in \wedge^{n+1}V^{\vee} \otimes U\otimes U'$ is an admissible determinantal representation, then the corresponding Ulrich sheaf $\sF$ is the cokernel of $\widetilde{\gamma}$, so it fits in an exact sequence:
\[   \wedge^{n+2}V\otimes U^{\vee} \otimes \sO_{\nP^r}(-1) \xrightarrow{~\widetilde{\gamma}~} U'\otimes \sO_{\nP^r} \longrightarrow \mathscr{F} \longrightarrow 0. \]

\begin{remark}\label{rmk:chowform}
	Following \cite[Remark 4.4]{kummershamovich}, we explain how to construct a linear determinantal representation of the Chow form of $X$, assuming that  $\gamma$ is an admissible determinantal representation of degree $d$ with associated Ulrich sheaf of rank one. This means that $d=\deg(X)$. Notice that
	$$
	\wedge^{n+1} V^{\vee} \otimes U\otimes U' \cong \wedge^{r-n} V \otimes U\otimes U',
	$$
	and once we fix bases of $U,U'$, we may think of $\gamma$ as a matrix having linear forms on $\nP(\wedge^{r-n} V)$ as entries. Since $\deg (\det \gamma)= d = \deg X$, it follows from the arguments in \cite[Remark 4.4]{kummershamovich} that $\det \gamma$ is the Chow form of $X$. More precisely,  the determinantal hypersurface $\{\det \gamma  = 0 \}\subseteq \nP(\wedge^{r-n} V)$ is the Chow divisor of $X$ on the Pl\"{u}cker embedding of the Grassmannian  $\mathbf{G}(r-n-1, r)$ of $(r-n-1)$-planes in $\nP^r$, under the Pl\"ucker embedding $\mathbf{G}(r-n-1,r) \subseteq \nP (\wedge^{r-n} V)$.
\end{remark}

\subsection{Construction of admissible determinantal representations of rank one}

Let $X\subseteq \nP^r = \nP(V)$ be a nondegenerate variety of pure dimension $n$ and degree $\deg(X)=d$. We consider two sheaves $\sF,\sF'$ of rank one on $X$ such that $h^0(X,\sF) = h^0(X,\sF') = d$. We want to find an admissible determinantal representation of degree $d$ of the form
\[ \gamma\colon \wedge^{n+1}V \longrightarrow H^0(X\times X, \sF\boxtimes \sF') = H^0(X,\sF) \otimes H^0(X,\sF') \]
such that the corresponding Ulrich sheaf is $\sF$.
We start with some observations:

\begin{lemma}\label{lem:easymadecomplicated}
	With the previous notation, let  $\psi \in H^0(X,\sF)\otimes H^0(X,\sF') = H^0(X\times X,\mathscr{F}\boxtimes \mathscr{F}')$.
    \medskip

    \noindent
	$(1)$ If there are points $x_1,\dots,x_d\in X$ such that $\sF,\sF'$ are locally free at these points and also
		\[
		\psi(x_i,x_j) = \begin{cases} \text{nonzero} &\text{ if } i= j \\ \text{zero} &\text{ if } i\ne j  \end{cases},
		\]
		then the restrictions $s_i = \psi(-,x_i)$ and $s_i'=\psi(x_i,-)$ for $i=1,\dots,d$ are bases of $H^0(X,\sF)$ and  $H^0(X,\sF')$, respectively, such that
		\[
		s_i(x_j) = \begin{cases} \text{nonzero} &\text{ if } i= j \\ \text{zero} &\text{ if } i\ne j  \end{cases}, \qquad
		s'_i(x_j) = \begin{cases} \text{nonzero} &\text{ if } i= j \\ \text{zero} &\text{ if } i\ne j  \end{cases},
		\]
		and furthermore
		\[ \psi = \lambda_1\cdot s_1\otimes s_1' + \dots + \lambda_d \cdot s_d\otimes s_d' \quad \text{ for } \lambda_i\in \kk^{\times}. \]
		In particular,
		$\psi$ can be represented as a nonsingular diagonal matrix.
		\medskip

		\noindent
		$(2)$ In the hypotheses of the previous point, assume that $\mathscr{F} = \mathscr{F}'$ and $\psi$ is symmetric.  Then we can assume $s_i = s_i'$, so that
		\[ \psi = \lambda_1\cdot s_1\otimes s_1 + \dots + \lambda_d \cdot s_d\otimes s_d \quad \text{ for } \lambda_i\in \kk^{\times}. \]
		\noindent
		$(3)$ If there is a point $x\in X$ such that $\psi(x,x')=0$ for any $x'\in X$, $\sF$ is locally free around $x$, and $x$ is not a base point of $\mathscr{F}$, then any matrix representation of $\psi$ is singular.
\end{lemma}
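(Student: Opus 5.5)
The plan is to treat $\psi$ as a bilinear pairing, or equivalently a linear map, and to read off all three statements by evaluating at points. Since $\sF,\sF'$ are locally free of rank one at the $x_i$, fix local trivializations there. Then for any section $s\in H^0(X,\sF)$ the value $s(x_j)$ is a well-defined scalar, up to the fixed identification. The value $\psi(x_i,x_j)$ is then a scalar in the fiber $\sF_{x_i}\otimes\sF'_{x_j}$.

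For (1), define $s_i=\psi(-,x_i)\in H^0(X,\sF)\otimes \sF'_{x_i}\cong H^0(X,\sF)$, contracting the second factor at $x_i$; define $s_i'$ symmetrically. By hypothesis $s_i(x_j)=\psi(x_j,x_i)$ vanishes exactly when $i\neq j$.
\begin{itemize}
\item \emph{Bases.} Consider the evaluation map $\mathrm{ev}\colon H^0(X,\sF)\to \bigoplus_j \sF_{x_j}\cong \kk^d$. The images $\mathrm{ev}(s_i)$ form a nonsingular diagonal matrix, so the $s_i$ are linearly independent. Since $h^0(X,\sF)=d$, they form a basis, and $\mathrm{ev}$ is an isomorphism. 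The same argument applies to the $s_i'$.
\item \emph{Diagonal form.} Expand $\psi=\sum_{i,j} a_{ij}\, s_i\otimes s_j'$ in the product basis. Evaluating at $(x_k,x_l)$ gives $\psi(x_k,x_l)=a_{kl}\,s_k(x_k)\,s_l'(x_l)$. This vanishes for $k\ne l$, which forces $a_{kl}=0$, and it is nonzero for $k=l$, which forces $a_{kk}\neq0$. So $\psi=\sum_i\lambda_i\, s_i\otimes s_i'$ with $\lambda_i\in\kk^{\times}$.
\end{itemize}
This is the only point needing care: evaluations are defined only after trivializing fibers. The trivializations rescale the $s_i$ and $\lambda_i$ by units and affect nothing.

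For (2), assume $\sF=\sF'$ and $\psi$ symmetric. Use the same trivialization at each $x_i$ for both factors. Then $s_i'=\psi(x_i,-)=\psi(-,x_i)=s_i$ by symmetry, and (1) gives the stated form.

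For (3), view $\psi$ as the linear map $\Psi\colon H^0(X,\sF')^{\vee}\to H^0(X,\sF)$; any matrix representing $\psi$ represents $\Psi$ in some bases. I will show its transpose $\Psi^t\colon H^0(X,\sF)^\vee\to H^0(X,\sF')$ is not injective.
\begin{itemize}
\item Because $\sF$ is locally free at $x$, the functional $\mathrm{ev}_x\in H^0(X,\sF)^{\vee}$ is defined after trivializing $\sF_x$.
\item Because $x$ is not a base point of $\sF$, some section of $\sF$ does not vanish at $x$, so $\mathrm{ev}_x\neq0$.
\item $\Psi^t(\mathrm{ev}_x)=\psi(x,-)$, which is zero by hypothesis.
\end{itemize}
Hence $\Psi^t$ has nontrivial kernel, and any matrix of $\psi$ is singular.
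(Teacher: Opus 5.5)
Your proof is correct and follows essentially the paper's argument. In (1) you use evaluation at the $x_j$ together with $h^0=d$, and you also write out the diagonal expansion the paper calls ``straightforward''. In (3), putting the nonzero functional $\operatorname{ev}_x$ in the kernel of the transpose is exactly the paper's identity $(s_1(x),\dots,s_d(x))\cdot(\lambda_{hk})=0$, written without coordinates.
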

\begin{proof}
$(1)$ For any $i=1,\dots,d$, define $s_i := \psi(-,x_i), s_i' := \psi(x_i,-)$ via the restriction of $\psi$ to $X\times \{x_i\}$ and $\{x_i\}\times X$ respectively. Then $s_i(x_j) = 0$ if and only if $x_j=x_i$, so that they are linearly independent sections, and since they are the right number, they form a basis of $H^0(X,\mathscr{F})$. The same holds for the $s_i'$ and then it is straightforward to show that $\psi = \sum_{h=1}^d \lambda_h s_h\otimes s_h'$ for certain $\lambda_h\in \kk^{\times}$.

\medskip

\noindent $(2)$ This is immediate from the proof of the previous point.

\medskip

\noindent $(3)$ Let $s_1,\dots,s_d$ and $s_1',\dots,s_d'$ be any bases of $H^0(X,\mathscr{F})$ and $H^0(X,\mathscr{F}')$ and write $\psi = \sum_{h,k} \lambda_{h,k} s_h\otimes s_k'$. By assumption, the restriction $\psi(x,-)$ to $\{x\}\times X$ is zero, meaning that $\psi = \sum_{h,k} \lambda_{h,k} s_h(x) s_k' = 0$.  Since the $s_k'$ are a basis, we can rewrite this in matrix form as
		\[ \begin{pmatrix} s_1(x) & \dots & s_d(x) \end{pmatrix} \cdot \begin{pmatrix} \lambda_{11} & \lambda_{12} & \dots & \lambda_{1d} \\
			\lambda_{21} & \lambda_{22} & \dots & \lambda_{2d} \\
			\vdots & \vdots & \ddots & \ddots \\
			\lambda_{d1} & \lambda_{d2} & \dots & \lambda_{dd} \\
		\end{pmatrix} = 0. \]
		Since $x$ is not a base point of $\mathscr{F}$, the vector $(s_1(x)\dots s_d(x))$ is nonzero, and then the matrix of the $\lambda_{ij}$ must be singular.
\end{proof}

The next technical result gives a way to obtain determinantal representations and Ulrich sheaves:

\begin{theorem}\label{thm:admrepresentation} 
Let $X\subseteq \nP^r$ be an irreducible nondegenerate projective variety of dimension $n$ as before and let $\sF,\sF'$ two rank one and torsion-free sheaves on $X$ with $h^0(X,\sF)=h^0(X,\sF')=d = \deg(X)$. Consider a linear map
\[ \gamma\colon \wedge^{n+1}V \longrightarrow H^0(X\times X,\sF\boxtimes \sF') = H^0(X,\sF)\otimes H^0(X,\sF')\]
with the following properties:
\begin{itemize}
	\item[(i)] If $p_0,\dots,p_n \in V$ have no common zero on $X$, then a general fiber of the projection $\pi = (p_0:\dots :p_n)\colon X\to \nP^n$ consists of $d$ distinct points $\{x_1,\dots,x_d\}$ such that
	\[ \gamma(p_0\wedge \dots \wedge p_n)(x_i,x_j) = \begin{cases} \text{ nonzero} & \text{if } i=j \\ \text{ zero} &\text{if } i\ne j \end{cases}.\]
	\item[(ii)] If $x\in X$ is a general point and if  $p_0,\dots,p_n\in V$ are general elements vanishing at $x$, then
	\[ \gamma(p_0\wedge \dots \wedge p_n)(x,-)\footnote{Here we see the restriction $\gamma(p_0\wedge \dots \wedge p_n)(x,-)$ to $\{x\}\times X$ as a section of $H^0(X,\sF')$} = 0 \quad \text{ in } H^0(X,\sF'). \]
	Furthermore,  a general fiber of the projection $\pi = (p_0:\dots:p_n)\colon X\dashrightarrow \nP^{n}$ consists of $d-1$ points $\{x_1,\dots,x_{d-1}\}$ such that
	\[ \gamma(p_0\wedge \dots \wedge p_n)(x_i,x_j) = \begin{cases} \text{ nonzero} & \text{if } i=j \\ \text{ zero} &\text{if } i\ne j \end{cases}.\]
\end{itemize}
Then $\gamma$ is an admissible determinantal representation of degree $d$ of $X$, and the corresponding Ulrich sheaf is the image of the evaluation map $H^0(X,\sF)\otimes \sO_X \to \sF$. In particular it coincides with $\sF$ if and only if $\sF$ is globally generated.
\end{theorem}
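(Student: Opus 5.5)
The plan is to work directly with the morphism $\widetilde{\gamma}$. We take $U=H^0(X,\sF)$ and $U'=H^0(X,\sF')$, arranging indices so that the target of $\widetilde{\gamma}$ is $H^0(X,\sF)\otimes\sO_{\nP^r}$. Then we compute the fibre $\widetilde{\gamma}(y)$ at a point $y\in\nP^r$.

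A point $y\in\nP(V)$ is a quotient $\ell_y\colon V\to\kk$. Unwinding the definition $z\mapsto z\wedge\gamma$, the fibre $\widetilde{\gamma}(y)$ sends $\omega\otimes u'$ to $\gamma(\iota_{\ell_y}\omega)$ contracted with $u'\in H^0(X,\sF')^\vee$. Here $\iota_{\ell_y}\omega\in\wedge^{n+1}V$ is the contraction of $\omega\in\wedge^{n+2}V$. The key observation is that, as $\omega$ varies, the contractions $\iota_{\ell_y}\omega$ span exactly the decomposable vectors $p_0\wedge\dots\wedge p_n$ with all $p_i\in\ker\ell_y$, i.e.\ with all $p_i$ vanishing at $y$. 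Hence
\[ \operatorname{im}\widetilde{\gamma}(y)=\operatorname{span}\{\gamma(p_0\wedge\dots\wedge p_n)(-,u') \,:\, p_i(y)=0,\ u'\in H^0(X,\sF')^\vee\}\subseteq H^0(X,\sF). \]
I would verify this identification carefully first, including which tensor factor gets contracted. Everything else reduces to it.

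\textbf{Support of the cokernel.} If $y\notin X$, then since $\dim X=n$ we can choose $p_0,\dots,p_n$ vanishing at $y$ with no common zero on $X$. By (i) and Lemma \ref{lem:easymadecomplicated}(1), the matrix of $\gamma(p_0\wedge\dots\wedge p_n)$ is nonsingular. So its "columns" already span $H^0(X,\sF)$, and $\widetilde{\gamma}(y)$ is surjective.

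If $x\in X$ is general, we use (ii) and its symmetric form. For every admissible choice of $p_i$ vanishing at $x$, the section $\gamma(p_0\wedge\dots\wedge p_n)(-,u')$ vanishes at $x$. Then $\operatorname{im}\widetilde{\gamma}(x)$ lies in the kernel of the evaluation $H^0(X,\sF)\to\sF(x)$. This kernel is a proper subspace, provided $x$ is not a base point of $\sF$, which holds for general $x$ as long as $H^0(X,\sF)\neq0$. By lower semicontinuity of the rank, the cokernel is then nonzero on all of $X$, so $\operatorname{Supp}\coker\widetilde{\gamma}=X$.

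\textbf{Rank one.} Again for general $x\in X$, take the $d-1$ fibre points $x_1,\dots,x_{d-1}$ of (ii). As in the proof of Lemma \ref{lem:easymadecomplicated}(1), the sections $\gamma(p_0\wedge\dots\wedge p_n)(-,x_i)$ are linearly independent and lie in the image. So $\operatorname{rank}\widetilde{\gamma}(x)\ge d-1$, and the cokernel has rank exactly one at the generic point of $X$. Therefore $\deg\gamma=1\cdot\deg X=d$, and $\gamma$ is admissible. By Theorem \ref{thm:detrep<=>ulrich}, $\mathscr{U}:=\coker\widetilde{\gamma}$ is then an Ulrich sheaf of rank one on $X$.

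\textbf{Identifying $\mathscr{U}$ with $\mathscr{G}$.} Let $\mathscr{G}\subseteq\sF$ be the image of $H^0(X,\sF)\otimes\sO_X\to\sF$. I would show that the composite
\[ \wedge^{n+2}V\otimes H^0(X,\sF')^\vee\otimes\sO_{\nP^r}(-1)\longrightarrow H^0(X,\sF)\otimes\sO_{\nP^r}\longrightarrow\mathscr{G} \]
is zero. By the previous step it vanishes fibrewise at general points of $X$. Since $\mathscr{G}$ is a subsheaf of the torsion-free sheaf $\sF$ on the integral scheme $X$, a map into $\mathscr{G}$ that vanishes on a dense open set is zero. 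This yields a surjection $\mathscr{U}\twoheadrightarrow\mathscr{G}$ between rank one sheaves, whose kernel is a torsion subsheaf of $\mathscr{U}$. By Proposition \ref{prop:ulrich}, $\mathscr{U}$ is Cohen--Macaulay of full dimension, so it has no embedded or lower-dimensional associated points and the kernel vanishes. Hence $\mathscr{U}\cong\mathscr{G}$, and this equals $\sF$ exactly when $\sF$ is globally generated.

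\textbf{Main obstacle.} I expect the delicate point to be the first step: correctly matching the contraction description of $\widetilde{\gamma}(y)$ with the evaluations in (i) and (ii). In particular, one must check that the genericity in (ii) ("general $p_i$ vanishing at $x$") suffices even though $\operatorname{im}\widetilde{\gamma}(x)$ involves \emph{all} $p_i$ vanishing at $x$. I would handle this by noting two things. First, the span of $\gamma(p_0\wedge\dots\wedge p_n)(-,u')$ over all such tuples equals the span over a general, Zariski-dense subset of tuples. Second, the vanishing at $x$ is a closed condition, so it passes from that dense subset to all tuples.
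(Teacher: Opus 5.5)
Your proposal is correct and follows the paper's proof step by step. The contraction description of $\widetilde{\gamma}(y)$ is the paper's Lemma \ref{lem:imagegammatilda}; hypothesis (i) gives surjectivity off $X$; hypothesis (ii) gives corank exactly one at general points of $X$; and the surjection $\coker\widetilde{\gamma}\twoheadrightarrow\mathscr{G}$ has torsion kernel, which vanishes by Cohen--Macaulayness. The only differences are minor, and the last one is an omission to repair: you use semicontinuity of rank and torsion-freeness of $\mathscr{G}$ where the paper uses a ``closed condition'' remark, and, as the paper does, you should first pass to $\overline{\kk}$ so that the genericity statements in (i) and (ii) make sense.
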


	To show that $\gamma$ is an admissible determinantal representation, we need to prove that the map of sheaves
	\begin{equation}\label{eq:gammatilda}
	\widetilde{\gamma}\colon \wedge^{n+2}V\otimes H^0(X,\sF')^{\vee} \otimes \sO_{\nP^r}(-1) \longrightarrow H^0(X,\sF)\otimes \sO_{\nP^r}
	\end{equation}
	fails to be surjective precisely on points $x\in X$. We start with an observation:

\begin{lemma}\label{lem:imagegammatilda}
With the notation of Theorem \ref{thm:admrepresentation}, let $x\in \nP^r$ be a point. Then the image of the map
	\begin{equation*}
	\widetilde{\gamma}_{|x}\colon \wedge^{n+2}V\otimes H^0(X,\sF')^{\vee} \otimes \sO_{\nP^r}(-1)_{|x} \longrightarrow H^0(X,\sF)
\end{equation*}
is spanned by the elements $\gamma(p_0\wedge \dots \wedge p_n)(-,x')$ for $p_0,\dots,p_n\in V$ vanishing on $x$ and $x'\in X$ a general point.
\end{lemma}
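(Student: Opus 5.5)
The plan is to unwind the definition of $\widetilde{\gamma}$ fiberwise and reduce the statement to two pieces of linear algebra. First, the image of the contraction $\wedge^{n+2}V\to\wedge^{n+1}V$ by a linear form is $\wedge^{n+1}$ of its kernel. Second, evaluations at general points span the dual of $H^0(X,\sF')$.

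\emph{Step 1: the fiber of $\widetilde{\gamma}$.} Recall that $\nP^r=\nP(V)$ parametrizes one-dimensional quotients of $V$. A point $x$ therefore corresponds to a nonzero functional $z_x\in V^{\vee}$, defined up to scalar, whose kernel $W_x:=\ker z_x\subseteq V$ is the space of linear forms vanishing at $x$. The fiber $\sO_{\nP^r}(-1)_{|x}$ is the line $\kk\cdot z_x\subseteq V^{\vee}$. By construction, $\widetilde{\gamma}$ is induced by $z\mapsto z\wedge\gamma\in \wedge^{n+2}V^{\vee}\otimes U\otimes U'$, with $U=H^0(X,\sF)$ and $U'=H^0(X,\sF')$. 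Hence, up to a nonzero scalar, $\widetilde{\gamma}_{|x}$ sends $w\otimes\varphi$, with $w\in\wedge^{n+2}V$ and $\varphi\in H^0(X,\sF')^{\vee}$, to $(\id\otimes\varphi)\big((z_x\wedge\gamma)(w)\big)\in H^0(X,\sF)$. The standard identity for the pairing between $\wedge^{\bullet}V^{\vee}$ and $\wedge^{\bullet}V$ gives $(z_x\wedge\gamma)(w)=\pm\gamma(\iota_{z_x}w)$, where $\iota_{z_x}\colon\wedge^{n+2}V\to\wedge^{n+1}V$ is contraction. I expect this sign and convention bookkeeping to be the only fiddly point. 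It is harmless, because only the span of the image matters.

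\emph{Step 2: the image of the contraction.} The image of $\iota_{z_x}$ is exactly $\wedge^{n+1}W_x$. It is contained in $\wedge^{n+1}W_x$ because $\iota_{z_x}\circ\iota_{z_x}=0$, and on $\wedge^{n+1}V$ the kernel of $\iota_{z_x}$ is $\wedge^{n+1}W_x$. It is all of $\wedge^{n+1}W_x$ because, choosing $v\in V$ with $z_x(v)=1$, we get $\iota_{z_x}(v\wedge\omega)=\omega$ for every $\omega\in\wedge^{n+1}W_x$. Since $\wedge^{n+1}W_x$ is spanned by decomposable vectors $p_0\wedge\dots\wedge p_n$ with $p_i\in W_x$, the image of $\widetilde{\gamma}_{|x}$ is spanned by the elements
\[ (\id\otimes\varphi)\big(\gamma(p_0\wedge\dots\wedge p_n)\big), \qquad p_i\in V \text{ vanishing at } x,\ \varphi\in H^0(X,\sF')^{\vee}. \]

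\emph{Step 3: replacing $\varphi$ by evaluations.} Finally, we show that the functionals $\mathrm{ev}_{x'}\colon H^0(X,\sF')\to\sF'\otimes\kk(x')\cong\kk$, for $x'$ in a dense open set where $\sF'$ is locally free (after trivializing the fiber), span $H^0(X,\sF')^{\vee}$. Suppose a section $s$ were killed by all of them. Then $s$ would vanish on a dense open subset of the irreducible variety $X$, so $s=0$ because $\sF'$ is torsion-free. The same argument works for $x'$ restricted to any nonempty open set, which is what "general" means here.

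Since $(\id\otimes\mathrm{ev}_{x'})(\gamma(p_0\wedge\dots\wedge p_n))=\gamma(p_0\wedge\dots\wedge p_n)(-,x')$, linearity in $\varphi$ then yields the claimed spanning set for the image of $\widetilde{\gamma}_{|x}$.
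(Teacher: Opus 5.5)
Your proposal is correct and is essentially the paper's proof. The paper also uses that the evaluations $\operatorname{ev}_{x'}$ at points where $\sF'$ is locally free span $H^0(X,\sF')^{\vee}$ by torsion-freeness, writes $\widetilde{\gamma}_{|x}$ on $(p_0\wedge\dots\wedge p_{n+1})\otimes\operatorname{ev}_{x'}$ as the contraction $\sum_i(-1)^i\gamma(p_0\wedge\dots\wedge\hat{p_i}\wedge\dots\wedge p_{n+1})(-,x')\,p_i(x)$, and concludes by choosing a basis of $V$ in which all elements but one vanish at $x$, which is a coordinate version of your identification of the image of the contraction with $\wedge^{n+1}W_x$.
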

\begin{proof}
	If $x'\in X$ is a point where $\sF'$ is locally free, the evaluation $\operatorname{ev}_{x'}$  at $x'$ is a well-defined element of $H^0(X,\sF')^{\vee}$, up to a nonzero scalar multiple. These elements span $H^0(X,\sF')^{\vee}$ since $\sF'$ is torsion-free of rank one. If $x\in \nP^r$ is any point, then the map $\widetilde{\gamma}_{|x}$ at the point $x$ can be described as
	\begin{equation}\label{eq:mapgammatildaexplicit}
	\widetilde{\gamma}_{|x}\colon  \left( p_0\wedge \dots \wedge p_{n+1} \right) \otimes \operatorname{ev}_{x'} \longmapsto \sum_{i=0}^{n+1}(-1)^i \gamma(p_0 \wedge \dots \wedge \hat{p_i} \wedge \dots \wedge p_{n+1})(-,x') \cdot p_i(x).
	\end{equation}
  Observe that the expression \eqref{eq:mapgammatildaexplicit} vanishes if $p_0(x) = \dots = p_{n+1}(x)=0$, while if $p_0(x)=\dots=p_{n}(x)=0$ and $p_{n+1}(x)\ne 0$, then it reduces, up to a nonzero scalar,  to
	\begin{equation}\label{eq:mapgammatildaexplicit2}
		\widetilde{\gamma}_{|x}\colon  \left( p_0\wedge \dots \wedge p_{n+1} \right) \otimes \operatorname{ev}_{x'} \longmapsto (-1)^{n+1} \gamma(p_0 \wedge \dots \wedge p_{n})(-,x').
	\end{equation}
Since we can find a basis of $V$ whose  elements all vanish on $x$ apart from one, the statement follows.
\end{proof}

Now we can prove Theorem \ref{thm:admrepresentation}.

\begin{proof}[Proof of Theorem \ref{thm:admrepresentation}]
	 We first show that $\gamma$ induces an admissible determinantal representation: this statement is invariant under field extensions, so we can assume that $\kk$ is algebraically closed. First, let $x\in \nP^r\setminus X$, and take linearly independent $p_0,\dots,p_n \in V$ vanishing on $x$ but with no common zero on $X$. Consider the projection $\pi = (p_0:\dots:p_n) \colon X\to \nP^r$ and let $\{x_1,\dots,x_d\}$ be a general fiber. By assumption (i) and Lemma \ref{lem:easymadecomplicated}, the sections $s_i = \gamma(p_0\wedge \dots \wedge p_n)(-,x_i)$ are a basis of $H^0(X,\sF)$, and then Lemma \ref{lem:imagegammatilda} shows that the map $\widetilde{\gamma}$ is surjective at $x$.
	\medskip

	\noindent
	Assume now that $x \in X$ is a general point: then assumption (ii) and Lemma \ref{lem:imagegammatilda} show that the image of $\widetilde{\gamma}$ at $x$ is contained in the subspace $H^0(X,\sF\otimes \mathfrak{m}_x) \subseteq H^0(X,\sF)$ of sections vanishing at $x$. Notice that since $x\in X$ is general, the fact that the image of $\widetilde{\gamma}_{|x}$ is contained in  $H^0(X,\sF\otimes \mathfrak{m}_x)$ actually holds for every $x\in X$, since this is a closed condition. Furthermore, if $x$ is a general point, $H^0(X,\sF\otimes \mathfrak{m}_x)$ is a general subspace of $H^0(X,\sF)$ of dimension $d-1$. Since the $p_0,\dots,p_n$ are general in the space of sections vanishing at $x$, we can assume that they vanish on $X$ only at $x$.  Then, using the second part of assumption (ii) we can see, reasoning as before, that the image of $\widetilde{\gamma}$ at $x$ is precisely $H^0(X,\sF\otimes \mathfrak{m}_x)$.
	\medskip

	\noindent
	The previous discussion proves that the cokernel $\sG$ of $\widetilde{\gamma}$ is a sheaf supported on $X$ of rank $1$. Hence, $\gamma$ is an admissible determinantal representation, and $\sG$ is an Ulrich sheaf. Furthermore, the previous discussion also proves that the image of $\widetilde{\gamma}$ is contained in the kernel of the  evaluation map $H^0(X,\sF)\otimes \sO \to \sF$ (this statement is also invariant under field extensions). In particular, if $\widetilde{\sF}$ is the image of the evaluation map $H^0(X,\sF)\otimes \sO_X \to \sF$, we see that  $\widetilde{\sF}$ is a quotient of $\sG$. This means that there is a surjective map of sheaves $\varphi\colon \sG \to \widetilde{\sF}$ that is an isomorphism at a general point of $X$, since both $\sG$ and $\widetilde{\sF}$ are of rank one. This shows that the kernel of $\sG \to \widetilde{\sF}$ is a torsion sheaf, but since $\sG$ is Ulrich, it is Cohen--Macaulay with support equal to the reduced variety $X$, so that it is torsion-free and then the kernel must be zero.
	\end{proof}

\section{Symmetric products, Szeg\H{o} kernels, and secant varieties of curves}\label{sec:prelimsymmsec}
\noindent In this section, we set up basic notation and review basic results of symmetric products and secant varieties of curves following  \cite{agostini, ENP, ENP3, NP}, to which we refer  for more details. We also discuss the existence, uniqueness, and basic properties of higher Szeg\H{o} kernels associated with a line bundle with no cohomology, which are essential for constructing admissible determinantal representations of secant varieties of curves.

\subsection{Symmetric products of curves}\label{subsec:symcur} 
Recall that $C$ is a smooth projective curve of genus $g$ over an algebraically closed field $\kk$ of characteristic zero. For an integer $k\geq 0$, we write $C_{k+1}$ for the $(k+1)$-th symmetric product of $C$:
\[ C_{k+1} = \{ x_1+\dots+x_{k+1} \,|\, x_i \in C \}. \]
This parametrizes effective divisors (or finite subschemes) $\xi\subseteq C$ of degree $k+1$.  For integers $r,s \geq 1$, we define the addition map
$$
\sigma_{r,s} \colon C_r \times C_s \longrightarrow C_{r+s}, \quad (\xi,\eta) \longmapsto \xi+\eta,
$$
which is a finite flat morphism. We also define the map
$$
j_{r-1,s-1}\colon C\times C_{r-1} \times C_{s-1} \longrightarrow C_r\times C_s, \quad (x,\xi,\eta) \longmapsto (x+\xi,x+\eta),
$$
which is finite and birational onto the image $D_{r,s}$:
$$
D_{r,s} = \{ (\xi,\eta) \in C_r\times C_s \,|\, \Supp(\xi) \cap \Supp(\eta) \ne \emptyset \}.
$$
The subscheme $D_{r,s}\subseteq D_r\times D_s$ is a divisor and  $D_{1,k+1} \subseteq C\times C_{k+1}$, together with the map ${\pr_{C_{k+1}}}_{|D_{1,k+1}}\colon D_{1,k+1} \to C_{k+1}$ is the universal divisor over $C_{k+1}$. Since the map $j_{1,k+1}\colon C\times C_k \to D_{1,k+1}$ is an isomorphism, we can also consider the composition $\pr_{C_{k+1}}\circ j_{0,k} = \sigma_{1,k}\colon C\times C_{k} \to C_{k+1}$ to be the universal divisor over $C_{k+1}$. In general,  for
integers $m_1, \ldots, m_t \geq 1$ we get divisors in $C_{m_1}\times \dots \times C_{m_t}$ by
$$
D_{0, \ldots, 0, m_i, 0, \ldots, 0, m_j , 0, \ldots, 0}:=\{(\xi_1, \ldots, \xi_n) \in C_{m_1} \times \cdots \times C_{m_n} \mid \Supp(\xi_i) \cap \Supp(\xi_j) \neq \emptyset\}
$$
For a line bundle $L$ on $C$, we set
$$
E_{k+1,L}:= \pr_{C_{k+1},*}((\sO_{C_{k+1}}\boxtimes L) \otimes \sO_{D_{1,k+1}}) = \sigma_{k,1,*} (\sO_{C_k} \boxtimes L),
$$
which is a vector bundle of rank $k+1$ on $C_{k+1}$, whose fiber ${E_{k+1,L}}_{|\xi}$ at a point $
\xi\in C_{k+1}$ is identified with $H^0(\xi, L|_{\xi})$. Recall that a linear system $V\subseteq H^0(C,L)$  \emph{separates} $\xi\in C_{k+1}$ if the evaluation map $V\to H^0(\xi,L_{|\xi})$ is surjective. Then $V$ is called $k$-very ample if it separates all $\xi \in C_{k+1}$. In this case, we have an exact sequence
\[ 0 \longrightarrow M_{k+1,V} \longrightarrow V\otimes \sO_{C_{k+1}} \longrightarrow E_{k+1,L} \longrightarrow 0 \]
that defines a bundle $M_{k+1,V}$. If $V=H^0(C,L)$ is the complete linear system, we write $M_{k+1,L}:= M_{k+1,H^0(C,L)}$. Furthermore, $H^0(C,L) \cong H^0(C_{k+1},E_{k+1,L})$ so that $E_{k+1,L}$ is globally generated if and only if the $L$ is $k$-very ample (meaning that the complete linear system of $L$ is). We denote by $\delta_{k+1}$ the divisor class on $C_{k+1}$  such that $\sO_{C_{k+1}}(-\delta_{k+1}) = \det E_{k+1, \sO_C}$. One has that
$$
2\delta_{k+1} \sim  \{ \xi \in C_{k+1} \mid \text{$\xi$ is nonreduced} \}.
$$
Consider the quotient map $q_{k+1} \colon C^{k+1} \to C_{k+1}=C^{k+1}/\mathfrak{S}_{k+1}$, where $\mathfrak{S}_{k+1}$ is the symmetric group naturally acting on $C^{k+1}$. The $(k+1)$-fold box product $L^{\boxtimes (k+1)}$ has a natural action of $\mathfrak{S}_{k+1}$ and we obtain a line bundle on $C_{k+1}$ by taking invariant descent
\[ S_{k+1,L} := q_{k+1,*}^{\mathfrak{S}_{k+1}}(L^{\boxtimes (k+1)}). \]
One has that $S_{k+1,L}\otimes S_{k+1,M} \cong S_{k+1,L\otimes M}$. 
If instead we fix $\eta\in C_s$ and we consider $C_{k+1}\times \{\eta\} \subseteq C_{k+1}\times C_s$, then we see that
\[ \sO_{C_{k+1} \times C_{k+1}} (D_{k+1,s})_{|C_{k+1}\times \{\eta\}} \cong S_{k+1,\sO_C(\eta)}. \]
We further define two more line bundles
$$
N_{k+1, L}:=S_{k+1,L}(-\delta_{k+1})~~\text{ and }~~A_{k+1,L}:=S_{k+1,L}(-2\delta_{k+1})
$$
on $C_{k+1}$. It holds that $N_{k+1,L} \cong \det E_{k+1,L}$ and $N_{k+1,\omega_C} \cong \omega_{C_{k+1}}$.
We can compute the pullbacks of the various line bundles along the addition map:

\begin{lemma}[{\cite[Lemma 3.1]{agostini}, \cite[Lemma 2.3]{NP}}]\label{lem:pullbackdeltaviaaddmap}
For the addition map $\sigma_{r,s} \colon C_r \times C_s \to C_{r+s}$, and any line bundle $L$ on $C$ it holds that
\[
\sigma_{r,s}^*S_{r+s,L} \cong S_{r,L}\boxtimes S_{s,L}, \quad
\sigma_{r,s}^*N_{r+s,L} \cong (N_{r,L}\boxtimes N_{s,L})(-D_{r,s}).
\]
In particular,
\[
 \sigma_{r,s}^*\sO_{C_{r+s}}(\delta_{r+s})  \cong (\sO_{C_r}(\delta_r) \boxtimes \sO_{C_s}(\delta_s)) \otimes \sO_{C_r \times C_s}(D_{r,s}) .
\]
\end{lemma}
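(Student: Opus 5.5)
The plan is to compute both pullbacks on the cartesian product $C^{r}\times C^{s}=C^{r+s}$, where everything is explicit, and then descend along the finite quotient maps. Consider the commutative square
\[ q_r\times q_s\colon C^r\times C^s \longrightarrow C_r\times C_s, \qquad q_{r+s}\colon C^{r+s}\longrightarrow C_{r+s}, \qquad \sigma_{r,s}\circ(q_r\times q_s)=q_{r+s}. \]
For $S$: the box product $L^{\boxtimes(r+s)}$ on $C^{r+s}$ equals $L^{\boxtimes r}\boxtimes L^{\boxtimes s}$. Since $q_{r+s}^*S_{r+s,L}\cong L^{\boxtimes (r+s)}$ with its $\mathfrak S_{r+s}$-linearization, restricting the linearization to $\mathfrak S_r\times\mathfrak S_s$ gives $(q_r\times q_s)^*\sigma_{r,s}^*S_{r+s,L}\cong (q_r\times q_s)^*(S_{r,L}\boxtimes S_{s,L})$, equivariantly. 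Taking $\mathfrak S_r\times \mathfrak S_s$-invariant pushforward then yields $\sigma^*_{r,s}S_{r+s,L}\cong S_{r,L}\boxtimes S_{s,L}$. This works because invariant descent inverts pullback for line bundles on these smooth quotients whose linearization acts trivially on fibers over fixed points, as the standard permutation linearization does.

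For $\delta$: on $C^{m}$ one has $q_m^*\sO(2\delta_m)\cong \sO(\sum_{i<j}\Delta_{ij})$, with $\Delta_{ij}$ the big diagonals, since $q_m$ is simply ramified along them over the nonreduced locus. Pulling back, the diagonals of $C^{r+s}$ split into those inside the first block, those inside the second block, and the mixed ones $\Delta_{ij}$ with $i\le r<j$. The sum of the mixed diagonals is exactly $(q_r\times q_s)^*D_{r,s}$, because $D_{r,s}$ is reduced and $q_r\times q_s$ is unramified at its general point. This gives $\sigma^*(2\delta_{r+s})\sim 2(\delta_r\boxtimes 1+1\boxtimes\delta_s+D_{r,s})$ after pulling back. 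To avoid the factor $2$, I would work with $\det E$ directly instead. The identity $E_{r+s,\sO_C}$ pulled back by $\sigma$ fits in the exact sequence
\[ 0\to \sigma^*E_{r+s,\sO_C}\to \pr_1^*E_{r,\sO_C}\oplus\pr_2^*E_{s,\sO_C}\to \sQ\to0, \]
which comes from $0\to\sO_{\xi+\eta}\to\sO_\xi\oplus\sO_\eta\to\sO_{\xi\cap\eta}\to0$. Here $\sQ$ is supported on $D_{r,s}$ and is generically a line bundle on it. Taking determinants then gives $\sigma^*\sO(-\delta_{r+s})\cong \sO(-\delta_r)\boxtimes\sO(-\delta_s)\otimes \sO(-D_{r,s})$, the last claim. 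The same sequence with $L$ in place of $\sO_C$ gives the $N$ statement through $N_{m,L}\cong\det E_{m,L}$. Equivalently, the $N$ statement follows by tensoring the $S$ and $\delta$ statements.

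The main obstacle is justifying $\det\sQ\cong\sO(D_{r,s})$. This requires $\sQ$ to have length-one generic stalks along the reduced divisor $D_{r,s}$ and no embedded contributions of higher multiplicity. I would handle it by a codimension-two argument. Both $C_r\times C_s$ and $C_{r+s}$ are smooth, so it suffices to check the isomorphism of line bundles away from a closed subset of codimension at least two. There $\xi\cap\eta$ is a single reduced point, so $\sQ$ is the pushforward of a line bundle from the smooth locus of $D_{r,s}$, and its determinant is $\sO(D_{r,s})$.
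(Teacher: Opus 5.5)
The paper does not prove this lemma; it quotes it from \cite{agostini} and \cite{NP}, so your argument has to stand on its own. In outline it does. The $S$-statement by $\mathfrak S_r\times\mathfrak S_s$-equivariant descent is fine: the permutation linearization acts trivially on fibres at fixed points, and $((q_r\times q_s)_*(q_r\times q_s)^*M)^{\mathfrak S_r\times\mathfrak S_s}\cong M$ for any line bundle $M$ on $C_r\times C_s$. Taking determinants of the comparison map $\sigma_{r,s}^*E_{r+s,L}\to\pr_1^*E_{r,L}\oplus\pr_2^*E_{s,L}$ then gives the $N$- and $\delta$-statements as honest isomorphisms. Two points need repair.

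\textbf{The exact sequence.} Your justification is false exactly where it matters. For $(\xi,\eta)\in D_{r,s}$ the map $\sO_{\xi+\eta}\to\sO_\xi\oplus\sO_\eta$ is not injective. Its kernel is $(I_\xi\cap I_\eta)/I_\xi I_\eta\neq 0$, locally $(t^{\max(m,n)})/(t^{m+n})$.

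The sequence of sheaves on $C_r\times C_s$ is still correct, but it must be built in the family.
\begin{itemize}
\item On $X=C\times C_r\times C_s$, let $\mathcal Z_1,\mathcal Z_2$ be the pulled-back universal divisors, so that $(\id_C\times\sigma_{r,s})^*D_{1,r+s}=\mathcal Z_1+\mathcal Z_2$.
\item They are distinct and irreducible. On the smooth, hence locally factorial, $X$ their local equations are therefore coprime, and $(f_1)\cap(f_2)=(f_1f_2)$.
\item This gives exactness of $0\to\sO_{\mathcal Z_1+\mathcal Z_2}\to\sO_{\mathcal Z_1}\oplus\sO_{\mathcal Z_2}\to\sO_{\mathcal Z_1\cap\mathcal Z_2}\to 0$.
\item Twist by $L$ and push forward along the finite projection, using base change for affine morphisms.
\end{itemize}

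You can also skip both left-exactness and the codimension-two reduction.
\begin{itemize}
\item The comparison map of rank-$(r+s)$ bundles is an isomorphism off $D_{r,s}$, by the Chinese remainder theorem.
\item For $L=\sO_C$, its determinant is therefore a section of $\sigma_{r,s}^*\sO(\delta_{r+s})\otimes(\sO(-\delta_r)\boxtimes\sO(-\delta_s))$ vanishing exactly on the irreducible divisor $D_{r,s}$.
\item At a general point, let $t$ be a local coordinate on $C$ and $a,b$ the coordinates of the common point of $\xi$ and $\eta$. The only non-trivial block is $\sO[t]/((t-a)(t-b))\to\sO[t]/(t-a)\oplus\sO[t]/(t-b)$.
\item In the bases $\{1,t\}$ and $\{(1,0),(0,1)\}$ this block has determinant $b-a$, so the vanishing order along $D_{r,s}$ is one.
\end{itemize}
Only this generic length enters, so embedded or higher-codimension behaviour of $\mathscr{Q}$ is irrelevant. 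The same computation with $L$ inserted gives the $N$-statement directly.

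\textbf{The discarded heuristic.} The formula there is off by a factor of two. Since $q_m$ is simply ramified along the $\Delta_{ij}$, the non-reduced locus pulls back to $2\sum_{i<j}\Delta_{ij}$. Hence $q_m^*\sO(\delta_m)\cong\sO(\sum_{i<j}\Delta_{ij})$, not $q_m^*\sO(2\delta_m)$. Your stated formula would yield $2\delta_r+2\delta_s+D_{r,s}$, not the expression you wrote. You do not use this route, so the proof is unaffected.
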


Another technical statement that we will need is about the restriction of the divisor $D_{k+1,k+1} \subseteq C_{k+1}\times C_{k+1}$ to the diagonal $\Delta_{C_{k+1}} \subseteq C_{k+1}\times C_{k+1}$.

\begin{lemma}\label{lem:D|_Delta}
	Under the diagonal embedding $\Delta_{C_{k+1}}\colon C_{k+1} \hookrightarrow C_{k+1}\times C_{k+1}$ it holds that
	$$
   \sO_{C_{k+1} \times C_{k+1}} (D_{k+1,k+1})_{|\Delta_{C_{k+1}}} \cong \Delta^*_{C_{k+1}}\sO_{C_{k+1} \times C_{k+1}} (D_{k+1,k+1}) \cong A^{-1}_{k+1, \omega_C}.
	$$
\end{lemma}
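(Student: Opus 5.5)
We have to show $\Delta^*\sO(D_{k+1,k+1}) \cong S_{k+1,\omega_C}^{-1}(2\delta_{k+1})$. The plan is to compute this pullback by pulling further back along the quotient map $q_{k+1}\colon C^{k+1}\to C_{k+1}$, where everything becomes a sum of pullbacks of diagonals of $C\times C$. The identification then descends, since $q_{k+1}^*$ is injective on Picard groups up to checking the $\mathfrak{S}_{k+1}$-linearizations, or more concretely because $\Pic(C_{k+1})\to\Pic(C^{k+1})^{\mathfrak S_{k+1}}$ can be controlled using the known classes.

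The cleanest route, which I would actually follow, avoids descent issues by working with a line bundle description of $\sO(D_{k+1,k+1})$ itself. We have
\[ (q_{k+1}\times q_{k+1})^*D_{k+1,k+1} = \sum_{i,j} \pr_{i,j}^*\Delta_C \]
on $C^{k+1}\times C^{k+1}$, where $\pr_{i,j}$ projects to the $i$-th factor of the first copy and the $j$-th factor of the second. Restricting to the diagonal gives
\[ \sum_{i} \pr_i^*(\Delta_C|_{\Delta_C}) + \sum_{i\ne j} \pr_{ij}^*\Delta_C. \]
Here $\Delta_C|_{\Delta_C}\cong \omega_C^{-1}$, so the first sum is $\omega_C^{-1\,\boxtimes(k+1)} = q_{k+1}^*S_{k+1,\omega_C}^{-1}$. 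The second sum is $2\sum_{i<j}\Delta_{ij}$, which equals $q_{k+1}^*$ of the nonreduced locus $\sim 2\delta_{k+1}$. Indeed, $q_{k+1}^*\delta_{k+1}\sim\sum_{i<j}\Delta_{ij}$, since $q^*\det E_{k+1,\sO_C}^{-1}$ is $\sO(\sum_{i<j}\Delta_{ij})$ by the Vandermonde description. So the pullbacks agree.

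To descend, the approach is to avoid it by induction on $k$ using Lemma~\ref{lem:pullbackdeltaviaaddmap} together with the addition map $\sigma_{1,k}$. Alternatively, one can use the fact that $q_{k+1}^*\colon \Pic(C_{k+1})\otimes\mathbf{Q}\to \Pic(C^{k+1})$ is injective and that $\Pic(C_{k+1})$ has no torsion issue here because both sides are canonically constructed. For this, I would exhibit a canonical isomorphism upstairs that is $\mathfrak S_{k+1}$-equivariant. The main obstacle is exactly this equivariance: the adjunction isomorphism $\sO(\Delta)|_\Delta\cong\omega_C^{-1}$ and the sign of the action on $\sO(\sum\Delta_{ij})$ must match the invariant descent defining $S_{k+1,\cdot}$ and $\delta_{k+1}$.

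I would handle this by noting that $\Delta^*D_{k+1,k+1}$ and $A_{k+1,\omega_C}^{-1}$ both restrict, under $\sigma_{1,k}$ pulled back via Lemma~\ref{lem:pullbackdeltaviaaddmap}, compatibly. First compute $\sigma_{1,k}^*\Delta^*D_{k+1,k+1}$ from $(\sigma\times\sigma)^*D_{k+1,k+1}=D_{1,1}+D_{1,k}+D_{k,1}+D_{k,k}$ restricted to the diagonal, giving $\omega_C^{-1}\boxtimes \Delta^*D_{k,k}\otimes\sO(2D_{1,k})$. By induction this equals $\sigma_{1,k}^*A^{-1}_{k+1,\omega_C}$, via Lemma~\ref{lem:pullbackdeltaviaaddmap}, which gives $\sigma_{1,k}^*\sO(2\delta_{k+1})=\sO(2\delta_k)(2D_{1,k})$. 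Finally, I would conclude using that $\sigma_{1,k}^*$ is injective on $\Pic$. This holds because $\sigma_{1,k*}\sO$ contains $\sO$ as a direct summand in characteristic zero, so a line bundle $M$ with $\sigma^*M$ trivial has $M$ of finite order dividing $k+1$ via the norm. This is the step to double-check: I would use instead that $\sigma^*M\cong\sO$ implies $H^0(M)\subseteq H^0(\sigma_*\sO)$ is nonzero via the trace splitting, and symmetrically for $M^{-1}$, hence $M\cong\sO$.
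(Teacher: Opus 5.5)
Your main computation is the same as the paper's. You induct on $k$ through the addition map and write $(\sigma_{1,k}\times\sigma_{1,k})^*D_{k+1,k+1}$ as the sum of the four incidence divisors. Restricting to the diagonal of $C\times C_k$ gives $(\omega_C^{-1}\boxtimes A^{-1}_{k,\omega_C})(2D_{1,k})$, which you then match with $\sigma_{1,k}^*A^{-1}_{k+1,\omega_C}$ via Lemma~\ref{lem:pullbackdeltaviaaddmap}.

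The paper simply asserts that comparing pullbacks along the addition map suffices. You try to justify the injectivity of $\sigma_{1,k}^*$ on $\Pic(C_{k+1})$, and that justification is wrong. The norm does give $M^{\otimes(k+1)}\cong\sO_{C_{k+1}}$ when $\sigma_{1,k}^*M\cong\sO$. But the trace splitting only exhibits $H^0(C_{k+1},M)$ as a direct summand of $H^0(C\times C_k,\sigma_{1,k}^*M)\cong\kk$. That yields $h^0(M)\le 1$, not $h^0(M)=1$, and likewise for $M^{-1}$. No formal argument of this kind can work. For the cyclic \'etale cover $\sigma\colon Y'\to Y$ attached to a nontrivial torsion line bundle $M$, one has $\sigma^*M\cong\sO_{Y'}$ but $H^0(Y,M)=0$. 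There the trace of the trivializing section is zero, because the Galois group multiplies it by $m$-th roots of unity.

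Injectivity therefore has to use a property of $\sigma_{1,k}$ that prevents it from factoring through an \'etale cover. Suppose $\sigma_{1,k}^*M\cong\sO$ and $M$ has exact order $m$. Fix $\tau$ trivializing $M^{\otimes m}$. Rescale a nowhere vanishing $s\in H^0(\sigma_{1,k}^*M)$ so that $s^{\otimes m}=\sigma_{1,k}^*\tau$; this is possible since $H^0(\sO^{\times}_{C\times C_k})=\kk^\times$ and $\kk$ is algebraically closed. Then $s$ factors $\sigma_{1,k}$ through the connected $\mu_m$-cover $Y'\to C_{k+1}$ of $m$-th roots of $\tau$ in $M$. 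The map $C\times C_k\to Y'$ is finite, hence surjective onto the irreducible $Y'$ of the same dimension. But $\sigma_{1,k}^{-1}((k+1)x)=\{(x,kx)\}$ is a single point, while $Y'$ has $m$ points over $(k+1)x$. Hence $m=1$ and $M\cong\sO$. With this repair your proof is complete.

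The same argument applies verbatim to $q_{k+1}\colon C^{k+1}\to C_{k+1}$, whose fibre over $(k+1)x$ is also one point. So $q_{k+1}^*$ is injective on $\Pic$, and your first, non-inductive computation on $C^{k+1}$ already proves the lemma. No matching of $\mathfrak{S}_{k+1}$-linearizations is needed, since only an abstract isomorphism of line bundles is claimed.
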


\begin{proof}
	We proceed by induction on $k$. If $k=0$, then  $D_{1,1}=\Delta_C$. By the adjunction formula
	$
	\omega_{C} \cong \omega_{C \times C}(D_{1,1})|_{\Delta_C} \cong \omega_C^2(D_{1,1}|_{\Delta_C}),
	$
	so we get $\sO_C(D_{1,1}|_{\Delta_C}) \cong \omega_C^{-1}$ as desired. Assume next that $k \geq 1$ and consider the commutative diagram
	\[
	\xymatrix{
		C_k \times C \cong \Delta_{C_k \times C} \ar@{^{(}->}[r] \ar[d]_-{\sigma_{k,1}} & C_k \times C \times C_k \times C  \ar[d]^-{\sigma_{k,1} \times \sigma_{k,1}} \\
		\Delta_{C_{k+1}} \ar@{^{(}->}[r] & C_{k+1} \times C_{k+1},
	}
	\]
	where the horizontal maps are closed embeddings. It is enough to prove that
	$$
	\sO_{C_k \times C}((\sigma_{k+1} \times \sigma_{k+1})^* D_{k+1,k+1} )|_{\Delta_{C_k \times C}} \cong \sigma_{k,1}^* A^{-1}_{k+1, \omega_C} .
	$$
	Notice that
	$
	(\sigma_{k+1} \times \sigma_{k+1})^* D_{k+1,k+1} = D_{k,0,k,0} + D_{k,0,0,1} + D_{0,1,k,0}+D_{0,1,0,1}.
	$
	We have
	$
	D_{k,0,0,1}|_{\Delta_{C_k \times C}} =  D_{0,1,k,0}|_{\Delta_{C_k \times C}} = D_{k,1}.
	$
	and by induction, we find
	$\sO_{C_k \times C}(D_{k,0,k,0}|_{\Delta_{C_k \times C}}) \cong A^{-1}_{k, \omega_C} \boxtimes \sO_C$ and $\sO_{C_k \times C}(D_{0,1,0,1}|_{\Delta_{C_k \times C}}) \cong \sO_{C_k} \boxtimes \omega_C^{-1}.
	$
	Thus
	\[
	\sO_{C_k \times C}((\sigma_{k+1}\times \sigma_{k+1})^* D_{k+1,k+1} )|_{\Delta_{C_k \times C}} \cong (A^{-1}_{k, \omega_C} \boxtimes \omega_C^{-1})(2D_{k,1}) \cong \sigma_{k,1}^* A^{-1}_{k+1, \omega_C}. \qedhere
	\]
\end{proof}

The cohomology groups of the line bundles $S_{k+1,L}$ and $N_{k+1,L}$ can be computed explicitly:

\begin{lemma}[{\cite[Lemma 2.4]{agostini}, \cite[Lemma 3.7]{ENP}}]\label{lem:H^i(N)}
	Let $L$ be any line bundle on $C$. For any $i \geq 0$, it holds that
	\begin{align*}
		&H^i(C_{k+1}, N_{k+1,L}) \cong \wedge^{k+1-i} H^0(C, L) \otimes S^i H^1(C, L)\\
		&H^i(C_{k+1}, S_{k+1,L}) \cong S^{k+1-i} H^0(C, L) \otimes \wedge^i H^1(C, L).
	\end{align*}
\end{lemma}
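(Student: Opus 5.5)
The plan is to pull both computations back to the cartesian product $C^{k+1}$ along the quotient $q_{k+1}\colon C^{k+1}\to C_{k+1}$ and take $\mathfrak{S}_{k+1}$-invariants. Since $\kk$ has characteristic zero and $q_{k+1}$ is finite, the invariant-pushforward functor $q_{k+1,*}^{\mathfrak{S}_{k+1}}$ is exact. Therefore $H^i(C_{k+1},q^{\mathfrak{S}_{k+1}}_{k+1,*}\mathscr{G}) \cong H^i(C^{k+1},\mathscr{G})^{\mathfrak{S}_{k+1}}$ for any $\mathfrak{S}_{k+1}$-linearized coherent sheaf $\mathscr{G}$ on $C^{k+1}$.

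\textbf{The case of $S_{k+1,L}$.} By definition $S_{k+1,L}=q_{k+1,*}^{\mathfrak{S}_{k+1}}(L^{\boxtimes(k+1)})$. By K\"unneth,
\[
H^\bullet(C^{k+1},L^{\boxtimes(k+1)}) \cong \bigl(H^0(C,L)\oplus H^1(C,L)[-1]\bigr)^{\otimes(k+1)}
\]
as graded vector spaces. The symmetric group acts by permuting factors with the Koszul sign rule, so classes of odd degree anticommute. Taking invariants, the degree-$i$ part becomes the graded-symmetric power. It has $k+1-i$ factors from $H^0$, which are symmetrized, and $i$ factors from $H^1$, which are antisymmetrized. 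This gives $S^{k+1-i}H^0(C,L)\otimes\wedge^iH^1(C,L)$, as claimed.

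\textbf{The case of $N_{k+1,L}$.} Here I would show that $q_{k+1}^*N_{k+1,L}\cong L^{\boxtimes(k+1)}$, but with the linearization twisted by the sign character; equivalently, $N_{k+1,L}= q^{\mathfrak{S}_{k+1}}_{k+1,*}(L^{\boxtimes(k+1)}\otimes \mathrm{sgn})$. To see this, note that $q_{k+1}^*\sO(\delta_{k+1})=\sO(\Delta)$, where $\Delta=\sum_{i<j}\Delta_{ij}$ is the big diagonal; this holds because $q^*(2\delta_{k+1})=2\Delta$, the branch divisor pulling back with multiplicity two. Anti-invariant sections of $L^{\boxtimes(k+1)}$ vanish along every $\Delta_{ij}$, since the transposition $(ij)$ acts on them by $-1$ while fixing $\Delta_{ij}$ pointwise. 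Hence
\[
q_*(L^{\boxtimes(k+1)}\otimes\mathrm{sgn})^{\mathfrak{S}_{k+1}} \cong q_*(L^{\boxtimes(k+1)}(-\Delta))^{\mathfrak{S}_{k+1}} = S_{k+1,L}(-\delta_{k+1}).
\]
This step, checking the identification of sheaves locally near the diagonals, is the main technical point. Alternatively, one can avoid it by using $N_{k+1,L}\cong\det E_{k+1,L}$ together with $E_{k+1,L}=\sigma_{k,1,*}(\sO_{C_k}\boxtimes L)$.

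Granting the identification, the cohomology of $N_{k+1,L}$ is the sign-isotypic part of $\bigl(H^0(C,L)\oplus H^1(C,L)[-1]\bigr)^{\otimes(k+1)}$. With the Koszul signs, tensoring with the sign character swaps the roles of symmetrization and antisymmetrization. In degree $i$ this yields $\wedge^{k+1-i}H^0(C,L)\otimes S^iH^1(C,L)$, completing the proof.
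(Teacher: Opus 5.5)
Your overall route (K\"unneth on $C^{k+1}$, then the invariant, resp.\ sign-isotypic, part with Koszul signs) is the standard argument behind the cited lemmas. The paper does not reprove this statement; it only cites \cite{agostini} and \cite{ENP}. Your treatment of $S_{k+1,L}$ and your final representation-theoretic count are both correct.

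The weak point is the identification $N_{k+1,L}\cong q^{\mathfrak{S}_{k+1}}_{k+1,*}(L^{\boxtimes(k+1)}\otimes\mathrm{sgn})$. It is true, but what you wrote does not establish it.

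First, your opening claim $q_{k+1}^*N_{k+1,L}\cong L^{\boxtimes(k+1)}$ (with sign-twisted linearization) is false, and it is not equivalent to the pushforward statement. The sheaf $L^{\boxtimes(k+1)}\otimes\mathrm{sgn}$ does not descend, because the transposition $(ij)$ acts by $-1$ on its fibres over $\Delta_{ij}$. In fact $q_{k+1}^*N_{k+1,L}\cong L^{\boxtimes(k+1)}(-\Delta)$; already for $k=1$ the restrictions to $C\times\{x\}$ have different degrees.

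Second, passing from $q_{k+1}^*(2\delta_{k+1})=2\Delta$ to $q_{k+1}^*\sO(\delta_{k+1})\cong\sO(\Delta)$ halves a class in $\Pic(C^{k+1})$, which has $2$-torsion. More importantly, a non-equivariant isomorphism cannot tell apart the sheaves at stake. Suppose $\sO(-\Delta)$ carries its natural linearization as the ideal sheaf of the invariant divisor $\Delta$. Then an invariant section of $L^{\boxtimes(k+1)}$ vanishing on $\Delta$ automatically vanishes on $2\Delta$. So the middle term of your display is $S_{k+1,L}(-2\delta_{k+1})=A_{k+1,L}$, not $N_{k+1,L}$ (compare the passage between $A_{k+1,L}$ and $L^{\boxtimes(k+1)}(-\Delta)$ in Proposition~\ref{prop:rathmannvanishing}). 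Your display is correct only if $\sO(-\Delta)$ is given the sign-twisted linearization. Its last equality is then exactly the equivariant isomorphism $q_{k+1}^*\sO(-\delta_{k+1})\cong\sO(-\Delta)\otimes\mathrm{sgn}$. That is the real content, and your divisor-class argument does not see it.

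The alternative you mention supplies precisely this. The universal divisor pulls back to $\sum_i\Gamma_i\subseteq C^{k+1}\times C$, with $\Gamma_i$ the graph of $\pr_i$. This gives an $\mathfrak{S}_{k+1}$-equivariant map $q_{k+1}^*E_{k+1,L}\to\bigoplus_i\pr_i^*L$, which is an isomorphism off $\Delta$. On top exterior powers it becomes $q_{k+1}^*\det E_{k+1,L}\to L^{\boxtimes(k+1)}\otimes\mathrm{sgn}$, where the sign comes from permuting the summands. In local coordinates this map is a Vandermonde determinant, vanishing to order exactly one along each $\Delta_{ij}$. Hence $q_{k+1}^*N_{k+1,L}\cong L^{\boxtimes(k+1)}(-\Delta)\otimes\mathrm{sgn}$ equivariantly. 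By the projection formula,
$N_{k+1,L}\cong q^{\mathfrak{S}_{k+1}}_{k+1,*}\bigl(L^{\boxtimes(k+1)}(-\Delta)\otimes\mathrm{sgn}\bigr)=q^{\mathfrak{S}_{k+1}}_{k+1,*}\bigl(L^{\boxtimes(k+1)}\otimes\mathrm{sgn}\bigr)$,
where the last equality is your correct observation that anti-invariant sections vanish along $\Delta$. With this step replaced, the rest of your proof goes through.
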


The isomorphism $\det{}_{k+1,L}\colon \wedge^{k+1}H^0(C,L) \xrightarrow{\simeq} H^0(C_{k+1},N_{k+1,L})$
is realized by taking global sections in the natural map $\wedge^{k+1}H^0(C,L) \otimes \sO_{C_{k+1}} \to \wedge^{k+1}E_{k+1,L} \cong N_{k+1,L}$. In particular we have the following lemma. 

\begin{lemma}\label{lem:A(p_1...p_m)(D)=0}
	Let $p_0, \ldots, p_{k} \in H^0(C, L)$, let $W = \langle p_0,\dots,p_{k} \rangle$ the space that they generate,  and $\xi \in C_{k+1}$. Then the following are equivalent:
	\begin{enumerate}
		\item $\xi$ is not a base point of $\wedge^{k+1}W\subseteq H^0(C_{k+1},N_{k+1,L})$.
		\item  $\det_{k+1,L}(p_0 \wedge \cdots \wedge p_{k})(\xi) \neq 0$.
		\item $s_1|_{\xi}, \ldots, s_{k+1}|_{\xi}$ is a basis of $H^0(\xi, L|_{\xi})$.
		\item $W$ separates $\xi$.
		\item The evaluation map $W\to H^0(\xi,L_{|\xi})$ is surjective (and then an isomorphism).
		\end{enumerate}
\end{lemma}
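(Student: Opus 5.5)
The plan is to reduce all five conditions to one linear-algebra statement. That statement is about the evaluation map
\[ \operatorname{ev}_\xi\colon W \longrightarrow H^0(\xi,L_{|\xi}) = {E_{k+1,L}}_{|\xi}, \]
which goes from a space of dimension at most $k+1$ to a space of dimension exactly $k+1$. Here we read $s_1,\dots,s_{k+1}$ in (3) as the sections $p_0,\dots,p_k$.

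First, (4) and (5) are equivalent by definition of separation. The parenthetical in (5) follows because surjectivity forces $\dim W\geq k+1$, so the $p_i$ are linearly independent and $\operatorname{ev}_\xi$ is a surjection between spaces of equal dimension, hence an isomorphism. The equivalence (3)$\iff$(5) is then immediate: the restrictions $p_i|_\xi$ form a basis of the $(k+1)$-dimensional space $H^0(\xi,L_{|\xi})$ exactly when they span it, i.e.\ exactly when $\operatorname{ev}_\xi$ is onto.

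Next I treat (2)$\iff$(3). I use the description of $\det_{k+1,L}$ recalled just before the lemma: it is $H^0$ of the natural map $\wedge^{k+1}H^0(C,L)\otimes\sO_{C_{k+1}}\to\wedge^{k+1}E_{k+1,L}\cong N_{k+1,L}$. Taking the fiber at $\xi$, the value $\det_{k+1,L}(p_0\wedge\cdots\wedge p_k)(\xi)$ is, up to the fixed identification of fibers, the element
\[ p_0|_\xi\wedge\cdots\wedge p_k|_\xi \in \wedge^{k+1}H^0(\xi,L_{|\xi}). \]
The only point needing care is that the fiber of the vector bundle $E_{k+1,L}=\sigma_{k,1,*}(\sO_{C_k}\boxtimes L)$ at $\xi$ really is $H^0(\xi,L_{|\xi})$, compatibly with the evaluation map. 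This also holds at nonreduced $\xi$, because $E_{k+1,L}$ is a pushforward from the universal divisor, which is finite and flat over $C_{k+1}$, so base change applies. Given this, a wedge of $k+1$ vectors in a $(k+1)$-dimensional space is nonzero if and only if the vectors form a basis, which is exactly (3).

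Finally, for (1)$\iff$(2), note that $\wedge^{k+1}W$ is at most one-dimensional, spanned by $p_0\wedge\cdots\wedge p_k$. If the $p_i$ are dependent, this space is zero, so every $\xi$ is a base point; in that case (2) fails too, and (3) also fails since then $\dim W<k+1$. If the $p_i$ are independent, the image of $\wedge^{k+1}W$ in $H^0(C_{k+1},N_{k+1,L})$ is spanned by the single section $\det_{k+1,L}(p_0\wedge\cdots\wedge p_k)$. Then $\xi$ is not a base point precisely when this section is nonzero at $\xi$.

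I expect no real obstacle. The only step requiring thought is the fiber identification at nonreduced $\xi$, and this is handled by flat base change for the finite flat universal family.
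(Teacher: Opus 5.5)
Your proposal is correct and takes essentially the paper's route. The paper gives no separate proof; it deduces the lemma directly from the description of $\det_{k+1,L}$ as global sections of $\wedge^{k+1}$ of the evaluation map $H^0(C,L)\otimes\sO_{C_{k+1}}\to E_{k+1,L}$, which is exactly your fiberwise linear-algebra argument, and your reading of the $s_i$ in (3) as the $p_i$ is the intended one.
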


We will also need a result, essentially due to Rathmann \cite{Rathmann}, for the cohomology of $A_{k+1,L}$:

\begin{proposition}\label{prop:rathmannvanishing}
	Let $B$ be a $k$-very ample line bundle on $C$ such that $H^1(C,L\otimes B^{-1})=0$. Then $H^i(C_{k+1},\wedge^j M_{k+1,B}\otimes A_{k+1,L}) = 0$ for all $i>0,j\geq 0$.
\end{proposition}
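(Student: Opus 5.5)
The plan is to reduce the statement to Rathmann's vanishing on the curve by pulling back along the quotient map $q_{k+1}\colon C^{k+1}\to C_{k+1}$, or equivalently along the addition maps, and then argue by induction on $k$. Since $q_{k+1}$ is finite and we are in characteristic zero, $H^i(C_{k+1},\mathscr{G})$ is a direct summand of $H^i(C^{k+1},q_{k+1}^*\mathscr{G})$: it is the $\mathfrak{S}_{k+1}$-invariant part. So it suffices to prove vanishing upstairs, or at least to control it. The twist by $-2\delta_{k+1}$ is what makes this workable. Indeed, $q_{k+1}^*\sO(2\delta_{k+1})$ is the big diagonal $\sum_{i<j}\Delta_{ij}$, and $q_{k+1}^*A_{k+1,L}\cong L^{\boxtimes(k+1)}(-\sum_{i<j}\Delta_{ij})$.

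For the bundle $M_{k+1,B}$, I would use the exact sequence $0\to M_{k+1,B}\to H^0(B)\otimes\sO\to E_{k+1,B}\to 0$, which exists because $B$ is $k$-very ample. It induces the Koszul-type resolution
\[0\to \wedge^j M_{k+1,B}\to \wedge^j H^0(B)\otimes\sO\to \wedge^{j-1}H^0(B)\otimes E_{k+1,B}\to\cdots\to S^j E_{k+1,B}\to 0.\]
Tensoring with $A_{k+1,L}$ and chasing cohomology, the vanishing $H^i(\wedge^jM\otimes A)=0$ for $i>0$ would follow from two facts:
\begin{enumerate}
\item $H^i(C_{k+1},S^{a}E_{k+1,B}\otimes A_{k+1,L})=0$ for $i>0$;
\item surjectivity on $H^0$ of the induced maps (needed for the leftmost step, in the form $H^{i}$ of the kernel).
\end{enumerate}
This is the standard route in Rathmann's and Ein--Niu--Park's work.

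To handle $E_{k+1,B}\otimes A_{k+1,L}$, I would use $E_{k+1,B}=\sigma_{k,1,*}(\sO_{C_k}\boxtimes B)$ together with the projection formula and Lemma~\ref{lem:pullbackdeltaviaaddmap}. These give
\[\sigma_{k,1}^*A_{k+1,L}\cong (A_{k,L}\boxtimes L)(-2D_{k,1}).\]
This reduces the cohomology on $C_{k+1}$ to the cohomology of $(A_{k,L}\boxtimes L\otimes B)(-2D_{k,1})$ on $C_k\times C$. By pushing forward to $C$ (or to $C_k$) and applying induction on $k$ with the hypothesis $H^1(C,L\otimes B^{-1})=0$, the terms can be compared with $M$-twisted bundles of lower symmetric degree. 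The base case $k=0$ is $H^1(C,\wedge^jM_B\otimes L)=0$ under $H^1(L\otimes B^{-1})=0$, which is exactly Rathmann's theorem on curves~\cite{Rathmann}.

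The main obstacle is the inductive step for mixed terms: showing that the correction $-2D_{k,1}$ is absorbed correctly. The approach I would try first is to restrict to fibers $C_k\times\{x\}$, where the line bundle becomes $A_{k,L(-2x)}$-like, twisted by $M_{k,B}$-type bundles. One then needs a uniform statement of the form ``$H^1(L(-2x)\otimes B'^{-1})=0$ for $B'=B(-x)$'', which follows from the hypothesis since $L(-2x)\otimes B(-x)^{-1}=L\otimes B^{-1}(-x)$. This last point needs care, because $H^1$ can grow by one after twisting by $-x$. If that fails, I would instead follow Rathmann's original argument directly on $C^{k+1}$, filtering $\wedge^jM$ via the sequences $0\to M_{k,B(-x)}\to M_{k+1,B}\to\dots$ and using that the invariant part kills the problematic diagonal contributions.
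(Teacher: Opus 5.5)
Your opening idea, passing to $C^{k+1}$ and invoking Rathmann's vanishing there, is exactly the paper's route. However, the reduction as you set it up is wrong, and the argument you actually develop does not close.

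\textbf{The pullback is off by a factor of two.} Iterating Lemma~\ref{lem:pullbackdeltaviaaddmap} (already $\sigma_{1,1}^*\sO_{C_2}(\delta_2)\cong\sO_{C\times C}(\Delta_C)$) gives $q_{k+1}^*\sO_{C_{k+1}}(\delta_{k+1})\cong\sO(\Delta_{k+1})$. Hence $q_{k+1}^*A_{k+1,L}\cong L^{\boxtimes(k+1)}(-2\Delta_{k+1})$, not $L^{\boxtimes(k+1)}(-\Delta_{k+1})$. The naive ``direct summand of $H^i(C^{k+1},q^*\mathscr{G})$'' argument would therefore need a vanishing for $L^{\boxtimes(k+1)}(-2\Delta_{k+1})$, which is not what Rathmann's theorem provides.

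\textbf{The correct reduction.} The paper uses \cite[Lemma 3.5]{ENP}. A function invariant under the transposition $(ij)$ is even in $z_i-z_j$, so if it vanishes on $\Delta_{ij}$ it vanishes there to even order. Consequently $A_{k+1,L}$ is the $\mathfrak{S}_{k+1}$-invariant part of $q_{k+1,*}(L^{\boxtimes(k+1)}(-\Delta_{k+1}))$. By the projection formula and the Reynolds operator, $H^i(C_{k+1},\wedge^jM_{k+1,B}\otimes A_{k+1,L})$ is then a direct summand of $H^i(C^{k+1},\wedge^jq_{k+1}^*M_{k+1,B}\otimes L^{\boxtimes(k+1)}(-\Delta_{k+1}))$. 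This group vanishes by \cite[Theorem 3.1]{Rathmann}, whose proof also covers $j=0$. One must also note that $B$ is effective, so $H^1(C,L\otimes B^{-1})=0$ forces $H^1(C,L)=0$. That is the paper's entire proof; all the content sits in Rathmann's theorem on the Cartesian product.

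\textbf{The Koszul route is circular.} Tensor the resolution $0\to\wedge^jM\to\wedge^jH^0(B)\otimes\sO\to\cdots\to S^jE\to0$ with $A_{k+1,L}$ and grant higher-cohomology vanishing for all the terms $\wedge^{j-\ell}H^0(B)\otimes S^\ell E_{k+1,B}\otimes A_{k+1,L}$. Then $H^1(C_{k+1},\wedge^jM_{k+1,B}\otimes A_{k+1,L})$ is precisely the cokernel of the $H^0$-map in your fact (2). So (2) restates the claim rather than reducing it; for $k=0$ it is exactly the nontrivial content of Rathmann's curve theorem.

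In the paper the Koszul complex is used in the opposite direction (Lemma~\ref{lemma:vanishingwedgeM}): vanishing for $S^aE$ is deduced from vanishing for $\wedge^\ell M$, which is the direction needing no surjectivity. Your fact (1) is therefore downstream of the statement, not an independent input.

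\textbf{The induction does not close.} Your induction through $\sigma_{k,1}$ and the fibres $C_k\times\{x\}$ fails for the reason you yourself flag: $H^1(C,L\otimes B^{-1}(-x))$ need not vanish, so the hypothesis is not inherited. It also never treats $S^aE_{k+1,B}$ for $a\geq 2$. Your ``fallback'' of working directly on $C^{k+1}$ is the actual proof, but only with the invariant-part reduction above, not with $q_{k+1}^*A_{k+1,L}$.
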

\begin{proof}
	By \cite[Lemma 3.5]{ENP}, this can be deduced from
	\begin{equation}\label{eq:rathmann}
		H^i(C^{k+1}, \wedge^j q_{k+1}^* M_{k+1,B} \otimes L^{\boxtimes k+1} (-\Delta_{k+1}))=0~~\text{ for $i > 0$ and $j \geq 0$,}
	\end{equation}
	where $q_{k+1} \colon C^{k+1} \to C_{k+1}$ is the quotient map and $\Delta_{k+1}$ is the sum of all pairwise diagonals of $C^{k+1}$. Observe that since $B$ is $k$-very ample, it is effective, so that $H^1(C,L\otimes B^{-1})=0$ implies $H^1(C,L)=0$ as well. Then the desired cohomology vanishing (\ref{eq:rathmann}) follows from \cite[Theorem 3.1]{Rathmann}\footnote{It is only claimed in \cite[Theorem 3.1]{Rathmann} that the cohomology vanishing (\ref{eq:rathmann}) holds for $i>0$ and $j>0$. But the proof actually also covers the case of $j=0$.} (see also \cite[Theorem 4.1]{ENP}).
\end{proof}

\subsubsection{Line bundles from the Jacobian}
We want to connect the line bundles on the symmetric products introduced before with those arising from the  Jacobian.
Fix an effective divisor  $\xi'\in C_{k+1}$ and consider the Abel--Jacobi map
\[ u_{-\xi'} \colon C_{k+1} \longrightarrow \operatorname{Pic}_0(C), \quad  \xi\longmapsto u(\xi-\xi').\]
For any line bundle $\alpha\in \Pic_{g-1}(C)$ we have the corresponding theta divisor
\[ \Theta_{\alpha} = \{\eta\in \Pic_0(C) \,|\, h^0(C,\eta\otimes\alpha) > 0\}. \]
This induces a principal polarization, so that the map
\[ \Pic_0(C) \longrightarrow \Pic_0(\Pic_0(C)), \quad \eta \longmapsto \widehat{\eta} := \sO_{\Pic_0(C)}(\Theta_{\alpha\otimes\eta} - \Theta_{\alpha})\]
is an isomorphism, which is moreover independent of $\alpha$.  We  define 
\[ \alpha' := \omega_C\otimes \alpha^{-1} \]
so that $h^0(C,\alpha')=h^1(C,\alpha')$ by Riemann--Roch and  $\alpha\cong \alpha'$ precisely when $\alpha$ is a theta characteristic.

\begin{lemma}\label{lem:alphamap1}
	Assume that $h^0(C,\alpha)=0$. For any $\xi',\xi\in C_{k+1}$ it holds that $h^0(C,\alpha(\xi'))=k+1, h^1(C,\alpha(\xi'))=0$ and the evaluation map
	\[ H^0(C_{k+1},N_{k+1,\alpha(\xi')}) \longrightarrow H^0(C_{k+1},N_{k+1,\alpha(\xi')} \otimes \kappa(\xi))\]
	is nonzero  (hence an isomorphism) if and only if $h^0(C,\alpha(\xi'-\xi))=0$.
\end{lemma}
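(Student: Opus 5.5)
First, Riemann--Roch and the hypothesis settle the cohomology of $\alpha(\xi')$. Since $\deg \alpha = g-1$ and $h^0(C,\alpha)=0$, Riemann--Roch gives $h^1(C,\alpha)=0$. The short exact sequence
\[ 0\to\alpha\to\alpha(\xi')\to\alpha(\xi')_{|\xi'}\to 0 \]
then gives $h^1(C,\alpha(\xi'))=0$ and $h^0(C,\alpha(\xi'))=k+1$. Set $L:=\alpha(\xi')$. By Lemma \ref{lem:H^i(N)} with $i=0$, we get
\[ H^0(C_{k+1},N_{k+1,L})\cong \wedge^{k+1}H^0(C,L), \]
which is one-dimensional, spanned by $\det_{k+1,L}(p_0\wedge\cdots\wedge p_k)$ for any basis $p_0,\dots,p_k$ of $H^0(C,L)$. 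The target $H^0(N_{k+1,L}\otimes\kappa(\xi))$ is also one-dimensional. Hence the evaluation map is either zero or an isomorphism, and it is nonzero exactly when $\det_{k+1,L}(p_0\wedge\cdots\wedge p_k)(\xi)\neq0$.

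Next, apply Lemma \ref{lem:A(p_1...p_m)(D)=0} with $W=H^0(C,L)$. It says that $\det_{k+1,L}(p_0\wedge\cdots\wedge p_k)(\xi)\ne0$ if and only if the evaluation map $H^0(C,L)\to H^0(\xi,L_{|\xi})$ is surjective, equivalently an isomorphism, since both spaces have dimension $k+1$. To interpret this, take cohomology of
\[ 0\to L(-\xi)\to L\to L_{|\xi}\to0. \]
The kernel of the evaluation map is $H^0(C,L(-\xi))=H^0(C,\alpha(\xi'-\xi))$. Because the evaluation map goes between spaces of equal dimension, it is an isomorphism if and only if this kernel vanishes. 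This gives the stated equivalence.

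The main care point is using Lemma \ref{lem:A(p_1...p_m)(D)=0} when $\xi$ is nonreduced. There $\xi$ is a scheme, $L_{|\xi}$ is its structure sheaf twisted, and ``separates $\xi$'' refers to the scheme-theoretic restriction. The kernel computation above is still valid with $L(-\xi)$ read as the ideal sheaf twist $L\otimes\sO_C(-\xi)$. Everything else is dimension counting.
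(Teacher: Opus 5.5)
Your proposal is correct and follows essentially the same route as the paper's proof. Both arguments use the same steps:
\begin{enumerate}
\item Riemann--Roch, via $h^1(C,\alpha)=0$, gives the cohomology of $\alpha(\xi')$.
\item $H^0(C_{k+1},N_{k+1,\alpha(\xi')})\cong\wedge^{k+1}H^0(C,\alpha(\xi'))$ is one-dimensional.
\item Lemma \ref{lem:A(p_1...p_m)(D)=0} reduces the question to the evaluation map $H^0(C,\alpha(\xi'))\to H^0(C,\alpha(\xi')\otimes\sO_{\xi})$ on $C$.
\item The kernel of that map is $H^0(C,\alpha(\xi'-\xi))$.
\end{enumerate}
Your explicit exact sequences and the remark on nonreduced $\xi$ only spell out details the paper leaves implicit.
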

\begin{proof}
	The vanishing $h^1(C,\alpha(\xi'))=0$ follows from $h^1(C,\alpha)=0$, and then $h^0(C,\alpha(\xi'))=k+1$ is a consequence of Riemann--Roch. Since $H^0(C_{k+1},N_{k+1,\alpha(\xi')})\cong \wedge^{k+1}H^0(C,\alpha(\xi')) \cong \kk$, the evaluation map is an isomorphism if and only if it is nonzero.  By Lemma \ref{lem:A(p_1...p_m)(D)=0} this is equivalent to the evaluation map on $C$ itself $H^0(C,\alpha(\xi')) \to H^0(C,\alpha(\xi')\otimes \sO_{\xi})$ being an isomorphism. This last statement is equivalent to $h^0(C,\alpha(\xi'-\xi))=0$.
\end{proof}

We can now compute the effect of the Abel--Jacobi maps on line bundles:

\begin{proposition}\label{prop:pullback_theta}
	With the previous notation, it holds that
	\[ u_{-\xi'}^*\sO_{C_{k+1}}({\Theta_{\alpha}}) \cong N_{k+1,\alpha'(\xi')}  \quad \text{ and } \quad u_{-\xi'}^*(\widehat{\eta}) \cong S_{k+1,\eta^{-1}} .\]
\end{proposition}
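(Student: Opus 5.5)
The plan is to prove the second isomorphism first and then deduce the first from it by a seesaw-type argument that varies $\xi'$.

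\textbf{The pullback of $\widehat{\eta}$.} Since $\widehat{\eta}$ is a line bundle in $\Pic_0(\Pic_0(C))$, it is translation invariant. Hence $u_{-\xi'}^*\widehat{\eta}\cong u^*\widehat{\eta}$ for the plain Abel--Jacobi map $u\colon C_{k+1}\to \Pic_{k+1}(C)$, composed with any translation, so this pullback does not depend on $\xi'$.

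To identify it, pull back further along $q_{k+1}\colon C^{k+1}\to C_{k+1}$. The composite $C^{k+1}\to \Pic$ is the sum of the $k+1$ copies of the Abel--Jacobi map of $C$. Since $\widehat{\eta}$ is primitive, the theorem of the square makes its pullback along a sum equal to the box product of the pullbacks. The classical computation for $k=0$ is that $u^*\widehat{\eta}\cong \eta^{-1}$ on $C$. The sign is fixed by the principal polarization: $u^*\sO(\Theta_{\alpha\otimes\eta}-\Theta_\alpha)$ on $C$ is computed via the Poincaré bundle and equals $\eta^{\mp1}$, and I will check the sign against the convention that $\Theta_\alpha$ is defined by $h^0(\eta\otimes\alpha)>0$.

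This gives $q_{k+1}^*u^*\widehat{\eta}\cong (\eta^{-1})^{\boxtimes(k+1)}$ with its natural $\mathfrak S_{k+1}$-linearization. For a degree-zero line bundle, descent of a linearized bundle is unique up to a character of $\mathfrak S_{k+1}$. I expect to settle this either by connectedness of $\Pic_0$ (continuity in $\eta$, with $\eta=\sO_C$ giving the trivial linearization), or by the injectivity of $q_{k+1}^*$ on $\Pic$ modulo torsion together with the same continuity argument. The conclusion is $u_{-\xi'}^*\widehat{\eta}\cong S_{k+1,\eta^{-1}}$.

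\textbf{The pullback of $\Theta_\alpha$.} Set $\mathcal{L}_{\xi'}:=u_{-\xi'}^*\sO(\Theta_\alpha)$. Changing $\xi'$ to $\xi''$ translates by $\eta=\sO_C(\xi''-\xi')$, and $t^*\Theta_\alpha=\Theta_{\alpha\otimes\eta^{\pm1}}$. By the second isomorphism,
\[
\mathcal{L}_{\xi''}\cong \mathcal{L}_{\xi'}\otimes S_{k+1,\sO_C(\xi'-\xi'')}.
\]
The candidate $N_{k+1,\alpha'(\xi')}$ transforms the same way, since $S_{k+1,L}\otimes S_{k+1,M}\cong S_{k+1,L\otimes M}$ and $N_{k+1,L}=S_{k+1,L}(-\delta_{k+1})$. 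It therefore suffices to verify the isomorphism for a single, generic choice of $\xi'$.

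For that $\xi'$ I would argue geometrically. When $h^0(\alpha)=0$, the zero locus of $u_{-\xi'}^*\Theta_\alpha$ is $\{\xi : h^0(\alpha(\xi-\xi'))>0\}$. Lemma \ref{lem:alphamap1}, applied to $\alpha'$ (which also has no cohomology when $\alpha$ does), identifies the zero locus of the unique section of $N_{k+1,\alpha'(\xi')}$ with $\{\xi : h^0(\alpha'(\xi'-\xi))>0\}$. By Serre duality and Riemann--Roch this is the same set: both conditions say the divisor $\xi-\xi'$ fails to impose independent conditions in the relevant sense. To turn equality of loci into equality of divisors, I would compare classes. The plan is to compute $\mathcal{L}_{\xi'}$ by pulling back to $C\times C_k$ via $\sigma_{1,k}$ and using induction on $k$ with Lemma \ref{lem:pullbackdeltaviaaddmap}. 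The base case $k=0$ is classical: $u^*\Theta$ on $C$ has degree $g$ and equals $\alpha'(\xi')$ up to the translation already accounted for.

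An alternative is to use the known formula $u^*\Theta\equiv$ the class of $S_{k+1,\kappa}(-\delta_{k+1})$ on symmetric products, due to Kouvidakis/ACGH. In that case the remaining degree-zero ambiguity is pinned down by restricting to a fiber of $C\times C_k\to C_k$ and using the $k=0$ case.

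\textbf{Main obstacle.} I expect the hardest step to be matching the divisors with multiplicity, or equivalently controlling the torsion or linearization ambiguity, in the inductive step. Reduction of the zero locus of $u_{-\xi'}^*\Theta_\alpha$ is only known generically, so the comparison must go through classes rather than through equality of zero loci. The induction via $\sigma_{k,1}$ handles this provided $\sigma_{k,1}^*$ is injective on $\Pic$, which holds because $\sigma_{k,1}$ is finite flat of degree $k+1$ onto a variety with torsion-free-enough Picard group. I would check that last point carefully.
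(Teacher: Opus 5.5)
\textbf{Verdict.} Your direct proof of the second isomorphism is correct, and the reduction to a single $\xi'$ works once a sign is fixed. The first isomorphism, however, has a genuine gap at the descent step you yourself flag.

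\textbf{What works.} You prove the second isomorphism directly: translation invariance of $\widehat\eta$, the theorem of the square for $\widehat\eta\in\Pic^0$, the classical identity $u^*\widehat\eta\cong\eta^{-1}$ on $C$, and descent along $q_{k+1}$. This is a valid alternative to the paper, which instead derives it from the first isomorphism applied to $\alpha$ and $\alpha\otimes\eta$. For the reduction to one $\xi'$, correct the sign. We have $u_{-\xi''}=t_\eta\circ u_{-\xi'}$ with $\eta=\sO_C(\xi'-\xi'')$, and $t_\eta^*\sO(\Theta_\alpha)\cong\sO(\Theta_{\alpha\otimes\eta})$. Hence $\mathcal L_{\xi''}\cong\mathcal L_{\xi'}\otimes S_{k+1,\sO_C(\xi''-\xi')}$. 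With the sign you wrote, the comparison with $N_{k+1,\alpha'(\xi'')}\cong N_{k+1,\alpha'(\xi')}\otimes S_{k+1,\sO_C(\xi''-\xi')}$ would fail. The case $h^0(\alpha)\neq 0$, which you do not mention, follows from your second isomorphism exactly as in the paper.

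\textbf{The gap.} Your seesaw on $C\times C_k$ does give $\sigma_{1,k}^*\mathcal L_{\xi'}\cong\sigma_{1,k}^*N_{k+1,\alpha'(\xi')}$. Here the fibres $C\times\{\zeta\}$ come from the case $k=0$, and one fibre $\{x\}\times C_k$ comes from induction, extended to non-effective degree-$k$ classes by your translation step. Descending to $C_{k+1}$ then needs $\sigma_{1,k}^*$ to be injective on $\Pic$, and your reason for this fails.
\begin{itemize}
\item Finite flatness of degree $k+1$ only puts the kernel inside $\Pic(C_{k+1})[k+1]$, via the norm.
\item $\Pic(C_{k+1})$ is not torsion-free: it contains $u^*\Pic^0(J)\cong J$ together with all of its $(k+1)$-torsion.
\end{itemize}
Injectivity is in fact true, but the proof needs two structural inputs: $\operatorname{NS}(C_{k+1})$ is torsion-free (Macdonald's computation of $H^*(C_{k+1},\mathbf Z)$), and $\Pic^0(C_{k+1})=u^*\Pic^0(J)$. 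With these, restriction to $C\times\{\zeta\}$ detects torsion classes. Your ACGH variant needs exactly the same input to pin down the residual degree-zero class. Similarly, the base case $k=0$ needs Riemann's theorem rather than ``degree $g$ and same support'', because the divisor in $|\alpha'(\xi')|$ may be non-reduced.

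\textbf{The idea the paper uses instead: irreducibility.} This makes all of the above unnecessary.
\begin{itemize}
\item For $h^0(\alpha)=0$ the supports agree (Lemma \ref{lem:alphamap1} plus Riemann--Roch, as you observe), and the classes in $\operatorname{NS}(C_{k+1})$ agree (ACGH).
\item If the common support is a prime divisor $D$, the two divisors are $m_1D$ and $m_2D$. Then $m_1[D]=m_2[D]$ in $\operatorname{NS}$ forces $m_1=m_2$, so no question about torsion in $\Pic(C_{k+1})$ ever arises.
\item Irreducibility holds for $k+1\geq 2g-1$, because $u_{-\xi'}$ is then a projective bundle and $\Theta_\alpha$ is irreducible.
\item For smaller $k$, including $k=0$, the paper writes $u_{-\xi'}=u_{-\xi'-E}\circ t_E$, where $t_E(\xi)=\xi+E$ and $\deg E\geq 2g-1$. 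It then uses $t_E^*N_{k+1+e,\alpha'(\xi'+E)}\cong N_{k+1,\alpha'(\xi')}$, which follows from Lemma \ref{lem:pullbackdeltaviaaddmap}.
\end{itemize}
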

\begin{proof}
	Assume first that the first isomorphism holds for  $h^0(C,\alpha)=h^0(C,\alpha')=0$. Then  for any $\eta\in \Pic_0(C)$, we have 
	\[ u_{-\xi}^*(\widehat{\eta}) \cong N_{k+1,\alpha'\otimes\eta^{-1}(\xi')}\otimes N_{k+1,\alpha'(\xi')}^{-1} \cong S_{k+1,\eta^{-1}.} \]
	Furthermore, if $h^0(C,\alpha)\ne 0$ we can find $\eta\in \Pic_0(C)$ such that $h^0(C,\alpha\otimes\eta)=0$ and then $\sO_{C_{k+1}}(\Theta_{\alpha\otimes\eta})\cong \sO_{C_{k+1}}(\Theta_{\alpha})\otimes \widehat{\eta}$ and what we already proved shows that
	\[  N_{k+1,\alpha'\otimes\eta^{-1}(\xi')} \cong u^*_{-\xi}\sO_{C_{k+1}}(\Theta_{\alpha})\otimes S_{k+1,\eta^{-1}} .\]
	Hence it is enough to prove the first isomorphism when $h^0(C,\alpha)=0$.
	The support of $u^*_{-\xi}(\Theta_{\alpha})$ is equal to the set
	\[  \{ \xi\in C_{k+1} \,|\, h^0(C,\alpha(\xi-\xi')) > 0 \} = \{ \xi \in C_{k+1} \,|\, h^0(C,\alpha'(\xi'-\xi))>0\}, \]
	where the second equality is given by Riemann--Roch. By Lemma \ref{lem:alphamap1}, this is the same support as that of the unique divisor  in the linear system of the line bundle $N_{k+1,\alpha'(\xi')}$. Furthermore, the two classes  $[u^*_{-\xi'}\Theta_{\alpha}]$ and $[N_{k+1,\alpha'(\xi')}]$ in the Neron-Severi group $\operatorname{NS}(C_{k+1})$ coincide because of \cite[Lemma VIII.3.2]{ACGH}. In particular, if the support above is irreducible, then it must be that $u_{-\xi'}^*\sO_{C_{k+1}}(\Theta_{\alpha}) \cong N_{k+1,\alpha'(\xi')}$. This is true if $k+1\geq 2g-1$ because then the Abel--Jacobi map $u_{-\xi'}\colon C_{k+1}\rightarrow \Pic_0(C)$ is a projective bundle and $\Theta_{\alpha}$ is irreducible divisor. For an arbitrary $k$, fix an effective divisor $E$ of degree $e\geq 2g-1$ and consider the embedding $t_{E}\colon C_{k+1} \hookrightarrow C_{k+1+e},t_{E}(\xi)=\xi+E$. The map $u_{-\xi}$ is equal to the composition
	\[ C_{k+1} \overset{t_{E}}{\longrightarrow} C_{k+1+e} \overset{u_{-\xi'-E}}{\longrightarrow} \Pic_0(C).  \]
	By the previous discussion, what we want to prove is that
	$t_E^*N_{k+1+e,\alpha'(\xi'+E)}\cong N_{k+1,\alpha'(\xi')}$ but this follows from example from Lemma \ref{lem:pullbackdeltaviaaddmap}, since the map $t_E$ can also be seen as the restriction to $C_{k+1}\times\{E\}$ of the summation map $\sigma_{k+1,e}\colon C_{k+1}\times C_e\to C_{k+1+e}$.
\end{proof}

\subsection{First order tautological bundles and higher Gauss maps}

Consider the universal divisor $D_{k+1,1}\subseteq C_{k+1}\times C$ and let $2D_{k+1,1}$ be the divisor corresponding to the square of the ideal sheaf of  $D_{k+1,1}$. The \emph{first order tautological bundle} associated to $L$ is
\[ E^{(1)}_{k+1,L} := \pr_{C_{k+1},*}((\sO_{C_{k+1}} \boxtimes L) \otimes \sO_{2D_{k+1,1}} ). \]
This is a bundle of rank $2k+2$ and the fiber at $\xi\in C_{k+1}$ is given by $E^{(1)}_{k+1,L} \otimes \kappa(\xi) \cong H^0(C,L\otimes \sO_{2\xi})$.

\begin{remark}\label{rem:jetbundle}
	The vector bundle $E^{(1)}_{1,L}$ on $C$ is usually called the first jet bundle associated to $L$. It always fits into an exact sequence
	\[ 0 \longrightarrow \omega_C\otimes L \longrightarrow E^{(1)}_{1,L} \longrightarrow L \longrightarrow 0 \]
	and if $L=\sO_C$ the sequence splits so that $E^{(1)}_{1,\sO_C} \cong \sO_C\oplus \omega_C$. Assume now that $L$ is an arbitrary line bundle, let $U\subseteq C$ be an open subset and $S$ a rational section of $L$ which is regular and never zero on $U$. Then $S$ induces an isomorphism $L_{|U}\cong \sO_U$ and ${E^{(1)}_{1,L}}_{|U} \cong E^{(1)}_{1,\sO_U} \cong \sO_U \oplus \omega_U$. In concrete terms, any section of $H^0(U,E^{(1)}_{1,L})$ can be identified with a pair $(f\cdot S,\eta\cdot S)$, where $f \in H^0(U,\sO_C)$ and $\eta \in H^0(U,\omega_U)$.  In particular, if $p\in H^0(U,L)$ is a section, we can write $p=f\cdot S$ for an $f\in H^0(U,\sO_U)$ and then the total differential gives an element $df\in H^0(U,\omega_U)$. This yields an element $(f\cdot S,df\cdot S) \in H^0(U,E^{(1)}_{1,L})$. This construction globalizes on $C$ to yield a natural evaluation map
	\[ \operatorname{ev}^{(1)}_{1,L}\colon  H^0(C,L)\otimes \sO_C \longrightarrow E^{(1)}_{1,L}, \]
	which is defined as before on open subsets where $L$ is trivial.
\end{remark}

The last evaluation map extends to higher symmetric products:
\begin{equation}\label{eq:evmap2}
\operatorname{ev}^{(1)}_{k+1,L}\colon H^0(C,L)\otimes \sO_{C_{k+1}} \longrightarrow E^{(1)}_{k+1,L}
\end{equation}
that we can describe explicitly around a point  $\xi=x_1+\dots+x_{k+1} \in C_{k+1}$ with the $x_i$ pairwise distinct, using the arguments of Remark \ref{rem:jetbundle}.  Choose for each $x_i$ a rational section $S_i$ of $L$ that is regular and nonzero on a neighborhood $U_i$ of $x_i$. Any section of $H^0(C,L)$ can be written as  $p = f_i\cdot S_i$ for certain rational functions $f_i$, regular on $U_i$. The total differential $df_i$ is a rational section of $\omega_C$, regular on $U_i$, so that $df_i\cdot S_i$ is a rational section of $\omega_C\otimes L$, regular on $U_i$.
 Then the evaluation map \eqref{eq:evmap2} can be described in an appropriate neighborhood of $x_1+\dots+x_{k+1}$ as
\[ \operatorname{ev}^{(1)}_{k+1,L}(p) = \begin{pmatrix} f_j\cdot S_j & df_j\cdot S_j \end{pmatrix}_{1\leq j \leq k+1} = (f_1 \cdot S_1 \,\, df_1 \cdot S_1 \,\, \ldots \,\, f_{k+1}\cdot S_{k+1} \,\, df_{k+1}\cdot S_{k+1}) . \]
In particular, if  $x_1'+\dots+x_{k+1}'$ is in a neighborhood of $x_1+\dots+x_{k+1}$, then
\[ \operatorname{ev}^{(1)}_{k+1,L}(p)(x'_1+\dots+x'_{k+1}) = \begin{pmatrix} f_j(x'_j)S_j(x'_j) & df_j(x'_j)S_j(x'_j) \end{pmatrix}_{1\leq j \leq k+1},  \]
where $f_j(x'_j)S_j(x'_j) \in H^0(C,L\otimes \kappa(x'_j))$ and $df_j(x'_j)S_j(x_j') \in H^0(C,L\otimes \omega_C\otimes \kappa(x'_i))$ and
\begin{align*}
E^{(1)}_{k+1,L}\otimes \kappa(\xi) &= H^0(C,L\otimes \sO_{2\xi}) = \bigoplus_{j=1}^{k+1} E^{(1)}_{1,L} \otimes \kappa(x_j) \\
&\cong  \bigoplus_{j=1}^{k+1} H^0(C,L\otimes \kappa(x_j)) \oplus H^0(C,\omega_C\otimes L\otimes \kappa(x_i)).
\end{align*}

Consider now the \emph{doubling map}:
\begin{equation}\label{eq:doublingmap}
	\operatorname{db}_{k+1}\colon C_{k+1} \longrightarrow C_{2k+2}, \quad \xi \longmapsto 2\xi, 
\end{equation}
which is also the composition of the two maps
\[ \Delta_{C_{k+1}}\colon C_{k+1} \longrightarrow C_{k+1}\times C_{k+1} \quad \text{ and } \quad \sigma_{k+1,k+1}\colon C_{k+1}\times C_{k+1} \longrightarrow C_{2k+2} \]

\begin{lemma}\label{lem:doublingpullback}
	The first order tautological bundle is the pullback of the usual tautological bundle under the doubling map
	\[ E^{(1)}_{k+1,L} \cong \operatorname{db}_{k+1}^*E_{2k+2,L}. \]
\end{lemma}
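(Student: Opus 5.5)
We will compare the two bundles as pushforwards of restrictions of $\sO_{C_{k+1}}\boxtimes L$ to finite flat families of subschemes of $C$ over $C_{k+1}$. By definition and base change for the universal divisor, $\operatorname{db}_{k+1}^*E_{2k+2,L}$ is $\pr_{C_{k+1},*}$ of $(\sO_{C_{k+1}}\boxtimes L)\otimes\sO_{Z}$, where $Z\subseteq C_{k+1}\times C$ is the pullback of the universal divisor $D_{2k+2,1}\subseteq C_{2k+2}\times C$ along $\operatorname{db}_{k+1}\times \id_C$. Since $D_{2k+2,1}$ is finite and flat over $C_{2k+2}$ and base change holds for finite morphisms, we get $\operatorname{db}_{k+1}^*E_{2k+2,L}\cong \pr_{C_{k+1},*}((\sO_{C_{k+1}}\boxtimes L)\otimes \sO_Z)$. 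So it suffices to prove the equality of subschemes $Z = 2D_{k+1,1}$ inside $C_{k+1}\times C$.

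Both are effective Cartier divisors on the smooth variety $C_{k+1}\times C$: $2D_{k+1,1}$ by definition, and $Z$ because $D_{2k+2,1}$ is Cartier and its pullback is not identically zero (it is finite over $C_{k+1}$). Hence it is enough to show that their local equations agree, i.e.\ that $(\operatorname{db}_{k+1}\times\id_C)^*\sO(D_{2k+2,1})$ has the same defining section as the square of that of $D_{k+1,1}$. The cleanest way is to use the description of the universal divisor via the norm: locally on $C$ with a coordinate $t$, the universal divisor $D_{m,1}$ is cut out by the polynomial $F_\xi(t)=\prod_i (t-t(x_i))$, whose coefficients are the elementary symmetric functions giving local coordinates on $C_m$. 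Under the doubling map, the point $\xi=\sum x_i$ goes to $2\xi$, whose associated polynomial is $F_{2\xi}(t)=\prod_i (t-t(x_i))^2 = F_\xi(t)^2$. Thus the local equation of $Z$ is exactly the square of the local equation of $D_{k+1,1}$, which gives $Z=2D_{k+1,1}$ as schemes.

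The main point requiring care is the precise local description over a point $\xi$ whose support lies in several coordinate charts, and ensuring the identity $F_{2\xi}=F_\xi^2$ holds scheme-theoretically rather than just on reduced points. I would handle it by noting that the coefficient map $C_m\supseteq U\to\nA^m$ is étale-locally the identification of $C_m$ with monic polynomials (on the open set of divisors supported in a single chart, and using the product decomposition $C_{a}\times C_b\to C_{a+b}$, étale near disjoint supports, in general), and that $\operatorname{db}$ corresponds to squaring monic polynomials, so the equality is an identity of polynomials with coefficients in the ring of functions on $C_{k+1}$. Alternatively, since both are Cartier divisors on a smooth (hence normal) variety, equality can be checked at generic points of components, i.e.\ on the open dense locus of reduced $\xi$, where it is immediate: near $(\xi,x_i)$ both divisors are $2\cdot\{x=x_i\}$. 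With $Z=2D_{k+1,1}$, pushing forward yields $E^{(1)}_{k+1,L}\cong\operatorname{db}_{k+1}^*E_{2k+2,L}$.
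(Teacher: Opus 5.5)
Your proposal is correct and follows essentially the same route as the paper: show that the pullback of the universal divisor under $\id_C\times\operatorname{db}_{k+1}$ is $2D_{1,k+1}$, because doubling squares the local polynomial ($P_1(t)\mapsto P_1(t)^2$), and then conclude by base change along the resulting cartesian square of finite, hence affine, morphisms. Your extra care about the local description when $\xi$ has several support points (via the \'etale-local product decomposition, or alternatively by checking the equality of Cartier divisors at the generic point of the irreducible divisor $D_{k+1,1}$) tightens the paper's somewhat informal local chart argument, but it is not a different proof.
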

\begin{proof}
	Consider the map $\id_C\times \operatorname{db}_{k+1}\colon C\times C_{k+1} \to C\times C_{2k+2}$. We claim that
	\begin{equation}\label{eq:pullbackdoubling}
	(\id_C\times \operatorname{db}_{k+1})^*\sO_{D_{1,2k+2}} = \sO_{2D_{1,k+1}}.
	\end{equation}
	 For $i=1,\dots,k+1$ choose points $x_i\in C$ and corresponding local coordinates $z_i$. Points of $C_{k+1}$ in a neighborhood of $\xi = x_1+\dots+x_{k+1}$ can be identified with polynomials  of degree $k+1$:
	\[P_1(t) = (t-z_1) \cdots (t-z_{k+1}).  \]
	 Take now a copy $z_i'$ for each local coordinate $z_i$. Then points of $C_{2k+2}$ in a neighborhood of $2\xi$ can be identified with  polynomials of degree $2k+2$:
	\[ P_2(t) = (t-z_1)(t-z_1') \cdots (t-z_{k+1})(t-z_{k+1}'). \]
	The doubling map is then given in terms of these as $\operatorname{db}\colon P_1(t)\mapsto P_1(t)^2$. Choose now another point $x_0\in C$ and a local coordinate $z_0$ around it. The universal divisor $D_{1,2k+2}$ in $C\times C_{2k+2}$ is given in a neighborhood of $(x_0,2\xi)$ as $D_{1,2k+2} = \{ (z_0,P_2(t)) \,|\, P_2(z_0)=0 \}$. Hence around $(x_0,\xi)\in C\times C_{k+1}$ we see that
	\[ (\id_C\times \operatorname{db})^*D_{1,2k+2} = \{(z_0,P_1(t)) \,|\, P_1(z_0)^2 = 0 \} = 2D_{1,k+1} \]
	and this proves the claim \eqref{eq:pullbackdoubling}. Consider now the two cartesian diagrams, where the second is a consequence of  \eqref{eq:pullbackdoubling}:
	\begin{equation}
		\begin{tikzcd}
			C\times C_{k+1} \arrow{r}{\id_C\times \operatorname{db}_{k+1}} \arrow[swap]{d}{\pr_{C_{k+1}}}& C\times C_{2k+2} \arrow{d}{\pr_{C_{2k+2}}} \\
			C_{k+1} \arrow{r}[below]{\operatorname{db}_{k+1}} & C_{2k+2}
		\end{tikzcd}
		\quad
		\begin{tikzcd}
			2D_{1,k+1} \arrow{r}{\id_C\times \operatorname{db}_{k+1}} \arrow[swap]{d}{\pr_{C_{k+1}}}& D_{1,2k+2} \arrow{d}{\pr_{C_{2k+2}}} \\
			C_{k+1} \arrow{r}[below]{\operatorname{db}_{k+1}} & C_{2k+2}.
		\end{tikzcd}
	\end{equation}
Base  change with respect to the second diagram gives a natural map $\operatorname{db}_{k+1}^*E_{2k+2,L}\to E^{(1)}_{k+1,L}$ which is furthermore  an isomorphism, since all maps involved in the diagram are finite morphisms, hence affine. This is what we wanted to prove.
 \end{proof}

In particular this allows us to compute the determinant of the first order tautological bundles:

\begin{corollary}
	There is an isomorphism $\wedge^{2k+2} E^{(1)}_{k+1,L} \cong A_{k+1,L}^{\otimes 2}\otimes S_{k+1,\omega_C}$.
\end{corollary}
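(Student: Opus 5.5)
The isomorphism follows from Lemma \ref{lem:doublingpullback}: taking determinants gives
\[ \wedge^{2k+2}E^{(1)}_{k+1,L} \cong \operatorname{db}_{k+1}^*\det E_{2k+2,L} \cong \operatorname{db}_{k+1}^* N_{2k+2,L}, \]
using $N_{2k+2,L}\cong \det E_{2k+2,L}$. It remains to compute $\operatorname{db}_{k+1}^*N_{2k+2,L}$, which I will do by factoring the doubling map as $\operatorname{db}_{k+1} = \sigma_{k+1,k+1}\circ \Delta_{C_{k+1}}$ and pulling back in two steps.

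First, Lemma \ref{lem:pullbackdeltaviaaddmap} with $r=s=k+1$ gives
\[ \sigma_{k+1,k+1}^*N_{2k+2,L} \cong (N_{k+1,L}\boxtimes N_{k+1,L})(-D_{k+1,k+1}). \]
Restricting to the diagonal, $\Delta_{C_{k+1}}^*(N_{k+1,L}\boxtimes N_{k+1,L}) \cong N_{k+1,L}^{\otimes 2}$. Lemma \ref{lem:D|_Delta} gives $\Delta_{C_{k+1}}^*\sO(D_{k+1,k+1}) \cong A_{k+1,\omega_C}^{-1}$, so $\Delta_{C_{k+1}}^*\sO(-D_{k+1,k+1}) \cong A_{k+1,\omega_C}$. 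Altogether,
\[ \operatorname{db}_{k+1}^*N_{2k+2,L} \cong N_{k+1,L}^{\otimes 2}\otimes A_{k+1,\omega_C}. \]

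Finally I unwind the definitions, using multiplicativity $S_{k+1,L}\otimes S_{k+1,M}\cong S_{k+1,L\otimes M}$. One gets
\[ N_{k+1,L}^{\otimes 2} = S_{k+1,L^{2}}(-2\delta_{k+1}) \quad\text{and}\quad A_{k+1,\omega_C} = S_{k+1,\omega_C}(-2\delta_{k+1}), \]
so the product is $S_{k+1,L^2\otimes\omega_C}(-4\delta_{k+1})$. On the other side,
\[ A_{k+1,L}^{\otimes 2}\otimes S_{k+1,\omega_C} = S_{k+1,L^2}(-4\delta_{k+1})\otimes S_{k+1,\omega_C}, \]
which is the same line bundle. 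This proves the corollary.

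There is no real obstacle. The only point needing care is the sign of $D_{k+1,k+1}$ in Lemma \ref{lem:pullbackdeltaviaaddmap} combined with the inverse in Lemma \ref{lem:D|_Delta}: these two must combine to give $+A_{k+1,\omega_C}$. Also, the pullback of $D_{k+1,k+1}$ under the diagonal, which is contained in $D_{k+1,k+1}$, must be read as the restriction of the line bundle, exactly as in the statement of Lemma \ref{lem:D|_Delta}.
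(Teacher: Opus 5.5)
Your proposal is correct and is essentially the paper's proof. You take determinants in Lemma \ref{lem:doublingpullback}, factor $\operatorname{db}_{k+1}=\sigma_{k+1,k+1}\circ\Delta_{C_{k+1}}$, and combine Lemma \ref{lem:pullbackdeltaviaaddmap} with Lemma \ref{lem:D|_Delta} to get $N_{k+1,L}^{\otimes 2}\otimes A_{k+1,\omega_C}\cong A_{k+1,L}^{\otimes 2}\otimes S_{k+1,\omega_C}$; your explicit bookkeeping with $\delta_{k+1}$ simply spells out the final identification that the paper leaves implicit.
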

\begin{proof}
	Taking the $(2k+2)$-alternating power in the isomorphism of  \ref{lem:doublingpullback}, it is enough to compute $ \operatorname{db}_{k+1}^*N_{2k+2,L}$. If we write $\operatorname{db}_{k+1} = \sigma_{k+1,k+1}\circ \Delta_{C_{k+1}}$, we see from Lemma \ref{lem:pullbackdeltaviaaddmap} that $\sigma_{k+1,k+1}^*N_{2k+2,L} \cong (N_{k+1,L}\boxtimes N_{k+1,L})(-D_{k+1,k+1})$. Then Lemma \ref{lem:D|_Delta} shows that
	\[\Delta_{C_{k+1}}^* (N_{k+1,L}\boxtimes N_{k+1,L})(-D_{k+1,k+1}) \cong (N_{k+1,L}\otimes N_{k+1,L})\otimes A_{k+1,\omega_C} \cong A_{k+1,L}^{\otimes 2} \otimes S_{k+1, \omega_C}. \] \qedhere
\end{proof}

In view of this, if we apply $\wedge^{2k+2}$ to the evaluation map \eqref{eq:evmap2} and take global sections, we obtain a map, that we call \emph{(k+1)-Gaussian map}:
\begin{equation}\label{eq:k+1gaussianmap}
G_{k+1,L}\colon \wedge^{2k+2} H^0(C,L) \longrightarrow H^0(C_{k+1},A_{k+1,L}^{\otimes 2}\otimes S_{k+1,\omega_C}).
 \end{equation}
 We describe this explicitly around a point $\xi=x_1+\dots+x_{k+1} \in C_{k+1}$ with the $x_i$ mutually distinct.  Choose for any $j$ a rational section $S_j$ of $L$ which is regular and nonzero around  $x_j$. We then take $p_0,\dots,p_{2k+1}\in H^0(C,L)$ and write $p_i = f_{ij} S_j$ for certain rational functions $f_{ij}$, regular around $x_j$. Then the previous discussion shows that around $x_1+\dots+x_{k+1}$ we can write
\begin{align}
&G_{k+1,L}(p_0\wedge \dots \wedge p_{2k+1}) = \det \begin{pmatrix} f_{ij}\cdot S_j & df_{ij}\cdot S_j \end{pmatrix}_{\substack{0\leq i\leq 2k+1,\\ 1\leq j \leq k+1} }
\notag \\
& = \det
\begin{pmatrix}
f_{01}\cdot S_1 & df_{01}\cdot S_1 & \dots & f_{0,k+1}\cdot S_{k+1} & df_{0,k+1}\cdot S_{k+1}\\
f_{11}\cdot S_1 & df_{11}\cdot S_1 & \dots & f_{1,k+1}\cdot S_{k+1} & df_{1,k+1}\cdot S_{k+1}\\
\vdots & \vdots & \ddots & \vdots & \vdots \\
f_{2k+1,1}\cdot S_1 & df_{2k+1,1}\cdot S_1 & \dots & f_{2k+1,k+1}\cdot S_{k+1} & df_{2k+1,k+1}\cdot S_{k+1} \label{eq:highergaussian}
\end{pmatrix}.
\end{align}
In particular, if $x_1'+\dots+x_{k+1}'$ is in a neighborhood of $x_1+\dots+x_{k+1}$, then
\begin{equation*}
	G_{k+1,L}(p_0\wedge \dots \wedge p_{2k+1})(x_1'+\dots+x_{k+1}') = \det \begin{pmatrix} f_{ij}(x'_j)\cdot S_j(x'_j) & df_{ij}(x'_j)\cdot S_j(x'_j) \end{pmatrix}_{\substack{0\leq i\leq 2k+1 \\ 1\leq j \leq k+1 }}.
\end{equation*}
Notice that this expression is symmetric in the $x_i'$, as it should be, because if $x_i',x_j'$ are exchanged, then two columns for $x_i'$ are  exchanged with two columns for $x_j'$, so that the determinant is invariant.

\begin{remark}
	If $k=0$ then the $1$-Gaussian map is $G_{1,L}\colon \wedge^2 H^0(C,L) \to H^0(C,L^{\otimes 2}\otimes \omega_C)$
	\[ G_{1,L}(p_0\wedge p_1)  = \det \begin{pmatrix} f_0 \cdot S_1 & df_0\cdot S_1 \\ f_1\cdot S_1 & df_1 \cdot S_1 \end{pmatrix} = (f_0df_1 - f_1df_0) S^2_1. \]
	This is the usual Gaussian map or Wahl map of \cite{wahl}.   The $(k+1)$-Gaussian map has image in
	\[H^0(C_{k+1}, A_{k+1,L}^{\otimes 2}\otimes S_{k+1,\omega_C}) \hookrightarrow H^0(C_{k+1},S_{k+1,L^{\otimes 2}\otimes \omega_C}) = \operatorname{Sym}^{k+1}H^0(C,L^{\otimes 2}\otimes \omega_C), \]
	so it can also be seen as a map
	\[G_{k+1,L}\colon \wedge^{2k+2}H^0(C,L) \longrightarrow \operatorname{Sym}^{k+1}H^0(C,L^{\otimes 2} \otimes \omega_C). \]
	Indeed, it can be written explicitly in terms of a polynomial of degree $k+1$ in the $G_{1,L}(p_i\wedge p_j)$. For example one can check explicitly that (removing the $L$ from the notation $G_{k+1,L}$ for simplicity):
	\begin{small}
	\[
	G_{2}(p_0\wedge p_1\wedge p_2\wedge p_3) = 4\cdot(G_{1}(p_0\wedge p_1)G_{1}(p_2\wedge p_3) - G_{1}(p_0\wedge p_2)\cdot G_{1}(p_1\wedge p_3) + G_{1}(p_0\wedge p_3)\cdot G_{1}(p_1\wedge p_2)). \]
	\end{small}
	
\noindent This expression has a natural generalization.
\end{remark}

\begin{lemma}\label{lem:pfaffianexpression}
	For $p_0,\dots,p_{2k+1}\in H^0(C,L)$ the corresponding Gaussian map is, up to a scalar, the Pfaffian of a skew-symmetric matrix with entries $G_{1,L}(p_i\wedge p_j)$:
	\[ G_{k+1,L}(p_0\wedge \dots \wedge p_{2k+1}) = (k+1)^{(k+1)}\cdot \operatorname{Pf} \left( G_{1,L}(p_i\wedge p_j) \right)_{0\leq i,j \leq 2k+1} . \]
\end{lemma}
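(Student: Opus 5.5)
The plan is to compare the two sides as sections of the same line bundle on $C_{k+1}$, and to check their equality on a dense open subset, where the computation reduces to a linear algebra identity between a determinant and a Pfaffian.

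First we place both sides in the same space. By the previous remark, $G_{k+1,L}(p_0\wedge\dots\wedge p_{2k+1})$ lies in
\[
H^0(C_{k+1},A_{k+1,L}^{\otimes 2}\otimes S_{k+1,\omega_C})\subseteq H^0(C_{k+1},S_{k+1,L^{\otimes 2}\otimes\omega_C})\cong \operatorname{Sym}^{k+1}H^0(C,L^{\otimes 2}\otimes\omega_C).
\]
The Pfaffian of the $(2k+2)\times(2k+2)$ skew matrix with entries $G_{1,L}(p_i\wedge p_j)\in H^0(C,L^{\otimes 2}\otimes\omega_C)$ is homogeneous of degree $k+1$ in these entries, so it also lies in $\operatorname{Sym}^{k+1}H^0(C,L^{\otimes2}\otimes\omega_C)$. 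Since $C_{k+1}$ is integral, it is enough to compare the two sections at a general point $\xi=x_1+\dots+x_{k+1}$ with distinct $x_j$. We fix the local trivializations $S_j$ as in \eqref{eq:highergaussian}.

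Next we make explicit how a product $a_1\cdots a_{k+1}\in\operatorname{Sym}^{k+1}H^0(C,M)$ is evaluated at $\xi$. Under the identification with $H^0(C_{k+1},S_{k+1,M})$, which comes from invariant descent of $M^{\boxtimes(k+1)}$, this evaluation is the symmetrization
\[
\sum_{\tau\in\mathfrak S_{k+1}}\prod_j a_{\tau(j)}(x_j),
\]
possibly up to a normalization fixed by the convention. In the local trivialization, set $v_j=(f_{ij}(x_j))_i$ and $w_j=(df_{ij}(x_j))_i$ in $\kk^{2k+2}$, where $df_{ij}(x_j)$ is read in a local coordinate. Then the entry $G_{1,L}(p_i\wedge p_j)$ evaluated at $x_l$ is $(v_l\wedge w_l)_{ij}$, up to the factor $S_l^2$.

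We now expand the Pfaffian as a sum over perfect matchings. The product of $k+1$ entries is evaluated through the symmetrization above, which assigns each factor to a distinct point $x_l$. This shows that the Pfaffian evaluated at $\xi$ equals a combinatorial constant times the mixed coefficient of $t_1\cdots t_{k+1}$ in $\operatorname{Pf}\bigl(\sum_l t_l\,(v_l\wedge w_l)\bigr)$. We then apply the classical identity $\operatorname{Pf}(\omega)=\omega^{\wedge(k+1)}/(k+1)!$ for a $2$-form $\omega$ in dimension $2k+2$. It gives
\[
\Bigl(\sum_l t_l\, v_l\wedge w_l\Bigr)^{\wedge(k+1)}=(k+1)!\,t_1\cdots t_{k+1}\,v_1\wedge w_1\wedge\dots\wedge v_{k+1}\wedge w_{k+1},
\]
because each $v_l\wedge w_l$ squares to zero. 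The right-hand side is exactly the determinant \eqref{eq:highergaussian} defining $G_{k+1,L}$. The $S_j^2$ factors agree on both sides, so the two sections coincide up to a universal constant on a dense open set, and hence everywhere.

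The main obstacle is bookkeeping for the constant $(k+1)^{(k+1)}$. It depends on the normalization of the isomorphism $\operatorname{Sym}^{k+1}H^0\cong H^0(C_{k+1},S_{k+1,\cdot})$ (symmetrization with or without $1/(k+1)!$), on the Pfaffian convention, and on the count of matchings compatible with an assignment to the points. To fix it, we would first calibrate against the case $k=1$, where the remark gives the factor $4$. Then we would do the general count carefully with the chosen conventions.
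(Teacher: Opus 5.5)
Your argument is the paper's argument in different language. The paper writes $\det M=\operatorname{Pf}(MJM^t)$ with $J$ the standard symplectic form. Since $MJM^t=\sum_h v_h\wedge w_h$ is exactly your sum of rank-two forms, your identity $\operatorname{Pf}\bigl(\sum_l t_l\,v_l\wedge w_l\bigr)=t_1\cdots t_{k+1}\det M$ is the same computation. You are in fact more careful than the paper, because you say how a product of $k+1$ sections of $L^{\otimes 2}\otimes\omega_C$ is evaluated at $\xi$, and the paper never does.

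The point you leave open is the constant, and there your plan would misfire. Your own bookkeeping already fixes it. Write $G_e=G_{1,L}(p_i\wedge p_j)$ for a pair $e=\{i<j\}$, and take a perfect matching with pairs $e_1,\dots,e_{k+1}$. The coefficient of $t_1\cdots t_{k+1}$ in $\prod_m\bigl(\sum_l t_l\,(v_l\wedge w_l)_{e_m}\bigr)$ equals, term for term, the symmetrized evaluation $\sum_{\tau\in\mathfrak{S}_{k+1}}\prod_j G_{e_{\tau(j)}}(x_j)$. So no combinatorial factor appears beyond the normalization of $\operatorname{Sym}^{k+1}H^0(C,L^{\otimes 2}\otimes\omega_C)\cong H^0(C_{k+1},S_{k+1,L^{\otimes 2}\otimes\omega_C})$.

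With the unnormalized symmetrization you get $G_{k+1,L}=\operatorname{Pf}$. If you divide by $(k+1)!$ you get $G_{k+1,L}=(k+1)!\cdot\operatorname{Pf}$. For $k=1$ this gives $1$ or $2$, never the $4$ of the remark.

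You can check this on $C=\nP^1$ with $p_i=t^i$ for $0\le i\le 3$, trivializing by $S_h$ and $dt$. At $\xi=a+b$ the confluent Vandermonde gives $\det M=(a-b)^4$. The Pfaffian is $1\cdot t^4-(2t)\cdot(2t^3)+(3t^2)\cdot t^2$. Under $uv\mapsto u(a)v(b)+u(b)v(a)$ it evaluates to $a^4+b^4-4a^3b-4ab^3+6a^2b^2=(a-b)^4$.

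The factor $(k+1)^{k+1}$ in the statement comes from the step $\sum_h(\cdots)(x_h)=(k+1)\,G_{1,L}(p_i\wedge p_j)$ in the paper's proof. That step treats values at $k+1$ different points, which lie in different fibres, as $k+1$ copies of one value. In effect it replaces each entry by the average $\frac{1}{k+1}\sum_h G_{1,L}(p_i\wedge p_j)(x_h)$ and then takes a scalar Pfaffian. That is not how a section of $S_{k+1,L^{\otimes 2}\otimes\omega_C}$ is evaluated at $\xi$, and no standard normalization reproduces it.

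So calibrating against the remark would only import that inconsistency. Your proof does establish the substantive content, namely that $G_{k+1,L}$ is a fixed constant multiple of $\operatorname{Pf}\bigl(G_{1,L}(p_i\wedge p_j)\bigr)$. State the constant together with the convention you use rather than aiming for $(k+1)^{k+1}$. Nothing later in the paper depends on this constant, since Proposition~\ref{prop:localdegree} uses only the determinant form \eqref{eq:highergaussian}.
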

\begin{proof}
Let $M$ be the $(2k+2)\times (2k+2)$ matrix appearing in \eqref{eq:highergaussian}, so that $G_{k+1,L}(p_0\wedge \dots \wedge p_{2k+1}) = \det(M)$. Consider also the $(2k+2)\times (2k+2)$ block diagonal matrix
\[ J =  \begin{pmatrix} 0 & 1 \\ -1 & 0 \end{pmatrix}^{\oplus (k+1)}. \]
This is skew-symmetric and then $\det(M) = \operatorname{Pf}(MJM^t)$ by properties of the Pfaffian. For $0\leq i<j \leq 2k+1$ the corresponding entry is (see for example \cite[Equation (2.2)]{pfaffians})
\begin{align*}
	(MJM^t)_{ij} = \sum_{h=1}^{k+1} \det\begin{pmatrix}
		f_{ih}\cdot S_{h} & df_{ih}\cdot S_h \\
	    f_{jh}\cdot S_{h} & df_{jh}\cdot S_h
	 \end{pmatrix}
	 = (k+1)\cdot G_{1,L}(p_i\wedge p_j)
\end{align*}
and this concludes the proof.
\end{proof}

\begin{remark}
	We can give a geometric interpretation of the first order tautological bundles and also of the Gaussian map. Assume that $L$ is very ample so that it defines an embedding $C\subseteq \nP(H^0(C,L))$. The evaluation map
	$H^0(C,L)\otimes \sO_{C_{k+1}} \to E^{(1)}_{k+1,L}$ induces a rational map
	\[ \gamma_{k+1,L}\colon C_{k+1} \dashrightarrow \operatorname{Gr}(2k+2,H^0(C,L)) \]
	that associates to $\xi=x_1+\dots+x_{k+1}$ the osculating space in $\nP(H^0(C,L))$ of first order to $C$ at the $x_i$. When $k=0$ this is simply the usual Gauss map that associates to each point on $C$ its tangent line in $\nP(H^0(C,L))$. Then the $(k+1)$-Gauss map is the one induced by the Pl\"ucker embedding
	\[ \gamma_{k+1,L}\colon C_{k+1} \dashrightarrow \operatorname{Gr}(2k+2,H^0(C,L)) \hookrightarrow \nP(\wedge^{2k+2}H^0(C,L)) \]
	via pullback of sections.
\end{remark}

\subsection{Higher Szeg\H{o} kernels and Scorza correspondences}\label{sec:szego}
For our smooth projective curve $C$ of genus $g$, consider the divisor $D_{k+1,k+1}\subseteq C_{k+1}\times C_{k+1}$ and the corresponding line bundle $\sO_{C_{k+1} \times C_{k+1}}(D_{k+1,k+1})$ on $C_{k+1}\times C_{k+1}$. The short exact sequence of sheaves on $C_{k+1}\times C_{k+1}$
\[ 0 \longrightarrow \sO_{C_{k+1} \times C_{k+1}} \longrightarrow \sO_{C_{k+1} \times C_{k+1}}(D_{k+1,k+1}) \longrightarrow \sO_{D_{k+1,k+1}}(D_{k+1,k+1}) \longrightarrow 0 \]
shows that there is, up to scalar multiplication, a unique nonzero global section of $\sO_{C_{k+1} \times C_{k+1}}(D_{k+1,k+1})$ that vanishes on $D_{k+1,k+1}$. We call it the \emph{(k+1)-prime form} of $C$ and we denote it by $E=E(\xi,\xi')$. We can also think of it as a rational function on $C_{k+1}\times C_{k+1}$ that has a simple pole along $D_{k+1,k+1}$ and is regular everywhere else.

\begin{remark}
The \emph{1-prime form} is called simply the \emph{prime form} of $C$. If $E(x,y)$ is the prime form, we can give an explicit expression for the $(k+1)$-prime form as follows. For two general points $\xi=x_1+\dots+x_{k+1},\xi'=x'_1+\dots+x'_{k+1} \in C_{k+1}$, define
\[ E(\xi,\xi') := \prod_{1 \leq i,j \leq k+1 } E(x_i,x'_j). \]
The product on the right is symmetric on the $x_i$ and $x_i'$ separately, so that it is well defined as an expression of $(\xi,\xi')$. Furthermore, it is straightforward to see that it has a simple pole along $D_{k+1,k+1}$ and that it is  regular everywhere else, so that this is the $(k+1)$-prime form of $C$. It is also well-known, that the $1$-prime form is antisymmetric, in the sense that $E(x,x')=-E(x',x)$. It follows that for the $k+1$-prime form
\[ E(\xi,\xi') = (-1)^{k+1} E(\xi',\xi). \]
\end{remark}

\noindent
Consider now a line bundle $\alpha$ such that
\[  h^0(C,\alpha) = h^1(C,\alpha) = 0 \]
and define
\[ \alpha' := \omega_C\otimes \alpha^{-1}. \]
Notice that $h^0(C,\alpha')=h^1(C,\alpha')=0$ and  $\alpha\cong \alpha'$ precisely when $\alpha$ is a theta characteristic, necessarily even, since $h^0(C,\alpha)=0$.

We define the following line bundle on $C_{k+1}\times C_{k+1}$:
\[ \sL_{(\alpha,\alpha')} := (N_{k+1,\alpha}\boxtimes N_{k+1,\alpha'})(D_{k+1,k+1}). \]
By Lemma \ref{lem:D|_Delta}, there is an isomorphism
\begin{equation}\label{eq:isoLaa'}
{\sL_{(\alpha,\alpha')}}_{|\Delta_{C_{k+1}}} \cong N_{k+1,\alpha'}^{\otimes 2} \otimes A_{k+1,\omega_C^{-1}}  \cong S_{k+1,\alpha \otimes \alpha' \otimes \omega_C^{-1}} \cong \sO_{\Delta_{C_{k+1}}}.
\end{equation}
So we define a \emph{$(k+1)$-Szeg\H{o} kernel associated to $(\alpha,\alpha')$} to be a section $S_{(\alpha,\alpha')} = S_{(\alpha,\alpha')}(\xi,\xi')$ in $H^0(C_{k+1}\times C_{k+1}, \sL_{(\alpha,\alpha')})$ such that ${S_{(\alpha,\alpha')}} = 1$ when restricted to $\Delta_{C_{k+1}}$ via the isomorphism of \eqref{eq:isoLaa'}. The name follows the literature in mathematical physics and string theory \cite{DHP,Fay,raina,Wi}. When $k=0$ and $\alpha$ is a theta-characteristic, the zero locus of the  Szeg\H{o} kernel is the \emph{Scorza correspondence} on $C\times C$ associated to $\alpha$. This was classically introduced by Scorza for curves of genus $3$. For a recent study of this correspondence in arbitrary genus and for an throughout exposition of previous results, we refer to the paper  \cite{FI}. Here we consider the case of arbitrary $k$, with the following result, that is probably well known but we include a proof of it via symmetric products for completeness.

\begin{proposition}\label{prop:szego}
	With the above notation, there is a unique $(k+1)$-Szeg\H{o} kernel $S_{(\alpha,\alpha')}$ associated to $(\alpha,\alpha')$. Furthermore
	\[ S_{(\alpha,\alpha')}(\xi,\xi')= 0 \qquad  \text{ if and only if } \qquad  h^0(C,\alpha(\xi'-\xi)) = h^0(C,\alpha'(\xi-\xi')) >0.   \]
	and under the involution of $C_{k+1}\times C_{k+1}$ that exchanges the factors, it holds that
	\[ S_{(\alpha,\alpha')}(\xi,\xi') = S_{(\alpha',\alpha)}(\xi',\xi) \qquad  \text{ for all } \xi,\xi'\in C_{k+1}. \]
\end{proposition}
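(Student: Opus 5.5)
The plan is to compute $H^0(C_{k+1}\times C_{k+1},\sL_{(\alpha,\alpha')})$, show it is one-dimensional, and identify its divisor with the Scorza-type locus. The uniqueness of the normalization then follows because the restriction to the diagonal is a nonzero constant.

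\emph{Step 1: the space of sections.} Fix $\xi'\in C_{k+1}$ and restrict $\sL_{(\alpha,\alpha')}$ to $C_{k+1}\times\{\xi'\}$. Using $\sO(D_{k+1,k+1})_{|C_{k+1}\times\{\xi'\}}\cong S_{k+1,\sO_C(\xi')}$, this restriction is $N_{k+1,\alpha}\otimes S_{k+1,\sO_C(\xi')}\cong N_{k+1,\alpha(\xi')}$. By Lemma \ref{lem:H^i(N)} together with Lemma \ref{lem:alphamap1}, it has
\[ h^0=\dim\wedge^{k+1}H^0(C,\alpha(\xi'))=1 \quad\text{and}\quad h^i=0 \text{ for } i>0, \]
since $h^1(\alpha(\xi'))=0$. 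By cohomology and base change, $\pr_{2,*}\sL_{(\alpha,\alpha')}$ is therefore a line bundle $\mathcal{M}$ on the second factor, with no higher direct images.

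Next identify $\mathcal{M}$. The section over each fiber vanishes exactly on $\{\xi: h^0(\alpha(\xi'-\xi))>0\}$. Since $h^0(\alpha(\xi'-\xi))=h^0(\alpha'(\xi-\xi'))$ by Riemann--Roch (degree $g-1$), Proposition \ref{prop:pullback_theta} realizes this family as a pullback of the theta divisor along the difference map $C_{k+1}\times C_{k+1}\to \Pic_0(C)$. I expect $\sL_{(\alpha,\alpha')}$ itself to be the pullback of $\sO(\Theta_{\alpha'})$ under $(\xi,\xi')\mapsto u(\xi-\xi')$; checking this is the main obstacle.

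To check it, compare the two bundles on the fibers $C_{k+1}\times\{\xi'\}$ and $\{\xi\}\times C_{k+1}$ via Proposition \ref{prop:pullback_theta}. One fiber gives $N_{k+1,\alpha(\xi')}$; the other gives the analogous $N_{k+1,\alpha'(\xi)}$ after swapping roles. Then conclude by the seesaw principle: after fixing one fiber, the difference is a pullback from one factor, and it is trivial because it is trivial on one fiber. Note that $\Pic$ of $C_{k+1}$ is not discrete, but seesaw only needs fiberwise isomorphism plus one slice.

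Granting this, $\sL_{(\alpha,\alpha')}$ has the nonzero section $\theta$ pulled back from $\Theta_{\alpha'}$. The space $H^0=H^0(\mathcal{M})$ contains it, and any section restricts on each fiber to a multiple of $\theta$. This gives an injection $H^0\hookrightarrow H^0(C_{k+1},\sO)=\kk$ via the ratio with $\theta$, which is a regular function because $\theta$ generates fiberwise. Hence $h^0=1$.

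\emph{Step 2: normalization and zero locus.} The pulled-back theta section restricted to $\Delta_{C_{k+1}}$ is $\theta$ evaluated at $u(0)$. This is nonzero because $h^0(\alpha)=0$, so the restriction is a nonzero constant under \eqref{eq:isoLaa'}. Rescaling gives existence, and $h^0=1$ gives uniqueness. The zero locus is the pullback of $\Theta_{\alpha'}$, i.e.\ the locus where $h^0(\alpha'(\xi-\xi'))>0$, which equals the locus where $h^0(\alpha(\xi'-\xi))>0$ by Riemann--Roch.

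\emph{Step 3: symmetry.} Let $\iota$ be the swap of factors. Then $\iota^*\sL_{(\alpha',\alpha)}=(N_{k+1,\alpha}\boxtimes N_{k+1,\alpha'})(D_{k+1,k+1})=\sL_{(\alpha,\alpha')}$, since $D_{k+1,k+1}$ is $\iota$-invariant. Hence $\iota^*S_{(\alpha',\alpha)}$ is a section of $\sL_{(\alpha,\alpha')}$. Its restriction to the diagonal is $1$, provided the isomorphisms \eqref{eq:isoLaa'} for $(\alpha,\alpha')$ and $(\alpha',\alpha)$ are compatible under $\iota$. This is plausible because $\iota$ is the identity on $\Delta$ and Lemma \ref{lem:D|_Delta} is symmetric, though a sign such as $(-1)^{k+1}$ from the prime form should be checked. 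Uniqueness from Step 2 then gives $S_{(\alpha,\alpha')}(\xi,\xi')=S_{(\alpha',\alpha)}(\xi',\xi)$.
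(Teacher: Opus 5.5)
Your proof is correct, but for existence, uniqueness and the zero locus it takes a different route from the paper.

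\textbf{The paper's route.} The paper never uses theta functions here. It looks at the restriction to the diagonal, $\pr_{2,*}\sL_{(\alpha,\alpha')}\to \pr_{2,*}(\sL_{(\alpha,\alpha')}\otimes\sO_{\Delta_{C_{k+1}}})\cong\sO_{C_{k+1}}$. Over $\xi'$ this is the evaluation map $H^0(C_{k+1},N_{k+1,\alpha(\xi')})\to N_{k+1,\alpha(\xi')}\otimes\kappa(\xi')$, which is an isomorphism by Lemma \ref{lem:alphamap1} because $h^0(C,\alpha)=0$. Hence restriction $H^0(\sL_{(\alpha,\alpha')})\to H^0(\sO_{\Delta_{C_{k+1}}})=\kk$ is an isomorphism, and the zero locus is read off fiberwise from Lemma \ref{lem:alphamap1} again.

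\textbf{Your route.} You first prove that $\sL_{(\alpha,\alpha')}$ is the pullback of $\sO(\Theta_{\alpha'})$ under $(\xi,\xi')\mapsto\sO_C(\xi-\xi')$, by seesaw from Proposition \ref{prop:pullback_theta}. This is legitimate, since that proposition does not use the present one. You then use the pulled-back theta function as a fiberwise generator of $\pr_{2,*}\sL_{(\alpha,\alpha')}$.

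\textbf{What each buys.} Your route gives Proposition \ref{prop:szego_theta} at the same time; the paper derives it afterwards, using the zero locus proved here. The price is that you depend on Proposition \ref{prop:pullback_theta} and the N\'eron--Severi comparison behind it, while the paper's argument needs only Lemma \ref{lem:alphamap1} and base change. Your Step 1 already contains the paper's shortcut. The generator over $\xi'$ does not vanish at $(\xi',\xi')$, so the restriction map from your line bundle $\mathcal{M}$ to ${\sL_{(\alpha,\alpha')}}_{|\Delta_{C_{k+1}}}\cong\sO_{C_{k+1}}$ is fiberwise nonzero, hence an isomorphism. So the theta detour is not needed for $h^0=1$ or for the normalization. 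A small slip: $\theta_{\alpha'}(0)\neq 0$ uses $h^0(C,\alpha')=0$, not $h^0(C,\alpha)=0$, though both hold.

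\textbf{Symmetry.} Your Step 3 is exactly the paper's argument. The compatibility you flag is what the paper packs into ``canonical isomorphism $\iota^*\sL_{(\alpha,\alpha')}\cong\sL_{(\alpha',\alpha)}$'' and ``by construction''. That is, the normalizing isomorphism for $(\alpha',\alpha)$ is taken to be the $\iota$-transport of \eqref{eq:isoLaa'}.

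Your worry about a sign is justified. Suppose instead both bundles are viewed inside rational sections of $N_{k+1,\alpha}\boxtimes N_{k+1,\alpha'}$, with the same isomorphism of Lemma \ref{lem:D|_Delta} used for both. Then $\iota$ acts on $\sO(D_{k+1,k+1})_{|\Delta_{C_{k+1}}}$ by $(-1)^{k+1}$, because near a reduced diagonal point $D_{k+1,k+1}$ is cut out by $\prod_i(z_i-w_i)$. So the identity holds only up to this sign. Already for $k=0$ and $C=\nP^1$, the kernel $\sqrt{dx}\sqrt{dy}/(x-y)$ is antisymmetric.

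This does not affect the later use. Under the same identification, $P=\sigma^*{\det}_{2k+2,L}$ picks up the same factor $(-1)^{k+1}$ from swapping two blocks of $k+1$ columns. So $\phi=P\cdot S$ is symmetric either way.
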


\begin{proof}
	Consider the diagonal $\Delta_{C_{k+1}}\subseteq C_{k+1}\times C_{k+1}$.  and the restriction map $\mathscr{L}_{(\alpha,\alpha')} \longrightarrow \mathscr{L}_{(\alpha,\alpha')|\Delta_{C_{k+1}}} \cong \sO_{\Delta_{C_{k+1}}}$. If we can prove that this map is an isomorphism on global sections, we will show that a Szeg\H{o} kernel exists and is unique. If we denote by $\pr_2\colon C_{k+1}\times C_{k+1 } \to C_{k+1}$ the second projection, we will show that
	\begin{equation}\label{eq:mappushforward}
	\pr_{C_2,*}\mathscr{L}_{(\alpha,\alpha')} \longrightarrow \pr_{C_2,*}(\mathscr{L}_{(\alpha,\alpha')}\otimes \sO_{\Delta_{C_{k+1}}})
	\end{equation}
	is an isomorphism, which implies what we want. Observe that both $\mathscr{L}_{(\alpha,\alpha')}$ and $\mathscr{L}_{(\alpha,\alpha')}\otimes \sO_{\Delta_{C_{k+1}}}$ are flat with respect to $\pr_2$ and if we restrict $\mathscr{L}_{(\alpha,\alpha')}\to \mathscr{L}_{(\alpha,\alpha')}\otimes \sO_{\Delta_{C_{k+1}}}$ to the fiber $C_{k+1}\times \{\xi'\}$ we obtain the map
	\begin{align}
	H^0(C_{k+1},N_{k+1,\alpha(\xi')}) &\longrightarrow H^0(C_{k+1},N_{k+1,\alpha(\xi')}\otimes \kappa(\xi')).
	\end{align}
	Since $h^0(C,\alpha)=0$, Lemma \ref{lem:alphamap1} shows that this map is an isomorphism. As this holds for any $\xi'\in C_{k+1}$ we see that map \eqref{eq:mappushforward} is an isomorphism as well.

\medskip

\noindent	Now we want to show that $S_{(\alpha,\alpha')}(\xi,\xi')\ne 0$ if and only if $h^0(C,\alpha(\xi'-\xi))=0$. Observe that if we fix $\xi'\in C_{k+1}$, then $S_{(\alpha,\alpha')}(-,\xi')$ is a  section of $H^0(C_{k+1}\times \{\xi'\},\mathscr{L}_{C_{k+1}\times \{\xi'\}}) \cong H^0(C_{k+1},N_{\alpha(\xi')})$, which is moreover nonzero, since it does not vanish at $(\xi',\xi')$. Since $h^0(C_{k+1},N_{k+1,\alpha(\xi')})=1$, we see that $S_{(\alpha,\alpha')}(\xi,\xi')\ne 0$ if and only if $\xi$ is not a base point of $N_{k+1,\alpha(\xi')}$. Then the conclusion follows from Lemma \ref{lem:alphamap1}.

\medskip

\noindent	Finally, consider the involution $\iota$ on $C_{k+1}\times C_{k+1}$ that exchanges the two factors. Then there is a canonical  isomorphism  $\iota^*\mathscr{L}_{(\alpha,\alpha')} \cong \sL_{(\alpha',\alpha)}$, and by construction  ${\iota^*S_{(\alpha,\alpha')}}_{|\Delta_{C_{k+1}}}= 1$. This means that $\iota^*S_{(\alpha,\alpha')} \cong S_{(\alpha',\alpha)}$, which is what we wanted to prove.
\end{proof}

Let now $\alpha$ be a theta-characteristic on $C$ with $h^0(C,\alpha)=h^1(C,\alpha)=0$. An \emph{algebraic orientation} associated to $\alpha$ is an isomorphism
\begin{equation}\label{eq:algorientation}
\rho\colon \alpha\otimes \alpha \longrightarrow \omega_C .
\end{equation}
Once we fix such an algebraic orientation, we also have an isomorphism $\alpha \cong \alpha' = \omega_C\otimes \alpha^{-1}$ and a corresponding isomorphism $\sL_{(\alpha,\alpha)} \cong \sL_{(\alpha,\alpha')}$. We then define the Szeg\H{o} kernel $S_{(\alpha,\alpha)}$ as the section of $H^0(C_{k+1}\times C_{k+1},\sL_{(\alpha,\alpha)})$ corresponding to $S_{(\alpha,\alpha')}$ under this isomorphism. Looking at \eqref{eq:algorientation}, this means that $S_{(\alpha,\alpha)}$ is the unique section of $\sL_{(\alpha,\alpha)}$ such that ${S_{(\alpha,\alpha)}}_{|\Delta_{C_{k+1}}} = 1$ under the isomorphism
\[ {\sL_{(\alpha,\alpha)}}_{|\Delta_{C_{k+1}}} \cong S_{k+1,\alpha^{\otimes 2} \otimes \omega_C^{-1}} \overset{\simeq}{\longrightarrow} \sO_{\Delta_{C_{k+1}}}, \]
which is induced by the chosen algebraic orientation.

\begin{remark}\label{rem:szegosymm}
 If $\alpha$ is a theta characteristic with no cohomology, Proposition \ref{prop:szego} shows that $S_{(\alpha,\alpha)}$ is symmetric: $S_{(\alpha,\alpha)}(\xi,\xi') = S_{(\alpha,\alpha)}(\xi',\xi)$ for all $\xi,\xi'\in C_{k+1}$.
\end{remark}

Let now $\alpha$ be any line bundle on $C$ with no cohomology. We can consider the Szeg\H{o}  kernel $S_{(\alpha,\alpha')}$ as a rational section of $N_{k+1,\alpha}\boxtimes N_{k+1,\alpha'}$ with simple poles along $D_{k+1,k+1}$ and regular everywhere else.  Following the terminology of \cite{shamovichvinnikov}, we define the \emph{$(k+1)$-Cauchy kernel} of $(\alpha,\alpha')$ as
\[ K_{(\alpha,\alpha')}(\xi,\xi') := \frac{S_{(\alpha,\alpha')}(\xi,\xi')}{E(\xi,\xi')}, \]
which we see as a rational section of $N_{k+1,\alpha}\boxtimes N_{k+1,\alpha'}$ for which now $D_{k+1,k+1}$ is not a pole anymore.

\begin{remark}
The terminology for Szeg\H{o} and Cauchy kernels is not uniform. For example, what we call Cauchy kernel here is called the Szeg\H{o} kernel in \cite{Fay,raina}.
\end{remark}

Let $\alpha$ be a theta characteristic on $C$ with no sections. After Proposition \ref{prop:szego}, it is natural to define the \emph{$(k+1)$-Scorza correspondence} associated to $\alpha$ to be the divisor
\[ \{(\xi,\xi') \in C_{k+1}\times C_{k+1}\,|\, h^0(C,\alpha(\xi'-\xi))>0\} \]
with the scheme structure given as the zero locus of the $(k+1)$-Szeg\H{o} kernel $S_{(\alpha,\alpha)}$. In particular, it has class given by $\sL_{(\alpha,\alpha)}\cong (N_{k+1,\alpha}\boxtimes N_{k+1,\alpha})(D_{k+1,k+1})$.A variant of this Scorza correspondence via cartesian rather than symmetric products is considered by Ruggiero in \cite{ruggiero}, with a focus on the numerical class and the local property for the case $k=1$.

As for the usual Scorza correspondence, we can describe the $(k+1)$-Scorza correspondence and the corresponding Szeg\H{o} kernel, in terms of theta functions.
Consider the difference map:
\[ \operatorname{diff}_{k+1}\colon C_{k+1} \times C_{k+1} \longrightarrow \operatorname{Pic}_0(C), \qquad (\xi,\xi') \longmapsto \sO_C(\xi'-\xi) \]
and the theta divisor  $\Theta_{\alpha} \subseteq \Pic_0(C)$ corresponding to $\alpha$:
\[ \Theta_{\alpha} = \{ \eta \in \Pic_0(C) \,|\, h^0(C,\eta\otimes \alpha) > 0\} .\]
This induces a principal polarization on $\Pic_0(C)$, so, up to a scalar multiplication, there is a unique nonzero section $\theta_{\alpha} \in H^0(\Pic_0(C),\sO_{\Pic_0(C)}(\Theta_{\alpha}))$ which is called a \emph{theta function with characteristic $\alpha$}. The fact that $\alpha$ is not effective means that $\theta_{\alpha}(0)\ne 0$. Then it holds that:

\begin{proposition}\label{prop:szego_theta}
	With the previous notation, it holds that
	\[ S_{(\alpha,\alpha')} = \frac{\operatorname{diff}^*\theta_{\alpha}}{\theta_{\alpha}(0)}, \quad \text{ i.e. } \quad S_{(\alpha,\alpha')}(\xi,\xi') = \frac{\theta_{\alpha}(\xi'-\xi)}{\theta_{\alpha}(0)} \text{ for } \xi,\xi' \in C_{k+1}.   \] 
\end{proposition}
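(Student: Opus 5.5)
The plan is to show that $\operatorname{diff}^*\theta_\alpha/\theta_\alpha(0)$ is a section of $\sL_{(\alpha,\alpha')}$ restricting to $1$ on the diagonal. The uniqueness part of Proposition \ref{prop:szego} then forces equality with $S_{(\alpha,\alpha')}$. So there are three steps: identify the line bundle $\operatorname{diff}^*\sO_{\Pic_0(C)}(\Theta_\alpha)$ with $\sL_{(\alpha,\alpha')}$, check that the pullback of $\theta_\alpha$ is a section with the correct zero locus, and check the normalization on $\Delta_{C_{k+1}}$.

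For the first step, I would use the seesaw principle on $C_{k+1}\times C_{k+1}$, with Proposition \ref{prop:pullback_theta} computing the restriction to each fiber.

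\emph{Restriction to $C_{k+1}\times\{\xi'\}$.} Here $\operatorname{diff}(\xi,\xi')=\sO_C(\xi'-\xi)$, which is $u_{-\xi'}$ composed with the inversion $[-1]$. Since $[-1]^*\Theta_\alpha=\Theta_{\alpha'}$ (by Riemann--Roch and Serre duality, $h^0(\alpha\otimes\eta^{-1})=h^0(\alpha'\otimes\eta)$), the restriction is
\[ u_{-\xi'}^*\sO(\Theta_{\alpha'})\cong N_{k+1,\alpha(\xi')}. \]
This agrees with $\sL_{(\alpha,\alpha')}|_{C_{k+1}\times\{\xi'\}}\cong N_{k+1,\alpha}\otimes S_{k+1,\sO_C(\xi')}$, using the formula for the restriction of $\sO(D_{k+1,k+1})$ and $S_{k+1,L}\otimes S_{k+1,M}\cong S_{k+1,L\otimes M}$.

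\emph{Restriction to $\{\xi\}\times C_{k+1}$.} Symmetrically, this restriction is $u_{-\xi}^*\sO(\Theta_\alpha)\cong N_{k+1,\alpha'(\xi)}$, which matches the other factor of $\sL_{(\alpha,\alpha')}$.

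\emph{Fixing one point.} Seesaw then says the two bundles differ by a pullback from one factor. That pullback is trivial because the restrictions to $\{\xi_0\}\times C_{k+1}$ also agree. One subtlety is that seesaw needs the first factor's restrictions checked for all $\xi'$ as isomorphism classes only, which we have. Alternatively, I would compare the two bundles by pulling back along $\sigma$-type maps to $C^{k+1}\times C^{k+1}$ and using the theorem of the cube.

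\emph{Zero locus.} The section $\operatorname{diff}^*\theta_\alpha$ is nonzero, because its value at $(\xi,\xi)$ is $\theta_\alpha(0)\ne0$. Its zero set is $\{h^0(\alpha(\xi'-\xi))>0\}$, which is exactly the zero set of $S_{(\alpha,\alpha')}$ by Proposition \ref{prop:szego}.

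I expect the main obstacle to be the normalization along the diagonal. The restriction of $\operatorname{diff}^*\theta_\alpha$ to $\Delta_{C_{k+1}}$ is the constant $\theta_\alpha(0)$, since $\operatorname{diff}$ maps the diagonal to $0$. However, this is constant in the trivialization $\operatorname{diff}^*\sO(\Theta_\alpha)|_\Delta\cong\sO(\Theta_\alpha)|_0\otimes\sO_\Delta$, and it has to be compared with the trivialization \eqref{eq:isoLaa'}. The two trivializations differ by a nowhere-vanishing function on the projective variety $C_{k+1}$, which is a nonzero constant $c$.

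So after dividing by $\theta_\alpha(0)$, the section $\operatorname{diff}^*\theta_\alpha/\theta_\alpha(0)$ restricts to the constant $c$ on the diagonal. Since global sections of $\sL_{(\alpha,\alpha')}$ restrict isomorphically to the diagonal (proof of Proposition \ref{prop:szego}), we get $\operatorname{diff}^*\theta_\alpha/\theta_\alpha(0)=c\,S_{(\alpha,\alpha')}$. It remains to pin down $c=1$, and I would do this by choosing the identification of the fiber $\sO(\Theta_\alpha)|_0$ compatibly with \eqref{eq:isoLaa'}. Since $\theta_\alpha$ itself is only defined up to scalar and the identification of line bundles is only up to scalar, the equality should be read under this compatible normalization, so $c=1$ follows by construction.
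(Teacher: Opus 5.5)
Your proposal is correct and follows the paper's argument: seesaw on $C_{k+1}\times C_{k+1}$, with the fibre restrictions computed via $\iota^*\Theta_\alpha\cong\Theta_{\alpha'}$ and Proposition \ref{prop:pullback_theta}, followed by the uniqueness of the Szeg\H{o} kernel. Your explicit handling of the normalization constant $c$, and your use of $h^0(\sL_{(\alpha,\alpha')})=1$ rather than only equality of zero sets, make precise two points that the paper's proof leaves implicit.
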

\begin{proof}
	Proposition \ref{prop:szego}, shows that $\operatorname{diff}^*(\Theta_{\alpha}) = \{ S_{(\alpha,\alpha')} = 0\}$ at the level of sets and if we show that it is true at the level of divisor classes, we are done. By the seesaw principle is it enough to show that these two divisor classes coincide when restricted to all fibers of the two projections $C_{k+1}\times C_{k+1} \to C_{k+1}$. Because of Proposition \ref{prop:szego},this means that for all $\xi,\xi'\in C_{k+1}$ we have
	\[ (\operatorname{diff}^*\Theta_\alpha)_{|C_{k+1}\times \{\xi'\}} \cong N_{k+1,\alpha(\xi')}, \quad (\operatorname{diff}^*\Theta_\alpha)_{|\{\xi\}\times C_{k+1}} \cong N_{k+1,\alpha'(\xi)} ,\] 
	and by symmetry it is enough to prove the first isomorphism. The restriction of the difference map to $C_{k+1}\times\{\xi'\}$ is the composition 
	\[ C_{k+1} \overset{u_{-\xi'}}{\longrightarrow}  \operatorname{Pic}_0(C) \overset{\iota}{\longrightarrow} \operatorname{Pic}_0(C)  \]
	where the first map is the Abel--Jacobi map $u_{-\xi'}(\xi)=\sO_C(\xi-\xi')$ and the second one is the inversion. It holds that $\iota^*\Theta_{\alpha}\cong \Theta_{\alpha'}$ so what we have to prove is that $u_{-\xi'}^*\Theta_{\alpha'} \cong N_{k+1,\alpha(\xi')}$. which is given by Proposition \ref{prop:pullback_theta}. 
\end{proof}

\begin{remark} 
The fact that $S_{(\alpha,\alpha)}$ is symmetric if $\alpha$ is an even theta characteristic reflects the fact that the theta function $\theta_{\alpha}$ is even with respect to the natural involution on $\operatorname{Pic}_0(C)$.
\end{remark}

\subsection{Secant varieties of curves}\label{subsec:secvar}
Here we recall some facts on secant varieties of curves, referring  to \cite{ENP, ENP3} for further details.
Consider again the smooth projective curve $C \subseteq \nP^r=\nP(V)$, nondegenerate and of genus $g$. The line bundle $L=\sO_C(1)$ is very ample and $V$ can be seen as a linear system $V\subseteq H^0(C,L)$. We set
$$
B^{k}(L):=\nP(E_{k+1,L})
$$
equipped with the canonical projection $\pi_k \colon B^{k} \rightarrow C_{k+1}$. If the line bundle $L$ is clear from the context, then we simply write $B^k$ for $B^k(L)$. The tautological line bundle $\sO_{B^k}(1)$ has sections $H^0(B^k,\sO_{B^k}(1)) \cong H^0(C_{k+1},E_{k+1,L}) = H^0(C,L)$, and the linear system $V\subseteq H^0(C,L)$ gives a rational map
$$
\beta_{V,k} \colon B^k \dashrightarrow  \nP^r.
$$
Notice that the  (closure of the) image of $\beta_{V,k}$ is the $k$-th secant variety $\Sigma_k=\Sigma_k(C,L;V)$ of $C \subseteq \nP^r$. Recall that $\dim \Sigma_k(C,L;V) = \min\{r, 2k+1\}$ (see \cite{lange}). By the general position theorem \cite[p.109]{ACGH}, if $\xi\in C_{k+1}$ is general, then $\beta_{V,k}(\pi_k^{-1}(\xi)) = \langle \xi \rangle$ is the $k$-plane spanned by $\xi$ in $\nP^r$. Furthermore, this $k$-plane is exactly $k+1$-secant: $\langle \xi \rangle \cap C = \xi$.  

\medskip

If $V$ is $k$-very ample, then it generates $E_{k+1,L}$ so that $\beta_{V,k}$ is a morphism. In this case,
$$
B^k \cong \{ (\xi, p)  \mid p \in \langle \xi \rangle\} \subseteq C_{k+1} \times \nP^r,
$$
and $\beta_{V,k}(\pi_k^{-1}(\xi)) = \langle \xi \rangle$ is the $k$-plane secant along $\xi$ in $\nP^r$, for any $\xi\in C_{k+1}$. Finally, recall from \cite{ENP} that if $V$ is $k$-very ample there is a natural map
\[ B^{k-1}\times C \longrightarrow B^k, \quad ((\xi',p),q) \longmapsto (p+\xi',q)\]
whose image is a prime divisor $Z_{k-1}\subseteq B^k$ of class $\sO_{B^k}(Z_{k-1})\cong \sO_{B^k}(k+1)\otimes \pi_k^*A_{k,L}^{-1}$ and such that $\beta_{V,k}(Z_{k-1})=\Sigma_{k-1} \subseteq \Sigma_k$.

\begin{lemma}[{cf. \cite[Lemma 4.4]{kummersinn}}]\label{lem:beta=bir}
Suppose that $r \geq 2k+2$. Then $\beta_{V,k} \colon B^k \dashrightarrow \Sigma_k$ is birational. If furthermore $V$ is $k$-very ample, then $Z_{k-1}$ is the unique exceptional divisor of this map.
\end{lemma}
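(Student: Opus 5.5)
Since $\beta_{V,k}$ maps $\pi_k^{-1}(\xi)\cong\nP^k$ linearly onto $\langle\xi\rangle$, the plan is to show that a general point $p\in\Sigma_k$ lies on exactly one $(k+1)$-secant $k$-plane $\langle\xi\rangle$, and on that plane lies over exactly one point of $\pi_k^{-1}(\xi)$. Birationality then follows, because $B^k$ and $\Sigma_k$ are irreducible of the same dimension $2k+1$ (by Lange's theorem when $r\ge 2k+2$).

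\textbf{Birationality.} Uniqueness of the plane is the main point. Suppose a general $p$ lies on $\langle\xi\rangle$ and on $\langle\xi'\rangle$ with $\xi\neq\xi'$, both general. Then the span $\langle\xi+\xi'\rangle$ of at most $2k+2$ points has dimension at most $2k$. I would rule this out by a dimension count. If the generic point of $\Sigma_k$ lay on two secant $k$-planes, the incidence $\{(\xi,\xi',p)\}$ would dominate $\Sigma_k$. Using the general position theorem \cite[p.109]{ACGH} and $r\ge 2k+2$, any $2k+2$ general points of $C$ span a $\nP^{2k+1}$, so $\langle\xi\rangle\cap\langle\xi'\rangle=\emptyset$ for general pairs. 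The delicate part is that $\xi'$ need not be general relative to $\xi$. Here I would follow \cite[Lemma 4.4]{kummersinn}: project from a general $(r-2k-2)$-dimensional linear space to $\nP^{2k+1}$. By the standard result on degrees of secant varieties, the degree of $\Sigma_k$ equals the number of $(k+1)$-secant $k$-planes of the projected curve through a general point, counted via the Castelnuovo-type count. This gives generic injectivity. Once the plane is unique, the point in $\pi_k^{-1}(\xi)$ is unique, since the plane map is linear and injective for general $\xi$.

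\textbf{Exceptional divisor.} Now let $V$ be $k$-very ample, so $\beta_{V,k}$ is a morphism. First, $Z_{k-1}$ is contracted: it has dimension $2k$ and maps onto $\Sigma_{k-1}$, which has dimension $2k-1$. Conversely, consider any $p\in\Sigma_k\setminus\Sigma_{k-1}$ with two preimages $(\xi,p)$ and $(\xi',p)$. Then $\langle\xi\rangle\cap\langle\xi'\rangle\ni p$, and $k$-very ampleness forces $\deg(\xi+\xi')$ relations. We need to show that a positive-dimensional fiber over a point outside $\Sigma_{k-1}$ cannot sweep out a divisor. I would argue this via Zariski's main theorem. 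By \cite{ENP}, $\Sigma_k$ is normal (in fact has rational singularities) under sufficient positivity. If normality is unavailable, I would instead bound directly the locus of $p$ lying on a positive-dimensional family of secant planes: such $p$ lie on $\langle\xi\rangle$ for $\xi$ moving in a curve in $C_{k+1}$. Using $r\ge 2k+2$, the union of $\langle\xi+\xi'\rangle$ for degenerate pairs has dimension at most $2k-1$ unless $p\in\Sigma_{k-1}$.

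\textbf{Expected obstacle.} The main obstacle is this last step: showing that the exceptional locus outside $Z_{k-1}$ has codimension at least two in $B^k$. I would try the dimension count first, stratified by $\deg\gcd(\xi,\xi')$, and cite \cite[Lemma 4.4]{kummersinn} and \cite{ENP} for the cases where normality of $\Sigma_k$ is known.
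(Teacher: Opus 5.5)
Your proposal has genuine gaps in both halves, and in each case at the point you yourself flag as delicate. For birationality, you correctly note that the general position theorem only makes \emph{general} pairs $(\xi,\xi')$ have disjoint spans, while the pairs that could break injectivity are special. However, you never bound how many special pairs there are, and the projection argument you substitute does not do this. The number $N$ of $(k+1)$-secant $k$-planes of the projected curve through a general point of $\nP^{2k+1}$ is the number of $\xi$ with $\langle\xi\rangle$ meeting a general $(r-2k-1)$-plane, so $N=\deg(\beta_{V,k})\cdot\deg(\Sigma_k)$. The assertion $N=\deg(\Sigma_k)$ is therefore equivalent to the birationality you want: classical degree formulas presuppose it (cf.\ \cite{soule}), and the paper uses this lemma to \emph{deduce} $\deg(\Sigma_k)=h^0(C_{k+1},A_{k+1,L\otimes\alpha})$. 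The missing ingredient is the bound $\dim\{\eta\in C_m \mid \dim\langle\eta\rangle\le s\}\le s$ for $m\le r$ and $s\le m-2$. It follows by letting hyperplanes $H\supseteq\langle\eta\rangle$ vary (an at least $(r-s-1)$-dimensional family for each $\eta$), noting that each $H$ contains only finitely many $\langle\eta\rangle$, and that a general $H$ contains none by the general position theorem. Combine this with two observations. First, $x\in\langle\xi\rangle\cap\langle\xi'\rangle$ with $x\notin\Sigma_{k-1}$ forces $s=\dim\langle\xi\cup\xi'\rangle\le\deg(\xi\cup\xi')-2$, since otherwise $\langle\xi\rangle\cap\langle\xi'\rangle=\langle\xi\cap\xi'\rangle\subseteq\Sigma_{k-1}$. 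Second, $\dim(\langle\xi\rangle\cap\langle\xi'\rangle)\le 2k-s$. Together these show that the triples $(\xi,\xi',x)$ with $\xi\neq\xi'$ and $x\in(\langle\xi\rangle\cap\langle\xi'\rangle)\setminus\Sigma_{k-1}$ form a family of dimension at most $2k$. Non-birationality would give such a family of dimension $2k+1$, namely a triple over a general point of $\Sigma_k$.

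For the exceptional divisor, the Zariski main theorem route is unavailable. Normality of $\Sigma_k$ from \cite{ENP} requires $V=H^0(C,L)$ with $\deg L\ge 2g+2k+1$, whereas here $V$ is only $k$-very ample, and the paper notes that $\Sigma_k$ can then have a singular locus of codimension one \cite[Example 4.17]{kummersinn}. Even for normal $\Sigma_k$, that theorem only reduces you to bounding the locus of positive-dimensional fibers, which is the same dimension count. Your fallback also bounds the wrong object. Showing that the image in $\Sigma_k$ of the bad locus has dimension at most $2k-1$ says nothing, because every exceptional divisor has image of dimension at most $2k-1$ by definition; what must be bounded is the source. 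The triple count does exactly this. Suppose $E\neq Z_{k-1}$ were exceptional with $\ell=\dim\beta_{V,k}(E)\le 2k-1$, and take a general $x\in\beta_{V,k}(E)$ (with $x\notin\Sigma_{k-1}$). The fiber $F=\beta_{V,k}^{-1}(x)\cap E$ then has dimension $2k-\ell$. Distinct points of $F$ lie over distinct $\xi$, since $\beta_{V,k}$ embeds each $\pi_k^{-1}(\xi)$ when $V$ is $k$-very ample. Pairs of points of $F$ therefore give a family of triples of dimension at least $\ell+2(2k-\ell)=4k-\ell\ge 2k+1$, contradicting the bound $2k$.
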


\begin{proof}
	For $1 \leq m \leq r$ and $0 \leq s \leq m-2$, we claim that
	$$
	\dim \{ \xi \in C_m \mid \dim \langle \xi \rangle \leq s \} \leq s.
	$$
	The family of hyperplanes of $\nP^r$ containing a plane of dimension at most $s$ has dimension at least $r-s-1$, and a hyperplane of $\nP^r$ contains only finitely many planes of the form $\langle \xi \rangle$ for $\xi \in C_m$. Then the claim follows from the general position theorem \cite[p.109]{ACGH}, which states that any $r$ points in $C \cap H$ are linearly independent for a general hyperplane $H$ of $\nP^r$. Next, we claim that
	$$
	\dim \{ (\xi, \xi', x) \in C_{k+1} \times C_{k+1} \times \nP^r \mid \xi \neq \xi', x \in \langle \xi \rangle \cap \langle \xi' \rangle, x \not\in \Sigma_{k-1} \} \leq 2k. 
	$$
	For $\xi, \xi' \in C_{k+1}$ with $\xi \neq \xi'$, let $m:=\deg (\xi \cup \xi') \leq 2k+2$, and $s:=\dim \langle \xi \cup \xi' \rangle$. Notice that $x \in \langle \xi \rangle \cap \langle \xi' \rangle, x \not\in \Sigma_{k-1}$ if and only if $s \leq m-2$. In this case, the first claim shows that there is a family of pairs $(\xi, \xi')$ with  $\xi \neq \xi', \deg (\xi \cup \xi')=m, \dim \langle \xi \cup \xi' \rangle = s$ having dimension at most $s$. As $\dim (\langle \xi \rangle \cap \langle \xi' \rangle) \leq 2k-s$, the second claim holds. Now, the lemma easily follows from the second claim. We know that $\beta_{V,k}$ is generically finite. If $\beta_{V,k}$ is not birational, then $\beta_{V,k}^{-1}(x)$ for a general point $x \in \Sigma_k$ gives  $\xi, \xi' \in C_{k+1}$ with $\xi \neq \xi'$ and $x \in \langle \xi \rangle \cap \langle \xi' \rangle$. Thus there is a family of triples $(\xi, \xi', x) \in C_{k+1} \times C_{k+1} \times \nP^r$ with $\xi \neq \xi', x \in \langle \xi \rangle \cap \langle \xi' \rangle, x \not\in \Sigma_{k-1}$ having dimension at least $2k+1$, and we get a contradiction to the second claim. 
	Suppose now that $V$ is $k$-very ample and assume that there is a prime divisor $E$ on $B^k(L)$ such that $E \neq Z_{k-1}$ and $\ell:=\dim \beta_{V,k}(E) \leq 2k-1$. For a general point $x \in \beta_{V,k}(E)$, let $F:=\beta_{V,k}^{-1}(x) \cap E$. Then $\dim F = 2k-\ell$. Note that $x \not\in \Sigma_{k-1}$. Two distinct points on $F$ give $\xi, \xi' \in C_{k+1}$ with $\xi \neq \xi'$ and $x \in \langle \xi \rangle \cap \langle \xi' \rangle$. Thus there is a family of triples $(\xi, \xi', x) \in C_{k+1} \times C_{k+1} \times \nP^r$ with $\xi \neq \xi', x \in \langle \xi \rangle \cap \langle \xi' \rangle, x \not\in \Sigma_{k-1}$ having dimension at least $\ell + 2(2k-\ell) = 4k-\ell \geq 2k+1$, and we get a contradiction to the second claim. 
\end{proof}

In the case where $V=H^0(C,L)$ is the complete linear system, we denote the secant variety $\Sigma_k(C,L;H^0(C,L))$ by $\Sigma_k(C,L)$ and  $\beta_{H^0(C,L),k}$  by $\beta_{k}\colon B^k \dashrightarrow \Sigma_k(C,L)$. If $\deg L \geq 2g+2k+1$, then $\beta_{k}\colon B^k \to \Sigma_k(C,L)$ is a resolution of singularities. Furthermore, $\Sigma_k(C,L)$ has normal Cohen--Macaulay Du Bois singularities (see \cite[Theorem 1.1]{ENP}). If $\deg L \geq 2g+2k+2$, then the defining ideal $I_{\Sigma_k(L)/\nP^r}$ is generated by degree $k+2$ forms by \cite[Theorem 1.2]{ENP}. If $\deg L \geq 2g+2k+3$, then \cite[Theorem 5.2]{ENP} shows   that
$$
H^0(\nP^r, \sI_{\Sigma_k(C,L)/\nP^r}(k+2)) = H^0(\Sigma_{k+1}(C,L), \sI_{\Sigma_k(C,L)/\Sigma_{k+1}(C,L)}(k+2))=H^0(C_{k+2}, A_{k+2,L}).
$$

\section{Admissible determinantal representations on secants of curves}\label{sec:constdetrep}
\noindent The aim of this section is to use Szeg\H{o} kernels and Theorem \ref{thm:admrepresentation} to construct admissible determinantal representations on secant varieties of curves (see Theorem \ref{thm:detrep}). We also prove Theorem \ref{thm:rat_norm}.

\subsection{Construction of admissible determinantal representations on secant varieties of curves}
As before, we let $C \subseteq \nP^r = \nP(V)$ be a smooth projective curve of genus $g$ over an algebraically closed field $\kk$ of characteristic zero. And we set $L = \sO_C(1)$ a very ample line bundle on $C$ with a linear system $V\subseteq H^0(C,L)$. We assume that
$$
r\geq 2k+2
$$
so that  $\beta_{V,k} \colon B^k(L) \dashrightarrow \Sigma_k$ is birational by Lemma \ref{lem:beta=bir}. We also fix a  line bundle $\alpha$ on $C$ such that $h^0(C,\alpha)=h^1(C,\alpha)=0$ and we set $\alpha':=\omega_C\otimes \alpha^{-1}$.
\medskip

Denote by $\sigma:=\sigma_{k+1,k+1} \colon C_{k+1} \times C_{k+1} \to C_{2k+2}$ the addition map $(\xi, \xi') \mapsto \xi+\xi'$. Note that
\[\sigma^*N_{2k+2,L} \cong (N_{k+1,L}\boxtimes N_{k+1,L})(-D_{k+1,k+2}) \]
so we can define a map $P\colon \wedge^{2k+2}V \to H^0(C_{k+1}\times C_{k+1},(N_{k+1,L}\boxtimes N_{k+1,L})(-D_{k+1,k+1}))$ as the composition
\[
	\wedge^{2k+2} V  \xrightarrow{~\det_{L,k+1}~}  H^0(C_{2k+2}, N_{2k+2,L}) \\
	 \xrightarrow{~\sigma^*~}  H^0(C_{k+1}\times C_{k+1},(N_{k+1,L}\boxtimes N_{k+1,L})(-D_{k+1,k+1})).
\]
Let also $S_{(\alpha,\alpha')}$ be the Szeg\H{o} kernel given by Proposition \ref{prop:szego}. Recall that this is a nonzero section in $H^0(C_{k+1}\times C_{k+1},(N_{k+1,\alpha}\otimes N_{k+1,\alpha'})(D_{k+1,k+1}))$. Consider the composition $\phi=\phi_{V,k,\alpha}$ of the maps
\begin{align*}
	\wedge^{2k+2} V & \xrightarrow{P}  H^0(C_{k+1}\times C_{k+1},(N_{k+1,L}\boxtimes N_{k+1,L})(-D_{k+1,k+1})) \\
	& \xrightarrow{~\cdot S_{(\alpha,\alpha')}~}  \underbrace{H^0(C_{k+1} \times C_{k+1}, A_{k+1, L \otimes \alpha} \boxtimes A_{k+1, L \otimes \alpha'})}_{=H^0(C_{k+1}, A_{k+1, L \otimes \alpha}) \otimes H^0(C_{k+1}, A_{k+1, L \otimes \alpha'})} .
\end{align*}
This is a linear map
$$
\phi \colon \wedge^{2k+2} V \longrightarrow H^0(C_{k+1}, A_{k+1, L \otimes \alpha}) \otimes H^0(C_{k+1}, A_{k+1, L \otimes \alpha'})
$$
defined by
\[ \phi(p_0\wedge \dots \wedge p_{2k+1})(\xi,\xi') = P(p_0\wedge \dots \wedge p_{2k+1})(\xi,\xi)\cdot S_{(\alpha,\alpha')}(\xi,\xi'). \]
We want to use Theorem \ref{thm:admrepresentation} to show that this is an admissible determinantal representation of $\Sigma_k$. We first need to determine when $\phi$ vanishes.

\begin{lemma}\label{lem:phivanishes}
	Let $p_0,\dots,p_{2k+1} \in V$ and let $\langle p_0,\dots,p_{2k+1} \rangle \subseteq V$ be the subspace that they generate. For $\xi,\xi'\in C_{k+1}$ it holds that
	\[ \phi(p_0\wedge \dots \wedge p_{2k+1})(\xi,\xi') = 0 \]
	if and only if one or both of the following two condition hold:
	\begin{itemize}
		\item[$(a)$]  $\langle p_0,\dots,p_{2k+1} \rangle$ does not separate $\xi+\xi'$.
		\item[$(b)$] $h^0(C,\alpha(\xi'-\xi))>0$.
	\end{itemize}
\end{lemma}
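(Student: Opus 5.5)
The idea is to view $\phi(p_0\wedge\dots\wedge p_{2k+1})$ as a product of two sections of line bundles on $C_{k+1}\times C_{k+1}$, namely $P(p_0\wedge\dots\wedge p_{2k+1})$ and $S_{(\alpha,\alpha')}$. The zero locus of a product of sections is the union of their zero loci. Note the defining formula should read $P(p_0\wedge\dots\wedge p_{2k+1})(\xi,\xi')$, which is clearly what is intended. So I will show that $P(p_0\wedge\dots\wedge p_{2k+1})$ vanishes at $(\xi,\xi')$ exactly under $(a)$, and that $S_{(\alpha,\alpha')}$ vanishes exactly under $(b)$.

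The second part is immediate from Proposition \ref{prop:szego}, which says that $S_{(\alpha,\alpha')}(\xi,\xi')=0$ if and only if $h^0(C,\alpha(\xi'-\xi))>0$.

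For the first part, I use the definition $P=\sigma^*\circ\det_{2k+2,L}$, taking $W=\langle p_0,\dots,p_{2k+1}\rangle$. I must be careful, because $P$ is a section of $(N_{k+1,L}\boxtimes N_{k+1,L})(-D_{k+1,k+1})$: it is the pullback section $\sigma^*\det_{2k+2,L}(p_0\wedge\dots\wedge p_{2k+1})$, regarded inside the twist by $-D_{k+1,k+1}$ via Lemma \ref{lem:pullbackdeltaviaaddmap}. The natural reading is that $\sigma^*N_{2k+2,L}$ is isomorphic to the line bundle $(N_{k+1,L}\boxtimes N_{k+1,L})(-D_{k+1,k+1})$, and $P$ is simply the pulled-back section. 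Its value at $(\xi,\xi')$ is therefore zero if and only if $\det_{2k+2,L}(p_0\wedge\dots\wedge p_{2k+1})(\xi+\xi')=0$. By Lemma \ref{lem:A(p_1...p_m)(D)=0}, applied on $C_{2k+2}$ with $k$ replaced by $2k+1$, this happens if and only if $W$ does not separate the degree $2k+2$ subscheme $\xi+\xi'$, which is condition $(a)$.

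The main subtlety is the interplay between the divisor $D_{k+1,k+1}$ and the isomorphism of Lemma \ref{lem:pullbackdeltaviaaddmap}. Since this is an isomorphism of line bundles, vanishing of the pulled-back section at a point is a pointwise statement and is unaffected by how it is expressed. The product with $S_{(\alpha,\alpha')}$, a section of a bundle twisted by $+D_{k+1,k+1}$, lands in the untwisted bundle $A\boxtimes A$. The value of the product at a point is then the product of the values in the fibers, with fibers tensoring compatibly, including at points of $D_{k+1,k+1}$. So no cancellation of zeros against poles occurs, and the value at $(\xi,\xi')$ vanishes exactly when one of the two factors does. This gives the stated equivalence.
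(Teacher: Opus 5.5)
Your proposal is correct and follows the paper's proof: you write $\phi(p_0\wedge\dots\wedge p_{2k+1})$ as the product of $P(p_0\wedge\dots\wedge p_{2k+1})=\sigma^*{\det}_{2k+2,L}(p_0\wedge\dots\wedge p_{2k+1})$ and $S_{(\alpha,\alpha')}$. You then apply Lemma~\ref{lem:A(p_1...p_m)(D)=0} on $C_{2k+2}$ to get $(a)$ and Proposition~\ref{prop:szego} to get $(b)$, exactly as the paper does. Your remarks are also correct and make explicit what the paper leaves implicit: both factors are honest sections of line bundles, so no zero--pole cancellation can occur along $D_{k+1,k+1}$, and the $(\xi,\xi)$ in the defining formula should read $(\xi,\xi')$.
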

\begin{proof}
	By construction we see that $\phi(p_1\wedge \dots \wedge p_{2k+2})(\xi,\xi') = 0$ if and only if $P(p_0\wedge \dots \wedge_{2k+1})(\xi,\xi')=\det_{k+1,L}(p_0\wedge \dots \wedge p_{2k+1})(\xi+\xi')=0$ or $S_{(\alpha,\alpha')}(\xi,\xi')=0$. Lemma \ref{lem:A(p_1...p_m)(D)=0} shows that the first case corresponds to $(a)$ while Proposition \ref{prop:szego} shows that the second case corresponds to $(b)$.
\end{proof}

The following is the main result of this section.

\begin{theorem}\label{thm:detrep}
	Assume that $r\geq 2k+2$ and that \[d:=h^0(C_{k+1},A_{k+1,L\otimes \alpha}) = h^0(C_{k+1},A_{k+1,L\otimes \alpha'}) = \deg(\Sigma_k).\] The linear map
	\[  \phi = \phi_{k,V,\alpha} \colon \wedge^{2k+2} V \longrightarrow H^0(C_{k+1},A_{k+1,L\otimes \alpha}) \otimes H^0(C_{k+1},A_{k+1,L\otimes \alpha'}) \]
	is an admissible determinantal representation of $\Sigma_k$ of degree $d$, whose corresponding Ulrich sheaf , that we denote by $\sA_{k,L\otimes \alpha}$, has rank one. If $\alpha$ is a theta-characteristic, this is a symmetric representation and $\sA_{k,L\otimes \alpha}$ is a symmetric Ulrich sheaf.
\end{theorem}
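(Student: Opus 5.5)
The plan is to apply Theorem \ref{thm:admrepresentation} with $X=\Sigma_k$ and $n=2k+1$. Since $\beta_{V,k}\colon B^k\dashrightarrow\Sigma_k$ is birational by Lemma \ref{lem:beta=bir}, I will pull everything back to $B^k$ and then push forward. I take $\sF$ to be the rank one torsion-free sheaf on $\Sigma_k$ obtained as $\beta_{V,k,*}\pi_k^*A_{k+1,L\otimes\alpha}$ (restricting to the open locus where $\beta$ is a morphism if $V$ is not $k$-very ample). Define $\sF'$ in the same way using $\alpha'$. Global sections then give $H^0(\Sigma_k,\sF)\supseteq H^0(C_{k+1},A_{k+1,L\otimes\alpha})$, and I will arrange the dimension count so that $h^0=d$, invoking the hypothesis $d=\deg\Sigma_k$ (if needed, I replace $\sF$ by the image of the evaluation map, as the conclusion of Theorem \ref{thm:admrepresentation} allows). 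A point $x\in\Sigma_k$ general lies on a unique secant $k$-plane $\langle\xi\rangle$, and evaluating a section at $x$ is the same as evaluating at $\xi\in C_{k+1}$. So $\phi(p_0\wedge\cdots\wedge p_{2k+1})(x,x')$ is governed by Lemma \ref{lem:phivanishes} applied to the corresponding $\xi,\xi'$.

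To verify (i), take $p_0,\dots,p_{2k+1}$ with no common zero on $\Sigma_k$ and a general fiber $\{x_1,\dots,x_d\}$ of $\pi$. Each $x_i$ lies on $\langle\xi_i\rangle$, and $x_i,x_j$ lie in the same fiber exactly when the $2k+2$ sections, restricted to $\langle\xi_i\rangle\cup\langle\xi_j\rangle$, have a dependency. This means $W=\langle p_0,\dots,p_{2k+1}\rangle$ fails to separate $\xi_i+\xi_j$: the span $\langle \xi_i+\xi_j\rangle$ meets the center $\Lambda$, or equivalently the two secant planes map to intersecting planes. So for $i\neq j$, condition (a) of Lemma \ref{lem:phivanishes} holds and $\phi(x_i,x_j)=0$. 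For $i=j$, (b) fails because $h^0(\alpha)=0$. Condition (a) also fails: $W$ separates $2\xi_i$ for general fibers, since otherwise the fiber would be tangent/ramified, and a general fiber consists of reduced points with $\xi_i$ reduced. Hence $\phi(x_i,x_i)\neq 0$.

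For (ii), take $x\in\Sigma_k$ general, on $\langle\xi\rangle$, and general $p_i$ vanishing at $x$. Then for every $\xi'$ the sections of $W$ restricted to $\xi+\xi'$ have a nontrivial dependency, because the point $x\in\langle\xi\rangle\subseteq\langle\xi+\xi'\rangle$ gives a linear relation among the evaluations. So (a) holds for all $\xi'$ and $\phi(x,-)=0$. The statement about the $d-1$ remaining fiber points follows as in (i).

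Symmetry when $\alpha$ is a theta characteristic follows from Remark \ref{rem:szegosymm}, together with $P(\xi,\xi')=P(\xi',\xi)$, which holds because $\sigma$ is symmetric. Then $\sF=\sF'$ and the representation lies in $\wedge^{2k+2}V^\vee\otimes\Sym^2U$.

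The main obstacle I expect is the rigorous identification of fibers of $\pi$ with non-separated pairs $\xi_i+\xi_j$. In particular I need $\langle \xi_i+\xi_j\rangle\cap\Lambda\neq\emptyset$ to hold exactly, and I need the nonvanishing of the diagonal entries, which requires $W$ to separate $2\xi_i$. I would handle this via the Gaussian map: $P(\xi,\xi)$ is essentially $G_{k+1,L}$ through the doubling map (Lemma \ref{lem:doublingpullback}), and its vanishing means $d\pi$ degenerates at $x_i$. A general fiber avoids the ramification locus, so this vanishing does not occur there.
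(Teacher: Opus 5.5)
Your proposal is essentially the paper's proof: apply Theorem~\ref{thm:admrepresentation} to $\beta_{V,k,*}\pi_k^*A_{k+1,L\otimes\alpha}$. The off-diagonal zeros in (i) and the vanishing of $\phi(p_0\wedge\cdots\wedge p_{2k+1})(\xi,-)$ in (ii) come from non-separation via Lemma~\ref{lem:phivanishes}. The diagonal nonvanishing comes from $W$ separating $2\xi_i$ at an unramified fiber (the paper phrases this via Terracini's lemma) together with $S_{(\alpha,\alpha')}(\xi_i,\xi_i)\neq 0$, and symmetry comes from Remark~\ref{rem:szegosymm}.

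The one soft spot is the sheaf itself, and it has two parts.

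\begin{itemize}
\item The identification $H^0(\Sigma_k,\sF)\cong H^0(B^k,\pi_k^*A_{k+1,L\otimes\alpha})\cong H^0(C_{k+1},A_{k+1,L\otimes\alpha})$ is an equality, not just an inclusion $\supseteq$. Your fallback of passing to the image of the evaluation map would not lower $h^0$ anyway.
\item When $V$ is not $k$-very ample, pushing forward along $\beta_{V,k}$ restricted to its domain of definition is not a proper pushforward and need not give a coherent sheaf. The paper instead resolves the indeterminacy by $\varepsilon_k\colon\widetilde{B}_k\to B^k$ and uses $\varepsilon_{k,*}\sO_{\widetilde{B}_k}=\sO_{B^k}$ to keep $h^0=d$.
\end{itemize}
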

\begin{proof}
	We first treat the case when $V$ is $k$-very ample so that there is  the well-defined morphism $\beta_k\colon B_k \longrightarrow \Sigma_k$ and the  coherent sheaf $\widetilde{\sA}_{k,L\otimes \alpha} = \beta_{k,*}(\pi_k^* A_{k+1,L\otimes \alpha})$ on $\Sigma_k$. By construction, $H^0(\Sigma_k,\widetilde{\sA}_{k,L\otimes \alpha}) \cong H^0(B_k,\pi_k^*A_{k+1,L\otimes \alpha}) \cong H^0(C_{k+1},A_{k+1,L\otimes \alpha})$. Furthermore, if $x \in \Sigma_k$ is a general point, then the fiber $\beta^{-1}(x)$ is a single point, meaning that there is a unique $\xi\in C_{k+1}$ such that $x\in \langle \xi \rangle$. The evaluation of a section of $H^0(\Sigma_k,\widetilde{\sA}_{k,L\otimes \alpha})$ at $x$ corresponds to the evaluation of a section of $H^0(C_{k+1},A_{k+1,L\otimes \alpha})$ at $\xi$. To apply Theorem \ref{thm:admrepresentation}, we first observe that $\widetilde{\sA}_{k,L\otimes \alpha}$ is torsion-free and of rank one because $\beta_k$ is birational.  Now we check the two properties:
	\medskip

	\noindent
	(i)  Suppose that $p_0, \ldots, p_{2k+1}$ have no common zero on $\Sigma_k$ and let
	$
	\pi  \colon \Sigma_k \longrightarrow \nP^{2k+1},
	$
	be the corresponding outer projection,
	which is a finite morphism of degree $d$. Consider a general fiber $\{y_1, \ldots, y_d\}$. We can assume that there are unique $\xi_1, \ldots, \xi_d \in C_{k+1}$ such that $y_i \in \langle \xi_i \rangle$ for all $1 \leq i \leq d$. We may also assume that each $\xi_i$ consists of $k+1$ distinct points and that $\pi$ is an immersion around $y_1, \ldots, y_d$. We want to show that
	$$
	\phi(p_0 \wedge \cdots \wedge p_{2k+1})(\xi_i, \xi_j) =
	\begin{cases} \text{nonzero} & \text{if $i=j$} \\ \text{zero} & \text{if $i \neq j$} \end{cases}.
	$$
	First, assume that $i \neq j$. Note that $\pi$ sends $\langle \xi_i + \xi_j \rangle$ to a smaller dimensional linear subspace of $\nP^{2k+1}$. This means that $\langle p_0,\dots,p_{2k+1}\rangle$ does not separate $\xi_i+\xi_j$. By Lemma \ref{lem:phivanishes} we get $\phi(p_0 \wedge \cdots \wedge p_{2k+1})(\xi_i, \xi_j) = 0$. Next, assume that $i=j$. Terracini's lemma tells us that the projective tangent space to $\Sigma_{k}$ at $y_i$ is $\langle 2\xi_i \rangle$. As $\pi$ is an immersion at $x_i$, this means that $\langle p_0,\dots,p_{2k+1} \rangle$ separates $2\xi_i$. By Lemma \ref{lem:phivanishes} it is then enough to check that $S_{(\alpha,\alpha')}(\xi_i,\xi_i)\ne 0$ and this follows from Proposition \ref{prop:szego}.
	\medskip

	\noindent
	(ii) Suppose that $p_0, \ldots, p_{2k+1}$ have a common zero at a point $y$ on $\Sigma_k$. There is $\xi \in C_{k+1}$ such that $y\in \langle \xi \rangle$. This means that $\langle p_0,\dots,p_{2k+1} \rangle$ does not separate $\xi$, so that it does not separate $\xi+\xi'$ either, for any  $\xi'\in C_{k+1}$. Then Lemma \ref{lem:phivanishes} shows that
	$$
	\phi(p_0 \wedge \cdots \wedge p_{2k+1})(\xi, \xi')=0~~\text{ for any $\xi' \in C_{k+1}$}.
	$$
	Furthermore, if $x\in X$ is a general point and if $p_0,\dots,p_{2k+1}$ have a unique common zero $x$ on $\Sigma_k$, then they induce an inner projection $\pi\colon \Sigma_k \dashrightarrow \nP^{2k+1}$ of degree $d-1$. If $\{y_1,\dots,y_{d-1}\}$ is a general fiber, and if $\xi_1,\dots,\xi_{d-1} \in C_{k+1}$ are the unique schemes such that $y_i \in \langle \xi_i \rangle$, then the same arguments used for point (i) show that
	$$
	\phi(p_0 \wedge \cdots \wedge p_{2k+1})(\xi_i, \xi_j) =
	\begin{cases} \text{nonzero} & \text{if $i=j$} \\ \text{zero} & \text{if $i \neq j$} \end{cases}.
	$$
	which is what we needed to prove.
	\medskip

	\noindent
	The previous discussion shows that we can apply Theorem \ref{thm:admrepresentation}, so that $\phi$ induces an admissible determinantal representation of degree $d$ of $\Sigma_d$, whose corresponding Ulrich sheaf $\sA_{k,L\otimes \alpha}$ is the image of the evaluation map
	\[  H^0(C_{k+1},A_{k+1,L\otimes \alpha}) \otimes \sO_{\Sigma_k} \longrightarrow \widetilde{\sA}_{k,L\otimes \alpha}. \]
	\medskip

	\noindent
	Assume now that $V$ is not necessarily $k$-very ample. Then the map $\beta_k\colon B_k \dashrightarrow \Sigma_k$ is only rational, but it can be resolved via two birational projective maps $\varepsilon_k \colon \widetilde{B}_k \to B_k$ and $\widetilde{\beta}_k\colon \widetilde{B}_k \to \Sigma_k$, such that $\widetilde{\beta}_k = \beta_k \circ \varepsilon_k$. Notice that  $\varepsilon_{k,*}\sO_{\widetilde{B_k}} = \sO_{B_k}$, so that  $H^0(\widetilde{B}_k,\varepsilon_k^*\pi_k^*A_{k+1,L\otimes \alpha}) \cong H^0(B_k,\pi_k^*A_{k+1,L\otimes \alpha}) \cong H^0(C_{k+1},A_{k+1,L\otimes \alpha})$. Consider now the coherent sheaf $\widetilde{\sA}_{k,L\otimes \alpha} = \widetilde{\beta}_{k,*}(\varepsilon_k^* \pi_k^* A_{k+1,L\otimes \alpha})$ on $\Sigma_k$. We see that $H^0(\Sigma_k,\widetilde{\sA}_{k,L\otimes \alpha}) \cong H^0(C_{k+1},A_{k+1,L\otimes \alpha})$, and then we can use the same proof of the case when $V$ is $k$-very ample.
	\medskip

	\noindent
	Finally, if $\alpha$ is a theta characteristic, Remark \ref{rem:szegosymm}, shows that the Szeg\H{o} kernel $S_{(\alpha,\alpha)}$ is symmetric. Furthermore, the addition map $\sigma\colon C_{k+1}\times C_{k+1} \to C_{2k+2}$ is also symmetric, so that for any $p_0,\dots,p_{2k+1}\in V$ we see that \[ \phi(p_0\wedge \dots \wedge p_{2k+1}) = \sigma^*\det_{2k+2,L}\left( p_0 \wedge \dots \wedge p_{2k+1}  \right)\cdot S_{(\alpha,\alpha)} \]  is a symmetric element of $H^0(C_{k+1},A_{k+1,L\otimes \alpha}) \otimes H^0(C_{k+1},A_{k+1,L\otimes \alpha})$. This proves that $\phi$ yields a symmetric determinantal representation.
\end{proof}

\begin{remark}
	We have defined $\phi$ via the Szeg\H{o} kernels but we could have used Cauchy kernels instead. Indeed
	\begin{align*}
	\phi(p_0\wedge \dots \wedge p_{2k+1})(\xi,\xi') &=  P(p_0\wedge \dots \wedge p_n)(\xi,\xi') \cdot S_{(\alpha,\alpha')}(\xi,\xi') \\
	& = P(p_0\wedge \dots \wedge p_n)(\xi,\xi')  \cdot E(\xi,\xi')\cdot K_{(\alpha,\alpha')}(\xi,\xi') \\
	& = Q(p_0\wedge \dots \wedge p_{2k+1})(\xi,\xi')\cdot K_{(\alpha,\alpha')}(\xi,\xi'),
	\end{align*}
	where
	\[ Q(p_0\wedge \dots \wedge p_{2k+1})(\xi,\xi') =P(p_0\wedge \dots \wedge p_n)(\xi,\xi')  \cdot E(\xi,\xi').\]
This is the way taken by \cite{shamovichvinnikov} in the case $k=0$, and it is of course equivalent to the definition with the Szeg\H{o} kernels, but in the rest of the paper we will keep using Szeg\H{o} kernels rather than Cauchy kernels.
\end{remark}

Theorem \ref{thm:detrep}, gives a way to construct an admissible determinantal representation, or equivalently an Ulrich sheaf, if  $\deg(\Sigma_k)=h^0(C_{k+1},A_{k+1,L\otimes \alpha})=h^0(C_{k+1},A_{k+1,L\otimes \alpha'})$.  This condition can be checked directly for rational normal curves.

\subsection{Explicit admissible determinantal representations for secant varieties of rational normal curves} \label{subsec:explicitdetrep}
Here we show Theorem \ref{thm:rat_norm}.
On $C = \nP^1$, let $t$ be the rational function which has a simple pole at $(0:1)$ and a simple zero at $(1:0)$ but has no other pole or zero. Then $\nP^1 = \nA^1 \cup \{\infty\}$ where $\infty = \{(0:1)\}$ and $t$ serves as an affine coordinate on $\nA^1$. We can naturally identify $H^0(\nP^1,\sO_{\nP^1}(n))$ with the vector space $\kk[t]_{\leq n}$ of univariate polynomials of degree at most $n$. For any $m\geq 1$, there is an isomorphism $(\nP^1)_m \cong \nP^m$, which is described as follows: if $t_1,\dots,t_m \in \nA^1$ we consider the polynomial
	\[ (t-t_1)\cdots (t-t_m) = t^m - e_1(t_1,\dots,t_m)t^{m-1}+ \dots +(-1)^m e_m(t_1,\dots,t_m), \]
	where the $e_i(t_1,\dots,t_m)$ are the elementary symmetric functions. Then the isomorphism is given by
	\[ (\nP^1)_m \longrightarrow \nP^m, \qquad t_1+\dots+t_m \longmapsto [1,e_1(t_1,\dots,t_m),\dots,e_m(t_1,\dots,t_m)] . \]
	More precisely, this is only defined over $(\nA^1)_m$ but it can be extended to the whole of $(\nP^1)_m$. The fact that this is indeed an isomorphism is the fundamental theorem of symmetric functions.
	This realizes $\nP^m$ as the projective space whose points correspond to polynomials $P(t)\in \kk[t]_{\leq m}$, up to rescaling. The locus $\Delta_m \subseteq (\nP^1)_m$ corresponds to polynomials with a double root, meaning that the resultant of the polynomial with its derivatives vanishes. This resultant is an expression of degree $2(m-1)$ in the coefficients, so that $\sO_{(\nP^1)_m}(\delta_m) \cong \sO_{\nP^m}(m-1)$. Furthermore $S_{m,\sO_{\nP^1}(n)} \cong \sO_{\nP^m}(n)$ so that $N_{m,\sO_{\nP^1}(n)} \cong \sO_{\nP^m}(n-m+1)$ and $A_{m,\sO_{\nP^1}(n)} \cong \sO_{\nP^m}(n-2m+2)$. Finally, we have an identification
	\begin{align*}
		H^0(\nP^m,\sO_{\nP^m}(d)) \cong \kk[e_1,\dots,e_m]_{\leq d}.
	\end{align*}
	This space has a particularly nice basis corresponding to partitions $\lambda$ fitting into an $m\times d$ box, meaning that
	\[ \lambda = (\lambda_1,\dots,\lambda_m), \quad d\geq \lambda_1 \geq \lambda_2  \geq \dots \geq \lambda_m \geq 0. \]
	Given such a partition, the corresponding Schur polynomial $s_{\lambda}(t_1,\dots,t_m)$ is a symmetric function of $t_1,\dots,t_m$ that can be expressed explicitly in terms of the elementary symmetric polynomials: let $\lambda' = (m\geq \lambda'_1 \geq \dots \geq \lambda_d' \geq 0)$ be the partition obtained by transposing the Young diagram of $\lambda$. Then the dual Jacobi--Trudi formula states that
	\begin{equation}\label{eq:dualjacobitrudi}
		s_{\lambda} =
		\det
		\begin{pmatrix}
			e_{\lambda'_1} & e_{\lambda'_1+1} & \dots & e_{\lambda'_1+(d-1)} \\
			e_{\lambda'_2-1} & e_{\lambda'_2} & \dots & e_{\lambda'_2+(d-2)} \\
			\vdots & \vdots & \ddots & \vdots \\
			e_{\lambda'_d-(d-1)} & e_{\lambda'_d-(d-2)} & \dots & e_{\lambda'_d}
		\end{pmatrix},
	\end{equation}
	where one sets $e_0(t_1,\dots,t_m)=1$ and $e_{i}(t_1,\dots,t_m)=0$ if $i<0$ or $i>m$. The expression \eqref{eq:dualjacobitrudi} shows that if $\lambda$ is a partition fitting in an $m\times d$ box, then $s_{\lambda}$ is a polynomial of degree at most $d$ in $e_1,\dots,e_m$. Furthermore, there are $\binom{d+m}{m}$ such partitions and the corresponding Schur polynomials $s_{\lambda}$ are linearly independent. This shows that
	\[ \left\{ s_{\lambda} \,|\, \lambda \text{ in a } (m\times d)\text{-box } \right\} \text{ is a basis of } \kk[e_1,\dots,e_m]_{\leq d} \cong H^0(\nP^m,\sO_{\nP^m}(d)). \]
	Consider now the embedding $C\hookrightarrow \nP^n = \nP(V_n)$ given by $V_n = H^0(\nP^1,\sO_{\nP^1}(n))$. The image is a rational normal curve of degree $n$. We assume $n\geq 2k+2$, so that the secant $\Sigma_k=\Sigma_k(\nP^1,\sO_{\nP^1}(n))$ is a proper subvariety of $\nP^n$ of dimension $2k+1$. We also fix the unique line bundle $\alpha=\sO_{\nP^1}(-1)$ with no cohomology and we want to compute the map  
	\[ \phi=\phi_{V_n,k,\alpha} \colon \wedge^{2k+2}V_n \longrightarrow \begin{matrix} H^0((\nP^1)_{k+1},A_{k+1,\sO_{\nP^1}(n-1)}) \\ \otimes \\ H^0((\nP^1)_{k+1},A_{k+1,\sO_{\nP^1}(n-1)}) \end{matrix} \cong 
	\begin{matrix} H^0(\nP^{k+1},\sO_{\nP^{k+1}}(n-1-2k)) \\ \otimes \\ H^0(\nP^{k+1},\sO_{\nP^{k+1}}(n-1-2k) \end{matrix}\] 
	of Theorem \ref{thm:admrepresentation}. The first step is the isomorphism:
	\[ {\det}_{2k+2}\colon \wedge^{2k+2}V_n \longrightarrow H^0((\nP^1)_{2k+2},N_{2k+2,\sO_{\nP^1}(n)}) \cong H^0(\nP^{2k+2},\sO_{\nP^{2k+2}}(n-2k-1)) . \]
	Let $\lambda=(\lambda_1,\dots,\lambda_{2k+2})$ be a partition fitting into a $(2k+2)\times (n-2k-1)$ box. For any such partition we set
	\[  \wedge^{\lambda}t := t^{\lambda_1+2k+1}\wedge t^{\lambda_2+2k} \wedge \dots \wedge t^{\lambda_{2k+1}+1} \wedge t^{\lambda_{2k+2}} \in \wedge^{2k+2}V_n\]
	and in this way we get a basis of $\wedge^{2k+2}V_n$. Then it holds that
	\[ {\det}_{2k+2}(\wedge^{\lambda}t) = \frac{\det
		\begin{pmatrix}
			t_1^{\lambda_1+2k+1} & t_2^{\lambda_1+2k+1} & \dots & t_{2k+2}^{\lambda_1+2k+1} \\
			t_1^{\lambda_2+2k} & t_2^{\lambda_2+2k} & \dots & t_{2k+2}^{\lambda_2+2k} \\
			\vdots & \vdots & \ddots & \vdots  \\
			t_1^{\lambda_{2k+2}} & t_2^{\lambda_{2k+2}} & \dots & t_{2k+2}^{\lambda_{2k+2}}
	\end{pmatrix}}{\prod_{1\leq i < j \leq 2k+2}(t_i-t_j)}  = s_{\lambda}(t_1,\dots,t_{2k+2}),
	\]
	where the last equality is the bialternant formula for Schur polynomials. Now we want to compute the pullback $\sigma_{k+1,k+1}^*s_{\lambda}$ along the addition map $\sigma_{2k+2}\colon \nP^{k+1} \times \nP^{k+1} \longrightarrow \nP^{2k+2}$. If we use the basis of $H^0((\nP^1)_{k+1},A_{k+1,\sO_{\nP^1}(n-1)}) \cong H^0(\nP^{k+1},\sO_{\nP^{k+1}}(n-2k-1))$ given by the partitions $\mu = (\mu_1,\dots,\mu_{k+1})$ contained in a $(k+1)\times (n-2k-1)$ box, we see from the coproduct formula of \cite[Formula (5.9), page 72]{macdonald} that
	\[ s_{\lambda}(t_1,\dots,t_{k+1},t_{k+2},\dots,t_{2k+2}) = \sum_{\mu,\nu} c^{\lambda}_{\mu\nu} \cdot s_{\mu}(t_1,\dots,t_{k+1})s_{\nu}(t_{k+2},\dots,t_{2k+2}),\]
	where the $c^{\lambda}_{\mu,\nu}$ are the Littlewood--Richardson coefficients. This shows that $\sigma_{k+1,k+1}^*(s_{\lambda}) = \sum_{\mu,\nu} c^{\lambda}_{\mu,\nu}\cdot ( s_{\mu}\boxtimes s_{\nu})$. Finally the Szeg\H{o} kernel in our case is a section of $(S_{k+1,\sO_{\nP^1}(-1)}\boxtimes S_{k+1,\sO_{\nP^1}(-1)}) \otimes \sO_{\nP^{k+1} \times \nP^{k+1}}(D_{k+1,k+1}) \cong \sO_{\nP^{k+1}\times \nP^{k+1}}$ so that $S_{(\alpha,\alpha)}=1$ and then
	\[ \phi(\wedge^{\lambda}t) = \sum_{\mu,\nu} c^{\lambda}_{\mu,\nu} \cdot (s_{\mu}\boxtimes s_{\nu}). \]
	
	\begin{remark}\label{rmk:coproduct}
	More intrinsically, we can rephrase this by saying that the map $\phi$ corresponds to the coproduct of symmetric functions that takes a symmetric function in $2k+2$ variables and expresses it in terms of symmetric functions in $k+1$ variables.
	\end{remark}
	
	We can represent the map $\varphi$ by an explicit matrix. Let us fix the basis $(x_{\lambda})$ of $(\wedge^{2k+2}V_{n})^{\vee}$ which is dual to the basis $(\wedge^{\lambda}t)$ indexed by partitions fitting in a $(2k+2)\times (n-2k-1)$ box. We also fix the basis $s_{\mu}$ of $H^0(\nP^{k+1},\sO_{\nP^{k+1}}(n-2k-1))$ indexed by partitions fitting in a $(k+1)\times (n-2k-1)$ box. Then we can represent $\phi$ via the matrix $M^{n,k}(x_{\lambda})$ of linear forms such that
	\begin{equation}
		M^{n,k}_{\mu,\nu}(x_{\lambda}) = \sum_{\lambda} c^{\lambda}_{\mu,\nu}\cdot x_{\lambda}.
	\end{equation}
	This gives an admissible determinantal representation of $\Sigma_k=\Sigma_k(\nP^1,\sO_{\nP^1}(n))$, which is the statement of Theorem \ref{thm:rat_norm} from the introduction.
	
	\begin{theorem}[Theorem \ref{thm:rat_norm}]
		If $n\geq 2k+2$, the matrix $M^{n,k}(x_{\lambda})$ gives a symmetric admissible determinantal representation of rank one of $\Sigma_k$ 
	\end{theorem}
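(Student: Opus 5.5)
The plan is to deduce the statement directly from Theorem \ref{thm:detrep}, applied to $C=\nP^1$, $L=\sO_{\nP^1}(n)$, $V=V_n$ and $\alpha=\sO_{\nP^1}(-1)$. The computation preceding the statement already identifies $\phi_{V_n,k,\alpha}$, in the bases $(\wedge^\lambda t)$ and $(s_\mu)$, with the matrix $M^{n,k}(x_\lambda)$. The hypotheses of Theorem \ref{thm:detrep} to check are therefore three: $r=n\geq 2k+2$, $\alpha$ has no cohomology, and
\[ h^0((\nP^1)_{k+1},A_{k+1,L\otimes\alpha}) = h^0((\nP^1)_{k+1},A_{k+1,L\otimes\alpha'}) = \deg \Sigma_k . \]
The first holds by assumption. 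The second holds because $h^0(\sO_{\nP^1}(-1))=h^1(\sO_{\nP^1}(-1))=0$. Moreover $\alpha'=\omega_{\nP^1}\otimes\alpha^{-1}=\sO_{\nP^1}(-1)=\alpha$, so $\alpha$ is a theta characteristic, and the symmetric part of Theorem \ref{thm:detrep} gives symmetry. This is also visible directly from $c^\lambda_{\mu\nu}=c^\lambda_{\nu\mu}$.

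For the dimension count, we have $L\otimes\alpha=\sO_{\nP^1}(n-1)$ and the identification $A_{m,\sO_{\nP^1}(n-1)}\cong\sO_{\nP^m}(n-1-2m+2)$. With $m=k+1$ this gives $A_{k+1,L\otimes\alpha}\cong\sO_{\nP^{k+1}}(n-2k-1)$, so
\[ h^0 = \binom{n-2k-1+k+1}{k+1}=\binom{n-k}{k+1}. \]
Equivalently, this is the number of partitions in a $(k+1)\times(n-2k-1)$ box. It remains to see that $\deg\Sigma_k=\binom{n-k}{k+1}$ for the rational normal curve. This is the classical degree formula $\deg\Sigma_k(C,L)=\sum_{i=0}^{k+1}\binom{d-k-1-i}{k+1-i}\binom{g}{i}$ with $g=0$, $d=n$.

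If I prefer to avoid citing that formula, I would compute $\deg\Sigma_k$ via the birational morphism $\beta_k\colon B^k\to\Sigma_k$ from Lemma \ref{lem:beta=bir}. Here $\sO_{\nP^1}(n)$ is $k$-very ample since $n\geq 2k+2>k$. The degree is then the top self-intersection $\sO_{B^k}(1)^{2k+1}$, which equals the Segre class $s_{k+1}(E_{k+1,L})$ on $(\nP^1)_{k+1}\cong\nP^{k+1}$. A quicker route is to project from a general $\nP^{n-2k-2}$ and count, in a general fiber, the number of $\xi\in C_{k+1}$ whose span meets a general $(n-2k-1)$-plane. This is a standard Schubert-type count and gives $\binom{n-k}{k+1}$.

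Pinning down this degree is the only nonformal step, and I expect it to be the main obstacle. Everything else is the substitution into Theorem \ref{thm:detrep} already carried out in the text. Finally, Remark \ref{rmk:chowform} yields the Chow form statement, since the associated Ulrich sheaf has rank one.
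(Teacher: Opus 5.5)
Your strategy is the paper's. You plug $\alpha=\sO_{\nP^1}(-1)$ into Theorem~\ref{thm:detrep}, use the preceding computation identifying $\phi$ with $M^{n,k}$, get $h^0=\binom{n-k}{k+1}$, and reduce everything to $\deg\Sigma_k=\binom{n-k}{k+1}$. The paper simply quotes that last fact from \cite[Proposition 1]{soule}.

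\textbf{The gap is the degree formula you quote.} The expression $\sum_{i=0}^{k+1}\binom{d-k-1-i}{k+1-i}\binom{g}{i}$ is not the degree of $\Sigma_k$. At $g=0$, $d=n$ it gives $\binom{n-k-1}{k+1}$, not $\binom{n-k}{k+1}$. For $n=4$, $k=1$ it gives $1$, but $\Sigma_1$ of the rational normal quartic is a cubic. For $k=1$ it also fails to reproduce the classical $\binom{d-1}{2}-g$: for the genus $2$ sextic in the paper it gives $13$ instead of $8$. The correct classical formula is $\sum_{i=0}^{k+1}\binom{d-g-k-i}{k+1-i}\binom{g}{i}$, which does reduce to $\binom{n-k}{k+1}$ at $g=0$.

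\textbf{How to close it.} Your fallback Segre-class route works and is short enough to write out instead of citing anything.
\begin{itemize}
\item On $(\nP^1)_{k+1}\cong\nP^{k+1}$, multiplying by the tautological polynomial of degree $k+1$ gives an exact sequence $0\to\sO_{\nP^{k+1}}(-1)^{\oplus(n-k)}\to\sO_{\nP^{k+1}}^{\oplus(n+1)}\to E_{k+1,\sO_{\nP^1}(n)}\to 0$. Hence $c(E_{k+1,L})=(1-h)^{-(n-k)}$.
\item Since $n\ge 2k+2$, the map $\beta_k$ is a birational morphism, so $\deg\Sigma_k=\int_{B^k}c_1(\sO_{B^k}(1))^{2k+1}$.
\item This integral is the degree-$(k+1)$ part of $c(E^\vee_{k+1,L})^{-1}=(1+h)^{n-k}$, namely $\binom{n-k}{k+1}$.
\end{itemize}
With this in place of the misquoted formula, your argument is complete and matches the paper's.
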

	
\begin{proof}This follows from the previous discussion and from Theorem \ref{thm:admrepresentation} if we can show that $\deg(\Sigma_k)=h^0((\nP^1)_{k+1},A_{k+1,\sO_{\nP^1}(n-1)})$. We have seen in the previous discussion that $h^0((\nP^1)_{k+1},A_{k+1,\sO_{\nP^1}(n-1)})=h^0(\nP^{k+1},\sO_{\nP^{k+1}})(n-2k-1)) = \binom{n-k}{k+1}$, and it is a classical result that this is equal to the degree of $\Sigma_k$: see for example \cite[Proposition 1]{soule}\footnote{We note that \cite[Proposition 1]{soule} holds in the assumption that the embedding of the curve is $k$-very ample, even if the author does not mention this assumption explicitly. This is because the proof uses the fact that the map $\beta_k\colon B^k\to \Sigma_k$ is a morphism}.
\end{proof}

\begin{example}Following the previous notation, an admissible determinantal representation for the first secant variety of the rational normal curve of degree $5$ is given by the matrix
	\[
	 \begin{pmatrix}
		x_{{\emptyset}} & x_{{(1)}} & x_{{(2)}} & x_{{(1,1)}} & x_{{(2,1)}} & x_{{(2,2)}} \\
		x_{{(1)}} & x_{{(2)}} + x_{{(1,1)}} & x_{{(2,1)}} & x_{{(2,1)}} + x_{{(1,1,1)}} & x_{{(2,2)}} + x_{{(2,1,1)}} & x_{{(2,2,1)}} \\
		x_{{(2)}} & x_{{(2,1)}} & x_{{(2,2)}} & x_{{(2,1,1)}} & x_{{(2,2,1)}} & x_{{(2,2,2)}} \\
		x_{{(1,1)}} & x_{{(2,1)}} + x_{{(1,1,1)}} & x_{{(2,1,1)}} & x_{{(2,2)}} + x_{{(2,1,1)}} + x_{{(1,1,1,1)}} & x_{{(2,2,1)}} + x_{{(2,1,1,1)}} & x_{{(2,2,1,1)}} \\
		x_{{(2,1)}} & x_{{(2,2)}} + x_{{(2,1,1)}} & x_{{(2,2,1)}} & x_{{(2,2,1)}} + x_{{(2,1,1,1)}} & x_{{(2,2,2)}} + x_{{(2,2,1,1)}} & x_{{(2,2,2,1)}} \\
		x_{{(2,2)}} & x_{{(2,2,1)}} & x_{{(2,2,2)}} & x_{{(2,2,1,1)}} & x_{{(2,2,2,1)}} & x_{{(2,2,2,2)}}
	\end{pmatrix}.
	\]
\end{example}

If the curve $C$ has higher genus, it is difficult to check directly that the conditions of Theorem \ref{thm:admrepresentation} hold. What we do instead is to prove that a sheaf is Ulrich via cohomology vanishing, when the embedding is $k$-very ample.

\section{Ulrich sheaves on secants of curves}\label{sec:constulrich}
\noindent This section is devoted to showing the existence of rank one Ulrich sheaves on secant varieties of curves and thereby completing the proof of Theorem \ref{thm:main}. We also explain how to explicitly construct the minimal free resolution of the Ulrich sheaves in Subsection \ref{subsec:explicitminres}.

\subsection{Existence of Ulrich sheaves on secant varieties of curves}
We assume that  $C\subseteq \nP^r=\nP(V)$ is a smooth projective curve of genus $g$ over a field $\kk$ of characteristic zero,  embedded by a linear system $V\subseteq H^0(C,L)$ of a very ample line bundle $L=\sO_C(1)$. In this section, we assume $r\geq2k+2$ and that \emph{$V$ is $k$-very ample}.
We also assume to have a line bundle $\alpha$ on $C$ with $h^0(C,\alpha)=h^1(C,\alpha)=0$.

\medskip

Since $V$ is $k$-very ample, it generates the bundle $E_{k+1,L}$ on $C_{k+1}$. Thus
$$
\beta_{V, k} \colon B^k \longrightarrow \Sigma_k = \Sigma_k(C, V; L)
$$
is a morphism, where we recall $B^k=B^k(L)=\nP(E_{k+1,L})$ with the canonical projection
$$
\pi_k \colon B^k \longrightarrow C_{k+1}.
$$
The main result of this section is the following theorem.

\begin{theorem}\label{thm:ulrichdeg>=2g+2k+1}
If $V$ is $k$-very ample, then the torsion-free sheaf 
$$
\widetilde{\mathscr{A}}_{k, L \otimes \alpha}:=\beta_{V,k,*} \pi_k^* A_{k+1,L \otimes \alpha}.
$$
is an Ulrich sheaf on $\Sigma_k$. If $r \geq 2k+2$, then this Ulrich sheaf has rank one and it coincides with the Ulrich sheaf $\sA_{k,L\otimes \alpha}$ of Theorem \ref{thm:detrep}.
\end{theorem}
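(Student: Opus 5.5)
Since $\beta_{V,k}$ is a morphism onto $\Sigma_k$ and $\sO_{B^k}(1)=\beta_{V,k}^*\sO_{\Sigma_k}(1)$, the plan is to use the projection formula to compute the Ulrich cohomology on $B^k$, and then push it down via the Leray spectral sequence. For this to work we need $R^i\beta_{V,k,*}\pi_k^*A_{k+1,L\otimes\alpha}=0$ for $i>0$. Granting that vanishing for now, we get
\[ H^i(\Sigma_k,\widetilde{\sA}_{k,L\otimes\alpha}(-j)) \cong H^i(B^k,\pi_k^*A_{k+1,L\otimes\alpha}\otimes \sO_{B^k}(-j)), \]
and the task is to show these groups vanish for all $i$ and all $1\le j\le 2k+1=\dim\Sigma_k$. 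The range $1\le j\le \dim\Sigma_k$ is exactly what the two Ulrich conditions require together. On the projective bundle $\pi_k\colon B^k=\nP(E_{k+1,L})\to C_{k+1}$ of relative dimension $k$, we have $R\pi_{k,*}\sO_{B^k}(-j)=0$ for $1\le j\le k$, so these twists vanish immediately.

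For $k+1\le j\le 2k+1$ we use relative duality: $R^k\pi_{k,*}\sO(-j)\cong (\Sym^{j-k-1}E_{k+1,L})^\vee\otimes\det E_{k+1,L}^\vee$. Since $\det E_{k+1,L}=N_{k+1,L}$ and $A_{k+1,L\otimes\alpha}=N_{k+1,L}\otimes N_{k+1,\alpha}(-\delta)$-type identities hold, we need
\[ H^{i}\big(C_{k+1},\ (\Sym^{m}E_{k+1,L})^\vee\otimes A_{k+1,L\otimes\alpha}\otimes N_{k+1,L}^{-1}\big)=0 \quad\text{for }0\le m\le k,\ \text{all } i. \]
Two routes suggest themselves. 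The first is to resolve $(\Sym^m E)^\vee$ via the Koszul complex of $0\to M_{k+1,V}\to V\otimes\sO\to E_{k+1,L}\to 0$, which brings in $\wedge^j M$ and fits Proposition \ref{prop:rathmannvanishing}. The second, which I would try first because it is cleaner, works with $B^k$ directly and Serre duality. Here $\omega_{B^k}=\sO_{B^k}(-k-1)\otimes\pi_k^*(\omega_{C_{k+1}}\otimes\det E_{k+1,L})$ and $\omega_{C_{k+1}}=N_{k+1,\omega_C}$. Combining these with $A_{k+1,L\otimes\alpha}\otimes A_{k+1,L\otimes\alpha'}$ and $\det E=N_{k+1,L}$ should give
\[ \omega_{B^k}\otimes(\pi_k^*A_{k+1,L\otimes\alpha}(-j))^{-1}\cong \pi_k^*A_{k+1,L\otimes\alpha'}(j-2k-2). \]
Since $j\mapsto 2k+2-j$ maps $[k+1,2k+1]$ to $[1,k+1]$, this reduces everything to the case $j=k+1$ for $\alpha$ and $\alpha'$, because $1\le j\le k$ is already done. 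The case $j=k+1$ amounts to $H^{*}(C_{k+1},\pi_{k,*}\omega_{\pi_k}\otimes\ldots)$, i.e. to the total vanishing $H^*(C_{k+1},A_{k+1,\alpha}\otimes(\text{twist}))$. Concretely, $R^k\pi_*\sO(-k-1)=N_{k+1,L}^{-1}$, so we need $H^*(C_{k+1},A_{k+1,L\otimes\alpha}\otimes N_{k+1,L}^{-1})=H^*(C_{k+1},N_{k+1,\alpha}(-\delta))$. This is less clean than hoped, so I would verify it by pulling back to $C\times C_k$ or by induction on $k$ using $Z_{k-1}$. Alternatively, I would use Lemma \ref{lem:H^i(N)} after identifying the twist with $N_{k+1,\alpha}$, and in that case $h^i=\wedge^{k+1-i}H^0(\alpha)\otimes S^iH^1(\alpha)=0$ because $\alpha$ has no cohomology. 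Carrying out this identification correctly is the main bookkeeping obstacle.

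The other genuine obstacle is the vanishing of the higher direct images $R^i\beta_{V,k,*}\pi_k^*A_{k+1,L\otimes\alpha}$. For these I would use the Ulrich-criterion alternative (2) of Proposition \ref{prop:ulrich}, taking a general finite linear projection $p\colon\Sigma_k\to\nP^{2k+1}$. The composition $p\circ\beta_{V,k}$ is generically finite, and via Leray it suffices to show that $R(p\circ\beta)_*\pi_k^*A$ is trivial. The cohomology vanishing above already gives $H^i(B^k,\pi_k^*A(-j))=0$ for $1\le j\le 2k+1$, and this is exactly Beilinson's criterion for the pushforward complex to $\nP^{2k+1}$ to be $\sO^{\oplus h^0}$ in degree $0$. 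Hence $R^i(p\beta)_*=0$ for $i>0$, and since $p$ is finite, $R^i\beta_*=0$ for $i>0$. This removes the need for a separate fiber analysis.

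Finally, when $r\ge 2k+2$, Lemma \ref{lem:beta=bir} makes $\beta_{V,k}$ birational, so $\widetilde{\sA}$ has rank one and $\deg\Sigma_k=h^0=h^0(C_{k+1},A_{k+1,L\otimes\alpha})$ by the last part of Proposition \ref{prop:ulrich}. The same count applies to $\alpha'$ by symmetry, so Theorem \ref{thm:detrep} applies. Its Ulrich sheaf is the image of evaluation into $\widetilde{\sA}$, and an Ulrich sheaf is globally generated, so the two sheaves coincide.
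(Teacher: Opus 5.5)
Your proposal has a genuine gap in the range $k+2\le j\le 2k+1$: the Serre-duality shortcut on $B^k$ rests on an identity that is false for $k\geq 1$. Using $\omega_{B^k}\cong\sO_{B^k}(-k-1)\otimes\pi_k^*(N_{k+1,\omega_C}\otimes N_{k+1,L})$ and the exact identity $A_{k+1,L\otimes\alpha}=N_{k+1,L}\otimes N_{k+1,\alpha}$ (there is no extra $-\delta$), one gets
\[ \omega_{B^k}\otimes\bigl(\pi_k^*A_{k+1,L\otimes\alpha}(-j)\bigr)^{-1}\cong \pi_k^*S_{k+1,\alpha'}\otimes\sO_{B^k}(j-k-1). \]
This differs from your $\pi_k^*A_{k+1,L\otimes\alpha'}(j-2k-2)$ by $\sO_{B^k}(k+1)\otimes\pi_k^*A_{k+1,L}^{-1}$. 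That bundle is nontrivial for $k\geq 1$, since it has degree $k+1$ on the fibres of $\pi_k$; it is the class of the exceptional divisor $Z_{k-1}$. So there is no symmetry $j\leftrightarrow 2k+2-j$ on $B^k$ exchanging $\alpha$ and $\alpha'$, and the twists $k+2\le j\le 2k+1$ do not reduce to $1\le j\le k+1$.

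Your case $j=k+1$ is fine once the bookkeeping is corrected: it is $H^*(C_{k+1},N_{k+1,\alpha})=0$ by Lemma~\ref{lem:H^i(N)}. For $j\ge k+2$, what duality actually leaves you with is the vanishing of $H^{2k+1-i}(C_{k+1},\Sym^m E_{k+1,L}\otimes S_{k+1,\alpha'})$ for $1\le m=j-k-1\le k$. This is the real content of the theorem. The paper resolves $\Sym^mE_{k+1,L}$ by the Koszul complex in the $\wedge^\ell M_{k+1,L}$. It then proves $H^{q}(C_{k+1},\wedge^\ell M_{k+1,L}\otimes S_{k+1,\alpha'})=0$ for $q\ge\ell$ and $0\le\ell\le k$ (Lemma~\ref{lemma:wedgeMSvanishing}). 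That proof goes through \cite[Proposition 3.1]{NP} and the fact that $S_{k+1,\alpha'(-\xi)}$ has no cohomology when $\deg\xi\le k$.

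Your first route does not close this gap with Proposition~\ref{prop:rathmannvanishing} alone. After dualizing the Koszul complex, the terms are $\wedge^\ell M_{k+1,L}^\vee\otimes N_{k+1,\alpha}\cong\wedge^{\rank M_{k+1,L}-\ell}M_{k+1,L}\otimes A_{k+1,L\otimes\alpha}$, so Rathmann does kill their higher cohomology. But the chase for $H^*((\Sym^mE_{k+1,L})^\vee\otimes N_{k+1,\alpha})$ still needs the global-section statement $H^0(\wedge^\ell M_{k+1,L}^\vee\otimes N_{k+1,\alpha})=0$ for $1\le\ell\le m$. By Serre duality this is $H^{k+1}(\wedge^\ell M_{k+1,L}\otimes S_{k+1,\alpha'})=0$. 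That is not a positivity statement; it is exactly the top-degree case of Lemma~\ref{lemma:wedgeMSvanishing}.

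Your Beilinson argument for $R^i\beta_{V,k,*}\pi_k^*A_{k+1,L\otimes\alpha}=0$ takes the full range $1\le j\le 2k+1$ as input, so it inherits this gap. Once that vanishing is in place, it is a valid alternative to the paper's Lemma~\ref{lemma:DuBois-like-cond}, which instead proves $H^i(C_{k+1},\Sym^\ell E_{k+1,L}\otimes A_{k+1,L\otimes\alpha})=0$ for $\ell\gg0$ via Rathmann. One correction is needed: Beilinson only gives $R(p\circ\beta_{V,k})_*\pi_k^*A_{k+1,L\otimes\alpha}\cong\bigoplus_q H^q(B^k,\pi_k^*A_{k+1,L\otimes\alpha})\otimes\sO[-q]$, not something concentrated in degree $0$. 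You must still kill the terms with $q>0$. You can do this with $H^q(C_{k+1},A_{k+1,L\otimes\alpha})=0$ (the $\wedge^0M$ case of Lemma~\ref{lemma:cohvanMA}). Alternatively, when $\beta_{V,k}$ is generically finite, $R^q\beta_{V,k,*}$ is torsion and so cannot be a nonzero free sheaf.
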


To prove the theorem, we need to show that
$$
H^i(\nP^r, \widetilde{\mathscr{A}}_{k, L \otimes \alpha}(-i))=H^{i-1}(\nP^r, \widetilde{\mathscr{A}}_{k, L \otimes \alpha}(-i))=0~\text{ for $1 \leq i \leq 2k+1$}.
$$
We verify the required cohomology vanishing with a series of lemmas. 

\begin{lemma}\label{lemma:vanishingwedgeM}
Fix integers $i \geq 0$ and $j \geq 0$ and let $\mathscr{G}$ be a coherent sheaf on $C_{k+1}$ such that
$
H^{i+\ell}(C_{k+1},\wedge^{\ell} M_{k+1,L}\otimes \mathscr{G})=0 ~~\text{ for $0 \leq \ell \leq j$}
$.
Then
$
H^i(C_{k+1}, S^j E_{k+1,L} \otimes \mathscr{G})=0.
$
\end{lemma}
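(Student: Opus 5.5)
The plan is to use the standard Koszul-type resolution of the symmetric powers of $E_{k+1,L}$ obtained from the defining sequence of $M_{k+1,L}$. Since the hypothesis involves $M_{k+1,L}$, we are implicitly in the situation where $L$ is $k$-very ample, so that
\[ 0 \longrightarrow M_{k+1,L} \longrightarrow H^0(C,L)\otimes \sO_{C_{k+1}} \longrightarrow E_{k+1,L} \longrightarrow 0 \]
is an exact sequence of vector bundles. For a short exact sequence of bundles $0\to M\to W\otimes\sO\to E\to 0$ there is the long exact sequence
\[ 0 \to \wedge^{j} M \to \wedge^{j-1}M\otimes W \to \wedge^{j-2}M\otimes S^2W \to \cdots \to M\otimes S^{j-1}W \to S^j W\otimes \sO \to S^j E \to 0, \]
where $W$ denotes $H^0(C,L)$ and $S^{j-\ell}W$ is a trivial bundle. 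Its exactness is the classical Koszul complex for symmetric powers of a quotient, and it can be checked locally, where the sequence splits.

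First, I tensor this complex with $\mathscr{G}$. Exactness is preserved because all terms are locally free and the complex is locally split. This gives a resolution
\[ 0\to \wedge^j M_{k+1,L}\otimes\mathscr{G}\otimes S^0W \to \cdots \to \wedge^{\ell}M_{k+1,L}\otimes \mathscr{G}\otimes S^{j-\ell}W \to \cdots \to S^jW\otimes \mathscr{G} \to S^jE_{k+1,L}\otimes \mathscr{G}\to 0. \]

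Second, I split this resolution into short exact sequences and chase cohomology. Let $K_\ell$ denote the image (equivalently, the kernel at the next step) so that
\[ 0\to K_{\ell+1}\to \wedge^{\ell}M_{k+1,L}\otimes\mathscr{G}\otimes S^{j-\ell}W \to K_\ell\to 0, \]
with $K_0=S^jE_{k+1,L}\otimes\mathscr{G}$ and $K_j=\wedge^jM_{k+1,L}\otimes\mathscr{G}$. Descending induction on $\ell$ then shows $H^{i+\ell}(K_\ell)=0$. The case $\ell=j$ is the hypothesis. For the inductive step, $H^{i+\ell}$ of the middle term vanishes by the hypothesis, since $S^{j-\ell}W$ is a trivial bundle, and $H^{i+\ell+1}(K_{\ell+1})=0$ by induction. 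Hence $H^{i+\ell}(K_\ell)=0$, and at $\ell=0$ we get $H^i(S^jE_{k+1,L}\otimes\mathscr{G})=0$.

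This is equivalent to the standard hypercohomology spectral sequence argument, and no step seems a real obstacle. The only points to check are the exactness of the Koszul complex and the use of the trivial bundle $S^{j-\ell}W$; both are formal.
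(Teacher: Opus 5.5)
Your proof is correct and follows the paper's argument: the paper also tensors the Koszul resolution of $S^jE_{k+1,L}$ by the bundles $\wedge^{\ell}M_{k+1,L}\otimes S^{j-\ell}H^0(C,L)$ with $\mathscr{G}$, using that exactness is preserved because the complex consists of locally free sheaves, and then chases cohomology. Your descending induction on the syzygy sheaves $K_\ell$ spells out the chase that the paper leaves to the reader.
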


\begin{proof}
Since the linear system $V$ is  $k$-very ample, the complete linear system is $k$-very ample as well, and there is a short exact sequence of vector bundles
$$
0 \longrightarrow  M_{k+1,L} \longrightarrow  H^0(C,L)\otimes \sO_{C_{k+1}} \longrightarrow  E_{k+1,L} \to 0.
$$
For every $j \geq 0$, this induces an exact complex
\begin{multline*}
0 \longrightarrow \wedge^j M_{k+1,L} \longrightarrow H^0(C,L) \otimes \wedge^{j-1} M_{k+1,L} \longrightarrow
\cdots
\longrightarrow  S^{j-2} H^0(C,L) \otimes \wedge^{2} M_{k+1,L} \\
\longrightarrow  S^{j-1}H^0(C,L) \otimes M_{k+1,L}
\longrightarrow  S^j H^0(C,L) \otimes \sO_{C_{k+1}} \longrightarrow S^j E_{k+1,L} \longrightarrow 0.
\end{multline*}
which stays exact after tensoring with $\mathscr{G}$, since it is a complex of locally free sheaves.  Then the assertion follows from chasing through complexes.
\end{proof}

\begin{lemma}\label{lemma:cohvanMA}
For any $i>0,j\geq 0$ it holds that
$
H^i(C_{k+1}, \wedge^j M_{k+1,L} \otimes A_{k+1,L \otimes \alpha})=0.
$
\end{lemma}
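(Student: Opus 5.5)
The plan is to reduce the statement to Proposition \ref{prop:rathmannvanishing}, applied with $B = L$ and with the line bundle $L\otimes\alpha$ in place of $L$. That proposition says: if $B$ is $k$-very ample and $H^1(C,(L\otimes\alpha)\otimes B^{-1})=0$, then
\[ H^i(C_{k+1},\wedge^j M_{k+1,B}\otimes A_{k+1,L\otimes\alpha})=0 \quad \text{for all } i>0,\ j\geq 0. \]
With $B=L$ this is exactly the vanishing we want, since $M_{k+1,L}=M_{k+1,H^0(C,L)}$.

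We then check the two hypotheses.

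\textbf{$k$-very ampleness of $L$.} In this section $V\subseteq H^0(C,L)$ is assumed $k$-very ample. Enlarging the linear system to all of $H^0(C,L)$ preserves surjectivity of the evaluation maps $H^0(C,L)\to H^0(\xi,L_{|\xi})$ for every $\xi\in C_{k+1}$. Hence the complete linear system, i.e.\ $L$ itself, is $k$-very ample. In particular the bundle $M_{k+1,L}$ is well defined as the kernel of the surjection $H^0(C,L)\otimes\sO_{C_{k+1}}\to E_{k+1,L}$, as already used in the proof of Lemma \ref{lemma:vanishingwedgeM}.

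\textbf{The $H^1$ condition.} Here $(L\otimes\alpha)\otimes L^{-1}\cong\alpha$, and $h^1(C,\alpha)=0$ by the standing assumption that $\alpha$ has no cohomology. So this hypothesis holds automatically.

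Both hypotheses are satisfied, and the proposition gives the claimed vanishing for all $i>0$ and $j\geq 0$.

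The only delicate point is the base field. The results of Section \ref{sec:prelimsymmsec}, including Proposition \ref{prop:rathmannvanishing}, were stated over an algebraically closed field, whereas this section allows an arbitrary $\kk$ of characteristic zero. Cohomology commutes with flat base change to $\overline{\kk}$, and all the objects involved ($M_{k+1,L}$, $A_{k+1,L\otimes\alpha}$, and the vanishing of $H^0$ and $H^1$ of $\alpha$) are compatible with this base change. So we may pass to $\overline{\kk}$ and apply the proposition there. No genuine obstacle is expected: the substance lies in Rathmann's vanishing, which we take as given.
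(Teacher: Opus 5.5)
Your proposal is correct and follows the paper's proof: apply Proposition \ref{prop:rathmannvanishing} with $B=L$ and $L\otimes\alpha$ in place of $L$. The hypotheses are exactly the ones you check, namely that $L$ is $k$-very ample (since $V$ is) and that $H^1(C,\alpha)=0$. Your remark about passing to $\overline{\kk}$ by flat base change is a harmless extra that the paper does not spell out.
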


\begin{proof}
Since $L$ is $k$-very ample and $H^1(C,(L\otimes \alpha)\otimes L^{-1}) = H^1(C,\alpha)=0$, the assertion follows from Proposition \ref{prop:rathmannvanishing}.
\end{proof}

\begin{lemma}\label{lemma:wedgeMSvanishing}
	For any $j\geq 0$ and $0\leq \ell\leq k$ it holds that
	$$
	H^{j+\ell}(C_{k+1}, \wedge^{\ell} M_{k+1,L} \otimes S_{k+1,\alpha'})=0.
	$$
\end{lemma}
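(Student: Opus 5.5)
The plan is to reduce the statement to the vanishing of \emph{all} cohomology of $S_{k+1,\alpha'}$, and then to propagate this vanishing through a Koszul-type resolution of $\wedge^{\ell}M_{k+1,L}$. Two facts are used throughout. First, $h^0(C,\alpha')=h^1(C,\alpha')=0$. Second, Lemma \ref{lem:H^i(N)} gives
\[ H^i(C_{k+1},S_{k+1,\alpha'}) \cong S^{k+1-i}H^0(C,\alpha')\otimes \wedge^i H^1(C,\alpha') = 0 \quad\text{for all } i. \]
This settles the case $\ell=0$ directly.

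For $\ell\geq 1$ I use the exact sequence $0\to M_{k+1,L}\to H^0(C,L)\otimes\sO_{C_{k+1}}\to E_{k+1,L}\to 0$, which exists because $L$ is $k$-very ample. It gives the exact complex of locally free sheaves
\[ 0\to \wedge^{\ell}M_{k+1,L}\to \wedge^{\ell}H^0(C,L)\otimes\sO_{C_{k+1}}\to \wedge^{\ell-1}H^0(C,L)\otimes E_{k+1,L}\to\cdots\to S^{\ell}E_{k+1,L}\to 0, \]
which remains exact after tensoring with $S_{k+1,\alpha'}$. Splitting it into short exact sequences and chasing, exactly as in Lemma \ref{lemma:vanishingwedgeM}, I get
\[ H^{j+\ell}(\wedge^{\ell}M_{k+1,L}\otimes S_{k+1,\alpha'})=0 \quad\text{provided}\quad H^{j+\ell-1-m}(S^{m}E_{k+1,L}\otimes S_{k+1,\alpha'})=0 \ \text{ for } 0\leq m\leq \ell. \]
Here I also use the vanishing of the $m=0$ term shown above. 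Since $\ell\leq k$, it therefore suffices to prove:
\[ H^{\bullet}(C_{k+1},S^{m}E_{k+1,L}\otimes S_{k+1,\alpha'})=0 \quad\text{for } 0\le m\le k. \]

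For $m=1$ this follows from the projection formula. Since $E_{k+1,L}=\sigma_{k,1,*}(\sO_{C_k}\boxtimes L)$ and $\sigma_{k,1}^*S_{k+1,\alpha'}\cong S_{k,\alpha'}\boxtimes\alpha'$ by Lemma \ref{lem:pullbackdeltaviaaddmap}, we get
\[ H^{\bullet}(E_{k+1,L}\otimes S_{k+1,\alpha'})\cong H^{\bullet}(S_{k,\alpha'})\otimes H^{\bullet}(L\otimes\alpha'), \]
and $H^{\bullet}(S_{k,\alpha'})=0$ by Lemma \ref{lem:H^i(N)}. For general $m\le k$ I pass to $C^{k+1}$ via $q_{k+1}$. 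Since we are in characteristic zero, $H^{\bullet}(C_{k+1},\mathscr G\otimes S_{k+1,\alpha'})$ is the $\mathfrak S_{k+1}$-invariant part of $H^{\bullet}(C^{k+1},q_{k+1}^*\mathscr G\otimes \alpha'^{\boxtimes(k+1)})$. So it suffices to show that $H^{\bullet}(C^{k+1}, q_{k+1}^*S^mE_{k+1,L}\otimes\alpha'^{\boxtimes(k+1)})=0$. The pullback $q_{k+1}^*E_{k+1,L}$ has a filtration whose graded pieces are $\operatorname{pr}_i^*L$ twisted by $-\Delta_{1i}-\dots-\Delta_{i-1,i}$. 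Hence $q_{k+1}^*S^mE_{k+1,L}$ is filtered by line bundles $\boxtimes_i L^{a_i}$, twisted by diagonals, with $\sum a_i=m\le k$. In particular some factor has $a_i=0$.

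The step I expect to be the main obstacle is precisely this one: showing that each such graded piece has no cohomology. The idea is to project away from a factor $i$ with $a_i=0$. The corresponding factor is $\alpha'$, twisted by the diagonal divisors $\Delta_{hi}$, and the hope is that on fibers of the projection forgetting the other coordinates the restriction becomes $\alpha'$ or $\alpha'(x)$-type bundles with no cohomology. This fails naively because of the diagonal twists, since $\alpha'(x)$ has sections. The first thing I would try is to order the filtration so that the diagonal twists all attach to factors with $a_i>0$, keeping some zero-weight factor untwisted so that it contributes a pure $\alpha'$ Künneth factor and kills everything. If that ordering cannot be achieved, I would instead argue by induction on $k$ along $\sigma_{k,1}$, using $\sigma_{k,1}^*E_{k+1,L}$, which is an extension of $\sO_{C_k}\boxtimes L$ by $E_{k,L}(-D_{k,1})$, and reducing to the statement for $C_k$ with $m\le k-1$.
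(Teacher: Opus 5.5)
Your reduction is correct as far as it goes. The Koszul complex is exact, and the case $m=0$ and your projection-formula argument for $m=1$ are right. (The chase actually gives $H^{j+\ell-m}$, not $H^{j+\ell-1-m}$, but this is harmless since you ask for vanishing in all degrees.) So for $k\le 1$ you have a complete proof. For $k\ge 2$, however, you have only traded the lemma for an equivalent statement: Lemma \ref{lemma:vanishingwedgeM} with $\mathscr G=S_{k+1,\alpha'}$ derives $H^\bullet(C_{k+1},S^mE_{k+1,L}\otimes S_{k+1,\alpha'})=0$ for $m\le k$ from the lemma itself. The cases $2\le m\le k$, which carry all the content, are not proved.

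The route you sketch for those cases rests on a false claim. The full cohomology on $C^{k+1}$ does not vanish; only its $\mathfrak S_{k+1}$-invariant part does.
\begin{itemize}
\item For $k=m=1$: $q_2^*E_{2,L}\otimes(\alpha'\boxtimes\alpha')$ is an extension of the acyclic sheaf $\alpha'\boxtimes(L\otimes\alpha')$ by $((L\otimes\alpha')\boxtimes\alpha')(-\Delta)$. Since $R^1\pr_{1,*}((\sO_C\boxtimes\alpha')(-\Delta))\cong\alpha'$, the latter has $H^1\cong H^0(C,L\otimes(\alpha')^{\otimes 2})\neq 0$.
\item For $m=2$: take $C=\nP^1$, $L=\sO(n)$, $k=2$, so $\alpha'=\sO(-1)$ and $S_{3,\alpha'}=\sO_{\nP^3}(-1)$. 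Then $M_{3,L}\cong H^0(\sO_{\nP^1}(n-3))\otimes\sO_{\nP^3}(-1)$ and $q_{3,*}\sO_{(\nP^1)^3}\cong\sO\oplus\sO(-1)^{\oplus 4}\oplus\sO(-2)$. Hence the cohomology upstairs contains $H^1(\nP^3,S^2E_{3,L}(-2))^{\oplus 4}$. The resolution $0\to\wedge^2M_{3,L}\to H^0(L)\otimes M_{3,L}\to S^2H^0(L)\otimes\sO\to S^2E_{3,L}\to 0$ shows $H^1(\nP^3,S^2E_{3,L}(-2))\cong\wedge^2H^0(\sO_{\nP^1}(n-3))\neq 0$.
\end{itemize}
So no ordering of the filtration can make every graded piece acyclic. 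The $\alpha'(-x)$-type contributions you noticed are really there, and they cancel only after taking invariants.

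Your fallback along $\sigma_{k,1}$ has the same defect; for $k=1$, $\sigma_{1,1}$ is just $q_2$. The extension you quote is also wrong for $k\ge 2$: on $C_k\times\{x\}$ the kernel of restriction to $x$ is $E_{k,L(-x)}$, not $E_{k,L}\otimes S_{k,\sO(-x)}$.

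The missing idea is to stay on the symmetric product, where diagonal twists of $S_{k+1,\alpha'}$ are absorbed into line bundles of the same type. The paper uses \cite[Proposition 3.1]{NP} to reduce the vanishing for $\wedge^\ell M_{k+1,L}\otimes S_{k+1,\alpha'}$ to that of $(S_{k+1,\alpha'}\boxtimes N_{\ell-n,L})(-D_{k+1,\ell-n})$ on $C_{k+1}\times C_{\ell-n}$. On the fibre over $\xi\in C_{\ell-n}$ this sheaf is $S_{k+1,\alpha'(-\xi)}$. Lemma \ref{lem:H^i(N)} then gives $S^{k+1-i}H^0(C,\alpha'(-\xi))\otimes\wedge^iH^1(C,\alpha'(-\xi))=0$ for all $i$, because $h^0(\alpha'(-\xi))=0$ and $h^1(\alpha'(-\xi))=\ell-n\le k$ kills $\wedge^{k+1}$. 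This is where $\ell\le k$ enters, and the Leray spectral sequence finishes the proof. On $C^{k+1}$ the factor $\alpha'(-x)$ simply has $H^1\neq 0$; only after symmetrizing does this become $\wedge^{k+1}$ of a space of dimension at most $k$. Repairing your approach would require an argument of this kind for $S^mE_{k+1,L}\otimes S_{k+1,\alpha'}$ on $C_{k+1}$ itself.
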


\begin{proof}
	By \cite[Proposition 3.1]{NP}, it is sufficient to check the following:
	\begin{enumerate}
		\item $H^{j+\ell}(C_{k+1} \times C_{\ell}, (S_{k+1, \alpha'} \boxtimes N_{k+1,L})(-D_{k+1, \ell})=0$.
		\item $H^{j+\ell-m}(C_{k+1} \times C_{\ell-n}, (S_{k+1,\alpha'} \boxtimes N_{\ell-n, L})(-D_{k+1, \ell-n})=0$ for all integer $m,n$ with $1 \leq n \leq j+\ell$ and $n+1 \leq m \leq 2n$.
	\end{enumerate}
	For an integer $0 \leq n \leq \ell$, let $\pr_2^{\ell-n} \colon C_{k+1} \times C_{\ell-n} \to C_{\ell-n}$ be the projection to the second component. For any $\xi \in C_{\ell-n}$, consider the fiber $C_{k+1}\times C_{\ell-n}$. Note that
	$$
	(S_{k+1,\alpha'} \boxtimes N_{\ell-n, L})(-D_{k+1, \ell-n})|_{C_{k+1}\times \{\xi\}} \cong S_{k+1, \alpha'(-\xi)}.
	$$
	Since $h^0(C, \alpha'(-\xi))=0$ and $h^1(C, \alpha'(-\xi))=\ell-n \leq k$, it follows from Lemma \ref{lem:H^i(N)} that
	$$
	H^i(C_{k+1}, S_{k+1, \alpha'(-\xi)})=0~~\text{ for $i \geq 0$}.
	$$
	Thus we obtain
	$$
	R^i \pr_{2,*}^{\ell-n} (S_{k+1,\alpha'} \boxtimes N_{\ell-n, L})(-D_{k+1, \ell-n}) = 0~~\text{ for $i \geq 0$}.
	$$
	By the Leray spectral sequence for $\pr_2^{\ell-n}$, we get
	$$
	H^{j+\ell-m}(C_{k+1} \times C_{\ell-n}, (S_{k+1,\alpha'} \boxtimes N_{\ell-n, L})(-D_{k+1, \ell-n})=0~~\text{ for $m \geq 0$ and $n \geq 0$},
	$$
	which verifies the above conditions $(1)$ and $(2)$.
\end{proof}

\begin{lemma}[{cf. \cite[Theorem 5.2 (2)]{ENP}}]\label{lemma:DuBois-like-cond}
For any $i>0$ it holds that
$
R^i \beta_{V,k,*} \pi_k^*A_{k+1, L \otimes \alpha} = 0.
$
\end{lemma}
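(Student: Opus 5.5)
The plan is to detect the vanishing of the higher direct images through cohomology on $B^k$ twisted by a large multiple of the ample class pulled back from $\Sigma_k$, and then push that cohomology down to $C_{k+1}$. There it will follow from Lemma \ref{lemma:vanishingwedgeM} and Lemma \ref{lemma:cohvanMA}.

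First, set $\mathscr{F}:=\pi_k^*A_{k+1,L\otimes\alpha}$ and note that $\beta_{V,k}^*\sO_{\Sigma_k}(1)\cong\sO_{B^k}(1)$, since $\beta_{V,k}$ is given by sections of the tautological bundle. By the projection formula, $R^q\beta_{V,k,*}(\mathscr{F}\otimes\sO_{B^k}(m))\cong R^q\beta_{V,k,*}\mathscr{F}\otimes\sO_{\Sigma_k}(m)$. The sheaves $R^q\beta_{V,k,*}\mathscr{F}$ are coherent, since $\beta_{V,k}$ is projective, and $\sO_{\Sigma_k}(1)$ is ample. So by Serre vanishing, for $m\gg0$ the Leray spectral sequence degenerates to
\[ H^i(B^k,\mathscr{F}\otimes\sO_{B^k}(m))\cong H^0(\Sigma_k,R^i\beta_{V,k,*}\mathscr{F}\otimes\sO_{\Sigma_k}(m)). \]
For $m\gg0$ the right-hand side is nonzero unless $R^i\beta_{V,k,*}\mathscr{F}=0$, because the twist is globally generated. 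It therefore suffices to show $H^i(B^k,\mathscr{F}\otimes\sO_{B^k}(m))=0$ for all $i>0$ and all $m\gg0$.

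Second, I push forward along $\pi_k\colon B^k=\nP(E_{k+1,L})\to C_{k+1}$. For $m\ge0$ we have $\pi_{k,*}\sO_{B^k}(m)=S^mE_{k+1,L}$ and $R^q\pi_{k,*}\sO_{B^k}(m)=0$ for $q>0$. Hence
\[ H^i(B^k,\mathscr{F}\otimes\sO_{B^k}(m))\cong H^i(C_{k+1},S^mE_{k+1,L}\otimes A_{k+1,L\otimes\alpha}). \]

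Third, I apply Lemma \ref{lemma:vanishingwedgeM} with $\mathscr{G}=A_{k+1,L\otimes\alpha}$ and $j=m$. This requires $H^{i+\ell}(C_{k+1},\wedge^\ell M_{k+1,L}\otimes A_{k+1,L\otimes\alpha})=0$ for $0\le\ell\le m$. Since $i>0$, we have $i+\ell>0$, so this is exactly Lemma \ref{lemma:cohvanMA}. Note that Lemma \ref{lemma:vanishingwedgeM} uses the complete linear system, which is $k$-very ample because $V$ is; the bundle $\sO_{B^k}(1)$ does not depend on the choice of $V$, so this causes no issue.

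I do not expect a serious obstacle; the substantive input, Rathmann's vanishing, is already packaged in Lemma \ref{lemma:cohvanMA}. The only point requiring care is the first reduction: the Leray/Serre argument must be applied on $\Sigma_k$ with the ample line bundle $\sO_{\Sigma_k}(1)$, and the vanishing must hold uniformly for all large $m$. This is fine because Lemma \ref{lemma:cohvanMA} holds for every $j\ge0$.
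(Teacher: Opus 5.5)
Your proof is correct and follows the same route as the paper. You reduce to the vanishing of $H^i(B^k,\pi_k^*A_{k+1,L\otimes\alpha}\otimes\sO_{B^k}(m))$ for $m\gg0$, push down along $\pi_k$ to $H^i(C_{k+1},S^mE_{k+1,L}\otimes A_{k+1,L\otimes\alpha})$, and conclude with Lemmas \ref{lemma:vanishingwedgeM} and \ref{lemma:cohvanMA}; you also spell out the Leray/Serre-vanishing justification for the first reduction, which the paper only asserts.
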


\begin{proof}
Notice that the lemma is equivalent to
$$
H^i(B^k, \pi_k^* A_{k+1, L \otimes \alpha} \otimes \sO_{B^k}(\ell)) = 0~~\text{ for $i>0$ and $\ell \gg 0$}.
$$
As $R^j \pi_{k,*}\sO_{B^k}(\ell) = 0$ for $j > 0$ and $\ell \geq 0$, we have
$$
H^i(B^k, \pi_k^* A_{k+1, L \otimes \alpha} \otimes \sO_{B^k}(\ell)) \cong H^i(C_{k+1}, S^{\ell} E_{k+1, L} \otimes A_{k+1, L \otimes \alpha})~~\text{ for $i \geq 0$ and $\ell \geq 0$.}
$$
This cohomology vanishes by  Lemmas \ref{lemma:vanishingwedgeM} and \ref{lemma:cohvanMA}.
\end{proof}

\begin{lemma}\label{lemma:cohfromBtoC}
For any $j\geq 0$ and $1\leq i\leq 2k+1$ it holds that
\begin{align*}
H^j(\nP^r, \widetilde{\mathscr{A}}_{k,L}(-i)) &\cong H^j(B^k, \pi_k^*A_{k+1, L \otimes \alpha} \otimes \sO_{B^k}(-i))\\
& \cong H^{2k+1-j}(C_{k+1},S^{i-k-1}E_{k+1,L}\otimes S_{k+1,\alpha'} )^{\vee}.
\end{align*}
\end{lemma}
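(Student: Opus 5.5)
The plan is to argue in three steps: Leray for $\beta_{V,k}$, then Serre duality on $B^k$, then Leray for $\pi_k$.

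\emph{First isomorphism.} By the projection formula,
\[
\widetilde{\sA}_{k,L\otimes\alpha}(-i)=\beta_{V,k,*}\bigl(\pi_k^*A_{k+1,L\otimes\alpha}\otimes\sO_{B^k}(-i)\bigr),
\]
since $\beta_{V,k}^*\sO_{\nP^r}(1)=\sO_{B^k}(1)$. Lemma \ref{lemma:DuBois-like-cond} gives $R^q\beta_{V,k,*}\pi_k^*A_{k+1,L\otimes\alpha}=0$ for $q>0$. Tensoring with the line bundle $\sO(-i)$ pulled back from $\nP^r$ preserves this vanishing. The Leray spectral sequence for $\beta_{V,k}$ therefore degenerates, and this yields the first isomorphism. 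Here $\widetilde{\sA}$ is pushed forward to $\Sigma_k$, and cohomology on $\Sigma_k$ and on $\nP^r$ agree.

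\emph{Serre duality on $B^k$.} $B^k=\nP(E_{k+1,L})$ is smooth of dimension $2k+1$. The relative canonical bundle formula for the projective bundle of quotients gives
\[
\omega_{B^k}\cong \pi_k^*\bigl(\omega_{C_{k+1}}\otimes\det E_{k+1,L}\bigr)\otimes\sO_{B^k}(-k-1)\cong \pi_k^*\bigl(N_{k+1,\omega_C}\otimes N_{k+1,L}\bigr)(-k-1),
\]
using $N_{k+1,\omega_C}\cong\omega_{C_{k+1}}$ and $N_{k+1,L}\cong\det E_{k+1,L}$. Serre duality then gives
\[
H^j(B^k,\pi_k^*A_{k+1,L\otimes\alpha}(-i))^\vee\cong H^{2k+1-j}\bigl(B^k,\pi_k^*(N_{k+1,\omega_C}\otimes N_{k+1,L}\otimes A^{-1}_{k+1,L\otimes\alpha})(i-k-1)\bigr).
\]
Next compute the twisting line bundle on $C_{k+1}$, using $S_{k+1,M}\otimes S_{k+1,M'}\cong S_{k+1,M\otimes M'}$:
\[
N_{k+1,\omega_C}\otimes N_{k+1,L}\otimes A_{k+1,L\otimes\alpha}^{-1}
= S_{k+1,\omega_C\otimes L\otimes L^{-1}\otimes\alpha^{-1}}(-\delta-\delta+2\delta)
= S_{k+1,\alpha'}.
\]
So the dual space is $H^{2k+1-j}(B^k,\pi_k^*S_{k+1,\alpha'}\otimes\sO_{B^k}(i-k-1))$.

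\emph{Pushing down along $\pi_k$.} Since $1\le i\le 2k+1$, the exponent $m=i-k-1$ lies in $[-k,k]$, and the fibres of $\pi_k$ are $\nP^k$. For $m\ge0$ we have $\pi_{k,*}\sO(m)=S^mE_{k+1,L}$ and no higher direct images. For $-k\le m\le -1$ every direct image vanishes, and we interpret $S^m E=0$. The Leray spectral sequence for $\pi_k$ then gives
\[
H^{2k+1-j}(C_{k+1},S^{i-k-1}E_{k+1,L}\otimes S_{k+1,\alpha'}),
\]
which is the second isomorphism after dualizing.

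The only delicate step is the canonical bundle computation. The convention $\nP(E)$ = quotients gives $\pi_*\sO(1)=E$ and $\omega_{\nP(E)/C_{k+1}}=\pi^*\det E\otimes\sO(-\rank E)$, and this sign and convention must be checked carefully. Everything else follows formally from the earlier lemmas. (The statement writes $\widetilde{\sA}_{k,L}$; we read this as $\widetilde{\sA}_{k,L\otimes\alpha}$.)
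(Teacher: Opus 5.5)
Your proof is correct and follows the same route as the paper: the first isomorphism via Lemma \ref{lemma:DuBois-like-cond} and Leray, then Serre duality on $B^k$ using $\omega_{B^k}\cong \pi_k^*A_{k+1,\omega_C\otimes L}\otimes\sO_{B^k}(-k-1)$ (which is your $\pi_k^*(N_{k+1,\omega_C}\otimes N_{k+1,L})(-k-1)$), and finally pushing down along $\pi_k$. Your bookkeeping is cleaner than the written proof, which gives the twist as $\sO_{B^k}(k+1-i)$ and claims that all direct images vanish; the correct twist is your $\sO_{B^k}(i-k-1)$, whose direct images all vanish when $i\le k$, while only the higher ones vanish when $i\ge k+1$.
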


\begin{proof}
The first equality  follows immediately from Lemma \ref{lemma:DuBois-like-cond}. For the second equality, we use  Serre duality:
$$
H^j(B_k, \pi_k^*A_{k+1, L \otimes \alpha} \otimes \sO_{B^k}(-i)) \cong H^{2k+1-j}(B^k, \pi_k^* S_{k+1,\alpha'} \otimes \sO_{B^k}(k+1-i))^{\vee},
$$
where we recall $\omega_{B^k} \cong \pi_k^* A_{k+1, \omega_C \otimes L} \otimes \sO_{B^{k}}(-k-1)$.
As $1 \leq i \leq 2k+1$, we have $R^{\ell} \pi_{k,*} \sO_{B^k}(k+1-i) = 0$ for all $\ell \geq 0$. Thus
\[
H^{2k+1-j}(B^k, \pi_k^* S_{k+1,\alpha'} \otimes \sO_{B^k}(k+1-i))^{\vee}
\cong H^{2k+1-j}(C_{k+1}, S^{k+1-i} E_{k+1,L} \otimes S_{k+1,\alpha'})^{\vee}. \qedhere
\]
\end{proof}

We are ready to finish the proof of Theorem \ref{thm:ulrichdeg>=2g+2k+1}.

\begin{proof}[Proof of Theorem \ref{thm:ulrichdeg>=2g+2k+1}]
In view of Lemma \ref{lemma:cohfromBtoC}, we need to establish the following:
$$
H^{2k+1-i}(C_{k+1}, S^{i-k-1} E_{k+1,L} \otimes S_{k+1,\alpha'})=H^{2k+2-i}(C_{k+1}, S^{i-k-1}E_{k+1,L} \otimes S_{k+1,\alpha'})=0
$$
for $k+1 \leq i \leq 2k+1$. It is equivalent to
$$
H^{k-i}(C_{k+1}, S^i E_{k+1,L} \otimes S_{k+1, \alpha'}) = H^{k+1-i}(C_{k+1}, S^i E_{k+1,L} \otimes S_{k+1, \alpha'}) = 0
$$
for $0 \leq i \leq k$. By Lemma \ref{lemma:vanishingwedgeM}, we reduce the problem to
$$
H^{k-i+\ell}(C_{k+1}, \wedge^{\ell} M_{k+1,L} \otimes S_{k+1,\alpha'})=H^{k+1-i+\ell}(C_{k+1}, \wedge^{\ell} M_{k+1,L} \otimes S_{k+1,\alpha'})=0
$$
for $0 \leq i \leq k$ and $0 \leq \ell \leq i$. This follows from Lemma \ref{lemma:wedgeMSvanishing}. We have shown that $\widetilde{\mathscr{A}}_{k, L \otimes \alpha} = \beta_{V,k,*} \pi_k^* A_{k+1, L \otimes \alpha}$ is an Ulrich sheaf on $\Sigma_k$. For the second statement of the theorem, we suppose that $r \geq 2k+2$.
Then Lemma \ref{lem:beta=bir} says that $\beta_{V,k}$ is birational. In this case, the Ulrich sheaf $\widetilde{\mathscr{A}}_{k, L \otimes \alpha}$ has rank one so that $\deg(\Sigma_k)=h^0(C_{k+1},A_{k+1,L\otimes \alpha})$. The same principle shows that $\deg(\Sigma_k) = h^0(C_{k+1},A_{k+1,L\otimes \alpha'})$. Then Theorem \ref{thm:detrep} provides an admissible determinantal representation of $\Sigma_k$ with corresponding Ulrich sheaf $\sA_{k,L\otimes \alpha}$. The proof of Theorem \ref{thm:detrep} shows that $\sA_{k,L\otimes \alpha}$ is the image of the evaluation map
\[ H^0(C_{k+1},A_{k+1,L\otimes \alpha})\otimes \sO_{\Sigma_k} \longrightarrow \widetilde{\sA}_{k,L\otimes \alpha}, \]
but since we already proved that $\widetilde{\sA}_{k,L\otimes \alpha}$ is an Ulrich sheaf, we know that it is globally generated, so that $\sA_{k,L\otimes \alpha} = \widetilde{\sA}_{k,L\otimes\alpha}$.
\end{proof}

To give a complete proof of Theorem \ref{thm:main}, we need a final observation:

\begin{proposition}\label{prop:isoUlrich}
	Let $M,  M'$ be any line bundles on $C$. Then we have $\beta_{V,k,*} \pi_k^* A_{k+1, M} \cong \beta_{V,k,*} \pi_k^* A_{k+1, M'}$ if and only if $M \cong M'$.
\end{proposition}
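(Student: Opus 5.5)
The implication $M\cong M'\Rightarrow$ isomorphism is immediate, since $A_{k+1,M}$ depends functorially on $M$: $S_{k+1,M}$ is built by invariant descent of $M^{\boxtimes(k+1)}$, and $A_{k+1,M}=S_{k+1,M}(-2\delta_{k+1})$. For the converse, the plan is to recover $M$ from the sheaf $\beta_{V,k,*}\pi_k^*A_{k+1,M}$ by restricting it to the curve $C\subseteq\Sigma_k$.

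Write $\beta=\beta_{V,k}$ and $\mathscr{G}_M=\beta_*\pi_k^*A_{k+1,M}$, and let $U\subseteq\Sigma_k$ be the open set over which $\beta$ is an isomorphism. By Lemma~\ref{lem:beta=bir} we may take $U=\Sigma_k\setminus\Sigma_{k-1}$ when $k\geq1$, since $Z_{k-1}$ is the only exceptional divisor; we assume $r\geq 2k+2$ here. An isomorphism $\mathscr{G}_M\cong\mathscr{G}_{M'}$ then restricts to $\pi_k^*A_{k+1,M}|_{\beta^{-1}U}\cong\pi_k^*A_{k+1,M'}|_{\beta^{-1}U}$, so $\pi_k^*(A_{k+1,M}\otimes A_{k+1,M'}^{-1})=\pi_k^*S_{k+1,M\otimes M'^{-1}}$ is trivial on $B^k\setminus\beta^{-1}(\Sigma_{k-1})$. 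The complement of this open set is $Z_{k-1}$ together with possibly higher-codimension pieces, and $Z_{k-1}$ is a prime divisor. Hence
\[\pi_k^*S_{k+1,\eta}\cong\sO_{B^k}(mZ_{k-1})\quad\text{for some } m\in\nZ,\qquad \eta:=M\otimes M'^{-1}.\]

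Next we compare classes in $\Pic(B^k)=\pi_k^*\Pic(C_{k+1})\oplus\nZ\cdot\sO_{B^k}(1)$. Since $\sO(Z_{k-1})\cong\sO_{B^k}(k+1)\otimes\pi_k^*A_{k,L}^{-1}$, its $\sO(1)$-coefficient is $k+1\neq0$, whereas the left-hand side has coefficient $0$. This forces $m=0$, so $S_{k+1,\eta}$ is trivial on $C_{k+1}$ (pullback along a projective bundle is injective on Pic). Pulling back along the map $C\to C_{k+1}$, $x\mapsto x+\xi_0$ for a fixed $\xi_0$, and using Lemma~\ref{lem:pullbackdeltaviaaddmap}, gives $\eta\otimes\deg$-twist $\cong\sO_C$. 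More cleanly, Proposition~\ref{prop:pullback_theta} identifies $S_{k+1,\eta}$ with $u^*\widehat{\eta^{-1}}$, and the Abel--Jacobi pullback $\Pic_0(\Pic_0 C)\to\Pic(C_{k+1})$ is injective (already for $k=0$, $u^*$ restricted to $C$ is the inverse of the autoduality isomorphism). So $\eta\cong\sO_C$. One still needs $\deg M=\deg M'$ first; this follows from comparing $h^0$ or ranks, or from restricting to a fiber line of $C_{k+1}$, since triviality of $S_{k+1,\eta}$ forces degree $0$.

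The case $k=0$ is the simple one: $\beta$ is the identity on $C$ and the statement is tautological. The main obstacle is the codimension argument. It requires that $\beta$ is an isomorphism off $\beta^{-1}(\Sigma_{k-1})$, or at least that the locus where $\beta$ fails to be an isomorphism meets $B^k$ only in $Z_{k-1}$ plus a set of codimension at least $2$. If Lemma~\ref{lem:beta=bir} only yields birationality and uniqueness of the exceptional divisor, we use normality of $B^k$ and Zariski's main theorem. The locus in $\Sigma_k$ over which $\beta$ is not an isomorphism has preimage equal to the exceptional set union the preimage of the non-normal locus of $\Sigma_k$. The divisorial part of this preimage is at most $Z_{k-1}$, because any other divisor mapping with positive-dimensional fibers is excluded, and a divisor mapped finitely but non-injectively would contradict the dimension count of triples $(\xi,\xi',x)$ in that proof. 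Line bundles on the smooth variety $B^k$ that are trivial off a codimension $2$ set are trivial, so the argument goes through.
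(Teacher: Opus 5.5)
Your strategy is the paper's: restrict to where $\beta$ is an isomorphism, obtain $\pi_k^*S_{k+1,\eta}\cong\sO_{B^k}(mZ_{k-1})$, show $m=0$, and conclude by injectivity of $N\mapsto S_{k+1,N}$. However, the step producing $\sO_{B^k}(mZ_{k-1})$ has a genuine gap.

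Your claim that $\beta$ is an isomorphism over $\Sigma_k\setminus\Sigma_{k-1}$ is false in general. So is your fallback claim that a divisor of $B^k$ mapped finitely but non-injectively would contradict the dimension count of Lemma~\ref{lem:beta=bir}. That count bounds the family of triples $(\xi,\xi',x)$ by $2k$, which is exactly the dimension of a divisor in $B^k$. It therefore rules out non-birationality (a $(2k+1)$-dimensional family), but not a double divisor.

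Such divisors do occur even when $V$ is $k$-very ample. For $r=2k+2$, the divisors $\xi+\xi'\in C_{2k+2}$ spanning only a $\nP^{2k}$ form an expected $2k$-dimensional family. Each gives a point of $\langle\xi\rangle\cap\langle\xi'\rangle$, generically outside $\Sigma_{k-1}$. So $\beta$ is generically $2:1$ over a divisor of $\Sigma_k$ along which $\Sigma_k$ is not normal; the paper's footnote points to \cite[Example 4.17]{kummersinn}. Over the open set where $\beta$ is an isomorphism you then only get $\pi_k^*S_{k+1,\eta}\cong\sO_{B^k}(mZ_{k-1}+\sum_i n_iD_i)$ with further prime divisors $D_i$, and nothing about $\eta$ follows from that.

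The missing idea is to pass to the normalization, as the paper does. Factor $\beta=\nu\circ\overline{\beta}$, with $\nu\colon\overline{\Sigma}_k\to\Sigma_k$ the normalization.
\begin{itemize}
\item The sheaves $\overline{\beta}_*\pi_k^*A_{k+1,M}$ and $\overline{\beta}_*\pi_k^*A_{k+1,M'}$ are torsion-free. An $\sO_{\Sigma_k}$-linear map between torsion-free modules is automatically linear over $\nu_*\sO_{\overline{\Sigma}_k}$: write a section of the normalization as $a/s$ and cancel $s$ using torsion-freeness. Hence your isomorphism lifts to $\overline{\Sigma}_k$.
\item On $\overline{\Sigma}_k$, Zariski's main theorem makes $\overline{\beta}$ an isomorphism off its exceptional locus. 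By Lemma~\ref{lem:beta=bir}, the only divisorial component of that locus is $Z_{k-1}$. So for $U\subseteq B^k$ the open set where $\overline{\beta}$ restricts to an isomorphism onto an open subset, the kernel of $\operatorname{Cl}(B^k)\to\operatorname{Cl}(U)$ really is generated by $Z_{k-1}$.
\end{itemize}

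From that point your argument works. Your observation that $\sO_{B^k}(Z_{k-1})$ has $\sO_{B^k}(1)$-coefficient $k+1\neq 0$ usefully makes explicit the paper's ``it is clear that $m=0$''.

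The last step is simpler than you make it. By Lemma~\ref{lem:pullbackdeltaviaaddmap}, the pullback of $S_{k+1,\eta}$ along $C\to C_{k+1}$, $x\mapsto x+\xi_0$, is $\eta$ itself, with no twist. So neither the theta-function detour nor a separate degree comparison is needed.
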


\begin{proof}
By Lemma \ref{lem:beta=bir}, $\beta_{V,k} \colon B^k \to \Sigma_k$ is a birational morphism, which is factored as
$$
B^k  \xrightarrow{~\overline{\beta}_{V,k}~} \overline{\Sigma}_k \xrightarrow{~\eta_k~} \Sigma_k,
$$
where $\eta_k$ is the normalization map of $\Sigma_k$. As $\overline{\beta}_{k,*}  \pi_k^* A_{k+1, M},  \overline{\beta}_{k,*} \pi_k^* A_{k+1, M'}$ are torsion-free sheaves on $\overline{\Sigma}_k$, we see that $\beta_{V, k,*} \pi_k^* A_{k+1, M} \cong \beta_{V, k,*} \pi_k^* A_{k+1, M'}$ if and only if $\overline{\beta}_{k,*}  \pi_k^* A_{k+1, M} \cong  \overline{\beta}_{k,*} \pi_k^* A_{k+1, M'}$. Now, by Lemma \ref{lem:beta=bir}, there is an open subset $U \subseteq B^k$ such that $\overline{\beta}_k \colon U \to \overline{\beta}_k(U)$ is an isomorphism and the kernel of the group homomorphism $\operatorname{Cl}(B^k) \to \operatorname{Cl}(U)$ is generated by $Z_{k-1}$.\footnote{If $\Sigma_k$ is normal or more generally the singular locus of $\Sigma_k$ has codimension at least two, then the argument can be applied directly to $\beta_{V,k}$ without passing to the normalization of $\Sigma_k$. However, even under the assumption that $V$ is $k$-very ample, $\Sigma_k$ may have a singular locus of codimension one (see e.g., \cite[Example 4.17]{kummersinn}), so it is necessary to consider the normalization of $\Sigma_k$.}
If $\overline{\beta}_{k,*} \pi_k^* A_{k+1, M} \cong \overline{\beta}_{k,*} \pi_k^* A_{k+1, M'}$ (i.e., $\beta_{V, k,*} \pi_k^* A_{k+1, M} \cong \beta_{V, k,*} \pi_k^* A_{k+1, M'}$), then $(\pi_k^* A_{k+1, M})|_U \cong (\pi_k^* A_{k+1,M'})|_U$ so that $(\pi_k^* S_{k+1, M \otimes {M'}^{-1}})|_U \cong \sO_U$. Thus $\pi_k^* S_{k+1, M \otimes {M'}^{-1}} \cong \sO_{B^k}(mZ_{k-1})$ for some $m \in \mathbf{Z}$. But it is clear that $m=0$, so $\pi_k^* S_{k+1, M \otimes {M'}^{-1}} \cong \sO_{B^k}$, which implies that $ S_{k+1, M \otimes {M'}^{-1}} \cong \sO_{C_{k+1}}$. As the group homomorphism $\Pic(C) \to \Pic(C_{k+1}),~N \mapsto S_{k+1,N}$ is injective, we obtain $M \cong M'$.
\end{proof}

\begin{proof}[Proof of Theorem \ref{thm:main}]
	This follows from Theorems \ref{thm:detrep}, \ref{thm:ulrichdeg>=2g+2k+1}, and Proposition \ref{prop:isoUlrich}.
\end{proof}

\subsection{The minimal free resolution of $\mathscr{A}_{k,L\otimes \alpha}$}\label{subsec:explicitminres}

The minimal free resolution of the Ulrich sheaf $\mathscr{A}_{k,L\otimes \alpha}$ can be computed explicitly. As always, let $C\subseteq \nP^r = \nP(V)$ be a smooth projective curve of genus $g$ embedded by a very ample line bundle $L=\sO_C(1)$ and a linear system $V\subseteq H^0(C,L)$ of dimension $\dim V=r+1$. We assume that $r\geq 2k+2$ and that the embedding is $k$-very ample. We also fix a line bundle $\alpha$ on $C$ with no cohomology so that  Theorem \ref{thm:main} yields an Ulrich sheaf of rank one $\sA_{k,L\otimes \alpha}$ on the secant variety $\Sigma_k$.
\medskip

\noindent
Assume that we are able to do the following:
\begin{itemize}
	\item Compute a basis of $H^0(C,L\otimes \alpha)$.
	\item Compute the multiplication map $V\otimes H^0(C,L\otimes \alpha) \to H^0(C,L^{\otimes 2}\otimes \alpha)$.
	\item Compute a basis of $H^0(C_{k+1},A_{k+1,L\otimes \alpha})$ as a subspace of $H^0(C_{k+1},S_{k+1,L\otimes \alpha}) = S^{k+1}H^0(C,L\otimes \alpha)$.
\end{itemize}

\noindent
Then we explain how to construct explicitly the minimal free resolution of the Ulrich sheaf $\mathscr{A}_{k,L\otimes \alpha}$.
If we denote by $c:=r-2k-1$ the codimension of $\Sigma_k$, the minimal free resolution has the shape
\begin{equation}\label{eq:mfrulrich}
	0 \longrightarrow K_{c} \otimes \sO_{\mathbf{P}^r}(-c) \overset{M_{c}}{\longrightarrow} \cdots \longrightarrow K_1 \otimes \sO_{\mathbf{P}^r}(-1)  \overset{M_1}\longrightarrow K_0 \otimes \sO_{\mathbf{P}^r} {\longrightarrow} \mathscr{A}_{k,L\otimes \alpha} \longrightarrow 0,
\end{equation}
where  the vector space $K_h$ is the kernel of the Koszul differential:
\[ K_h = \Ker \left( \wedge^h V \otimes H^0(\nP^r,\sA_{k,L\otimes \alpha}) \overset{d_h}{\longrightarrow} \wedge^{h-1} V \otimes H^0(\nP^r,\sA_{k,L\otimes \alpha}(1)) \right). \]

\noindent
From Theorem \ref{thm:ulrichdeg>=2g+2k+1} we have
\begin{align*}
	&H^0(\nP^r,\mathscr{A}_{k,L\otimes \alpha})  \cong H^0(B^k,\pi_k^*A_{k+1,L\otimes \alpha}) \cong H^0(C_{k+1},A_{k+1,L\otimes \alpha}) \\
	&H^0(\nP^r,\mathscr{A}_{k,L\otimes \alpha}(1))  \cong  H^0(B^k,\pi_k^*A_{k+1,L\otimes \alpha} \otimes \sO_{B^k}(1)) \cong H^0(C_{k+1},A_{k+1,L\otimes \alpha} \otimes E_{k+1,L})
\end{align*}
so that the  Koszul differential $d_h$ can also be seen as a map
\begin{equation}\label{eq:firstkoszul}
	d_h\colon  \wedge^h V\otimes H^0(C_{k+1},A_{k+1,L\otimes \alpha}) \longrightarrow \wedge^{h-1}V\otimes H^0(C_{k+1},A_{k+1,L\otimes \alpha} \otimes E_{k+1,L}).
\end{equation}
We can use this to compute the kernels $K_h$ and the maps $M_h$ in the minimal free resolution \ref{eq:mfrulrich} explicitly. We will discuss the procedure only in the case $h=1$ for simplicity, but this already give a presentation of $\mathscr{A}_{k,L\otimes \alpha}$, and the general case is a straightforward generalization.
\medskip

\noindent
\textbf{Step 1}: Fix a basis $y_1,\dots,y_\ell$ of $H^0(C,L\otimes \alpha)$ and choose $F_1,\dots,F_d$ with
\[ F_i = F_i(y_1,\dots,y_{\ell}) \in S^{k+1}H^0(C,L\otimes \alpha) \quad \text{ for } i=1,\dots,d \]
that form a basis of $H^0(C_{k+1},A_{k+1,L\otimes \alpha})$. For example, if $\deg(L)\geq g+2k+4$, then \cite[Theorem 5.2]{ENP} shows that we can take $F_1,\dots,F_d$ to be a basis of the degree $k+1$ part of the homogeneous ideal of $\Sigma_{k-1}(C,L\otimes \alpha)$.
\medskip

\noindent
\textbf{Step 2}: If $\sigma_{1,k}\colon C \times C_k \to C_{k+1}$ is the addition map, then $\sigma_{1,k}^*(A_{k+1,L\otimes \alpha}) \subseteq (L\otimes \alpha) \boxtimes S_{k,L\boxtimes \alpha}$. Hence, the pullback induces an inclusion
\begin{align*}
	\sigma_{1,k}^*\colon H^0(C_{k+1},A_{k+1,L\otimes \alpha}) \longhookrightarrow  H^0(C,L\otimes \alpha) \otimes S^k H^0(C,L\otimes \alpha)
\end{align*}
that can be computed explicitly on our basis of $H^0(C_{k+1},A_{k+1,L\otimes \alpha})$ by
\[ \sigma_{1,k}^*F_i = \sum_{j=1}^{\ell} y_j \otimes \frac{\partial F_i}{\partial y_j} \qquad \text{ for } i=1,\dots,d. \]

\noindent
\textbf{Step 3}: We also see that $\sigma_{1,k}$ induces an isomorphism $H^0(C_{k+1},A_{k+1,L\otimes \alpha} \otimes E_{k+1,L}) \cong H^0(C\times C_k,(L\boxtimes \sO_{C_k}) \otimes \sigma_{1,k}^*A_{k+1,L\otimes \alpha})$ and reasoning as in Step 2, we see that  this is a subspace of $H^0(C,L^{\otimes 2}\otimes \alpha) \otimes S^kH^0(C,L\otimes \alpha)$. The composition of \eqref{eq:firstkoszul} for $h=1$ with this inclusion is a map
\[ d_1'\colon   V\otimes H^0(C_{k+1},A_{k+1,L\otimes \alpha}) \longrightarrow  H^0(C,L^{\otimes 2}\otimes \alpha) \otimes S^{k-1}H^0(C,L\otimes \alpha) \]
that is given on the basis $F_i,i=1,\dots,d$ by
\[ d_1'(p_1 \otimes F_i) =  \sum_{j=1}^{\ell}  (p_1\cdot y_j) \otimes \frac{ \partial F_i}{\partial y_j} \qquad \text{ for any } p_1\in V. \]
We can use this to compute a basis $s_1,\dots,s_d$ of $K_1 = \operatorname{Ker} d_1 = \operatorname{Ker} d_1'$, which we can write in the form
\[ s_j = \sum_{i=1}^{d} p_{ij} \otimes F_{i} \qquad \text{ for certain } p_{ij}\in V. \]
Then we get a presentation of $\mathscr{A}_{k,L\otimes \alpha}$ via the matrix
\[ K_{1} \otimes \sO_{\nP^r}(-1) \xrightarrow{M_1} H^0(C_{k+1},A_{k+1,L\otimes \alpha})\otimes \sO_{\nP^r}, \qquad M_1 = \left( p_{ij} \right)_{ij}. \]

\begin{example}[Rational normal quartic]
	Consider the curve $C=\nP^1$ embedded in $\nP^4$ with complete linear system of $L=\sO_{\nP^1}(4)$. The first secant variety $\Sigma_1 \subseteq \nP^4$ is a cubic hypersurface,  and if we take  $\alpha=\sO_{\nP^1}(-1)$, the resolution of the Ulrich sheaf $\mathscr{A}_{1,L\otimes \alpha}$ will give us a determinantal representation for it. We compute the resolution using the procedure outlined before: we choose bases of $H^0(C,L)$ and $H^0(C,L\otimes \alpha)$ given respectively by
	\begin{align*}
		(x_0,x_1,x_2,x_3,x_4) = (s^4,s^3t,s^2t^2,st^3,t^4), \qquad
		(y_0,y_1,y_2,y_3) = (s^3,s^2t,st^2,t^3),
	\end{align*}
	and then a basis of $H^0(C_2,A_{2,L\otimes \alpha})$ is given by quadrics that generate the ideal of the curve $C$ embedded by $L\otimes \alpha = \sO_{\nP^1}(3)$. We choose:
	\[ F_1 = y_1y_3-y_2^2, \quad F_2 = y_1y_2-y_0y_3, \quad F_3 = y_0y_2-y_1^2. \]
	One can compute a basis of $K_1$ (here we use the notation of the previous discussion):
	\begin{align*}
		s_1 = x_0 \otimes F_1 + x_1\otimes F_2 + x_2 \otimes F_3 \\
		s_2  = x_1 \otimes F_1 + x_2\otimes F_2 + x_3 \otimes F_3 \\
		s_3  = x_2 \otimes F_1 + x_3\otimes F_2 + x_4 \otimes F_3
	\end{align*}
	and this gives a presentation of $\mathscr{A}_{1,L\otimes \alpha}$ and a corresponding determinantal representation:
	\[ 0 \longrightarrow \sO_{\nP^4}(-1)^{\oplus 4} \xrightarrow{ \begin{pmatrix} x_0 & x_1 & x_2 \\ x_1 & x_2 & x_3 \\ x_2 & x_3 & x_4 \end{pmatrix}}  \sO_{\nP^4}^{\oplus 3} \longrightarrow \sA_{1,L\otimes \alpha} \longrightarrow 0. \]
	This is symmetric, reflecting the fact that $\alpha$ is a theta characteristic.
\end{example}

\begin{example}[A real curve of genus two]
	Consider the real curve $C$ of genus $2$ defined by the hyperelliptic model
	\begin{equation*}
		y^2=(1-x^2)(4-x^2)(9-x^2)
	\end{equation*}
	and the divisor $D$ on $X$ defined as the sum of the following six points:
	\begin{equation*}
		(\pm2,0),(\pm1,0),(0,\pm6).
	\end{equation*}
	The line bundle $L=\sO_C(D)$ embeds $C$ as a curve of degree $6$ in $\nP^4$ and the first secant $\Sigma_1$ is a hypersurface  of degree $8$.
	Using the algorithm outlined above, we compute a determinantal representation of this hypersurface. To this end, we choose the theta characteristic $\alpha$ on $C$ that corresponds to the divisor
	\begin{equation*}
		(-1,0)+(1,0)-(-3,0).
	\end{equation*}
	One checks that it induces a complex orientation on $C(\nR)$ (see Section \ref{sec:signatures} for more details).
	Choosing appropriate bases of $H^0(C,L)$ and $H^0(C,L\otimes \alpha)$, the resulting determinantal representation is symmetric and equal to
	\[
	\scalebox{.45}{$
		\left(
		\begin{array}{cccccccc}
			-585 x_1-1014 x_2-124 x_4 & 13 (18 x_1-x_4) & 3 (39 x_0+8 x_3) & 15 (57 x_0+14 x_3) & -3 (201 x_0+47 x_3) & 54 x_0+13 x_3 & -13 (3 x_1+x_2) & -26 (9 x_1-3 x_2-x_4) \\
			13 (18 x_1-x_4) & 6 x_2-x_4 & -3 (3 x_0-x_3) & -15 (3 x_0-x_3) & 12 (3 x_0-x_3) & x_3-3 x_0 & 3 x_1-x_2 & -2 (9 x_1+3 x_2-x_4) \\
			3 (39 x_0+8 x_3) & -3 (3 x_0-x_3) & -4 (18 x_1-x_4) & -2 (267 x_1+x_2-25 x_4) & 231 x_1+79 x_2-31 x_4 & -39 x_1+5 x_2+3 x_4 & 3 x_0 & 6 (2 x_0-x_3) \\
			15 (57 x_0+14 x_3) & -15 (3 x_0-x_3) & -2 (267 x_1+x_2-25 x_4) & -2 (960 x_1+154 x_2-79 x_4) & 231 x_1+79 x_2-31 x_4 & -193 x_1-29 x_2+15 x_4 & 15 x_0 & 6 (4 x_0-3 x_3) \\
			-3 (201 x_0+47 x_3) & 12 (3 x_0-x_3) & 231 x_1+79 x_2-31 x_4 & 231 x_1+79 x_2-31 x_4 & 924 x_1+466 x_2-139 x_4 & 77 x_1+43 x_2-12 x_4 & -12 x_0 & 24 x_0 \\
			54 x_0+13 x_3 & x_3-3 x_0 & -39 x_1+5 x_2+3 x_4 & -193 x_1-29 x_2+15 x_4 & 77 x_1+43 x_2-12 x_4 & x_4-18 x_1 & x_0 & 2 (2 x_0-x_3) \\
			-13 (3 x_1+x_2) & 3 x_1-x_2 & 3 x_0 & 15 x_0 & -12 x_0 & x_0 & -x_1 & 2 x_2 \\
			-26 (9 x_1-3 x_2-x_4) & -2 (9 x_1+3 x_2-x_4) & 6 (2 x_0-x_3) & 6 (4 x_0-3 x_3) & 24 x_0 & 2 (2 x_0-x_3) & 2 x_2 & 2 (8 x_1-x_4) \\
		\end{array}
		\right).$
	}
	\]
	\noindent The real algebraic geometry of $\Sigma_1$ was examined in \cite[Example 4.17]{kummersinn}. For instance, the linear projection $\Sigma_1\to\nP^3$ from the point $(0: -3:  1: 0: -28)$ is real-fibered. This is certified by the fact that the above matrix is positive definite when evaluated at $(0, -3,  1, 0, -28)$, as predicted by Theorem \ref{thm:thmD}.
\end{example}

\section{Isometry classes of symmetric determinantal representations}\label{sec:signatures}
\noindent We now look at the isometry classes of the symmetric determinantal representations that we have obtained, and we complete the proof of Theorem \ref{thm:thmA} and finally show Theorem \ref{thm:thmD}.

\subsection{Square classes} Let $\kk$ be a field of characteristic zero\footnote{For this discussion, it would be enough that the characteristic is different from two.}. The set of square classes of $\kk$ is the quotient $\kk/(\kk^{\times})^2$ and we denote the square class of $\lambda\in \kk$ as $\langle \lambda \rangle \in \kk/(\kk^{\times})^2$.

\begin{example}
For example, the square class of a complex number $\lambda\in \nC$ is either $\langle 0 \rangle$ or $\langle 1 \rangle$, according to whether the number $\lambda=0$ or not. Instead, the square class of a real number $\lambda \in \nR$ is either $\langle -1 \rangle,\langle 0 \rangle,\langle 1 \rangle$, according to the sign of $\lambda$.
\end{example}

Let now $X$ be a $\kk$-variety, let $\sF$ be a coherent sheaf on $X$, and let $\psi \in H^0(X\times X,\sF\boxtimes \sF)$.
 If $x\in X(\kk)$ is a point where $\sF$ is locally free of rank one, we can define the \emph{square class of $\psi(x,x)$} as an element $\langle \psi(x,x) \rangle \in \kk/(\kk^{\times})^2$ as follows:  since $\sF$ is locally free of rank one at $x$, there is an isomorphism $F\colon \sF\otimes \kappa(x) \to \kk$ of $\kk$-vector spaces which induces an isomorphism $F\boxtimes F\colon (\sF\boxtimes \sF)\otimes \kappa(x,x) \to \kk$. The \emph{square class of $\psi(x,x)$} is
 \[ \langle \psi(x,x) \rangle =  \langle (F\boxtimes F)(\psi(x,x)) \rangle \qquad \text{in } \kk/(\kk^\times)^2 .\]
Notice that if we had chosen another isomorphism $F'\colon \sF\otimes \kappa(x) \to \kk$, then $F'=\mu\cdot F$ for $\mu\in \kk^{\times}$ so that
\[ \langle (F'\boxtimes F')(\psi(x,x)) \rangle = \langle \mu^2(F\boxtimes F)(\psi(x,x)) \rangle  = \langle (F\boxtimes F)(\psi(x,x)) \rangle \qquad \text{in } \kk/(\kk^\times)^2 \]
the corresponding square class would be unchanged. Another way to compute the square class is to consider the restriction to the diagonal $\psi_{|\Delta_X} \in H^0(X,\sF^{\otimes 2})$ and then
\[ \langle \psi(x,x) \rangle = \langle F^{\otimes 2}(\psi_{|\Delta_X}(x)) \rangle \qquad \text{ in } \kk/(\kk^{\times})^2. \]
A third way is to take a rational section $T$ of $\sF$ which is regular and nonzero around $x$, and take the isomorphism $F\colon \sF \otimes \kappa(x) \to \kk$ such that $F(T(x))=1$.
Then $T^2$ is a rational section of $\sF^{\otimes 2}$, regular and nonzero around $x$, so around $x$ we can write $\psi_{|\Delta_X} = f\cdot T^2$ for a rational function $f$ on $X$, regular at $x$. The square class of $\psi(x,x)$ is
\[ \langle \psi(x,x) \rangle = \langle F^{\otimes 2}(f(x)\cdot T^2(x)) \rangle = \langle f(x)\cdot F^{\otimes 2}( T^2(x)) \rangle  = \langle f(x)\rangle \qquad \text{ in } \kk/(\kk^{\times})^2. \]

\subsection{Isometry classes of quadratic forms}
Let $\kk$ be a field of characteristic zero\footnote{Again, it would be enough that the characteristic is different from two.}. Its Grothendieck--Witt ring $\GW(\kk)$ is the ring generated by the isomorphism classes of quadratic forms on $\kk$, with the addition and multiplication given by the direct sum and tensor product of quadratic forms, respectively.

\begin{example}
For example, over $\nC$, the isomorphism class of a quadratic form is uniquely determined by its rank, so that there is an isomorphism $\GW(\nC) \cong \mathbf{Z}$. Instead, over $\nR$ the isomorphism class of a quadratic form is determined by its rank and its signature, so that $\GW(\nR)\cong \nZ \times \nZ$. For this reason, if $Q$ is a quadratic form over $\nR$, we will sometimes call the class $\langle  Q\rangle \in \GW(\nR)$  the \emph{signature} of $Q$.
\end{example}

Any quadratic form of rank one is isomorphic to one of the form $\kk \to \kk,x\mapsto \lambda x^2$, with $\lambda\in \kk^{\times}$. We denote the corresponding class in $\GW(\kk)$ as $\langle \lambda \rangle$. We see that $\langle \lambda \rangle = \langle \lambda' \rangle$ if and only if there is $\mu\in \kk^{\times}$ such that $\lambda = \lambda'\mu^2$, so that we can also think of $\langle \lambda \rangle$ as the square class of $\lambda$ in $\kk/(\kk^{\times})^2$. In general, any quadratic form $Q$ can be diagonalized, so that there are square classes $\langle \lambda_i \rangle\in \kk/(\kk^{\times})^2$ such that
\[ \langle Q \rangle = \sum_{i=1}^r \langle  \lambda_i \rangle \qquad \text{in } \GW(\kk).\]

In particular, if $V$ is a finite-dimensional $\kk$-vector space we can think of any $Q\in \operatorname{Sym}^2 V$ as a quadratic form on the dual space $V^{\vee}$ and then we can take its isometry class, which can be identified with the class $\langle Q \rangle$ in $\GW(\kk)$. This can be computed concretely by fixing a basis of $V$, and expressing $Q$ as a symmetric matrix with respect to this basis.
\medskip

We want to apply this to the setting of Lemma \ref{lem:easymadecomplicated}: let $X$ be an irreducible $\kk$-variety and $\sF$ a coherent sheaf of rank one on $X$ with $h^0(X,\sF)=d$. If $\psi\in H^0(X\times X,\sF\boxtimes \sF)$ is symmetric with respect to the involution, we can also see it as an element in $\Sym^2H^0(X,\sF)$, which has a well-defined class in $\GW(\kk)$.

\begin{lemma}\label{lem:signaturepsi}
	Let $\psi\in H^0(X\times X,\sF\boxtimes \sF)$ be a symmetric section and assume that there are points $x_1,\ldots,x_d\in X(\kk)$ such that $\psi(x_i,x_j)=0$ if $i\ne j$ and $\psi(x_i,x_i)\ne 0$ for all $i$. Then the isometry class of $\psi$ is
	\[ \langle \psi \rangle  = \langle \psi(x_1,x_1) \rangle + \cdots + \langle \psi(x_d,x_d) \rangle \qquad \text{ in } \GW(\kk). \]
\end{lemma}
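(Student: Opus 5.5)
The plan is to reduce to Lemma \ref{lem:easymadecomplicated}(2) and then read off the diagonal entries. First, I would make sure $\sF$ is locally free of rank one at each $x_i$, so that the square classes $\langle \psi(x_i,x_i)\rangle$ are defined. If this is not already implicit in the setting, I would add it as a standing assumption, exactly as in Lemma \ref{lem:easymadecomplicated}. Under that assumption, Lemma \ref{lem:easymadecomplicated}(1) and (2) apply. The sections $s_i=\psi(-,x_i)$ then form a basis of $H^0(X,\sF)$ with $s_i(x_j)=0$ for $i\ne j$ and $s_i(x_i)\ne0$, and
\[ \psi=\sum_{i=1}^d \lambda_i\, s_i\otimes s_i, \qquad \lambda_i\in\kk^\times . \]
These sections are defined over $\kk$ because the $x_i$ are $\kk$-rational. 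In this basis $\psi$ is a diagonal element of $\Sym^2H^0(X,\sF)$, so its isometry class in $\GW(\kk)$ is $\sum_i\langle\lambda_i\rangle$.

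Second, I would identify each $\langle\lambda_i\rangle$ with $\langle\psi(x_i,x_i)\rangle$. Fix an isomorphism $F_i\colon\sF\otimes\kappa(x_i)\to\kk$. Evaluating the expansion at $(x_i,x_i)$ kills every term $s_j\otimes s_j$ with $j\ne i$, since $s_j(x_i)=0$. This leaves
\[ (F_i\boxtimes F_i)(\psi(x_i,x_i))=\lambda_i\,F_i(s_i(x_i))^2 . \]
Here $F_i(s_i(x_i))\in\kk^\times$, so $\langle\psi(x_i,x_i)\rangle=\langle\lambda_i\rangle$ in $\kk/(\kk^\times)^2$. Summing over $i$ gives the claimed formula.

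The only mildly delicate point is a compatibility of conventions. The paper regards $\psi\in\Sym^2V$ with $V=H^0(X,\sF)$ as a quadratic form on $V^\vee$. I need to check that a diagonal expression $\sum\lambda_i s_i\otimes s_i$ gives the form $\sum\lambda_i y_i^2$ in the coordinates dual to the $s_i$, with no factor of $2$ or $1/2$ entering. A scalar factor would be harmless anyway, since it would multiply every summand uniformly. Apart from this convention check and the local freeness at the $x_i$, the argument is a direct consequence of Lemma \ref{lem:easymadecomplicated}.
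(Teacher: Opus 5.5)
Your proposal is correct and follows the paper's proof essentially verbatim: you expand $\psi=\sum\lambda_i\, s_i\otimes s_i$ via Lemma \ref{lem:easymadecomplicated}, then evaluate at $(x_i,x_i)$ using a trivialization $F_i$ of the fiber. The paper normalizes $F_i(s_i(x_i))=1$, while you keep a general $F_i$ and absorb the square $F_i(s_i(x_i))^2$; local freeness at the $x_i$ is indeed implicit, since it is needed for $\langle\psi(x_i,x_i)\rangle$ to be defined. One small correction to your aside: a convention-induced scalar $c$ would not be harmless in general, since it would multiply the left side by $\langle c\rangle\in\GW(\kk)$ and leave the right side unchanged. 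With the natural convention, where $\sum\lambda_i s_i\otimes s_i$ gives $\sum\lambda_i y_i^2$, no such factor arises, so the argument stands.
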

\begin{proof}
	Lemma \ref{lem:easymadecomplicated} shows that the sections  $s_i=\psi(x_i,-)$ form a basis of $H^0(X,\sF)$ such that $\psi = \lambda_1 s_1\otimes s_1 + \cdots +\lambda_d s_d\otimes s_d$ for certain $\lambda_i\in\kk^{\times}$. Hence, the class of $\psi$ is $\sum_{i=1}^d \langle \lambda_i \rangle$. What we need to prove is that $\langle \lambda_i \rangle=\langle \psi(x_i,x_i) \rangle$. Lemma \ref{lem:easymadecomplicated} shows also that $s_j(x_i)=0$ if $i\ne j$ and $s_i(x_i)\ne 0$. Fix then the  isomorphism $F_i\colon \sF\otimes \kappa(x_i) \to \kk$ such that $F_i(s_i(x_i))=1$. Then
	\[ \langle \psi(x_i,x_i) \rangle = \langle (F_i\boxtimes F_i)(\psi(x_i,x_i)) \rangle = \langle (F_i\boxtimes F_i)(\lambda_i\cdot s_i(x_i)\boxtimes s_i(x_i)) \rangle = \langle \lambda_i \rangle. \qedhere \]
\end{proof}

We want to apply this to admissible determinantal representations of secant varieties.

\subsection{Isometry classes of admissible determinantal representations}

We go back to our usual setting: let $C\subseteq \nP^r = \nP(V)$ be  a smooth curve of genus $g$,  over a field $\kk$. We assume that $C$ is embedded by a linear system $V\subseteq H^0(C,L)$ with $r\geq 2k+2$ and we denote by  $\Sigma_k$ the $k$-th secant variety of $C$. Let also $\alpha$ be a theta-characteristic on $C$ and  $\rho\colon \alpha\otimes\alpha \to \omega_C$ an algebraic orientation. If  $\deg(\Sigma_k)=h^0(C_{k+1},A_{k+1,L\otimes \alpha})$ then Theorem \ref{thm:detrep} gives an admissible symmetric determinantal representation of $\Sigma_k$ via the map
\[\phi\colon \wedge^{2k+2} V \longrightarrow H^0(C_{k+1}\times C_{k+1},A_{k+1,L\otimes\alpha}\boxtimes A_{k+1,L\otimes \alpha}). \]
This determinantal representation has an associated Ulrich sheaf $\sA_{k,L\otimes\alpha}$ which is symmetric of rank one. Then, for any $p_0,\dots,p_{2k+1}\in V$ linearly independent and with no common zero on $\Sigma_k$ we have an element
\[ \phi(p_0\wedge \dots \wedge p_{2k+1}) \in \operatorname{Sym}^2 H^0(C_{k+1},A_{k+1,L\otimes\alpha}) \]
and we want to compute its isometry class via Lemma \ref{lem:signaturepsi}. The first thing to do is to compute the square class
\[ \langle \phi(p_0\wedge \dots \wedge p_{2k+1})(\xi,\xi) \rangle \qquad \text{in } \kk/(\kk^{\times})^2 \]
for $\xi\in C_{k+1}(\kk)$. We assume here that $\xi = x_1+\dots+x_{k+1}$ where the $x_i$ are mutually distinct, but not necessarily defined over $\kk$ \footnote{ To do so, we might need to work over $\overline{\kk}$, but this does not matter at the end since $\xi$ is defined over $\kk$}. Around each point $x_j$, choose  rational sections $S_j,A_j$ of $L$ and $\alpha$ respectively, that are regular and nonzero at $x_j$. Via the chosen algebraic orientation, take $W_j = \rho(A_j^{2})$ , which is a rational section of $\omega_C$, regular and nonzero at $x_j$. Then we can write
\[ p_i = f_{ij}\cdot S_j, \quad df_{ij} = f'_{ij}\cdot S_j\cdot W_j  \]
for certain rational functions $f_{ij}$ and $f_{ij}'$ on $C$, regular around $x_j$.  We can think of the $f_{ij}'$ as the derivatives of the $f_{ij}$ with respect to $\alpha$ and the section $A_j$.

\begin{proposition}\label{prop:localdegree}
	With the previous notation, the class $\langle \phi(p_0\wedge \dots \wedge p_{2k+1})(\xi,\xi) \rangle$ in $\GW(\kk)$ is equal to the square class of the following determinant:
    \begin{equation}\label{eq:localdeterminant}
       \det\text{\tiny\(
        \begin{pmatrix}
		f_{01}(x_1) & f'_{01}(x_1) & f_{02}(x_2) & f'_{02}(x_2)& \dots & f_{0,k+1}(x_{k+1}) & f'_{0,k+1}(x_{k+1}) \\
		f_{11}(x_1) & f'_{11}(x_1) & f_{12}(x_2) & f'_{12}(x_2)& \dots & f_{1,k+1}(x_{k+1}) & f'_{1,k+1}(x_{k+1}) \\
		\vdots & \vdots & \vdots & \vdots & \ddots & \vdots & \vdots \\
		f_{2k+1,1}(x_1) & f'_{2k+1,1}(x_1) & f_{2k+1,2}(x_2) & f'_{2k+1,2}(x_2)& \dots & f_{2k+1,k+1}(x_{k+1}) & f'_{2k+1,k+1}(x_{k+1})
	\end{pmatrix}\)}.
	\end{equation}
\end{proposition}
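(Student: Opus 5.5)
We compute the square class using the third recipe from the subsection on square classes. We pick a rational section $T$ of $A_{k+1,L\otimes\alpha}$ that is regular and nonzero near $\xi$ and defined over $\kk$. Restricting $\phi(p_0\wedge\dots\wedge p_{2k+1})$ to the diagonal and writing the result as $f\cdot T^2$, we need to show that $\langle f(\xi)\rangle$ equals the square class of the determinant \eqref{eq:localdeterminant}. By construction,
\[ \phi(p_0\wedge\dots\wedge p_{2k+1})_{|\Delta} = \Delta^*\sigma^*{\det}_{2k+2,L}(p_0\wedge\dots\wedge p_{2k+1})\cdot \Delta^*S_{(\alpha,\alpha)}. \]
The second factor is $1$ under the isomorphism $\sL_{(\alpha,\alpha)|\Delta}\cong S_{k+1,\alpha^{2}\otimes\omega_C^{-1}}\cong\sO$ induced by $\rho$. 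The first factor is $\operatorname{db}_{k+1}^*\det_{2k+2,L}(\dots)$. By Lemma \ref{lem:doublingpullback} it equals the $(k+1)$-Gaussian map $G_{k+1,L}(p_0\wedge\dots\wedge p_{2k+1})$, a section of $\wedge^{2k+2}E^{(1)}_{k+1,L}\cong A^{\otimes 2}_{k+1,L}\otimes S_{k+1,\omega_C}$. So the diagonal restriction is identified with $G_{k+1,L}(p_0\wedge\dots\wedge p_{2k+1})$, read in $A_{k+1,L\otimes\alpha}^{\otimes2}$ through $A^{2}_{k+1,L}\otimes S_{k+1,\omega_C}\cong A^2_{k+1,L}\otimes S_{k+1,\alpha^2}$ (via $\rho^{-1}$).

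Next we use the local formula \eqref{eq:highergaussian} near $\xi=x_1+\dots+x_{k+1}$. Substituting $df_{ij}=f'_{ij}S_jW_j$ with $W_j=\rho(A_j^2)$, we factor $S_j$ out of both columns belonging to $x_j$ and $W_j$ out of the second one. This gives
\[ G_{k+1,L}(p_0\wedge\cdots\wedge p_{2k+1}) = \det\big(f_{ij}\ \ f'_{ij}\big)\cdot \prod_{j} S_j^2 W_j, \]
and, after applying $\rho^{-1}$, $\prod_j S_j^2W_j$ becomes $\prod_j (S_jA_j)^2$. The local section $\prod_j S_jA_j$ is the natural local frame of $S_{k+1,L\otimes\alpha}$ near a reduced $\xi$. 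Near a reduced point $\delta_{k+1}$ is trivial, so $A_{k+1,L\otimes\alpha}\cong S_{k+1,L\otimes\alpha}$ there, with local frame $T$. Evaluating at $\xi$ then shows the square class is $\langle\det(\dots)(x_1,\dots,x_{k+1})\rangle$, provided all identifications are compatible up to squares.

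The main obstacle is rationality and compatibility. The $x_j$ need not be $\kk$-rational, and the $S_j,A_j$ are chosen pointwise, so we must argue that $\prod_j S_jA_j$, or a suitable symmetrization, can be replaced by a $\kk$-rational frame $T$ changing the value only by a square. First we will check that the determinant is Galois-invariant up to the same squared factor: a permutation of the $x_j$ permutes pairs of columns and preserves the determinant. Second, the $\delta$-twist identifications (Lemma \ref{lem:D|_Delta}, the trivialization of $\sL_{(\alpha,\alpha)|\Delta}$, and Lemma \ref{lem:doublingpullback}) must be canonical, so that no non-square constant enters. If a non-square constant did enter, it would be global and could be absorbed into the chosen relative orientation, so the statement stands as a square-class identity. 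We also expect the factor $(k+1)^{k+1}$ from Lemma \ref{lem:pfaffianexpression} never to appear, since we work directly with the determinant rather than the Pfaffian.
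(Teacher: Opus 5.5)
Your proposal is correct and follows essentially the same route as the paper. Both restrict to the diagonal, use that $S_{(\alpha,\alpha)}$ restricts to $1$ on $\Delta_{C_{k+1}}$, identify the restriction of $P(p_0\wedge\dots\wedge p_{2k+1})$ with the $(k+1)$-Gaussian map via the doubling map, factor $\prod_j S_j^2W_j=\prod_j (S_jA_j)^2$ out of the local determinant, and pass from $S_{k+1,L\otimes\alpha}$ to $A_{k+1,L\otimes\alpha}$ near the reduced point $\xi$. The paper does this last step with an auxiliary rational function $\Delta$ vanishing on the nonreduced locus, so $\Delta(\xi)^2$ drops out as a square.

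Two remarks on your obstacle paragraph. The rationality issue is settled most simply by taking a single $\kk$-rational $S$ and $A$ that are regular and nonzero along all of $\supp(\xi)$. Your fallback of absorbing a global non-square constant into an orientation is unnecessary, since the paper just uses the natural local identifications throughout; it would also not prove the identity as stated.
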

\begin{proof}
	The rational section $\sum_{\sigma\in \mathfrak{S}_{k+1}} S_{\sigma(1)}A_{\sigma(1)} \otimes \dots \otimes S_{\sigma(k+1)}A_{\sigma(k+1)}$ of $(L\otimes \alpha)^{\boxtimes (k+1)}$ is $\mathfrak{S}_{k+1}$-invariant, so that it descends to a rational section of $S_{k+1,L\otimes \alpha}$ that we denote by $(S_1A_1)\dots  (S_{k+1}A_{k+1})$. We also fix a rational function $\Delta$ on $C_{k+1}$ which vanishes on the locus $\{\xi\in C_{k+1} \,|\, \xi \text{ non reduced}\}$ and that is regular and nonzero at $\xi$. We can see $\Delta$ as a rational section of $\sO_{C_{k+1}}(-2\delta)$ which is regular and nonzero at $\xi$. Then  $(S_1A_1)\dots (S_{k+1}A_{k+1})\cdot \Delta$ is a rational section of $A_{k+1,L\otimes\alpha} = S_{k+1,L\otimes \alpha}(-2\delta)$ that is regular and nonzero at $\xi$, and its square
	\[ (S^2_1A^2_1)\cdots (S^2_{k+1}A^2_{k+1})\cdot \Delta^2 = (S^2_1W_1)\ \cdots (S^2_{k+1}W_{k+1})\cdot \Delta^2 \]
	is a rational section of $A_{k+1,L\otimes\alpha}^{\otimes 2} \cong A_{k+1,L}^{\otimes 2}\otimes S_{k+1,\omega_C}$. To compute the class that we want, we need to write
	\[ \phi(p_0\wedge \dots \wedge p_{2k+1})_{|\Delta_{C_{k+1}}} = g\cdot (S^2_1W_1) \cdots (S^2_{k+1}W_{k+1})\cdot \Delta^2 \]
	for a certain rational function $g$ on $C_{k+1}$ and then
	\[ \langle \phi(p_0\wedge \dots \wedge p_{2k+1})(\xi,\xi) \rangle =  \langle g(\xi) \rangle .\]
	We compute the rational function $g$: by definition $\phi(p_0\wedge\dots \wedge p_{2k+1}) = P(p_0\wedge\dots\wedge p_{2k+1})\cdot S_{(\alpha,\alpha)}$, and by the properties of the Szeg\H{o} kernel, we have that ${S_{(\alpha,\alpha)}}_{|\Delta_{C_{k+1}}}=1$.  We are left with the restriction $P(p_0\wedge\dots \wedge p_{2k+1})_{|\Delta_{C_{k+1}}}$, which can be described in terms of the higher Gaussian map: by definition, $P(p_0\wedge \dots \wedge p_{2k+1}) \in H^0(C_{k+1}\times C_{k+1},(N_{k+1,L}\boxtimes N_{k+1,L})(D_{k+1,k+1}))$ is obtained by pulling back $\det_{2k+2,L}(p_0\wedge \dots \wedge p_{2k+1})\in H^0(C_{2k+2},N_{2k+2,L})$ along $\sigma_{k+1,k+1}\colon C_{k+1}\times C_{k+1}\to C_{2k+2}$. The restriction to the diagonal  $\Delta_{C_{k+1}}$ corresponds to the pullback along the doubling map $\sigma_{k+1,k+1}\circ \Delta_{C_{k+1}} = \operatorname{db}_{k+1}$. By definition, this is precisely the $(k+1)$-Gaussian map of \eqref{eq:k+1gaussianmap}:
	\[ P(p_0\wedge\dots \wedge p_{2k+1})_{|\Delta_{C_{k+1}}} = G_{2k+2,L}(p_0\wedge\dots\wedge p_{2k+1}) \in H^0(C_{k+1},A_{k+1,L}^{\otimes 2}\otimes S_{k+1,\omega_C}) . \]
	We now use the explicit expression for the Gaussian map given in \eqref{eq:highergaussian}:
	\begin{align*}
	 G(p_0\wedge\dots \wedge p_{2k+1}) &= \det \left( f_{ij}S_j \,\,\,\, df_{ij} S_i \right)_{ij} = \det \left( f_{ij}S_j \,\,\,\, f'_{ij} S_iW_i \right)_{ij} \\
	 &= \det\left( f_{ij} \,\,\,\, f'_{ij}  \right)_{ij}\cdot (S_1^2W_1)\dots (S^2_{k+1}W_{k+1}) \\
	 & = \frac{1}{\Delta^2}\det\left( f_{ij} \,\,\,\, f'_{ij}  \right)_{ij}\cdot (S_1^2W_1)\dots (S^2_{k+1}W_{k+1})\cdot \Delta^2,
	\end{align*}
	where in the last line we have used the rational function $\Delta$ defined before. Notice that $\Delta(\xi)\ne 0$ so that $\Delta(\xi)^2 \in (\kk^{\times})^2$, hence the previous discussion gives that
	\begin{align*}
	\langle \phi(p_0\wedge\dots\wedge p_{2k+1})(\xi,\xi) \rangle  &= \langle \frac{1}{\Delta(\xi)^2}\cdot \det \left( f_{ij} \,\,\,\, f'_{ij}  \right)_{ij}(\xi)\rangle = \langle \det \left( f_{ij} \,\,\,\, f'_{ij}  \right)_{ij}(\xi)\rangle\\
	&  = \langle \det \left( f_{ij}(x_j) \,\,\,\, f'_{ij}(x_j)  \right)_{ij}\rangle
	\end{align*}
	and this is precisely what we wanted to prove.
\end{proof}

Now it is straightforward to compute the isometry class of our symmetric determinantal representations: assume again that $d=\deg(\Sigma_k)=h^0(C_{k+1},A_{k+1,L\otimes\alpha})$, and consider the projection $\pi=(p_0:\dots:p_{2k+1})\colon \Sigma_{k} \longrightarrow \nP^{2k+1}$. Assume that there is a point  $y\in \nP^{2k+1}(\kk)$ with a fiber $\pi^{-1}(y)=\{y_1,\dots,y_d\}$ made up of $\kk$-rational points $y_i \in \Sigma_{k}(\kk)$. Assume also that  for all $i=1,\dots,d$ there is a unique reduced divisor $\xi_i\in C_{k+1}$ such that $y_i \in \ell(\xi_i)$.

\begin{theorem}\label{thm:signatureformula}
	With the previous notation the class $\langle \phi(p_0\wedge\dots\wedge p_{2k+1})\rangle$ in $\GW(\kk)$ is given by the sum
	\[ \sum_{i=1}^{d} \langle \phi(p_0\wedge\dots\wedge p_{2k+1})(\xi_i,\xi_i)\rangle, \]
	where each summand can be computed as in Proposition \ref{prop:localdegree}.
\end{theorem}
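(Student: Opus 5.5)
The plan is to reduce the statement directly to Lemma \ref{lem:signaturepsi}, applied with $X = C_{k+1}$, $\sF = A_{k+1,L\otimes\alpha}$ and $\psi = \phi(p_0\wedge\dots\wedge p_{2k+1})$. By Theorem \ref{thm:detrep} and Remark \ref{rem:szegosymm}, $\psi$ is a symmetric element of $H^0(C_{k+1}\times C_{k+1},A_{k+1,L\otimes\alpha}\boxtimes A_{k+1,L\otimes\alpha})$. We also have $h^0(C_{k+1},A_{k+1,L\otimes\alpha}) = d$ by assumption, and the sheaf is a line bundle, so it is locally free at every point. It therefore suffices to show that the points $\xi_1,\dots,\xi_d\in C_{k+1}(\kk)$ satisfy $\psi(\xi_i,\xi_j)=0$ for $i\ne j$ and $\psi(\xi_i,\xi_i)\neq 0$. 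Each summand is then computed by Proposition \ref{prop:localdegree}, since $\xi_i$ is reduced.

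\textbf{Rationality of the $\xi_i$.} Since $y_i$ is $\kk$-rational and $\xi_i$ is the unique divisor with $y_i\in\langle\xi_i\rangle$, the Galois group of $\overline{\kk}/\kk$ fixes $\xi_i$. Hence $\xi_i\in C_{k+1}(\kk)$, even though its individual points may not be rational.

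\textbf{Off-diagonal vanishing.} For $i\ne j$ I argue as in part (i) of the proof of Theorem \ref{thm:detrep}. The span $\langle\xi_i+\xi_j\rangle$ contains the two distinct points $y_i,y_j$, which have the same image under $\pi$. So $\pi$ restricted to $\langle \xi_i+\xi_j\rangle$ is not injective, and since it is linear its image has smaller dimension. Thus $\langle p_0,\dots,p_{2k+1}\rangle$ does not separate $\xi_i+\xi_j$, and Lemma \ref{lem:phivanishes}(a) gives $\psi(\xi_i,\xi_j)=0$.

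\textbf{Diagonal nonvanishing.} This is the step I expect to be the main obstacle. Part (i) of Theorem \ref{thm:detrep} handled a \emph{general} fiber, whereas here the fiber is a specific rational one. By Lemma \ref{lem:phivanishes} and Proposition \ref{prop:szego} (which gives $S_{(\alpha,\alpha)}(\xi,\xi)\ne0$), it is enough to show that $\langle p_0,\dots,p_{2k+1}\rangle$ separates $2\xi_i$.

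1. The fiber $\pi^{-1}(y)$ has exactly $d=\deg\pi$ points, so $\pi$ is unramified there.
2. The point $y_i$ lies on a unique secant plane $\langle\xi_i\rangle$ with $\xi_i$ reduced. From this I would argue that $y_i$ is a point where $\Sigma_k$ is locally the image of $\beta_{V,k}$, which is an immersion near $(\xi_i,y_i)$. By Terracini's lemma, the tangent space is then $T_{y_i}\Sigma_k=\langle 2\xi_i\rangle$.
3. If $y_i$ were a singular point of $\Sigma_k$, a finite projection of degree $d$ could not have a fiber of $d$ reduced points there. So $y_i$ is smooth and $\pi$ is étale at $y_i$.
4. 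Since $\pi$ is étale at $y_i$, the projection is injective on $\langle 2\xi_i\rangle$. This means $\langle p_0,\dots,p_{2k+1}\rangle$ separates $2\xi_i$.

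If the smoothness claim in step 3 is delicate, I would use the alternative formulation directly. The nonvanishing of $\psi(\xi_i,\xi_i)$ is, by Proposition \ref{prop:localdegree}, the nonvanishing of the Gaussian determinant \eqref{eq:localdeterminant}. This determinant is the Jacobian of the composition $B^k\to\Sigma_k\to\nP^{2k+1}$ at $(\xi_i,y_i)$ in local coordinates. That composition is a finite map whose fiber over $y$ has $d$ points, equal to its degree, so it is étale at each of them and the Jacobian is nonzero.

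Once both conditions are established, Lemma \ref{lem:signaturepsi} yields the formula.
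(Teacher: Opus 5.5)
Your proposal is correct and follows the paper's route. The paper's proof is just the reduction to Lemma \ref{lem:signaturepsi} combined with Proposition \ref{prop:localdegree}; it takes the off-diagonal vanishing and diagonal nonvanishing on this specific rational fiber for granted, since the proof of Theorem \ref{thm:detrep}(i) only treats a general fiber, so your verification of those hypotheses is a genuine addition.

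For $\psi(\xi_i,\xi_i)\neq 0$, your Jacobian argument is the airtight one: $\pi\circ\beta_{V,k}$ is finite over a neighbourhood of $y$ by properness and finiteness of the fiber, and flat there because $B^k$ is smooth. In your first route, the immersion claim of step 2 really depends on step 3 together with Zariski's main theorem, and steps 1 and 3 need an argument through the normalization, because $\pi$ need not be flat when $\Sigma_k$ is not Cohen--Macaulay.
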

\begin{proof}
	This is a consequence of Lemma \ref{lem:signaturepsi} and of Proposition \ref{prop:localdegree}.
\end{proof}

We complete the proof of Theorem \ref{thm:thmA}.

\begin{proof}[Proof of Theorem \ref{thm:thmA}]
It follows from Theorem \ref{thm:admrepresentation} and Theorem \ref{thm:signatureformula}.
\end{proof}

\subsection{Secant varieties of real curves with maximal signature}
Now we assume that our smooth curve $C\subseteq \nP^r = \nP(V)$ of genus $g$ is defined over the real numbers $\nR$. In particular, for any point $x\in C(\nC)$ we denote by $\bar{x}\in C(\nC)$ its complex conjugate.  We denote by $\Gamma_1,\ldots,\Gamma_s$ the connected components of $C(\nR)$. Recall that $s\leq g+1$ and that $C$ is called an \emph{M-curve} if $s=g+1$.

\medskip

 We assume that $C$ is embedded by a linear system $V\subseteq H^0(C,L)$ with $r\geq 2k+2$ and denote by $\Sigma_k$ the $k$-th secant variety of $C$, which is also a real variety. Let now $p_0,\ldots,p_{2k+1}\in V$ have no common zero on $\Sigma_k$. Following the definition of \cite{shamovichvinnikov,kummershamovich}, the real variety $\Sigma_k$ is \emph{hyperbolic} with respect to $p_0,\ldots,p_{2k+1}$ if and only if the projection
 \[ \pi = (p_0:\cdots:p_{2k+1})\colon \Sigma_k \longrightarrow \nP^{2k+1} \]
 is \emph{real-fibered}, meaning that $\pi^{-1}(\nP^{2k+1}(\nR)) = \Sigma_k(\nR)$. Concretely, this means that the intersection
 \[ \{ \lambda_0p_{0}+\cdots+\lambda_{2k+1}p_{2k+1} = 0 \} \cap \Sigma_k \]
 consists only of real points for each $(\lambda_0:\cdots:\lambda_{2k+1}) \in \nP^{2k+1}(\nR)$. This condition can be restated purely in terms of the curve $C$, following \cite{kummersinn}:
 
\begin{definition}[Vastly real linear systems]
    A linear system $W \subseteq V$ of dimension $2k+2\geq 2$ on the real curve $C$ is \emph{vastly real} if every divisor in $|W|$ contains at most $2k$ nonreal points, counted with multiplicity.
\end{definition}

Then it was shown in \cite[Theorem 4.6]{kummersinn} that 
\[ \pi\colon \Sigma_k \to \nP^{2k+1} \text{ is real-fibered } \quad  \text{ if and only if } \quad  W=\langle p_0,\ldots,p_{2k+1} \rangle \text{ is vastly real. } \]
In general, the notion of hyperbolicity for arbitrary real projective varieties was introduced by Shamovich and Vinnikov in \cite{shamovichvinnikov} in order to extend the Lax conjecture of plane algebraic curves to higher dimension. They also connected it with the theory of admissible determinantal representations. In particular, their \cite[Proposition 3.12]{shamovichvinnikov} applied to our case shows that if $\Sigma_k$ has an admissible determinantal representation $\phi$ such that the symmetric matrix $\phi(p_0\wedge \ldots \wedge p_{2k+1})$ is (positive or negative) definite, then $\pi\colon \Sigma_k\to \nP^{2k+1}$ is real-fibered. Our main result in this section is a converse of this result for secant varieties, which is precisely Theorem \ref{thm:thmD} from the introduction.

\begin{theorem}[Theorem \ref{thm:thmD}]\label{thm:definitedetrep}
	Let $C\subseteq \nP^r = \nP(V)$ be a smooth curve of genus $g$ over $\nR$ embedded by a linear system $V\subseteq H^0(C,L)$ with $r\geq 2k+2$. Further let $p_0,\ldots,p_{2k+1}\in V$ be linearly independent without common zero on the secant variety $\Sigma_k$ and consider the projection $\pi = (p_0:\cdots:p_{2k+1})\colon \Sigma_k\to \nP^{2k+1}$. If $V$ is $k$-very ample, then the following are equivalent
	\begin{enumerate}
		\item The projection $\pi\colon \Sigma_k \rightarrow \nP^{2k+1}$ is real-fibered.
		\item The linear system $W=\langle p_0,\ldots,p_{2k+1} \rangle$ is vastly real.
		\item There is an admissible determinantal representation $\phi$ of $\Sigma_k$ of size $\deg(\Sigma_k)$ such that $\phi(p_0\wedge \cdots \wedge p_{2k+1})$ is (positive or negative) definite. 
	\end{enumerate}
Furthermore, if $C$ is an $M$-curve and $V=H^0(C,L)$, then $V$ is $k$-very ample.
\end{theorem}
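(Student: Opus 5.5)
The equivalence (1)$\Leftrightarrow$(2) is \cite[Theorem 4.6]{kummersinn}, and (3)$\Rightarrow$(1) is \cite[Proposition 3.12]{shamovichvinnikov}. So the work lies in (1)/(2)$\Rightarrow$(3) and in the final statement about $M$-curves.

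For (2)$\Rightarrow$(3), I would use the symmetric representation $\phi=\phi_{V,k,\alpha}$ of Theorems \ref{thm:detrep} and \ref{thm:ulrichdeg>=2g+2k+1}. This requires choosing a real theta characteristic $\alpha$ with no cohomology, together with a real algebraic orientation $\rho\colon\alpha^{\otimes 2}\to\omega_C$. The choice should be such that the induced real $1$-forms $W$ are \emph{nonnegative} (for a fixed orientation) on each component $\Gamma_j$ of $C(\nR)$, i.e.\ $\alpha$ induces a complex orientation. Since $\pi$ is real-fibered, every fiber over a real point of $\nP^{2k+1}(\nR)$ is real. I would pick $y\in\nP^{2k+1}(\nR)$ general, so that the fiber $\{y_1,\dots,y_d\}$ consists of real points lying on secant spaces $\langle\xi_i\rangle$ with $\xi_i$ reduced, and apply Theorem \ref{thm:signatureformula}. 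A real secant point may a priori come from $\xi_i$ containing conjugate pairs. However, the hyperplane $\{\sum\lambda_ip_i=0\}$ through the preimage of $y$ cuts out a divisor containing $2\xi_i$ plus other points. Vast reality then forces all points of $\xi_i$ to be real, because conjugate pairs in $2\xi_i$ would already contribute too many nonreal points. I would make this precise via \cite{kummersinn}.

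It then suffices to show that the determinant \eqref{eq:localdeterminant} has the same sign for all real reduced $\xi$ for which $W$ separates $2\xi$. The key step, and the main obstacle, is a sign/continuity argument. The set of $\xi\in C_{k+1}(\nR)$ with all points real is a union of cells indexed by how the points are distributed among the $\Gamma_j$. Vast reality means that no section in $W$ vanishes on $2\xi$ for such $\xi$, including degenerate $\xi$ with coincident points (here one uses higher-order tangency, since a section vanishing to order $2$ at $k+1$ real points has $2k+2$ real zeros). One then wants to check that the determinant, which is the Gaussian map $G_{k+1,L}(p_0\wedge\dots\wedge p_{2k+1})$ normalized by $W$, does not vanish on the closure of each cell. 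That gives constant sign on each cell. Swapping two columns pairs swaps simultaneously, so the ordering of points does not matter.

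To compare signs across different cells, I would move a point from one component to another by passing through the boundary where two points collide. Near the diagonal, one uses the expansion of the determinant along $x_i\to x_j$. There the sign change coming from $(t_i-t_j)^4$ is positive, and the remaining factor is a higher Wronskian that vastly real forces to be nonzero. For points on different ovals, the complex orientation property of $\alpha$ should align the signs. I expect this to be the hardest part, and I would try first to reduce it, via Lemma \ref{lem:pfaffianexpression}, to signs of the $k=0$ Gaussians $f_idf_j-f_jdf_i$ relative to $W$. Once all summands have the same sign, Lemma \ref{lem:signaturepsi} shows that $\phi(p_0\wedge\dots\wedge p_{2k+1})$ is definite.

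For the final claim, if $C$ is an $M$-curve and $V=H^0(C,L)$ admits a vastly real $W$ of dimension $2k+2$, then I would show that $h^0(L(-\xi))=h^0(L)-k-1$ for all $\xi\in C_{k+1}$. A failure would produce a section of $W$ vanishing on $\xi$ plus additional constrained points, and counting real zeros on the $g+1$ ovals (each oval carrying an even number of zeros) should contradict the bound on nonreal zeros. Equivalently, $\deg L$ is large relative to $g$ and $k$ because of the parity constraints on $g+1$ ovals, and then $k$-very ampleness follows from $\deg L\ge 2g+2k+1$-type bounds or from Riemann--Roch directly.
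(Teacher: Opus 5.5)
Your reduction matches the paper's. You cite \cite[Theorem 4.6]{kummersinn} and \cite[Proposition 3.12]{shamovichvinnikov}, take $\phi$ built from a theta characteristic whose orientation is a complex orientation, and use Theorem~\ref{thm:signatureformula} to reduce to showing that \eqref{eq:localdeterminant} has one sign on all real reduced $\xi$ where it is nonzero. Here $\xi$ must range over all real reduced divisors, including those containing conjugate pairs. The gap is in how you propose to prove this sign constancy.

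(i) Vast reality does not force the $\xi_i$ to be totally real. A hyperplane through the center and $y_i$ does not contain $2\xi_i$; at an unramified fiber point, no element of $W$ vanishes on $2\xi_i$. Here is a counterexample. Take the rational normal quartic $\nu(t)=(1:t:\dots:t^4)$ with $k=1$, and let $e=(1,0,1,0,3)$ be the moment vector of the standard Gaussian. Then $W=e^{\perp}$ is vastly real, since each $p\in W$ has Gaussian mean zero and hence changes sign on $\nR$. Put $y=\nu(i)+\nu(-i)=(2,0,-2,0,2)$. For the Hankel matrix $H$ cutting out $\Sigma_1$ one computes $\det H(\mu e+\lambda y)=2\mu(\mu-2\lambda)(\mu+6\lambda)$. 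So the fiber through $y$ is $\{y,\,y+2e,\,y-6e\}$, all real and unramified, yet $y$ lies on the secant line of the conjugate pair $\pm i$. This persists on an open set of real fibers, so choosing a general fiber does not avoid it.

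(ii) Your nonvanishing claim is unjustified. A section vanishing on $2\xi$ with $\xi$ real has $2k+2$ \emph{real} zeros, which vast reality does not forbid. The paper only works on the Zariski open set $U$ where \eqref{eq:localdeterminant} is nonzero.

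(iii) Most importantly, even granting nonvanishing, continuity cannot compare configurations on different ovals. Points can only collide inside one oval. Divisors with different parities of points on the $\Gamma_i$ lie in different connected components of $C_{k+1}(\nR)$; for $k=0$ these components are just the ovals. That the complex orientation aligns the signs across these components is exactly what has to be proved. The Pfaffian formula of Lemma~\ref{lem:pfaffianexpression} gives no sign control, since a Pfaffian's sign is not determined by the signs of its entries.

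The missing idea is to let only one or two points move at a time.
\begin{itemize}
\item Take $\xi,\xi'$ with opposite signs and a minimal number $s$ of differing points.
\item By Corollary~\ref{cor:fixdouble}, and Lemma~\ref{lem:fixpair} at nonreal points, change basis of $W$ with determinant $1$. Then two of the $p_i$ (if $s=1$), or four (if $s\geq 2$), vanish doubly at the $k$, resp.\ $k-1$, frozen points.
\item Now \eqref{eq:localdeterminant} is block triangular. It factors as a $2\times 2$, resp.\ $4\times 4$, determinant in the moving point(s) times a factor independent of them.
\item The $2\times 2$ block is the local degree of a real-fibered pencil $C\to\nP^1$. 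Its restriction to $C(\nR)$ is a covering that pulls back an orientation of $\nP^1(\nR)$ to the complex orientation, so the sign is the same on \emph{every} oval. This is where the complex orientation actually enters.
\item The $4\times 4$ block is the case $k=1$, which is needed, e.g., when a conjugate pair moves. Its sign constancy comes from \cite{agostinikummer} together with the Mikhalkin--Orevkov theorem that the encomplexed writhe of a vastly real projection equals $\deg\Sigma_1$ \cite{mikhalkinorevkov}.
\item A generic real intermediate divisor $y_1+y_2$ then contradicts the minimality of $s$.
\end{itemize}

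Your $M$-curve argument also fails, for three reasons.
\begin{itemize}
\item The parity of the number of zeros of a real section on $\Gamma_i$ depends only on $L$, and it is often odd, e.g.\ for vastly real systems of type $(d,1,\dots,1)$.
\item The type only yields $\deg L\geq g+1+2k$. For $k<g-1$ this is below the range $\deg L\geq 2g+k$ where $k$-very ampleness is automatic.
\item A failure of $k$-very ampleness of $H^0(C,L)$ does not by itself produce any special element of $W$.
\end{itemize}
The paper instead combines Proposition~\ref{prop:typex} (every section of $W$ vanishes on every oval) with Huisman's criterion, Lemma~\ref{lem:huisman}, to see that $L$ and $L(-x)$ are nonspecial. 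It then inducts on $k$ via Corollary~\ref{cor:fixdouble} and Lemma~\ref{lem:fixpair}.
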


We prove this theorem in the next subsection.

\begin{remark}
	We expect that vastly real linear systems of dimension at least $4$ can only exist on $M$-curves, see \cite[Remark 4.7]{kummersinn}. Hence, if this is true, then the conditions (1)-(2)-(3) in Theorem \ref{thm:definitedetrep} are equivalent for complete linear systems of dimension at least $4$.
\end{remark}
	
	\begin{remark} 
	 Theorem \ref{thm:definitedetrep} was already proved for $k=0$, i.e. for the curve itself, in \cite{shamovichvinnikov}.
\end{remark}

\subsection{Vastly real linear systems}

We start by studying vastly real linear systems. In the following $C$ will be a real curve, $L$ a line bundle on $C$ and  $W\subseteq H^0(C,L)$ a linear system. If $\xi\subseteq C$ is an effective divisor, we denote by $W(-\xi) \subseteq H^0(C,L(-\xi))$ the linear system corresponding to the sections of $W$ that vanish on $\xi$. First we see that the condition in the definition of a vastly real linear system is the best possible:

\begin{lemma}\label{lem:mrdim}
 Assume that $W$ is a linear system with $\dim W\geq 2k+1\geq 1$.  Then there is at least one divisor in $|W|$ containing $2k$ nonreal points, counted with multiplicity.
\end{lemma}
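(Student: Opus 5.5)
The plan is to impose vanishing at $k$ points of $C$ that come in complex conjugate pairs. The real structure on $W$ means $W$ is spanned by real sections. For a nonreal point $x\in C(\nC)$, vanishing at both $x$ and $\bar{x}$ is a real condition on $W$: the subspace of sections vanishing on the real divisor $x+\bar{x}$ is defined over $\nR$. Each such pair imposes at most $2$ linear conditions.

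First I would choose nonreal points $x_1,\ldots,x_k\in C(\nC)$, general enough that $x_1,\bar x_1,\ldots,x_k,\bar x_k$ are $2k$ distinct nonreal points. Then I set
\[ \xi = x_1+\bar x_1+\cdots+x_k+\bar x_k, \]
a real effective divisor of degree $2k$. I then consider the real linear system $W(-\xi)$. Since $\xi$ imposes at most $2k$ conditions, we get
\[ \dim W(-\xi)\geq \dim W-2k\geq 1, \]
so there is a nonzero real section $s\in W(-\xi)$. Its divisor lies in $|W|$ and contains $\xi$, hence at least $2k$ nonreal points counted with multiplicity.

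If the statement is meant as "exactly $2k$", I would sharpen the construction. Choosing the $x_i$ general, $\xi$ imposes exactly $\min(2k,\dim W)$ independent conditions. Taking $\dim W=2k+1$ after passing to a real subsystem, $W(-\xi)$ is one-dimensional.

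The main obstacle is arranging that the remaining zeros of $s$ are real, which general position does not control. I expect the intended reading is "at least $2k$", which is exactly what makes the bound $2k$ in the definition of vastly real optimal. So I would only prove the existence of a divisor containing at least $2k$ nonreal points. The only care needed is checking that $W(-\xi)$ is defined over $\nR$, which holds because $\xi$ is invariant under complex conjugation.
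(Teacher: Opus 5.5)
Your proposal is correct and is essentially the paper's own argument: you impose vanishing on the conjugation-invariant divisor $\xi=\sum_{i=1}^k(x_i+\bar{x}_i)$ of nonreal points, so that $W(-\xi)$ is a real linear system of dimension at least $\dim W-2k\geq 1$, and any nonzero real section in it gives the required divisor. Your reading of ``containing $2k$ nonreal points'' as ``at least $2k$'' is the one the paper relies on later, in the uniqueness part of Proposition~\ref{prop:typex}, and your bound $\dim W-2k\geq 1$ fits the stated hypothesis $\dim W\geq 2k+1$ more precisely than the paper's count ``$2k+2-2k=2$''.
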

\begin{proof} Observe that for any choice of $x_1,\ldots,x_h\in C(\nC)\setminus C(\nR)$ the space $W(-\sum_{i=1}^k (x_i+\bar{x}_i))$  has dimension at least $2k+2-2k=2$. 
\end{proof}

\begin{lemma}\label{lem:vastlyrealclosed}
 Let $k\geq0$ and $\nG=\nG(2k+2,H^0(C,L))$ be the Grassmannian of $2k+2$ dimensional linear systems. The set of vastly real linear systems is closed in $\nG(\nR)$ with the analytic topology.
\end{lemma}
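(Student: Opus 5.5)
We will show that the complement of the vastly real locus is open in $\nG(\nR)$. Fix $W\in\nG(\nR)$ that is not vastly real. By definition there is a real nonzero section $s\in W$ whose divisor $\operatorname{div}(s)$ contains at least $2k+1$ nonreal points, counted with multiplicity. Nonreal zeros come in conjugate pairs, so their number is even. Hence $\operatorname{div}(s)$ contains at least $2k+2$ nonreal points, i.e. an effective divisor $\eta+\bar\eta$ supported on $C(\nC)\setminus C(\nR)$ with $\deg\eta=k+1$.

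The plan is to show that the bad condition persists on a neighborhood of $W$. First choose a real basis $p_0,\ldots,p_{2k+1}$ of $W$ with $p_0=s$. Any $W'$ near $W$ in $\nG(\nR)$ has a real basis $p_0',\ldots,p'_{2k+1}$ with each $p_i'$ close to $p_i$ in $H^0(C,L)$, by the standard affine charts of the Grassmannian. In particular $p_0'$ is a real section close to $s$.

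Next we use continuity of roots. Choose small disjoint disks $U_1,\ldots,U_m$ in $C(\nC)$, each avoiding $C(\nR)$, around the distinct nonreal zeros $z_1,\ldots,z_m$ of $s$, and closed under conjugation as a collection. In local trivializations, Hurwitz's theorem (or Rouché) applies: for $p_0'$ sufficiently close to $s$, the number of zeros of $p_0'$ in $U_j$, counted with multiplicity, equals the multiplicity of $z_j$ in $\operatorname{div}(s)$. This is the only analytic input. The one point needing care is uniformity: the closeness must be measured, say, by the sup norm on compact neighbourhoods of the $\overline{U_j}$ after trivializing $L$, and it is controlled by the Grassmannian chart coordinates.

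It then follows that $\operatorname{div}(p_0')$ has at least $2k+2>2k$ zeros in $\bigcup U_j$, all of them nonreal. Since $p_0'\in W'$ is nonzero, $W'$ is not vastly real. So the set of non-vastly-real systems is open, and its complement, the set of vastly real linear systems, is closed.

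The main obstacle is bookkeeping rather than substance. We have to ensure that the zeros near $z_j$ stay nonreal, which holds because the $U_j$ avoid $C(\nR)$. The parity argument is needed to upgrade "more than $2k$" to a count that is stable under perturbation, and openness itself follows directly from Hurwitz. Note also that if $W'$ is $W$ itself, no perturbation is needed. A cleaner algebraic alternative would be to describe non-vastly-real systems as the image of a proper map from an incidence variety, but the analytic argument above suffices.
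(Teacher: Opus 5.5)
Your argument is correct and is essentially the paper's: the paper argues by sequences (a nonzero $s\in W$ is a limit of nonzero $s_n\in W_n$, and by continuity of zeros and closedness of $C(\nR)$, more than $2k$ nonreal zeros of $s$ would force more than $2k$ nonreal zeros of $s_n$), while you phrase the same idea as openness of the complement via Hurwitz on conjugation-stable disks avoiding $C(\nR)$. The parity step is harmless but unnecessary, since Hurwitz already preserves the count $2k+1$.
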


\begin{proof}
 Let $(W_n)_{n\in\nN}$ be a sequence in $\nG(\nR)$ of vastly real linear systems that converges to $W\in\nG(\nR)$. Every $0\neq s\in W$ is the limit of a sequence $(s_n)_{n\in\nN}$ with $0\neq s_n\in W_n$. By continuity of zeros and because $C(\nR)$ is closed in $C(\nC)$ with the analytic topology, it follows that $s$ does not have more than $2k$ nonreal zeros because otherwise $s_n$, for large enough $n$, would have more than $2k$ nonreal zeros, contradicting the assumption that each $W_n$ is vastly real.
\end{proof}

\begin{lemma}\label{lem:fixpair}
	Assume that $\dim W=2k+2 \geq 2$. Then $W$ is vastly real if and only if it separates any divisor of the form $x_1+\ldots+x_h+\bar{x}_1+\cdots+\bar{x}_{h}$, where $0\leq h\leq k+1$ and $x_1,\ldots,x_h\in C(\nC)\setminus C(\nR)$ are arbitrary points. In particular if $W$ is vastly real then $W(-x_1-\cdots-x_h-\bar{x}_1-\cdots-\bar{x}_h)$ is a vastly real linear system of dimension $2k+2-2h$.
\end{lemma}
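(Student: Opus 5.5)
We argue by dimension counts on divisors of sections of $W$. Throughout, write $\eta_h=x_1+\cdots+x_h+\bar x_1+\cdots+\bar x_h$, a real divisor of degree $2h$; "$W$ separates $\eta_h$" means the evaluation map $W\to H^0(\eta_h,L_{|\eta_h})$ is surjective, equivalently $\dim W(-\eta_h)=2k+2-2h$.

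\emph{Separation implies vastly real.} Suppose $W$ separates every $\eta_h$ with $0\le h\le k+1$, and let $0\ne s\in W$ have more than $2k$ nonreal zeros, counted with multiplicity. Nonreal zeros of a real section come in conjugate pairs, so $s$ vanishes on some $\eta_{k+1}$. Since $\deg \eta_{k+1}=2k+2=\dim W$, separation makes the evaluation $W\to H^0(\eta_{k+1},L_{|\eta_{k+1}})$ an isomorphism. Hence $W(-\eta_{k+1})=0$, contradicting $s\ne0$.

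\emph{Vastly real implies separation.} Let $W$ be vastly real and suppose it fails to separate some $\eta_h$. Then $\dim W(-\eta_h)\ge 2k+3-2h$, and this space is defined over $\nR$. Choose further nonreal points $x_{h+1},\dots,x_{k}$ and impose vanishing on $x_j+\bar x_j$ for each. Each such pair imposes at most two real conditions, so the resulting space has dimension at least $2k+3-2h-2(k-h)=3$. Now use one more nonreal point $y$: vanishing at $y$ and $\bar y$ costs at most two conditions on the real space, leaving dimension at least $1$. A nonzero real section in it vanishes on $\eta_k+y+\bar y$, giving at least $2k+2$ nonreal zeros and contradicting vastly real. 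When $h=k+1$ the same count works with no extra pairs: $\dim W(-\eta_{k+1})\ge1$ already gives the contradiction.

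\emph{Multiplicities.} Coincident $x_i$ must be handled, but the separation statement is phrased scheme-theoretically, so the argument above goes through verbatim.

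\emph{The "in particular".} Separation of $\eta_h$ gives $\dim W(-\eta_h)=2k+2-2h$. Take a nonzero section $s$ of $W(-\eta_h)$, viewed in $H^0(C,L(-\eta_h))$. Its divisor there is its divisor in $|W|$ minus $\eta_h$. It therefore has at most $2k-2h$ nonreal zeros, which is exactly the vastly real bound for dimension $2(k-h)+2$.

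The delicate step is the dimension count: the conditions at $y,\bar y$ must be real conditions on a real vector space, so that the section produced is real, and one must check that no case collapses. Working throughout with the real subspace avoids the issue, because real sections vanishing at $y$ automatically vanish at $\bar y$. With that, each conjugate pair costs at most $2$ real dimensions.
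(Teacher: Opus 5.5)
Your proof is correct and follows essentially the paper's route: vastly real is equivalent to $W(-\eta_{k+1})=0$ for every $\eta_{k+1}$, which in turn is equivalent to $W$ separating every $\eta_{k+1}$, because $\deg\eta_{k+1}=\dim W$. The only difference is in the smaller cases $h\le k$: the paper handles them at once by noting that separating $\eta_{k+1}$ implies separating every subdivisor $\eta_h\le\eta_{k+1}$, and your dimension count, which enlarges $\eta_h$ by further conjugate pairs, verifies exactly this by hand.
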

\begin{proof}
	By definition $W$ is vastly real if and only if for any $x_1,\ldots,x_{k+1}\in C(\nC)\setminus C(\nR)$ it holds that $W(-x_1-\cdots- x_1-\bar{x}_{1}-\cdots - \bar{x}_{k+1}) = 0$. This means precisely that $W$ separates the divisor $x_1+\cdots+x_{k+1}+\bar{x}_{1}+\cdots + \bar{x}_{k+1}$, and then it must separate also all the divisors $x_1+\cdots+x_h+\bar{x}_1+\cdots+\bar{x}_h$ for $0\leq h\leq k+1$. 
\end{proof}

\begin{corollary}\label{cor:dividingtype}
    If there exists a vastly real linear system on $C$, then $C$ is of \emph{dividing type}, i.e., the set $C(\nC)\setminus C(\nR)$ has two connected components that are exchanged by complex conjugation.
\end{corollary}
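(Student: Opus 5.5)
The plan is to reduce to the case of a pencil, i.e.\ $\dim W = 2$, and then use the classical fact that a real-fibered morphism to $\nP^1$ splits the nonreal points into two halves.

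First I reduce to dimension two. Let $W$ be vastly real of dimension $2k+2$. Choose arbitrary nonreal points $x_1,\ldots,x_k\in C(\nC)\setminus C(\nR)$. By Lemma~\ref{lem:fixpair}, the linear system
\[ W' := W(-x_1-\cdots-x_k-\bar x_1-\cdots-\bar x_k) \]
is vastly real of dimension $2$. Equivalently, every divisor in $|W'|$, after removing the fixed part coming from the chosen pairs, consists only of real points.

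Next I remove base points. A base point of $W'$ outside $\{x_i,\bar x_i\}$ must be real. Indeed, a nonreal base point $y$ would make every section of $W$ vanishing on $x_1+\cdots+x_k+\bar x_1+\cdots+\bar x_k$ also vanish at $y$, and since $W'$ is real it would also vanish at $\bar y$. That gives more than $2k$ nonreal zeros, contradicting vastness. Let $F$ be the base divisor of $W'$, which is defined over $\nR$. Then $W'(-F)\subseteq H^0(C,L')$ is a base-point-free real pencil. It defines a real morphism
\[ f\colon C\longrightarrow \nP^1, \]
which is nonconstant because $\dim W'=2$. The vastly real condition now says exactly that every fiber of $f$ over a point of $\nP^1(\nR)$ consists of real points, so $f^{-1}(\nP^1(\nR))\subseteq C(\nR)$. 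Since $f$ is real, the reverse inclusion $f(C(\nR))\subseteq\nP^1(\nR)$ also holds, and therefore $f^{-1}(\nP^1(\nR)) = C(\nR)$.

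Finally I conclude. Write $\nP^1(\nC)\setminus\nP^1(\nR) = H_+\sqcup H_-$ as the upper and lower half-planes, which are exchanged by conjugation. Then
\[ C(\nC)\setminus C(\nR) = f^{-1}(H_+)\sqcup f^{-1}(H_-). \]
This is a disjoint union of two open sets, each nonempty since $f$ is surjective, and they are exchanged by complex conjugation because $f$ commutes with it. Hence $C(\nC)\setminus C(\nR)$ is disconnected.

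It remains to see that there are exactly two components, each of the two sets being connected. I will invoke the classical fact that $C(\nC)\setminus C(\nR)$ has at most two connected components. Sketch: conjugation acts freely on the complement, and the quotient $C(\nC)/\mathrm{conj}$ is a connected surface with boundary $C(\nR)$. The complement of $C(\nR)$ in $C(\nC)$ is therefore a double cover of the connected interior of that quotient, so it has at most two components. Combining this with the previous paragraph gives exactly two components, exchanged by conjugation, which is the definition of dividing type.

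The only delicate point is the base-point step, namely making sure that removing the chosen pairs and the fixed part does not destroy the real-fiberedness of the resulting map. This is handled by the argument above: nonreal base points are excluded by vastness, and real base points do not affect the condition on nonreal zeros.
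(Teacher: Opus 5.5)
Your proof is correct and follows the paper's route: you use Lemma~\ref{lem:fixpair} to reduce to a vastly real pencil, remove its (necessarily real) base points, and obtain a real-fibered morphism $C\to\nP^1$. The paper simply asserts that such a morphism forces dividing type, while you also prove that last step, using the preimages of the two half-planes together with the classical fact that $C(\nC)\setminus C(\nR)$ has at most two connected components.
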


\begin{proof}
 Lemma \ref{lem:fixpair} shows that there exists a vastly real linear system of dimension $2$ on $C$. After removing base-points if necessary, this defines a real-fibered morphism $C\to\nP^1$ which can only exist if $C$ is of dividing type.
\end{proof}

From Lemma \ref{lem:fixpair} we can also deduce a similar statement with the complex conjugate pair replaced by a real double point.

\begin{corollary}\label{cor:fixdouble}
    Let $W$ be a vastly real linear system of dimension $\dim W = 2k+2$, let $x_1,\ldots,x_h\in C(\nR)$ for $1\leq h\leq k$ and consider the linear system $W'=W(-2x_1-\cdots-2x_h)$.  Then there is a vastly real linear system $W''\subseteq W'$ of dimension $\dim W''=2k+2-2h$. 
\end{corollary}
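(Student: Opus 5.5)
We need to show: given $W$ vastly real of dimension $2k+2$ and real points $x_1,\dots,x_h$ with $1\le h\le k$, there is a vastly real $W''\subseteq W(-2x_1-\cdots-2x_h)$ of dimension $2k+2-2h$. The plan is to obtain $W''$ as a limit of the vastly real systems produced by Lemma \ref{lem:fixpair}, and to conclude with the closedness statement of Lemma \ref{lem:vastlyrealclosed}.

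First, approximate each real point $x_i$ by nonreal points. Take local real analytic coordinates near $x_i$ and choose sequences $y_{i,n}\in C(\nC)\setminus C(\nR)$ with $y_{i,n}\to x_i$, so that also $\bar y_{i,n}\to x_i$. We may take the $y_{i,n}$ pairwise distinct, and distinct from one another's conjugates, for different $i$.

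Second, apply Lemma \ref{lem:fixpair}. It gives that
\[ W_n := W\Big(-\sum_{i=1}^h (y_{i,n}+\bar y_{i,n})\Big) \]
is vastly real of dimension exactly $2k+2-2h$, because $W$ separates the divisor $\sum_i (y_{i,n}+\bar y_{i,n})$. Since the divisor is conjugation invariant, $W_n$ is a real subspace, hence a real point of $\nG(2k+2-2h,H^0(C,L))$.

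Third, pass to a limit. The real Grassmannian is compact, so after taking a subsequence $W_n\to W''\in\nG(2k+2-2h,H^0(C,L))(\nR)$. By Lemma \ref{lem:vastlyrealclosed}, applied with $k-h$ in place of $k$ (the proof there works verbatim for any fixed dimension), $W''$ is vastly real.

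Finally, check that $W''\subseteq W'$. Every $s\in W''$ is a limit of sections $s_n\in W_n\subseteq W$, so $s\in W$ since $W$ is closed. Moreover $s_n$ vanishes on the divisor $\xi_n=\sum_i(y_{i,n}+\bar y_{i,n})$, and $\xi_n\to 2x_1+\cdots+2x_h$ in $C_{2h}$. Vanishing on a divisor is a closed condition on pairs $(s,\xi)$ in $H^0(C,L)\times C_{2h}$, being the zero locus of the evaluation map to the tautological bundle $E_{2h,L}$. Hence $s$ vanishes on $2x_1+\cdots+2x_h$, which gives $W''\subseteq W(-2x_1-\cdots-2x_h)=W'$.

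The main subtlety is this last step: the containment must be phrased via the closedness of the incidence in $H^0(C,L)\times C_{2h}$ rather than pointwise, since the limit divisor is nonreduced. Using the family $E_{2h,L}$ handles this cleanly.
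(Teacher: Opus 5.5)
Your proof is correct and is essentially the paper's argument: you approximate the real points by conjugate pairs of nonreal points, apply Lemma~\ref{lem:fixpair}, take an accumulation point in the Grassmannian, and conclude with Lemma~\ref{lem:vastlyrealclosed}. The only difference is that the paper proves the case $h=1$ and then inducts, whereas you handle all $h$ points at once; you also spell out the containment $W''\subseteq W'$ via the evaluation map to $E_{2h,L}$, which the paper leaves as ``by continuity''.
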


\begin{proof}
   By induction, it is enough to prove the statement for $h=1$, and we set $x=x_1\in C(\nR)$.  Choose a sequence of points $y_n\in C(\nC)\setminus C(\nR)$ that converges to $x$ in the analytic topology and consider the subspaces $W_n=W(-y_n-\bar{y}_n)$. These are vastly real linear system of dimension $2k$ thanks to Lemma \ref{lem:fixpair}. Let $W''$ be an accumulation point of the sequence $(W_n)_{n\in\nN}$ in the Grassmannian of $2k$-dimensional linear subspaces of $W$. By continuity $W''$ has $2x$ in its base locus, so that $W''\subseteq W'$, and by Lemma \ref{lem:vastlyrealclosed} it is  vastly real of dimension $2k$.
\end{proof}

\begin{lemma}\label{lem:numbersconstant}
 Let $S\subseteq H^0(C,L)$ and assume that every $s\in S$ has only real zeros. For every connected component $S_0$ of $S$ with the analytic topology, the number of zeros (counted with multiplicities), that a section $s\in S_0$ has on each $\Gamma_i$, is constant.
\end{lemma}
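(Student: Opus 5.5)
The plan is a local-constancy argument on the connected component $S_0$: the count of zeros on each $\Gamma_i$ will be shown to be locally constant, hence constant because $S_0$ is connected. Set $\deg L = m$. Every $s\in S$ is nonzero, and by hypothesis all $m$ of its zeros, counted with multiplicity, lie on $C(\nR)=\Gamma_1\sqcup\cdots\sqcup\Gamma_s$. For $s\in S$ define
\[ n_i(s) := \#\{\text{zeros of } s \text{ on } \Gamma_i\}, \]
counted with multiplicity, so that $\sum_i n_i(s)=m$.

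The key step is continuity of zeros. Fix $s\in S_0$ and choose pairwise disjoint open neighbourhoods $U_1,\dots,U_s\subseteq C(\nC)$ of $\Gamma_1,\dots,\Gamma_s$ in the analytic topology; this is possible because the $\Gamma_i$ are disjoint compact sets. Put $U=\bigcup_i U_i$. By Hurwitz/Rouché-type continuity of zeros of sections of a line bundle, applied locally in trivializations around each zero together with compactness of $C(\nC)\setminus U$, every $s'\in H^0(C,L)$ sufficiently close to $s$ satisfies two conditions: it has no zeros in $C(\nC)\setminus U$, and it has exactly $n_i(s)$ zeros, with multiplicity, in $U_i$. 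Concretely, the zeros of $s$ in $U_i$ are all on $\Gamma_i$, and around each one a small disc inside $U_i$ contains the same number of zeros of $s'$; outside these discs $s$ is bounded away from zero.

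Now take $s'\in S$ near $s$. All zeros of $s'$ are real, so the zeros lying in $U_i$ lie in $U_i\cap C(\nR)=\Gamma_i$. Hence $n_i(s')=n_i(s)$ for every $i$. This shows each function $n_i\colon S\to\nZ$ is locally constant, and a locally constant function on the connected set $S_0$ is constant.

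The main obstacle is the continuity of zeros on a compact Riemann surface. It is handled by covering $C(\nC)$ with finitely many trivializing charts and applying the argument principle on small discs around each zero of $s$, together with the uniform lower bound for $|s|$ outside those discs.
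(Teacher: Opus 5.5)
Your argument is correct and is essentially the paper's: both rest on continuity of zeros of sections together with connectedness of $S_0$. The only difference is bookkeeping. The paper shows that each of the finitely many sets $S(d_1,\ldots,d_s)$, consisting of sections with exactly $d_i$ zeros on $\Gamma_i$, is closed in $S$, because the $\Gamma_i$ are closed and zeros vary continuously. It then gets openness for free from the finite disjoint decomposition of $S$. You instead prove openness directly, via Rouché on disjoint neighbourhoods $U_i$ of the $\Gamma_i$.
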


\begin{proof}
 Fix some $d_1,\ldots,d_s\geq0$ such that $\deg(L)=d_1+\cdots+d_s$.
 Since the $\Gamma_i$ are closed subsets of $C(\nC)$, the set $S(d_1,\ldots,d_s)$ of all $s\in S$ that have at least (and thus exactly) $d_i$ zeros on $\Gamma_i$ for all $i=1,\ldots,s$ is closed inside $S$. On the other hand, the set $S$ is the disjoint union of finitely many such sets of the form $S(e_1,\ldots,e_s)$ with $\deg(L)=e_1+\cdots+e_s$. Hence, each such set is closed and open in $S$ which proves the claim.
\end{proof}

The next proposition is not needed in its full generality but we include it anyways as it might be of independent interest.

\begin{proposition}\label{prop:typex}
 Let $W\subseteq H^0(C,L)$ be a vastly real linear system of dimension $2k+2$. There are unique integers $d_1,\ldots,d_s>0$ with $d_1+\cdots+d_s=\deg(L)-2k$ such that every $s\in W$ has at least $d_i$ zeros on $\Gamma_i$ (counted with multiplicity) for all $i=1,\ldots,s$.
\end{proposition}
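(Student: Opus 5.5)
We argue by induction-free reduction to the case of pencils, combined with a continuity argument on $W$. For $k=0$, $W$ has dimension $2$ and every nonzero $s\in W$ has only real zeros. The set $W\setminus\{0\}$ is connected, since it is a real $2$-dimensional vector space minus the origin. By Lemma \ref{lem:numbersconstant}, the number $d_i$ of zeros of $s$ on $\Gamma_i$ is therefore independent of $s$, and $\sum d_i=\deg L$. Positivity of each $d_i$ comes from the associated real-fibered map. Remove the (necessarily real) base points and consider the real-fibered morphism $C\to\nP^1$ from the proof of Corollary \ref{cor:dividingtype}. Its restriction to each $\Gamma_i$ is an unramified covering of $\nP^1(\nR)$; this is the standard fact that real-fibered maps of curves are unramified on real points, which we would quote. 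A covering of a circle by a circle has positive degree, so each $\Gamma_i$ gets at least one zero.

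For general $k$, let $T\subseteq W\setminus\{0\}$ be the set of sections with exactly $2k$ nonreal zeros counted with multiplicity. \textbf{Step 1:} $T$ is a union of the punctured pencils $W_y\setminus\{0\}$, where
\[ W_y=W(-y_1-\bar y_1-\cdots-y_k-\bar y_k) \]
and $y_1,\ldots,y_k\in C(\nC)\setminus C(\nR)$. By Lemma \ref{lem:fixpair}, each $W_y$ is a vastly real system of dimension $2$, so its nonzero sections have exactly the $2k$ prescribed nonreal zeros. Subtracting them gives a base-point-free-modulo-real-points pencil of degree $\deg L-2k$ with only real zeros. The case $k=0$, applied to this residual pencil, gives positive integers $d_i(y)$ with $\sum_i d_i(y)=\deg L-2k$.

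\textbf{Step 2:} the $d_i(y)$ do not depend on $y$. By Corollary \ref{cor:dividingtype}, $C$ is of dividing type. The divisors $\sum_j(y_j+\bar y_j)$ are therefore parametrized by $k$-tuples in one connected half of $C(\nC)\setminus C(\nR)$, up to permutation, and this parameter space is connected. We then run the argument of Lemma \ref{lem:numbersconstant} for $T$ itself. Sections in $T$ have exactly $\deg L-2k$ real zeros. The number of zeros on each closed set $\Gamma_i$ is upper semicontinuous, and the counts sum to a constant, so each count is locally constant on $T$. The set $T$ is connected, being a union of connected pencils indexed by a connected family, with nearby pencils meeting in the limit. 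Hence $d_i:=d_i(y)$ is constant.

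\textbf{Step 3:} extend from $T$ to all of $W$. Let $s\in W\setminus\{0\}$ be arbitrary. A dimension count shows $T$ is dense in $W$: the family of pencils $W_y$ has real dimension $2k$, and each pencil adds $2$, giving total dimension $2k+2=\dim W$. Alternatively, one can approximate each nonreal or double real zero pattern of $s$ by generic conjugate pairs, in the spirit of Corollary \ref{cor:fixdouble}. So $s$ is a limit of sections $s_n\in T$. Each $s_n$ has $d_i$ zeros on the closed set $\Gamma_i$, and zeros move continuously, so $s$ has at least $d_i$ zeros on $\Gamma_i$.

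Uniqueness: any valid tuple $(e_i)$ must satisfy $e_i\le d_i$, because a section in $T$ has exactly $d_i$ zeros on $\Gamma_i$. The condition $\sum e_i=\deg L-2k=\sum d_i$ then forces $e_i=d_i$. The main obstacle is Step 3 together with the connectedness of $T$ in Step 2, that is, making the density and connectivity of the union of the pencils $W_y$ rigorous. I would try first to prove both via the incidence variety $\{(y,s): s\in W_y\}$ and its surjective, generically finite projection to $W$.
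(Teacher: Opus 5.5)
Your Steps 1 and 2 and the uniqueness argument are sound. Connectedness of $T$ follows from continuity of $y\mapsto W_y$ over the connected base $\mathrm{Sym}^k$ of one half of $C(\nC)\setminus C(\nR)$, i.e.\ the tautological rank-two bundle given by your incidence variety. This is a valid replacement for the paper's path argument with the pencils $W(-kx_t-k\bar x_t)$.

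The gap is Step 3: $T$ is in general \emph{not} dense in $W$. Your dimension count only shows that $T$ has nonempty interior. In fact $T$ is open, being the set of nonzero sections with at most $\deg L-2k$ real zeros. The projection of the incidence variety to $W$ is not surjective: its image is exactly $T\cup\{0\}$. Sections with fewer than $2k$ nonreal zeros can fill an open set, because simple real zeros persist as real zeros under small real perturbations. For instance, take $C=\nP^1$, $L=\sO_{\nP^1}(3)$, $W=H^0(C,L)$ and $k=1$. This system is vastly real, since a real binary cubic has at most two nonreal roots. The cubics with three distinct real roots form a nonempty open subset of $W$ disjoint from $T$. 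No sequence from $T$ converges to such a section, so neither the density claim nor your alternative approximation by conjugate pairs can reach it.

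The missing idea is to transport the zero counts of such a section along a path to a point that \emph{is} a limit of sections with more nonreal zeros. The paper proceeds by induction on $k$ in two steps.
\begin{enumerate}
\item A section with a nonreal pair $x,\bar x$ lies in the vastly real system $W(-x-\bar x)$ of dimension $2k$, whose type must equal $(d_i)$ by uniqueness.
\item For $s_0$ with only real zeros, pick $s_1$ with a nonreal zero and let $t_m$ be the largest value such that $s_0+ts_1$ has only real zeros for all $0\le t\le t_m$. By Lemma \ref{lem:numbersconstant}, $s_2=s_0+t_ms_1$ has the same number of zeros on each $\Gamma_i$ as $s_0$. Also, $s_2$ is the limit of the sections $s_2+ts_1$ as $t\to 0^+$, which have nonreal zeros and hence at least $d_i$ zeros on $\Gamma_i$. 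Upper semicontinuity then finishes.
\end{enumerate}

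You can keep your induction-free framework by replacing Step 3 with this first-exit argument, run by downward induction on the number $2j$ of nonreal zeros. Let $\eta$ be the nonreal part of the zero divisor of $s$. Work on the segment $s+ts_1$ inside $W(-\eta)$, with $s_1$ having more than $2j$ nonreal zeros. Up to the first exit time, all these sections have exactly $\deg L-2j$ real zeros, so by the argument of Lemma \ref{lem:numbersconstant} their counts on each $\Gamma_i$ are constant. At the exit point they are limits of sections covered by the induction hypothesis. The base case $j=k$ is your Step 2.
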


\begin{proof}
 We first prove the uniqueness. Assume for the sake of a contradiction that there are $d_1',\ldots,d_s'>0$ satisfying the same properties but $d_i\neq d_i'$ for some $i$. Then every $0\neq s\in W$ has at least $\max(d_i,d_i')$ zeros on $\Gamma_i$ for all $i=1,\ldots,s$ which implies that $s$ has more than $\deg(L)-2k$ zeros on $C(\nR)$. This contradicts Lemma \ref{lem:mrdim}.

\medskip

\noindent  Next we observe that it suffices to prove the existence for linear systems without base points. We do this by induction on $k$.
 For $k=0$ the linear system $W$ corresponds to a real-fibered morphism $C\to\nP^1$. Its restriction to $C(\nR)$ is a covering map onto $\nP^1(\nR)$ of degree $\deg(L)$ by \cite[Theorem 2.19]{kummershamovich}. Hence we can choose $d_i$ to be the degree of this covering map restricted to $\Gamma_i$.

\medskip

\noindent  Now let $k>0$ and $x_0\in C(\nC)\setminus C(\nR)$. We consider the two linear systems $W_0=W(-x_0-\overline{x}_0)$ and $W'_0=W(-kx_0-k\overline{x}_0)\subseteq W_0$.  
 By Lemma \ref{lem:fixpair} they are vastly real of dimensions $\dim W_0=2k$ and $\dim W'_0=2$. Then by induction there must be two tuples of positive numbers $(d_1,\ldots,d_r)$ and $(d_1',\ldots,d_r')$ such that $\sum_{i=1}^r d_i = \sum_{i=1}^{r} d_i' = \deg(L)-2k$ and such that any section in $W_0$ (resp. $W'_0$) has at least $d_i$ zeros (resp. $d'_i$ zeros) on $\Gamma_i$. Since $W'_0\subseteq W_0$ the uniqueness statement proved before shows that $d_i=d_i'$. We claim now that these numbers do not depend on $x_0$. Indeed, let $x_1\in C(\nC)\setminus C(\nR)$. After possibly replacing $x_1$ with its complex conjugate we can assume that $x_0$ and $x_1$ lie in the same connected component of $C(\nC)\setminus C(\nR)$ (see Corollary \ref{cor:dividingtype}). We can thus connect these two points by a path $x_t \in C(\nC)\setminus C(\nR)$, $t\in[0,1]$. This gives a continuous family of vastly real linear systems $W'_t=W(-kx_t-k\bar{x}_t)$ of dimension $2$. Lemma \ref{lem:numbersconstant} applied to the connected set $S=\cup_{t\in[0,1]}(W'_t\setminus\{0\})$ shows that every $s\in W'_1$ has degree $d_i$ on $\Gamma_i$ since this is the case for $W'_0$.

\medskip

\noindent So far we have shown that there are integers $d_1,\ldots,d_r>0$ with $d_1+\cdots+d_r=\deg(L)-2k$ such that every $s\in W$ with at least one pair of complex conjugate nonreal zeros has at least $d_i$ zeros on $\Gamma_i$ for all $i=0,\ldots,r$.
  It remains to consider $s_0\in W$ with only real zeros. Choose $0\neq s_1\in W$ such that $s_1$ does not have only real zeros.
 Let $t_m\geq0$ be maximal with the property that $s_0+ts_1$ has only real zeros for all $0\leq t\leq t_m$. By Lemma \ref{lem:numbersconstant} the section $s_2=s_0+t_ms_1$ has the same number of zeros on each $\Gamma_i$ as $s_0$. On the other hand, the section $s_2$ is also the limit $\lim_{t\to0}(s_2+ts_1)$ and $s_2+ts_1$ does not have only real zeros for small enough $t>0$. Thus $s_2+ts_1$ has at least $d_i$ zeros on $\Gamma_i$ for all $i=1,\ldots,s$ and all small enough $t>0$. Hence, by continuity of zeros, the same is true for $s_2$ and thus for $s_0$.
 \end{proof}

\begin{definition}
Let $W\subseteq H^0(C,L)$ be a vastly real linear system of dimension $2k+2$. We call the tuple $(d_1,\ldots,d_r)$ from Proposition \ref{prop:typex} the \emph{type} of $W$ and say that $W$ is \emph{vastly real of type $(d_1,\ldots,d_r)$}.
\end{definition}

\begin{remark}
    Let $W\subseteq H^0(C,L)$ be a vastly real linear system of dimension $2k+2$ and of type $(d_1,\ldots,d_r)$. If $k=0$ and $W$ is base-point free, then it corresponds to a real-fibered morphism $C\to\nP^1$ whose restriction to $S_i$ has degree $d_i$. The question of which tuples $(d_1,\ldots,d_r)$ can occur was studied in \cite{kumshaw}. For instance, it was shown that for an $M$-curve every $(g+1)$-tuple of positive integers can be realized. The situation when $k=1$ and $W$ is very ample was examined in \cite{mikhalkinorevkov}. It was shown that in this case $C$ must be an $M$-curve and that every tuple of positive integers can be realized. In \cite{kummersinn}, for every $M$-curve and all $k,d\geq1$ examples of vastly real linear systems of dimension $2k+2$ and of type $(d,1,\ldots,1)$ have been constructed.
\end{remark}

Now we are almost ready to prove the statement on $M$-curves in Theorem \ref{thm:definitedetrep}. We just need the following statement from Huisman:

\begin{lemma}[{\cite[Theorem 2.4]{huismannsd1}}]\label{lem:huisman}
	Let $0\neq s\in H^0(C,L)$ have a zero on at least $g$ different connected components of $C(\nR)$. Then $L$ is nonspecial.
\end{lemma}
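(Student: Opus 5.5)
Since this lemma is quoted from \cite{huismannsd1}, the plan is to reproduce the standard argument: argue by contradiction, and combine the given section with a section of $\omega_C\otimes L^{-1}$ to produce a real holomorphic differential with too many zeros. Throughout, $L$ and $s$ are defined over $\nR$, as in our setting.

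\emph{Step 1 (a real section of $\omega_C\otimes L^{-1}$).} Suppose $L$ is special, so $h^1(C,L)=h^0(C,\omega_C\otimes L^{-1})>0$. Since $L$ is real, $H^0(C,\omega_C\otimes L^{-1})$ is the complexification of its real subspace of sections invariant under complex conjugation. Hence we can choose a nonzero real section $\tau\in H^0(C,\omega_C\otimes L^{-1})$.

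\emph{Step 2 (the real differential).} The product $\eta:=s\cdot\tau$ is a nonzero real section of $\omega_C$, i.e.\ a real holomorphic differential. Its zeros, counted with multiplicity, have total degree $2g-2$. Every zero of $s$ is a zero of $\eta$, so by hypothesis $\eta$ vanishes somewhere on at least $g$ of the components $\Gamma_i$.

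\emph{Step 3 (parity on each oval).} We show that $\eta$ has an even number of zeros, counted with multiplicity, on each $\Gamma_i$.
\begin{itemize}
\item Each $\Gamma_i$ is a circle, and restricting the real form $\eta$ to it gives a real $1$-form on $\Gamma_i$, i.e.\ a section of the real cotangent bundle of the circle.
\item That bundle is trivial, so $\eta|_{\Gamma_i}=f\,d\theta$ for a real analytic function $f$ on the circle. At a real point, the multiplicity of $\eta$ as a zero on $C$ equals the vanishing order of $f$.
\item A real analytic function on a circle changes sign an even number of times. A zero of odd order is exactly a sign change, so the number of zeros of $f$ counted with multiplicity is even.
\end{itemize}

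\emph{Step 4 (counting).} By Step 3, every component on which $\eta$ vanishes carries at least $2$ zeros. Since there are at least $g$ such components, $\eta$ has at least $2g$ real zeros, contradicting $\deg\omega_C=2g-2$. Hence $L$ is nonspecial.

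\emph{Main obstacle.} The only delicate point is Step 3. It requires the product $\eta$ to be genuinely real, which is why Step 1 insists on choosing $\tau$ real. It also uses orientability of the circle, so that the real cotangent bundle of $\Gamma_i$ is trivial. Once these are in place, the rest is pure counting.
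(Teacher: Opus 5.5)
Your proof is correct. The paper does not prove this lemma; it only cites Huisman, so your write-up makes this step self-contained. Your argument is the standard parity argument: a nonzero real holomorphic differential has even degree on each component of $C(\nR)$, so vanishing on $g$ components forces at least $2g>2g-2$ zeros.

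The one thing to flag is that Steps~1 and~3 use that $L$ and $s$ are real. That is an assumption you add, not part of the statement as written. It is harmless for the paper. In Proposition~\ref{prop:vastlyrealveryample} the section $s$ can be taken real, since it is chosen in the real linear system $W$ vanishing at a real point $x$. This also covers the application to $L(-x)$.

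You can also remove the assumption cheaply:
\begin{enumerate}
\item Choose real zeros $p_1,\dots,p_g$ of $s$ on distinct components and set $P=p_1+\cdots+p_g$.
\item Since $P\le\operatorname{div}(s)$, the cokernel of $\sO_C(P)\hookrightarrow L$ is torsion. Hence $H^1(C,\sO_C(P))\to H^1(C,L)$ is surjective, and $h^1(C,L)\le h^1(C,\sO_C(P))$.
\item The line bundle $\sO_C(P)$ and its canonical section are real, so your argument applies to them verbatim.
\end{enumerate}
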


\begin{proposition}\label{prop:vastlyrealveryample}
    If $C$ is an $M$-curve and $W\subseteq H^0(C,L)$ is a vastly real linear system of dimension $2k+2$, then $L$ is $k$-very ample.
\end{proposition}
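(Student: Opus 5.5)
The plan is to reduce $k$-very ampleness to nonspeciality and then apply Lemma \ref{lem:huisman}. By Riemann--Roch, $L$ separates $\xi\in C_{k+1}$ if and only if $h^1(C,L(-\xi))=h^1(C,L)$. So it suffices to show that $L(-\xi)$ is nonspecial for every effective divisor $\xi$ of degree $k+1$; this forces $h^1(C,L)\le h^1(C,L(-\xi))=0$ as well. Since the locus of non-separated $\xi$ is closed and defined over $\nR$, I would first try to reduce to divisors that are invariant under complex conjugation, for instance by enlarging a non-real $\xi$ to $\xi+\bar\xi$ or a suitable real sub-divisor. This matters because Lemma \ref{lem:huisman} should be applied to real sections, for which counting zeros on the ovals $\Gamma_1,\ldots,\Gamma_{g+1}$ makes sense.

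The key step is to produce a nonzero section of $L(-\xi)$ that has a zero on at least $g$ distinct components of $C(\nR)$. I would take $s\in W(-\xi)$. Since $\dim W=2k+2$, the space $W(-\xi)$ has dimension at least $k+1$, so I can impose up to $k$ further linear conditions. By Proposition \ref{prop:typex}, every nonzero $s\in W$ has at least $d_i\ge 1$ zeros on each of the $g+1$ ovals $\Gamma_i$. The section $s$, viewed in $H^0(C,L(-\xi))$, has divisor $\operatorname{div}(s)-\xi$. The only way it can fail to vanish on an oval $\Gamma_i$ is that $\xi$ absorbs all the zeros of $s$ on $\Gamma_i$.

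The real points of $\xi$ lie on at most $k+1$ ovals, so at most $k+1$ ovals are at risk. I would use the $k$ free conditions to force one additional zero of $s$, away from $\xi$, on all but one of the ovals that $\xi$ meets. The remaining risk is then at most one oval, and $s$ still vanishes on at least $g$ ovals after removing $\xi$. Lemma \ref{lem:huisman} then gives that $L(-\xi)$ is nonspecial.

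One could alternatively use a parity argument. For a real section, the number of zeros on $\Gamma_i$ has a fixed parity determined by $L|_{\Gamma_i}$, so when an oval carries an odd number of points of $\xi$ an extra zero may come for free.

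The main obstacle is the bookkeeping with multiplicities and the non-real case. If $\xi$ has a multiple point on an oval, or non-real points, the extra vanishing conditions must be imposed consistently, and $W(-\xi)$ must remain nonzero after imposing them. For real double points I would use Corollary \ref{cor:fixdouble}, and for conjugate pairs Lemma \ref{lem:fixpair}: these pass to a smaller vastly real system inside $W(-\xi)$, to which Proposition \ref{prop:typex} applies again. The induction on $k$ then guarantees a residual zero on every oval.
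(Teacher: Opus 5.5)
Your argument is correct for conjugation-invariant $\xi$. The real sections of $W(-\xi)$ form a space of dimension at least $k+1$. If $\xi$ meets $m$ ovals, you need at most $m-1\leq k$ extra real point conditions. Every nonzero section of $W$ vanishes on all $g+1$ ovals by Proposition~\ref{prop:typex}, so the residual divisor meets at least $g$ ovals and Lemma~\ref{lem:huisman} applies to the real bundle $L(-\xi)$. Multiplicities cause no trouble, since each point imposes at most one linear condition, and no induction on $k$ is needed.

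\textbf{The gap.} The gap is the reduction to such $\xi$, which you only sketch and which does not work as sketched. $k$-very ampleness concerns all $\xi\in C_{k+1}(\nC)$. A nonempty closed subset of $C_{k+1}$ defined over $\nR$ need not contain any conjugation-invariant point, so ``closed and defined over $\nR$'' gives nothing. Passing to a sub-divisor goes the wrong way, since $h^1(L(-\xi'))\leq h^1(L(-\xi))$ for $\xi'\leq\xi$. The enlargement $\xi+\bar\xi$ can destroy all sections:
\begin{itemize}
\item Suppose $\xi=y_1+\dots+y_{k+1}$ consists of nonreal points and $\xi,\bar\xi$ have disjoint supports. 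Then $W(-\xi-\bar\xi)=0$ by Lemma~\ref{lem:fixpair}, so there is no section to feed into Huisman's lemma. Already for $k=0$, $W(-y-\bar y)=0$, because members of $W$ have only real zeros.
\item If $\xi$ contains a real point $x$, then $\xi+\bar\xi\geq 2x$ costs an unnecessary condition. For $k=1$ and $\xi=x+y$, the vastly real pencil $W(-y-\bar y)$ has no member vanishing doubly at $x$ unless $x$ is a base point.
\end{itemize}

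\textbf{A repair.} Replace $\xi+\bar\xi$ by the smallest conjugation-invariant divisor $\xi\vee\bar\xi\geq\xi$, the coefficientwise maximum. Write $a$ for the degree of the real part of $\xi$, $2b$ for the degree of its largest conjugation-invariant nonreal part, and $c$ for the remaining degree, so $a+2b+c=k+1$. Then $\deg(\xi\vee\bar\xi)=a+2b+2c$, hence $\dim W(-\xi\vee\bar\xi)\geq a+2b$. Moreover $\xi\vee\bar\xi$ has the same real points as $\xi$.
\begin{itemize}
\item If $a+2b\geq 1$, your argument runs verbatim, needing at most $a-1$ extra conditions. It gives $h^1(L(-\xi))\leq h^1(L(-\xi\vee\bar\xi))=0$.
\item If $a=b=0$, then $\xi+\bar\xi$ has the form in Lemma~\ref{lem:fixpair}. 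So $W$ separates it, and hence separates $\xi$ directly. This is the same use of vast reality by which the paper rules out nonreal base points for $k=0$.
\end{itemize}

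\textbf{Comparison with the paper.} With this fix, your route genuinely differs from the paper's. The paper inducts on $k$: it peels off $x+\bar x$ via Lemma~\ref{lem:fixpair}, or $2x$ via Corollary~\ref{cor:fixdouble}, to get a vastly real subsystem of dimension $2k$, and uses Huisman only when $k=0$. Your argument is non-inductive and applies Huisman once. It avoids Corollary~\ref{cor:fixdouble} and gives the slightly stronger conclusion that $L(-\xi)$ is nonspecial for every $\xi\in C_{k+1}$.
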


\begin{proof}
     We prove this by induction on $k$. Let $k=0$ and $x\in C(\nR)$. Since the dimension of $W$ is $2$, there is a $0\neq s\in W$ which vanishes at $x$. By Proposition \ref{prop:typex} it also vanishes at a point on each of the $g$ connected components of $C(\nR)$ that do not contain $x$. It follows from Lemma \ref{lem:huisman} that both $L$ and $L(-x)$ are nonspecial. This shows that $h^0(L(-x))=h^0(L)-1$. Thus $L$ has no real base points. Since elements of $W$ have only real zeros, it has no nonreal base points either.

\medskip

\noindent     Now let $k>0$ and $x\in C(\nC)$. The linear system $W(-x-\overline{x})\subseteq H^0(C,L(-x-\overline{x}))$ corresponding to all $s\in W$, that vanish at $x$ and $\overline{x}$, contains a maximally real linear system of dimension $2k$ by Corollary \ref{cor:fixdouble} if $x$ is real and by Lemma \ref{lem:fixpair} if $x$ is nonreal. Thus $L(-x-\overline{x})$ is $(k-1)$-very ample by induction hypothesis for every $x$. But then $L(-x)$ is also $(k-1)$-very ample for every $x$ which implies that $L$ is $k$-very ample.
\end{proof}

\begin{example}
    Proposition \ref{prop:vastlyrealveryample} can fail if we do not assume that $C$ is an $M$-curve. Indeed, let $C$ be a curve of genus $2$ such that the canonical map $C\to\nP^1$ is real-fibered. Now for any $x\in C(\nR)$ the line bundle $L(x)$ has $x$ as a base point and $H^0(C,L(x))$ is vastly real of dimension $2$.
\end{example}

We now go back to the proof of Theorem \ref{thm:definitedetrep}:  to this end, we assume for the remainder of this section that $V$ is $k$-very ample and let $p_0,\ldots,p_{2k+1}\in V$ be linearly independent without common zero on the secant variety $\Sigma_k$ such that their span $W=\langle p_0,\ldots,p_{2k+1} \rangle \subseteq V$ is vastly real. Next, we have to choose a theta characteristic and an algebraic orientation on $C$. Note that every algebraic orientation naturally induces a topological orientation on $C(\nR)$, and, conversely, every topological orientation on $C(\nR)$ comes from an algebraic orientation. This was stated in \cite[Example 4.5]{agostinikummer} and follows by the same argument as in \cite[Corollary 2.4]{tottet}. Since $C$ is of dividing type by Corollary \ref{cor:dividingtype}, the real points $C(\nR)$ are the boundary of each of the two components of $C(\nC)\setminus C(\nR)$. Therefore, a topological orientation on one of these components induces an orientation on $C(\nR)$. Such an orientation is called \emph{complex orientation} of $C(\nR)$ and it is unique up to global reversal\footnote{An orientation is induced by a nowhere-vanishing volume form $\omega$. Its global reversal is induced by $-\omega$.} In the following we fix a theta characteristic $\alpha$ and an algebraic orientation $\rho\colon\alpha\otimes\alpha\to\omega_C$ which induces one of the two complex orientations on $C(\nR)$. It follows from Cauchy's theorem that $\alpha$ cannot have global sections, so that Theorem \ref{thm:detrep} gives an admissible symmetric determinantal representation $\phi$ of $\Sigma_k$ of size $\deg(\Sigma_k)$ whose associated Ulrich sheaf is $\sA_{k,L\otimes\alpha}$, which is symmetric of rank one. The only remaining step is to verify that $\phi(p_0\wedge\cdots\wedge p_{2k+1})$ is positive or negative definite, for which we will use Theorem \ref{thm:signatureformula}.

\begin{theorem}\label{thm:definite}
    With the notation as above the symmetric matrix $\phi(p_0\wedge\dots\wedge p_{2k+1})$ is positive or negative definite.
\end{theorem}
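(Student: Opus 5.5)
Since the projection $\pi=(p_0:\cdots:p_{2k+1})$ is real-fibered (by \cite[Theorem 4.6]{kummersinn}, since $W$ is vastly real), every fiber over a real point of $\nP^{2k+1}$ consists of real points of $\Sigma_k$. The plan is to choose a general point $y\in\nP^{2k+1}(\nR)$. Its fiber $\{y_1,\dots,y_d\}$ then consists of $d=\deg(\Sigma_k)$ distinct real points, each lying on $\langle\xi_i\rangle$ for a unique reduced $\xi_i\in C_{k+1}$. Each $\xi_i$ is real as a divisor, being unique, but its points may be real or come in conjugate pairs. By Theorem \ref{thm:signatureformula}, the signature of $\phi(p_0\wedge\cdots\wedge p_{2k+1})$ equals $\sum_i\langle\phi(\xi_i,\xi_i)\rangle$. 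Each summand is $\pm1$, and by Proposition \ref{prop:localdegree} it is the sign of the determinant \eqref{eq:localdeterminant} computed at $\xi_i$. So it suffices to show that this sign is the same for all $d$ points. Since the local sign depends continuously on data away from degenerations, the strategy is to show that the sign of \eqref{eq:localdeterminant} is constant on the whole set of reduced real $\xi\in C_{k+1}(\nR)$ for which $W$ separates $2\xi$.

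For that, I would interpret the determinant geometrically. Let $\xi=x_1+\dots+x_{k+1}$. For real $x_j$, I choose the local sections $S_j$ and $A_j$ real. Then $f_{ij}'$ is the derivative of $f_{ij}$ along $\Gamma$ with respect to the volume form $W_j=\rho(A_j^2)$, and this form is positive for the complex orientation. For conjugate pairs $x_j,\bar x_j$, the corresponding four columns pair to a real quantity, and I expect their contribution to have a fixed sign: a determinant of the form $|\det(\ldots)|^2$ up to a universal constant. The remaining sign comes from real points. Using Lemma \ref{lem:fixpair} and Corollary \ref{cor:fixdouble} inductively, I reduce to dimension-$2$ vastly real systems. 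Concretely, fixing all but one real point $x_j$, the sections of $W$ vanishing doubly at the other points form a system of dimension $2$. This system is vastly real in the sense of a real-fibered map $C\to\nP^1$, so along $C(\nR)$ its Wronskian $f_0f_1'-f_1f_0'$ has a constant sign with respect to the complex orientation. This is the $k=0$ statement of Shamovich--Vinnikov / \cite{kummershamovich}: a real-fibered morphism to $\nP^1$ preserves complex orientations, so it is locally orientation preserving. A Laplace/Schur-complement expansion of \eqref{eq:localdeterminant} along the two columns belonging to $x_j$ then expresses the determinant as this Wronskian times the determinant for the remaining points. Induction gives a sign independent of $\xi$.

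The main obstacle is making this factorization rigorous: ensuring that the residual system has dimension exactly $2$, and controlling the case where points of $\xi$ lie on different components $\Gamma_i$ or where conjugate pairs occur. A cleaner route, which I would try first, is to argue via connectedness. The set of $(\lambda,\xi)$ with $\xi$ in a real fiber is $\Sigma_k(\nR)$ minus a codimension-one degeneration locus. The determinant \eqref{eq:localdeterminant} vanishes only where $W$ fails to separate $2\xi$, and vastly realness excludes this for real data, by Lemma \ref{lem:fixpair} and Corollary \ref{cor:fixdouble}. Hence the sign is locally constant. Crossing between configurations (real pair $\leftrightarrow$ conjugate pair, via a double point $2x$) is then handled by Corollary \ref{cor:fixdouble}, with the complex orientation ensuring continuity of the sign. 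Once all $d$ local signs agree, the form is $\pm d\langle1\rangle$, i.e.\ definite.
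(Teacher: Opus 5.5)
\textbf{There is a genuine gap: your proposal never shows that divisors $\xi_i$ containing conjugate pairs contribute the same sign as divisors made of real points.}

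\textbf{What matches the paper.} Two parts of your argument are exactly the paper's cases $k=0$ and $s=1$:
\begin{itemize}
\item the reduction, via Theorem \ref{thm:signatureformula} and Proposition \ref{prop:localdegree}, to comparing the signs of \eqref{eq:localdeterminant} at the $\xi_i$;
\item the step that moves a single real point: take $p_0,p_1$ spanning a two-dimensional vastly real subspace of $W(-2x_2-\cdots-2x_{k+1})$ from Corollary \ref{cor:fixdouble}, so the matrix is block triangular and the $x$-block is the Wronskian of a real-fibered pencil.
\end{itemize}
Incidentally, you do not need the residual system to have dimension exactly $2$. One only works at $\xi$ in the open set $U$ where \eqref{eq:localdeterminant} is nonzero, so the complementary block is automatically nonsingular.

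\textbf{The conjugate-pair step is the heart of the theorem, and your heuristic for it is false.} You expect the four columns of a pair $y,\bar y$ to contribute something like $|\det(\cdots)|^2$. With conjugate local data these columns are $u,v,\bar u,\bar v$. The determinant then equals, up to a universal constant, the real determinant in which these four columns are replaced by $\operatorname{Re}u,\operatorname{Im}u,\operatorname{Re}v,\operatorname{Im}v$. That is the orientation of a real frame, not a modulus squared, and it has no a priori sign. Showing that, for vastly real $W$, this sign agrees with the sign at real pairs is precisely the case $k=1$.

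\textbf{How the paper handles it.} The paper does not get the $k=1$ case from local considerations. It identifies the signature with the encomplexed writhe of $\pi(C)\subseteq\nP^3$ via \cite[Theorem 6.5]{agostinikummer}. It then uses the Mikhalkin--Orevkov theorem that this writhe equals $\deg(\Sigma_1)$ for vastly real $W$. For $k>1$ it takes a counterexample $\xi,\xi'$ with the minimal number $s$ of differing points. When $s\geq 2$ it replaces two points $x_1+x_2$ at a time, where $x_1+x_2$ is either a real pair or a conjugate pair. It splits off a $4\times 4$ block using a four-dimensional vastly real subspace vanishing doubly at $x_3,\dots,x_{k+1}$. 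It uses Zariski density of $C_2(\nR)$ to find an intermediate $y_1+y_2$ that keeps both configurations inside $U$.

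\textbf{Your ``cleaner route'' does not close the gap.}
\begin{itemize}
\item You claim vastly realness forces $W$ to separate $2\xi$ for every real $\xi$. This does not follow from Lemma \ref{lem:fixpair} and Corollary \ref{cor:fixdouble}. The corollary only produces a vastly real subspace inside $W(-2x_1-\cdots-2x_h)$ and gives no bound on the dimension of $W(-2x_1-\cdots-2x_h)$ itself. The lemma only controls conjugate pairs.
\item Without that claim you have no local constancy of the sign on a connected set.
\item The decisive step is crossing from real pairs to conjugate pairs through a real double point $2x$, and you assert it rather than prove it. You would need to show that $W$ separates $4x+2\eta$ at suitable real crossing points $2x+\eta$. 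You would also need that the real zeros of the determinant on $C_{k+1}(\nR)$ do not separate the regions you want to connect.
\end{itemize}
As written, the proposal contains no argument that replaces the $k=1$ input.
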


\begin{proof}
    Because the projection $\pi=(p_0:\dots:p_{2k+1})\colon \Sigma_{k} \longrightarrow \nP^{2k+1}$ is real-fibered, we can apply Theorem \ref{thm:signatureformula}. It therefore suffices to show that the determinant (\ref{eq:localdeterminant}) in Proposition \ref{prop:localdegree} has the same sign for every $\xi\in C_{k+1}(\nR)$ where it does not vanish. We use the same notation as in Proposition \ref{prop:localdegree}.

\medskip

\noindent    In the case $k=0$ the map $\pi\colon C\to\nP^1$ is real-fibered and given by the rational function $g=\frac{f_{11}}{f_{01}}$. The determinant in Proposition \ref{prop:localdegree} is $g'(x_1)\cdot f_{01}(x_1)^2$ where $g'$ is the derivative with respect to the rational differential $W_1$, which, by our choice of $\alpha$, induces the complex orientation on $C(\nR)$. Hence the sign of the determinant in question is the local degree of $\pi$ at $x_1$ with respect to the complex orientation. Since $\pi$ is a covering map by \cite[Theorem 2.19]{kummershamovich} and the complex orientation of $C(\nR)$ can be obtained as pull-back of an orientation of $\nP^1(\nR)$ under $\pi$, these must all have the same sign.

\medskip

\noindent    In the case $k=1$ we first note that the symmetric admissible determinantal representation from \cite[Theorem 6.5]{agostinikummer} is, up to some constant factor, the same as $\phi$ because they both correspond to the same symmetric rank one Ulrich sheaf. Now the statement of \cite[Theorem 6.5]{agostinikummer} combined with \cite[Remark 6.3]{agostinikummer} says that the signature of $\phi(p_0\wedge\dots\wedge p_{3})$ is (up to sign) the so-called encomplexed writhe of the space curve $\pi(C)\subseteq\nP^3$. Because $W$ is vastly real, the encomplexed writhe of $\pi(C)\subseteq\nP^3$ is equal to $\deg(\Sigma_1)$ by \cite[Theorem 2]{mikhalkinorevkov}. Thus $\phi(p_0\wedge\dots\wedge p_{3})$ is either positive or negative definite. For later reference we note that this also implies, by Theorem \ref{thm:signatureformula}, that (\ref{eq:localdeterminant}) has the same sign for every $\xi\in C_{k+1}(\nR)$ where it does not vanish.

\medskip

\noindent    Finally, let $k>1$ and $U\subseteq C_{k+1}$ be the Zariski open subset where (\ref{eq:localdeterminant}) does not vanish. We will prove that (\ref{eq:localdeterminant}) takes only one sign on $U\cap C_{k+1}(\nR)$ which by Theorem \ref{thm:signatureformula} proves the claim. Assume for the sake of a contradiction that there are $\xi,\xi'\in U\subseteq C_{k+1}$ such that (\ref{eq:localdeterminant}) takes different signs on $\xi$ and $\xi'$. We write
    \begin{equation*}
     \xi=x_1+\cdots+x_{k+1}\textrm{ and }\xi'=x'_1+\cdots+x'_{k+1}
    \end{equation*} We assume that we have chosen $\xi$ and $\xi'$ such that the number $s$ of indices $j$ with $x'_j\not\in\{x_1,\ldots,x_{k+1}\}$ is minimal among all such examples.

\medskip

\noindent    We first consider the case $s=1$. We can write $\xi'=x'_1+x_2+\cdots+x_{k+1}$ with $x'_1$ not in the support of $\xi$. Then $x_1$ and $x_1'$ are necessarily real points of $C$. A multiple application of Corollary \ref{cor:fixdouble} shows that there are linearly independent sections $q_0,q_1\in W$ which vanish with multiplicity at least $2$ on $x_2,\ldots,x_{k+1}$ such that every nonzero linear combination $\lambda q_0+\mu q_1$, $\lambda,\mu\in\nR$, has only real zeros on $C$. Applying an endomorphism of $W$ with determinant $1$ we can assume that $q_i=p_i$, $i=0,1$. Then (\ref{eq:localdeterminant}) evaluated at $x+x_2+\cdots+x_{k+1}$ is equal to
\begin{equation}\label{eq:localblockdet1}
       \text{\footnotesize\(\det
        \begin{pmatrix}
		f_{01}(x) & f'_{01}(x)  \\
		f_{11}(x) & f'_{11}(x)
	\end{pmatrix}\cdot\det
        \begin{pmatrix}
		 f_{22}(x_2) & f'_{22}(x_2)& \dots & f_{2,k+1}(x_{k+1}) & f'_{2,k+1}(x_{k+1}) \\
		 f_{32}(x_2) & f'_{32}(x_2)& \dots & f_{3,k+1}(x_{k+1}) & f'_{3,k+1}(x_{k+1}) \\
		\vdots & \vdots & \ddots & \vdots & \vdots \\
		 f_{2k+1,2}(x_2) & f'_{2k+1,2}(x_2)& \dots & f_{2k+1,k+1}(x_{k+1}) & f'_{2k+1,k+1}(x_{k+1})
	\end{pmatrix}\)},
	\end{equation}
    where we have chosen $f_{01}$ and $f_{11}$ to meet the requirements above Proposition \ref{prop:localdegree} for $x_1$ and $x_1'$ simultaneously. By construction the morphism $C\to\nP^1$ defined by $g=\frac{f_{11}}{f_{01}}$ is real-fibered. Hence by the same argument as in the case $k=0$ above, the first determinant, which is $g'(x)\cdot f_{01}(x)^2$, has the same sign for every $x$ where it is defined and where $f_{01}$ is nonzero. In particular, since the second determinant in (\ref{eq:localblockdet1}) does not depend on $x$, this shows that (\ref{eq:localdeterminant}) has the same sign when evaluated at $\xi$ and $\xi'$. This contradicts our choice of $\xi$ and $\xi'$.

\medskip

\noindent        Finally, we consider the case that $s\geq2$. After relabeling if necessary, we can assume that $x_{i} \notin \{x'_1,\ldots,x'_{k+1}\},x'_{i}\notin\{x_1,\ldots,x_{k+1}\}$ for $i=1,2$ and that both $x_1+x_2$ and $x_1'+x_2'$ are defined over $\nR$.
    We consider the maps $\sigma,\sigma'\colon C_{2}\to C_{k+1}$ that send $\eta\in C_{2}$ to $\eta+(x_3+\cdots+x_{k+1})$ and $\eta+(x'_3+\cdots+x'_{k+1})$, respectively. We have
    \begin{equation*}
        x_1+x_2\in\sigma^{-1}(U) \textrm{ and } x'_1+x'_2\in\sigma'^{-1}(U),
    \end{equation*}
    so that $\sigma^{-1}(U)$ and $\sigma'^{-1}(U)$ are non-empty Zariski open subsets. Since the real points of $C_2$ are Zariski dense, there exists $y_1+y_2\in C_2(\nR)$ with $y_1+y_2\in\sigma^{-1}(U)\cap\sigma'^{-1}(U)$. We proceed analogously to the case $s=1$. A multiple application of Corollary \ref{cor:fixdouble} shows that there are linearly independent sections $q_0,\ldots,q_3\in W$ which vanish with multiplicity at least $2$ on $x_3,\ldots,x_{k+1}$ such that every nonzero real linear combination of $q_0,\ldots,q_3$ has at most two nonreal zeros on $C$, counted with multiplicities. Applying an endomorphism of $W$ with determinant $1$ we can assume that $q_i=p_i$, $i=0,\ldots,3$. Then (\ref{eq:localdeterminant}) evaluated at $z_1+z_2+x_3+\cdots+x_{k+1}$ is equal to
\begin{equation*}
       \text{\footnotesize\(\det
        \begin{pmatrix}
		f_{01}(z_1) & f'_{01}(z_1) & f_{02}(z_2) & f'_{02}(z_2) \\
		f_{11}(z_1) & f'_{11}(z_1) & f_{12}(z_2) & f'_{12}(z_2) \\
		f_{21}(z_1) & f'_{21}(z_1) & f_{22}(z_2) & f'_{22}(z_2) \\
		f_{31}(z_1) & f'_{31}(z_1) & f_{32}(z_2) & f'_{32}(z_2) \\
	\end{pmatrix}\cdot\det
        \begin{pmatrix}
		 f_{33}(x_3) & f'_{33}(x_3)& \dots  & f'_{3,k+1}(x_{k+1}) \\
		 f_{43}(x_3) & f'_{43}(x_3)& \dots  & f'_{4,k+1}(x_{k+1}) \\
		\vdots & \vdots & \ddots & \vdots  \\
		 f_{2k+1,3}(x_3) & f'_{2k+1,3}(x_3)& \dots & f'_{2k+1,k+1}(x_{k+1})
	\end{pmatrix}\)},
	\end{equation*}
    where we have chosen the $f_{ij}$, $i=0,\ldots,3$ and $j=1,2$, to meet the requirements above Proposition \ref{prop:localdegree} for $x_j$ and $z_j$ simultaneously. As shown in the case $k=1$, the first factor has the same sign for every real $z_1+z_2\in C_2(\nR)$ whenever it is nonzero. In particular, since the second factor does not depend on $z_1+z_2$, this shows that (\ref{eq:localdeterminant}) has the same sign when evaluated at $\xi=x_1+x_2+x_3+\cdots+x_{k+1}$ and $\nu=y_1+y_2+x_3+\cdots+x_{k+1}$. By the same argument, the determinant (\ref{eq:localdeterminant}) has the same sign when evaluated at $\xi'=x'_1+x'_2+x'_3+\cdots+x'_{k+1}$ and $\nu'=y_1+y_2+x'_3+\cdots+x'_{k+1}$. In particular, it has different signs at $\nu$ and $\nu'$ because this is the case for $\xi$ and $\xi'$. However, this contradicts the minimality of $s$, and thus completes the proof.
\end{proof}

We write out the proof of Theorem \ref{thm:definitedetrep} for completeness.

\begin{proof}[Proof of Theorem \ref{thm:definitedetrep}]
  Assume first that $V$ is $k$-very ample. The fact that condition (1) implies condition (3) is Theorem \ref{thm:definite}. The fact that condition (3) implies condition (1) is \cite[Proposition 3.12]{shamovichvinnikov} and the fact that condition (1) and (2) are equivalent is \cite[Theorem 4.6]{kummersinn}. Finally, if $C$ is an $M$-curve and if the complete linear system $H^0(C,L)$ is vastly real, then Proposition \ref{prop:vastlyrealveryample} shows that $H^0(C,L)$ is $k$-very ample.
\end{proof}

\begin{remark}\label{rem:hypcone}
	Consider the case $r=2k+2$ in Theorem \ref{thm:definitedetrep}. Then $\Sigma_k$ is a hypersurface defined by a homogeneous polynomial $H\in\nR[x_0,\ldots,x_r]$ and the zero locus of the linear forms  $p_0,\ldots,p_{2k+1}\in V$ is a point $[e]\in\nP^r(\nR^{r+1})$. Without loss of generality we can assume $H(e)>0$. Condition (3) in Theorem \ref{thm:definitedetrep} then translates into $H$ being \emph{hyperbolic with respect to $e$}, meaning that $H(te+a)\in\nR[t]$ has only real zeros for all $a\in\nR^{r+1}$. Condition (1) means that there are real symmetric matrices $A_0,\ldots,A_r$ such that
	\begin{equation}\label{eq:detrephyp}
	 H=\det(A(x))\textrm{ and }A(e)\textrm{ is positive definite,}
	\end{equation}
 where $A(x)=x_0A_0+\cdots+x_rA_r$. Of particular interest in convex optimization is the \emph{hyperbolicity cone} of $H$ \cite{guler}, defined as the closure of the connected component of $\{a\in\nR^{r+1}\mid H(a)\neq0\}$ that contains $e$. If we have (\ref{eq:detrephyp}), then it can be described as
 \begin{equation*}
  \{a\in\nR^{r+1}\mid A(e)\textrm{ is positive semidefinite}\},
 \end{equation*}
meaning that it is a so-called \emph{spectrahedron} \cite{BPT}.
If $C$ is an $M$-curve embedded via a complete linear system such that the components $\Gamma_1,\ldots,\Gamma_{g}$ of $C(\nR)$ are non-trivial in $H_1(\nP^r(\nR),\nZ/2)$, then $\Sigma_k$ is hyperbolic and its hyperbolicity cone is the conic hull of $\Gamma_{g+1}$ by \cite[Proposition 4.19]{kummersinn}.
By Theorem \ref{thm:definitedetrep}, this hyperbolicity cone is a spectrahedron, which answers \cite[Question (4)]{kummersinn} affirmatively.
\end{remark}

\bibliographystyle{ams}

\end{document}